\newenvironment{notation}[0]{%
  \begin{list}%
    {}%
    {\setlength{\itemindent}{0pt}
     \setlength{\labelwidth}{4\parindent}
     \setlength{\labelsep}{\parindent}
     \setlength{\leftmargin}{5\parindent}
     \setlength{\itemsep}{0pt}
     }%
   }%
  {\end{list}}
\newtheorem{thm}{Theorem}[section]
\newtheorem{lemma}[thm]{Lemma}
\newtheorem{prop}[thm]{Proposition}
\newtheorem{claim}[thm]{Claim}
\newtheorem{cor}[thm]{Corollary}
\theoremstyle{remark}
\newtheorem{remark}[thm]{Remark}
\theoremstyle{definition}
\theoremstyle{remark}
\newtheorem{example}[thm]{Example}
\def\L{\mathcal L}
\def\X{\mathcal X}
\newcommand{\ZZ}{\mathbb{Z}}
\newcommand{\R}{\mathcal{R}}
\newcommand{\RR}{\mathbb{R}}
\newcommand{\PP}{\mathbb{P}}
\newcommand{\C}{\mathfrak{C}}
\newcommand{\ord}{\mathrm{ord}}
\newcommand{\Red}{\mathrm{Red}^{\C}}
\newcommand{\Div}{\operatorname{Div}}
\newcommand\fX{\mathfrak X}
\newcommand{\Prin}{\operatorname{Prin}}
\renewcommand{\div}{\mathrm{div}}
\newcommand{\Pic}{\operatorname{Pic}}
\renewcommand{\k}{\kappa}
\newcommand{\K}{\mathbb K}
\newcommand{\Spec}{\operatorname{Spec}}
\newcommand{\g}{\mathfrak g}
\newcommand{\f}{\mathfrak f}
\newcommand{\mr}{\mathcal M}
\newcommand{\an}{\operatorname{an}}
\newcommand{\ad}{\operatorname{ad}}
\newcommand{\onto}{\twoheadrightarrow}  
\newcommand{\red}{\mathrm{red}}
\newcommand{\D}{\mathcal D}
\newcommand{\E}{\mathcal E}
\newcommand{\F}{\mathcal F}
\renewcommand{\H}{\mathcal H}
\title[Linear series on metrized complexes of algebraic curves]{Linear series on metrized complexes of algebraic curves}
\author{Omid Amini}
\address{CNRS - DMA, \'Ecole Normale Sup\'erieure, Paris}
\email{oamini@math.ens.fr}
\author{Matthew Baker}
\address{School of Mathematics, Georgia Institute of Technology}
\email{mbaker@math.gatech.edu}
\begin{document}

\begin{abstract}
A {\em metrized complex of algebraic curves} over an algebraically closed field $\kappa$ is, roughly speaking, a finite metric graph $\Gamma$ together with a collection of marked complete nonsingular algebraic curves $C_v$ over $\kappa$, one for each vertex $v$ of $\Gamma$; the marked points on $C_v$ are in bijection with the edges of $\Gamma$ incident to $v$.
We define linear equivalence of divisors and establish a Riemann-Roch theorem for metrized complexes of curves which combines the classical Riemann-Roch theorem over $\kappa$ with its graph-theoretic and tropical analogues
from \cite{AC,BN,GK,MZ}, providing a common generalization of all of these results.
For a complete nonsingular curve $X$ defined over a non-Archimedean field $\K$, together with a strongly semistable model $\fX$ for $X$ over the valuation ring $R$ of $\K$, we define a corresponding metrized complex $\C\fX$ of curves over the residue
field $\kappa$ of $\K$ and a canonical specialization map $\tau^{\C\fX}_*$
from divisors on $X$ to divisors on $\C\fX$ which preserves degrees and linear equivalence.
We then establish generalizations of the specialization lemma from \cite{bakersp} and its weighted graph analogue 
from \cite{AC}, showing that the rank of a divisor cannot go down under specialization from $X$ to $\C\fX$.
As an application, we establish a concrete link between specialization of divisors from curves to metrized complexes
and the Eisenbud-Harris theory \cite{EH86} of limit linear series.  Using this link, we formulate a generalization of the notion of limit linear series to curves which are not necessarily of compact type and prove, among other things, that any degeneration of a $\g^r_d$ in a regular family of semistable curves is a limit $\g^r_d$ on the special fiber.
\end{abstract}

\date{\today}

\maketitle

\setcounter{tocdepth}{1}
\tableofcontents

\thanks{{\it Acknowledgments}: 
 The authors would like to thank  Vladimir Berkovich, Erwan Brugall\'e, 
Lucia Caporaso, Ethan Cotterill, Eric Katz, Johannes Nicaise, Joe Rabinoff, Frank-Olaf Schreyer, David Zureick-Brown, and the referees for helpful discussions and remarks. The second author was supported in part by NSF grant DMS-0901487. }

\section{Introduction}
We begin by quoting the introduction from the groundbreaking paper \cite{EH86} of Eisenbud and Harris:

\medskip

\begin{itemize}
\item[]
``One of the most potent methods in the theory of (complex projective algebraic)
curves and their linear systems since the work of Castelnuovo has been
degeneration to singular curves...
Most problems of interest about curves are, or can be, formulated in terms of
(families of) linear series. Thus, in order to use degenerations to reducible curves for
studying smooth curves, it is necessary to understand what happens to linear series
in the course of such a degeneration, and in particular, to understand what structure
on a reducible curve plays the part of a linear series.''
\end{itemize}

This has been very successful in practice and has led to important advances in the study of 
algebraic curves (see \cite{HM} for an overview); we mention for example Brill-Noether theory~\cite{EH86, Bi}, 
the geometry of the moduli space of curves~\cite{HM82, EH87}, and Weierstrass points and their 
monodromy~\cite{EH87bis, EM, EH4}.
In view of the Deligne-Mumford compactification of $\mathcal M_g$ and
the stable reduction theorem~\cite{DM}, it is 
particularly natural to consider degeneration of smooth curves to stable (or more generally semistable) curves.
Eisenbud and Harris were able to settle a number of longstanding open problems with their theory of limit linear series on such curves,
and the theory has undergone numerous developments and improvements in the last 25 years.
However, the theory of limit linear series only applies, for the most part, to a rather restricted class of
reducible curves, namely those of {\em compact type} (i.e., nodal curves whose dual graph is a tree).

\medskip

In \cite{bakersp}, the second author introduced a new framework for degenerating linear series on curves, degenerating linear series on a regular semistable family of curves to a linear series on the {\em dual graph} of the special fiber.
This theory, which is closely related to tropical geometry and also to the theory of Berkovich analytic spaces, is more or less orthogonal to the Eisenbud-Harris theory, in that it works best for special fibers which are 
{\em maximally degenerate}, meaning that the dual graph has first Betti number equal to the genus of 
the generic fiber.  Specializing linear series to the dual graph provides no information whatsoever when the special fiber is of compact type.  Intriguingly, both the Eisenbud-Harris theory and the second author's theory from \cite{bakersp} lead to
simple proofs of the celebrated Brill-Noether theorem of Griffiths-Harris (see \cite{EH86,CDPR}).

\medskip

The aim of the present paper is to introduce a theoretical framework suitable for generalizing both the Eisenbud-Harris theory of limit linear series and the second author's theory of specialization from curves to graphs.   
The main new object of study is what we call a {\em metrized complex
of algebraic curves} over an algebraically closed field $\kappa$; this is, roughly speaking, a finite metric graph $\Gamma$ together with a collection of marked complete nonsingular algebraic curves $C_v$ over $\kappa$, one for each vertex $v$ (with a marked point on $C_v$ for each edge of $\Gamma$ incident to $v$).
We define divisors, linear equivalence of divisors, and rank of divisors on metrized complexes of curves and establish a
Riemann-Roch theorem for metrized complexes of curves which combines the classical Riemann-Roch theorem over 
$\kappa$ with its graph-theoretic and tropical analogues proved in \cite{AC,BN, GK, MZ}.
For a curve $X$ defined over a non-Archimedean field $\K$, together with a strongly semistable model $\fX$ for $X$
over the valuation ring $R$ of $\K$, we define a corresponding metrized complex $\C\fX$ of curves over the residue
field $\kappa$ of $\K$ and a specialization map $\tau^{\C\fX}_*$
from divisors on $X$ to divisors on $\C\fX$ preserving degrees and linear equivalence.
We then establish generalizations of the specialization lemma from \cite{bakersp} and its weighted graph analogue 
from \cite{AC}, showing that the rank of a divisor cannot go down under specialization from $X$ to $\C\fX$.
As an application, we establish a concrete link between specialization of divisors from curves to metrized complexes
and the Eisenbud-Harris theory of limit linear series.  Using this link, we formulate a generalization of the notion of
limit linear series to curves which are not necessarily of compact type and prove, among other things, that any degeneration of a $\g^r_d$ in a regular family of semistable curves is a limit $\g^r_d$ on the special fiber.

\medskip

As mentioned above, the specialization theory from \cite{bakersp} works best in the case where the 
family is maximally degenerate; in other cases, the dual graph of the family forgets too much information.
This loss of information was partially remedied in \cite{AC} by looking at vertex-weighted graphs, where the
weight attached to a vertex is the genus of the corresponding irreducible component of the special fiber.
The theory of metrized complexes of curves offers a richer solution in which one keeps track of the (normalization 
of the) irreducible components $C_v$ of the special fiber themselves, and not just their genera.  From the point of view of non-Archimedean geometry, this corresponds to retracting divisors on $X$ to the skeleton
$\Gamma_{\fX}$, a metric graph canonically embedded in the Berkovich analytic space $X^{\an}$, but also keeping track, for points retracting to a vertex of $\Gamma_{\fX}$, of the tangent direction in which the vertex is
approached.  This is quite natural from the valuation-theoretic point of view: the metric graph $\Gamma_{\fX}$ associated to a strongly semistable model $\fX$ is a canonical subset of the Berkovich analytic space $X^{\an}$, and the curve 
$C_v$ over $\kappa$ is naturally identified (for each $v \in V$) with the fiber over $v$ of the canonical retraction map $X^{\ad} \to X^{\an}$ (which sends a continuous valuation of arbitrary rank in the Huber adic space $X^{\ad}$ to its canonical rank-1 generalization), c.f. \cite[Remark 2.6]{Te}.
Though we could possibly (with quite a bit of additional effort) have formulated many of our theorems and proofs without 
mentioning Berkovich spaces, it would have resulted in a significant loss of elegance and clarity, and in any case
Berkovich's theory seems ideally suited for the point of view taken here.  We do not, however, require any non-trivial
facts from Huber's theory of adic spaces, so we will not mention them again even though they are certainly lurking in the
background.  

\medskip

There are a number of connections between the ideas in the present paper and tropical geometry; these will be explored
in more detail in future work.  For example, the theory of morphisms between metrized complexes of curves
studied in \cite{ABBR} sheds interesting new light on the question of which morphisms
between tropical curves are liftable.  The theory developed in \cite{BPR} shows that there is a close connection between 
metrized complexes of curves and ``exploded tropicalizations'' in the sense of \cite[Definition 2.9]{Pay}; we plan to say more about this in the future as well.  As noted by Payne in \cite{Pay}, exploded tropicalizations can be thought of as an 
algebraic analogue of the ``exploded torus fibrations'' studied by Parker from a symplectic viewpoint \cite{Par}.
It could be interesting to explore connections between {\it loc.~cit.} and the present work.

\medskip

The ideas in the present paper also have Diophantine applications to the study of rational points on curves over number fields (specifically, to the method of Coleman-Chabauty).
Indeed, Eric Katz and David Zureick-Brown have recently proved a result
similar to Theorem~\ref{thm:MCClifford} below (Clifford's theorem for metrized complexes), and they use this result
to answer a question of M.~Stoll.  A special case of the main result in \cite{KZB} is the following.  Let $X$ be a smooth projective geometrically irreducible curve of genus $g \geq 2$ over ${\mathbf Q}$ and assume that the Mordell-Weil rank $r$ of the Jacobian of $X$ is less than $g$.  
Fix a prime number $p > 2r+2$ and let $\fX$ be a proper (not necessarily semistable) regular model for $X$ over 
${\mathbf Z}_p$.  Then (letting $\bar{\fX}^{{\rm sm}}$ denote the smooth locus of $\bar{\fX}$)
\[
\# X({\mathbf Q}) \leq \bar{\fX}^{{\rm sm}}({\mathbf F}_p) + 2r.
\]

We discuss the theorem of Katz and Zureick-Brown in more detail in Section~\ref{ChabautySection} below,
explaining how limit linear series on metrized complexes of curves can be used to illuminate their proof and put it into a broader context.

\subsection{Notation}
\label{section:Notation}

We set the following notation, which will be used
throughout this paper.

\begin{notation}
\item[$G$]
a finite edge-weighted graph with vertex set $V$ and edge set $E$
\item[$\Gamma$]
a compact metric graph (geometric realization of an edge-weighted graph $G$)
\item[$\K$]
a complete and algebraically closed non-trivially valued non-Archimedean field
\item[$R$]
the valuation ring of $\K$
\item[$\k$]
the residue field of $\K$
\item[$X$]
a smooth, proper, connected algebraic curve defined over $\K$ 
\item[$X^{\mathrm{an}}$]
the Berkovich analytification of $X/\K$
\item[$\mathfrak X$]
a proper and flat semistable $R$-model for $X$, usually assumed to be strongly semistable
\item[$C_v$]
a smooth, proper, connected algebraic curve over $\k$ (one for each vertex $v\in V$)
\item[$\C$]
a metrized complex of algebraic curves over $\k$
\item[$|\C|$]
the geometric realization of $\C$
\item[$\C \mathfrak X$]
the metrized complex associated to a strongly semistable $R$-model $\mathfrak X$ of $X$
\item[$X_0$]
a nodal curve defined over $\k$
\item[$X_v$] 
an irreducible component of $X_0$
\item[$\C X_0$]
the regularization of the nodal curve $X_0$ 
\item[$\Div(\C)$]
the group of divisors on the metrized complex $\C$
\item[$\mathcal D$]
a divisor on $\C$
\item[$\mathcal E$]
an effective divisor on $\C$
\item[$\f$]
a rational function on $\C$
\item[$D_v$]
a divisor on $C_v$
\item[$E_v$]
an effective  divisor on $C_v$
\item[$f_\Gamma$]
a rational function on $\Gamma$, usually the $\Gamma$-part of a rational function $\f$ on $\C$
\item[$f_v$]
a rational function on $C_v$, usually the $C_v$-part of a rational function $\f$ on $\C$
\item[$\mathrm{slp}_e(f_\Gamma)$]
the outgoing slope of $f_\Gamma$ along an edge (or tangent direction) $e$
\item[$r_\C$]
the rank function on $\Div(\C)$
\item[$r_{\C,\F}$]
the restricted rank function with respect to a collection $\F = \{ F_v \}_{v\in V}$ of subspaces of $\k(C_v)$
\item[$\tau$]
the retraction map from $X^{\rm an}$ to the skeleton $\Gamma_{\mathfrak X}$ of $\mathfrak X$
\item[$\tau^{\C\fX}_*$]
the specialization map from $\Div(X)$ to $\Div(\C\fX)$
\item[$\mathcal A_v$] 
the marked points of $C_v$ in the metrized complex $\C$
\item[$x^e_v$]
the point of $\mathcal A_v$ corresponding to the edge (or tangent direction) $e$ 
\item[$A_v$]
the sum in $\Div(C_v)$ of the points of $\mathcal A_v$ 
\end{notation}

\subsection{Overview}
\label{section:overview}

We now discuss the contents of this paper in more detail.   

\medskip

An {\em edge-weighted graph} $G$ is a connected multigraph, possibly with loop edges, having vertex set $V$ and
edge set $E$, and endowed with a weight (or length) function $\ell : E \to \RR_{> 0}$.
A {\em metric graph} $\Gamma$ is the geometric realization of an edge-weighted graph $G$ in which each edge $e$ of
$G$ is identified with a line segment of length $\ell(e)$.  We call $G$ a {\em model} for $\Gamma$.
Subdividing an edge of $G$ in a length-preserving fashion changes the model but not the underlying metric graph.

\medskip

Let $\kappa$ be an algebraically closed field.\footnote{To simplify the presentation, we restrict to algebraically 
closed fields. 
In \S{2.3} of the January 2013 arXiv version of this paper, 
we indicate how the theory can be developed over an arbitrary field $\kappa$.}
A {\em metrized complex $\C$ of $\kappa$-curves} consists of the
following data:

\begin{itemize}
\item A connected finite graph $G$ with vertex set $V$ and edge set $E$.  
\item A metric graph $\Gamma$ having $G$ as a model (or equivalently, a length function $\ell : E \to \RR_{>0}$).  
\item For each vertex $v$ of $G$, a complete, nonsingular, irreducible curve $C_v$ over $\kappa$.
\item For each vertex $v$ of $G$, a bijection $e \mapsto x^e_v$ between the edges of $G$ incident to $v$ (with loop edges counted twice) and a subset 
$\mathcal A_v = \{ x^e_v \}_{e \ni v}$ of $C_v(\kappa)$.
\end{itemize}

\medskip

 \begin{figure}[!tb]
\includegraphics[width=6cm]{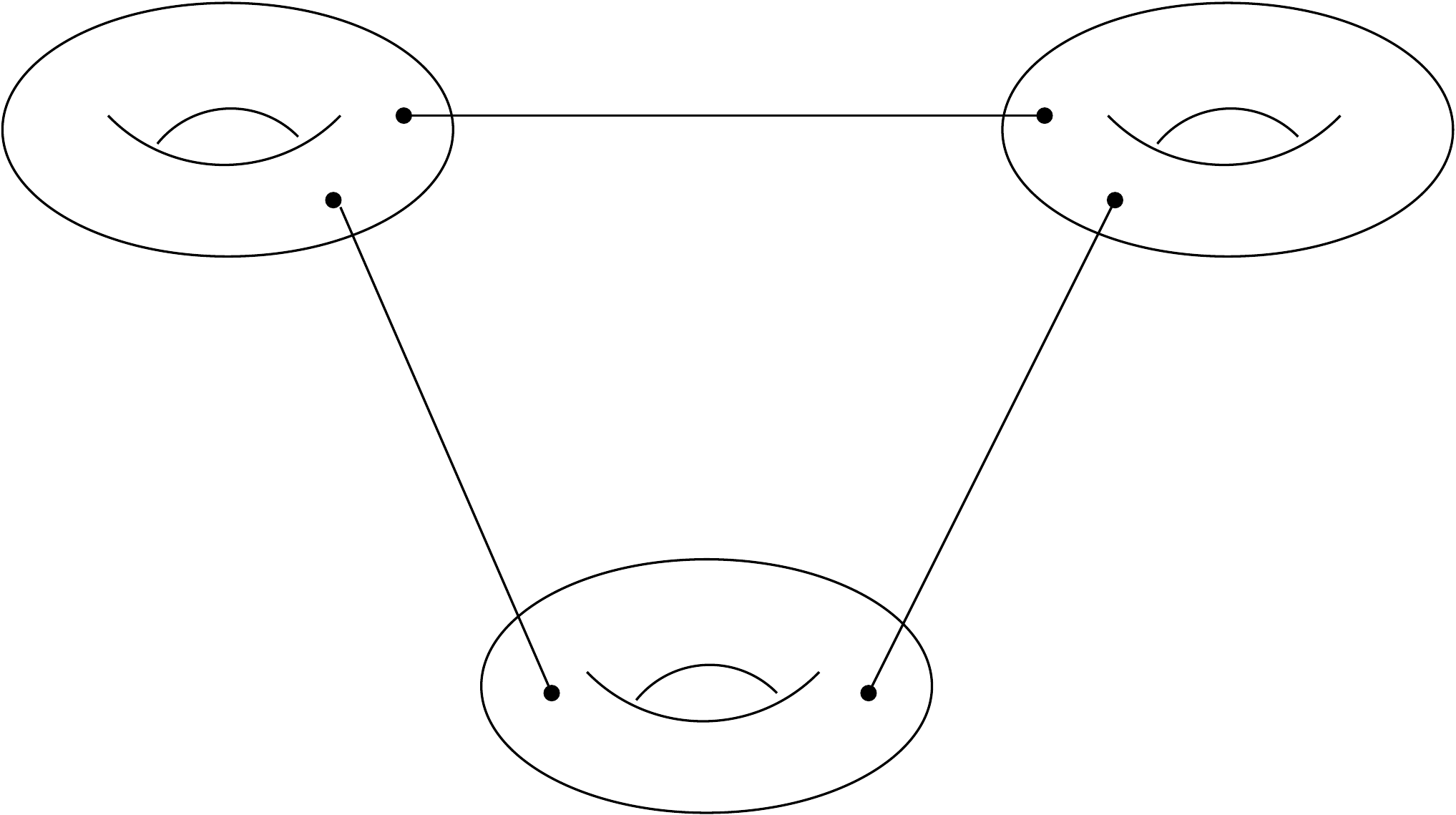}
 \caption{The geometric realization of a metrized complex of genus four.}
 \label{fig:mcsmall}
 \end{figure}

The {\em geometric realization} $|\C|$ of $\C$ is defined to be the union of the edges of $G$
and the collection of curves $C_v$, with each endpoint $v$ of an edge $e$ identified with the corresponding marked point $x^e_v$.
See Figure~\ref{fig:mcsmall}.
When we think of $|\C|$ as a set, we identify it with the disjoint union of $\Gamma \backslash V$ and $\bigcup_{v \in V} C_v(\kappa)$.  
Thus, when we write $x \in |\C|$, we mean that $x$ is either a non-vertex point of $\Gamma$ (a {\em graphical point} of $\C$) 
or a point of $C_v(\kappa)$ for some $v \in V$ (a {\em geometric point} of $\C$).

\medskip

The {\em genus} of a metrized complex of curves $\C$, denoted $g(\C)$, is by definition 
$g(\C)=g(\Gamma)+\sum_{v\in V}g_v$, where $g_v$ is the genus of $C_v$ and $g(\Gamma)$ is the first Betti number of $\Gamma$.

\medskip

Given a metrized complex $\C$ of $\kappa$-curves, there is an associated semistable curve
$X_0$ over $\kappa$ obtained by
gluing the curves $C_v$ along the points $x^e_v$ (one intersection for each edge $e$ of $G$) and
forgetting the metric structure on $\Gamma$.  Conversely, given a semistable curve $X_0$ over $\kappa$ together with a positive real number for each node (which we call a ``length function''), one obtains an associated metrized complex of $\kappa$-curves by letting $G$ be the dual graph of $X_0$, $\Gamma$ the metric graph associated to $G$ and the given length function, $C_v$ the normalization of the irreducible component $X_v$ of $X_0$ corresponding to $v$,
and $\mathcal A_v$ the preimage in $C_v$ of the set of nodes of $X_0$ belonging to $X_v$.

\medskip

Let $\K$ be a complete and algebraically closed non-Archimedean field with residue field $\kappa$ and
let $X/\K$ be a smooth, proper, connected algebraic curve.
There is a metrized complex $\C = \C\fX$ canonically associated to any strongly semistable model $\fX$ of $X$ over the valuation ring $R$ of $\K$: the special fiber $\bar{\fX}$ of $\fX$ is a semistable curve over 
$\kappa$, and one defines the length of an edge $e$ of the dual graph of $\bar{\fX}$ to be the {\em modulus} of the open annulus ${\rm red}^{-1}(z)$, where $z$ is the singular point of $\bar{\fX}$ corresponding to $e$ and ${\rm red} : X(\K) \to \bar{\fX}(\kappa)$ is the canonical reduction map.
Equivalently, there is a local analytic equation for $z$ of the form $xy=\varpi$ with $\varpi$ in the maximal ideal of $R$, and the length of $e$ is ${\rm val}(\varpi)$.
The corresponding metric graph $\Gamma = \Gamma_{\fX}$ is called the {\em skeleton} of $\fX$,
and there is a canonical retraction map $\tau = \tau_{\fX}: X^{\an} \onto \Gamma_{\fX}$, where $X^{\an}$ denotes the Berkovich analytification of $X$.
By restricting to $X(\K)$ and then extending by linearity, we obtain a specialization map
$\tau_* : \Div(X) \to \Div(\Gamma_{\fX})$.

\medskip

If $R$ is a discrete valuation ring and the fibered surface $\fX/R$ is regular, then one can use intersection-theoretic
methods to define and study specialization of divisors from curves to graphs. 
For example, in \cite{bakersp}, using intersection theory, a specialization homomorphism $\rho : \Div(X) \to \Div(G)$ was defined, where $G$ is the dual graph of $\bar{\fX}$, with the property that $\rho(D)$ and $\tau_*(D)$ are linearly equivalent as divisors on $\Gamma_{\fX}$ for all $D \in \Div(X)$.  In the present paper, we focus exclusively on $\tau_*$ and its
generalization to metrized complexes of curves, though in principle all of our results can be translated (when $R$
is a discrete valuation ring and $\fX$ is regular) into results about this alternative specialization map.

\medskip

A {\em divisor} on a metrized complex of curves $\C$ is an element $\mathcal D$ of the free abelian group on $|\C|$.
Thus a divisor on $\C$ can be written uniquely as $\mathcal D = \sum_{x \in |\C|} a_x (x)$ where $a_x \in \ZZ$, all but finitely many of the $a_x$ are zero,
and the sum is over all points of $\Gamma \backslash V$ as well as $C_v(\kappa)$ for $v \in V$.
The {\em degree} of $\mathcal D$ is defined to be $\sum a_x$.

\medskip

To a divisor on $\C$, we can naturally associate a divisor $D_\Gamma$ on $\Gamma$, called the {\em $\Gamma$-part} of $\D$, as well as, for each $v \in V$, a 
divisor $D_v$ on $C_v$ (called the {\em $C_v$-part} of $\D$).
The divisor $D_v$ is simply the restriction of $\D$ to $C_v$, i.e. $D_v = \sum_{x \in C_v(\kappa)} \mathcal D(x)(x)$, and $D_\Gamma$ is defined as
\[
D_\Gamma = \sum_{x \in \Gamma \backslash V} \D(x) (x) + \sum_{v \in V} \deg(D_v) (v),
\]
where $\mathcal D(x)$ denotes the coefficient of $x$ in $\mathcal D$.

In particular, the degree of $\mathcal D$ equals the degree of $D_\Gamma$.
One could equivalently define a divisor on $\C$ to be an element of the form
$\mathcal D = D_\Gamma \oplus \sum_v D_v$ of $\Div(\Gamma)\oplus (\oplus_v \Div(C_v))$  
such that $\deg(D_v) = D_\Gamma(v)$ for all $v$ in $V$.

\medskip

The specialization map $\tau_*$ can be enhanced in a canonical way to a map from divisors on $X$ to divisors on $\C\fX$. 
The fiber $\tau^{-1}(v)$ of the retraction map $\tau : X(\K) \to \Gamma$ over a vertex $v \in V$ can be
canonically identified with ${\rm red}^{-1}(C_v^{\rm sm}(\kappa))$, the fiber of the reduction map $\red : X(\K) \to
\bar{\fX}(\kappa)$ over the smooth locus $C_v(\kappa) \setminus \mathcal A_v$ of $C_v$.
We define $\tau^{\C\fX} : X(\K) \to \C\fX$ by setting
\[
\tau^{\C\fX} (P) = \left\{
\begin{array}{ll}
\tau(P) & \tau(P) \not\in V \\
{\rm red}(P) & \tau(P) \in V \\
\end{array}
\right.
\]
and we extend this by linearity to a map $\tau_*^{\C\fX} : \Div(X) \to \Div(\C\fX)$.

\medskip

Intuitively speaking, for points $P$ of $X(\K)$ which reduce to smooth points of $\bar{\fX}$, 
the map $\tau^{\C\fX}$ 
keeps track of the reduced point ${\rm red}({P}) \in \bar{\fX}^{\rm sm}(\kappa)$,
 while if $P$ reduces to a singular point of $\bar{\fX}$ then $\tau^{\C\fX}$ instead keeps track of the retraction of $P$ to the skeleton of the open annulus $\red^{-1}(P)$, which is canonically identified with the relative interior of the corresponding edge of $\Gamma_{\fX}$.

\medskip

A {\em nonzero rational function} $\f$ on a metrized complex of curves $\C$ is the data of a rational function $f_\Gamma$ on $\Gamma$ and nonzero rational functions $f_v$ on $C_v$ for each $v\in V$.  (We do {\em not} impose any compatibility
conditions on the rational functions $f_\Gamma$ and $f_v$.)
We call $f_\Gamma$ the {\em $\Gamma$-part} of $\f$ and $f_v$ the {\em $C_v$-part} of $\f$.

\medskip

The {\em divisor} of a nonzero rational function $\f$ on $\C$ is defined to be
\[
\div(\f) := \sum_{x \in |\C|} \ord_x(\f) (x),
\]
where $\ord_x(\f)$ is defined as follows:
\begin{itemize}
\item If $x \in \Gamma \backslash V$, then $\ord_x(\f) = \ord_x(f_\Gamma)$, where 
$\ord_x(f_\Gamma)$ is the sum of the slopes of $f_\Gamma$ in all tangent directions emanating from $x$.
\item If $x \in C_v(\kappa) \backslash \mathcal A_v$, then $\ord_x(\f) = \ord_x(f_v)$.
\item If $x = x^e_v \in \mathcal A_v$, then $\ord_x(\f) = \ord_x(f_v) + \mathrm{slp}_e(f_\Gamma)$, where
$\mathrm{slp}_e(f_\Gamma)$ is the outgoing slope of $f_\Gamma$ at $v$ in the direction of $e$.
\end{itemize}

If $D_\Gamma$ (resp. $D_v$) denotes the $\Gamma$-part (resp. the $C_v$-part) of $\div(\f)$, then
\[ D_\Gamma = \div(f_{\Gamma}) = \sum_{u\in \Gamma} \mathrm{ord}_u(f_\Gamma)\,(u)\qquad \textrm{and} 
\qquad D_v = \div(f_v) + \div_v(f_\Gamma), \qquad \textrm{where}
\]
\begin{equation}
\label{eq:divf2}
\div_v(f_\Gamma) := \sum_{e \ni v} \mathrm{slp}_e(f_\Gamma)(x^e_v).
\end{equation}

Divisors of the form $\div(\f)$ are called {\em principal}, and the principal divisors form a subgroup of 
$\Div^0(\C)$, the group of divisors of degree zero on $\C$.
Two divisors in $\Div(\C)$ are called {\em linearly equivalent} if they differ by a principal divisor.

\medskip

Linear equivalence of divisors on $\C$ can be understood quite intuitively in terms of ``chip-firing moves''. To state this formally, for each vertex $v$ of $G$, let $A_v$ be the sum of the $\deg_G(v)$ points in $\mathcal A_v$ and let
$\ell_v$ be the minimum length of an edge $e$ incident to $v$.  Also, for an edge $e$ having $v$ as an endpoint and a positive real number $\varepsilon < \ell(e)$, let $p^e_{v, \varepsilon}$ be the unique point of $e$ at distance $\varepsilon$ from $v$.  
Then it is not hard to show that linear equivalence of divisors on $\C$ is the equivalence relation generated by the following three kinds of ``moves'':

\begin{enumerate}
\item (Firing on $C_v$) Choose a vertex $v$ and replace the divisor $D_v$ with a linearly equivalent divisor $D'_v$ on $C_v$.
\item (Firing a vertex) Choose a vertex $v$ and a positive real number $\varepsilon < \ell_v$ and replace $D_v$ with 
$D_v - A_v$ and $D_\Gamma$ with $D_\Gamma - \deg_G(v)(v) + \sum_{e \ni v} (p^e_{\varepsilon})$.
\item (Firing a non-vertex) Choose a non-vertex $p \in \Gamma$ and a positive real number $\varepsilon$ less than the distance from $p$ to the
nearest vertex and replace $D_\Gamma$ by the divisor $D_\Gamma - 2({p}) + (p^+_\varepsilon) + (p^-_\varepsilon)$, where $p^{\pm}_\varepsilon$ are the two points at distance $\varepsilon$ from $p$.
\end{enumerate}

\medskip

The motivation for our definitions of $\tau^{\C\fX}_*$ and $\div(\f)$ come from the following fundamental relation,
a consequence of the non-Archimedean Poincar{\'e}-Lelong formula due to Thuillier (see \S{5} of \cite{BPR}).  
Let $f$ be a nonzero rational function
on $X$ and let $\f$ be the corresponding nonzero rational function on $\C\fX$, where $f_{\Gamma}$ is the restriction
to $\Gamma$ of the piecewise linear function $\log|f|$ on $X^{\an}$ and $f_v \in \kappa(C_v)$ for $v \in V$ is the 
normalized reduction of $f$ to $C_v$ (cf.~\S\ref{section:ReductionOfRationalFunctions}).
Then
\[
\tau_*^{\C\fX}(\div(f)) =\div(\f).
\]

In particular, we have $\tau_*^{\C\fX}(\Prin(X)) \subseteq \Prin(\C\fX)$.

\medskip

A divisor $\mathcal E = \sum_{x \in |\C|} {a_x (x)}$ on $\C$ is called {\em effective} if $a_x \geq 0$ for all $x$
(or, equivalently, if the $\Gamma$-part $E_\Gamma$ and the $C_v$-parts $E_v$ of $\mathcal E$ are effective for every $v\in V$). The {\em rank} $r_\C$ of a divisor $\mathcal D \in \Div(\C)$ is defined to 
be the largest integer $k$ such that $\mathcal D - \mathcal E$ is linearly equivalent to an effective divisor
for all effective divisors $\mathcal E$ of degree $k$ on $\C$ (so in particular
$r_{\C}(\mathcal D) \geq 0$ if and only if $\mathcal D$ is linearly equivalent to
an effective divisor, and otherwise $r_{\C}(\mathcal D)=-1$).

\medskip

If $r_X(D)$ denotes the usual rank function $r_X(D) = \dim |D| = h^0(D) - 1$ on $\Div(X)$, 
there is an important semicontinuity result relating $r_X$ to $r_{\C\fX}$:

\begin{thm}[Specialization Theorem]
\label{thm:introSpecialization}
For all $D \in \Div(X)$, we have 
\[
r_X(D) \leq r_{\C\fX}(\tau_*^{\C\fX}(D)).
\]
\end{thm}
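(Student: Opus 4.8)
The plan is to mimic the proof of the specialization lemma from \cite{bakersp}, reducing everything to the key relation $\tau_*^{\C\fX}(\div(f)) = \div(\f)$ recalled above, together with a surjectivity statement for the reduction map on rational functions. First I would fix $D \in \Div(X)$ and set $r = r_X(D)$; if $r = -1$ there is nothing to prove (since $r_{\C\fX}(\tau_*^{\C\fX}(D)) \geq -1$ always), so assume $r \geq 0$. It suffices to show: for every effective divisor $\mathcal E$ on $\C\fX$ of degree $r$, the divisor $\tau_*^{\C\fX}(D) - \mathcal E$ is linearly equivalent to an effective divisor on $\C\fX$. The natural strategy is to first ``lift'' $\mathcal E$ to an effective divisor $E$ on $X$ of degree $r$ with $\tau_*^{\C\fX}(E) = \mathcal E$, use that $r_X(D) = r$ to find a nonzero rational function $f$ on $X$ with $D - E + \div(f) \geq 0$, and then push down: applying $\tau_*^{\C\fX}$ and using that it preserves effectivity of degree, linear equivalence, and sends $\div(f)$ to $\div(\f)$, we obtain $\tau_*^{\C\fX}(D) - \mathcal E + \div(\f) = \tau_*^{\C\fX}(D - E + \div(f)) \geq 0$, which is exactly what we need.

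The main obstacle is the lifting step: given an effective divisor $\mathcal E$ on $\C\fX$, produce an effective divisor $E$ on $X$ of the same degree with $\tau_*^{\C\fX}(E) = \mathcal E$. This is where the non-Archimedean geometry enters. Write $\mathcal E = E_\Gamma \oplus \bigoplus_v E_v$. For a point $x$ appearing in $\mathcal E$ I would lift as follows: if $x$ is a graphical point lying in the relative interior of an edge $e$ of $\Gamma_\fX$, then $x$ corresponds to a point on the skeleton of the open annulus $\red^{-1}(z_e)$, and since $\K$ is algebraically closed and nontrivially valued we can choose a $\K$-point $P$ of that annulus with $\tau(P) = x$ (there is a point of $X(\K)$ retracting to any prescribed point of $\Gamma_\fX$, by surjectivity of $\tau$ on $\K$-points); if $x$ is a geometric point in $C_v^{\rm sm}(\kappa)$, then using that $\tau^{-1}(v)$ is identified with $\red^{-1}(C_v^{\rm sm}(\kappa))$ and that $\red$ is surjective onto $\bar{\fX}^{\rm sm}(\kappa)$, we can choose $P \in X(\K)$ with $\tau^{\C\fX}(P) = x$. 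Summing these lifts with multiplicity gives $E$ with $\deg E = \deg \mathcal E = r$ and $\tau_*^{\C\fX}(E) = \mathcal E$. (One subtlety: marked points $x^e_v \in \mathcal A_v$ do not occur in effective divisors $\mathcal E$ on $\C$ unless one is careful, but since $E_v$ is an arbitrary effective divisor on $C_v$ it may have support at marked points; such a point is a node of $\bar\fX$ and we may instead lift it to a $\K$-point in either adjacent annulus or component --- any choice works since we only need \emph{some} effective lift, and in fact it is cleanest to lift each marked point to a point on the adjacent open annulus so that its specialization lands back at $x^e_v$ via the edge; I would record this as a lemma.)

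Once the lifting lemma is in place, the rest is formal. I would isolate the three facts about $\tau_*^{\C\fX}$ used: (i) it is a degree-preserving homomorphism $\Div(X) \to \Div(\C\fX)$; (ii) $\tau_*^{\C\fX}(\Prin(X)) \subseteq \Prin(\C\fX)$, i.e.\ it sends $\div(f)$ to $\div(\f)$, which is the Poincar\'e--Lelong relation recalled above; and (iii) $\tau_*^{\C\fX}$ sends effective divisors to effective divisors (clear from the pointwise definition, since each $\tau^{\C\fX}(P)$ is a single point of $|\C\fX|$ with coefficient $+1$). Combining (i)--(iii) with the lifting lemma and the definition of $r_X$ yields the inequality $r_X(D) \leq r_{\C\fX}(\tau_*^{\C\fX}(D))$ for every $\mathcal E$ of degree $r = r_X(D)$, completing the proof. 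I expect the only genuine work to be a clean statement and proof of the lifting lemma; everything downstream is a diagram chase through the definitions of rank and linear equivalence on $\C$.
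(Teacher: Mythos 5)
Your overall strategy (lift $\mathcal E$, apply $r_X(D)\geq r$ upstairs, push down via $\tau_*^{\C\fX}(\div(f))=\div(\f)$) is the same as the paper's, and the ``downstream'' part is indeed a formal diagram chase. But the lifting lemma, which you correctly identify as the only genuine work, is false as you state it, for two separate reasons. First, the marked points $x^e_v\in\mathcal A_v$ are \emph{never} in the image of $\tau^{\C\fX}$: a point $P\in X(\K)$ reducing to the node corresponding to $e$ has $\tau(P)$ in the relative interior of $e$, so $\tau^{\C\fX}(P)=\tau(P)$ is a graphical point, while a point with $\tau(P)=v$ reduces into $C_v(\kappa)\setminus\mathcal A_v$ by construction. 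Your fallback that ``any choice works since we only need some effective lift'' does not save this: the definition of rank quantifies over \emph{all} effective $\mathcal E$ of degree $r$, so you must realize the given $\mathcal E$ exactly, not some nearby effective divisor. Second, even for graphical points in the interior of an edge, $\tau$ restricted to $X(\K)$ hits only the $\mathcal G$-rational points of $\Gamma$, where $\mathcal G={\rm val}(\K^\times)$; for $\K=\CC_p$, say, $\mathcal G=\QQ$ and points at irrational distance from the vertices have no $\K$-rational lift.

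The paper resolves both issues at once by first proving (Theorem~\ref{thm:f-width} in the appendix) that if $\R_v\subset C_v(\kappa)\setminus\mathcal A_v$ has size $g_v+1$ for each $v$, then $\R=\bigcup_v\R_v$ is a \emph{rank-determining set}: to compute $r_{\C\fX}(\D)$ it suffices to test effective divisors $\mathcal E$ supported on $\R$. Such $\mathcal E$ lift trivially, since $\red:X(\K)\to\bar\fX(\kappa)$ is surjective onto smooth closed points. (This is also how the paper avoids the approximation arguments of \cite{bakersp}.) So the missing ingredient in your proposal is precisely this reduction to a rank-determining set of smooth, non-marked geometric points; without it, or some substitute such as a continuity/approximation argument handling non-rational graphical points \emph{and} a separate argument for marked points, the proof does not go through.
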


Since $r_{\C\fX}(\tau_*^{\C\fX}(D)) \leq r_\Gamma(\tau_*(D))$, Theorem~\ref{thm:introSpecialization}
is a strengthening of the analogous specialization result from \cite{bakersp}.
In conjunction with a simple combinatorial argument, Theorem~\ref{thm:introSpecialization} also refines the specialization lemma for vertex-weighted graphs from \cite{AC}. 

\medskip

We also state and prove a version of Theorem~\ref{thm:introSpecialization} in which one has {\em equality} rather than just an inequality.
One can naturally associate to a rank $r$ divisor $D$ on $X$ not only a divisor $\tau^{\C \fX}_*(D)$ on $\C\fX$, but also a collection 
$\H = \{ H_v \}_{v \in V}$ of $(r+1)$-dimensional subspaces of $\k(C_v)$, where $H_v$ is the reduction to $C_v$ of all functions in $H^0(X, \mathcal L(D))$ (c.f. Section~\ref{section:ReductionOfRationalFunctions} for a discussion of reduction of rational functions).
If $\F = \{ F_v \}_{v \in V}$, where $F_v$ is any $\kappa$-subspace of the function field $\kappa(C_v)$,
then for $\D \in \Div(\C)$ we define the {\em $\F$-restricted rank} of $\D$, denoted $r_{\C,\F}(\D)$, to be the largest integer $k$
such that for any effective divisor $\E$ of degree $k$ on $\C$, there is a rational function $\f$ on $\C$ whose $C_v$-parts
$f_v$ belong to $F_v$ for all $v \in V$, and such that $\D - \E + \div(\f) \geq 0$.
(In terms of the ``chip-firing moves'' described above, one has to be able to make $\D-\E$ effective using only type (1) moves corresponding to functions belonging to
one of the spaces $F_v$, together with moves of type (2) or (3).)
The following variant of Theorem~\ref{thm:introSpecialization} will be proved as Theorem~\ref{thm:limitseriesgeneral} below:

\begin{thm}[Specialization Theorem for Restricted Ranks]
With notation as above, the $\H$-restricted rank of the specialization of $D$ is equal to the rank of $D$, i.e.,
$r_{\C,\H}(\tau^{\C \fX}_*(D)) = r$.
\end{thm}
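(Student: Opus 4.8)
The plan is to establish the two inequalities $r_{\C,\H}(\tau^{\C\fX}_*(D)) \le r$ and $r_{\C,\H}(\tau^{\C\fX}_*(D)) \ge r$ separately. The first inequality is immediate from the definitions: since $\H$-restricted rank only allows a more restrictive class of rational functions than the full rank $r_\C$, one has $r_{\C,\H}(\tau^{\C\fX}_*(D)) \le r_{\C}(\tau^{\C\fX}_*(D))$, and it suffices to produce a single effective divisor $\E$ of degree $r+1$ on $\C$ for which no $\f$ with $C_v$-parts in $H_v$ makes $\tau^{\C\fX}_*(D) - \E + \div(\f)$ effective. I would choose $\E$ of the form $\tau^{\C\fX}_*(E)$ for a suitable effective divisor $E$ of degree $r+1$ on $X$ supported on $\K$-points reducing to smooth points of distinct components, so that $|D - E| = \emptyset$ on $X$; then I trace through the reduction theory of Section~\ref{section:ReductionOfRationalFunctions} to see that a putative $\f$ realizing $\tau^{\C\fX}_*(D) - \E + \div(\f) \ge 0$ with $f_v \in H_v$ would lift, component by component, to a rational function $f$ on $X$ in $H^0(X,\mathcal L(D))$ with $\div(f) + D - E \ge 0$, contradicting $|D-E| = \emptyset$. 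Making this lifting precise is where the argument has real content; the key input is that $H_v$ is \emph{defined} as the space of reductions of functions in $H^0(X,\mathcal L(D))$, so elements of $H_v$ come with (non-unique) lifts, and one must arrange compatibility of these lifts across the vertices using the annuli attached to the edges — essentially a repackaging of the proof of Theorem~\ref{thm:introSpecialization}.

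For the reverse inequality, the idea is to run the proof of the Specialization Theorem itself but keep careful track of which rational functions on $\C$ arise. Given an effective divisor $\E$ of degree at most $r$ on $\C$, I first reduce (by the ``chip-firing moves'' description of linear equivalence and a density/continuity argument, exactly as in the proof of Theorem~\ref{thm:introSpecialization}) to the case where $\E = \tau^{\C\fX}_*(E)$ for an effective divisor $E$ of degree at most $r$ on $X$ supported on generic $\K$-points. Since $r_X(D) = r$, there is a nonzero $f \in H^0(X,\mathcal L(D))$ with $\div(f) + D - E \ge 0$. Let $\f$ be the associated rational function on $\C\fX$, with $f_\Gamma = \log|f|\big|_\Gamma$ and $f_v$ the normalized reduction of $f$ to $C_v$. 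By the Poincar\'e--Lelong relation quoted in the excerpt, $\tau^{\C\fX}_*(\div(f)) = \div(\f)$, hence $\tau^{\C\fX}_*(D) - \E + \div(\f) = \tau^{\C\fX}_*(\div(f) + D - E) \ge 0$ because $\tau^{\C\fX}_*$ is a degree- and effectivity-preserving map on effective divisors. The crucial point is that by construction $f \in H^0(X,\mathcal L(D))$, so its normalized reduction $f_v$ lies in $H_v$ for every $v \in V$; thus $\f$ is an admissible function for the $\H$-restricted rank, and $\E$ was arbitrary of degree $\le r$, giving $r_{\C,\H}(\tau^{\C\fX}_*(D)) \ge r$.

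I expect the main obstacle to be the first inequality, specifically the lifting step: showing that an $\H$-admissible $\f$ on $\C$ with $\tau^{\C\fX}_*(D) - \E + \div(\f)\ge 0$ forces $|D-E|\ne\emptyset$ on $X$. One must handle the fact that the lifts of the various $f_v$ are only determined up to scalars (and up to functions vanishing in the reduction), reconcile these choices along the edges of $\Gamma$ using that $f_\Gamma$ records the valuations of these scalars on the annuli, and verify the slope/order bookkeeping in the definition of $\div(\f)$ matches the orders of vanishing of the lift. A secondary technical point, shared with the proof of Theorem~\ref{thm:introSpecialization}, is the reduction of a general effective $\E$ on $\C$ to one of the form $\tau^{\C\fX}_*(E)$; this requires perturbing the graphical part of $\E$ to rational (hence algebraic) points and invoking semicontinuity of the relevant ranks, together with the surjectivity of $\tau^{\C\fX}$ onto the smooth locus of each $C_v$ and onto the relative interiors of the edges.
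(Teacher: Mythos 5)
Your lower bound $r_{\C,\H}(\tau^{\C\fX}_*(D)) \geq r$ is essentially the paper's argument (Theorem~\ref{thm:limitseriesgeneral}): reduce to effective divisors $\E$ of the form $\tau^{\C\fX}_*(E)$, pick $f \in H^0(X,\mathcal L(D))$ with $D-E+\div(f)\geq 0$, and observe that its reduction $\f$ is $\H$-admissible and satisfies $\tau^{\C\fX}_*(D)-\E+\div(\f)\geq 0$ by the Poincar\'e--Lelong relation. (The paper carries out the reduction step via the rank-determining sets of Theorem~\ref{thm:f-width} rather than a perturbation/semicontinuity argument, but that is a minor difference.)

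The upper bound is where your proposal has a genuine gap. You propose to choose $E$ of degree $r+1$ on $X$ with $|D-E|=\emptyset$ and to show that an $\H$-admissible $\f$ with $\tau^{\C\fX}_*(D)-\tau^{\C\fX}_*(E)+\div(\f)\geq 0$ would \emph{lift} to a single $f\in H^0(X,\mathcal L(D))$ with $\div(f)+D-E\geq 0$. No such lifting is available: in the definition of $r_{\C,\H}$ the $C_v$-parts $f_v\in H_v$ are chosen independently at each vertex, and the $\Gamma$-part $f_\Gamma$ is an arbitrary piecewise-linear function; even though each individual $f_v$ is the reduction of some element of $H^0(X,\mathcal L(D))$, there is in general no single global section reducing to all of them simultaneously and inducing $f_\Gamma$. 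Indeed, the existence of limit linear series that are not smoothable (Example~\ref{ex:NotSmoothable}) shows that admissible data on $\C\fX$ need not come from any linear series on $X$, so the ``compatibility along the annuli'' you hope to arrange does not exist in general. The paper's actual argument for the upper bound is entirely different and purely local: by Lemma~\ref{lem:dimensionreduction} the reduction $H_v$ of the $(r+1)$-dimensional space $H^0(X,\mathcal L(D))$ is again $(r+1)$-dimensional (a nontrivial fact, proved by constructing a basis of norm-one elements with linearly independent reductions), and then Proposition~\ref{prop:basic} gives $r_{\C,\H}(\D)\leq \min_v(\dim_\k H_v - 1)=r$ by a dimension count on a single curve $C_v$: for generic $E_v$ of degree $r+1$ on $C_v$ the slopes of $f_\Gamma$ at $v$ can be fixed, and then $H_v$ would have to cut out a linear system of projective dimension $r+1$ on $C_v$, which is impossible. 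You should replace your lifting step with this dimension-counting argument (and note that the dimension statement $\dim_\k H_v=r+1$ itself requires proof).
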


\medskip

The theory of divisors and linear equivalence on metrized complexes of curves generalizes both the classical theory
for algebraic curves and the corresponding theory for metric graphs and tropical curves found in \cite{GK,MZ}.
The former corresponds to the case where $G$ consists of a single vertex $v$ and no edges and $C=C_v$ is an arbitrary smooth curve. The latter corresponds to the case where the curves $C_v$ have genus zero for all $v \in V$.
Since any two points on a curve of genus zero are linearly equivalent, it is easy to see that the divisor theories and rank functions on $\C$ and $\Gamma$ are essentially equivalent. More precisely, (i) two divisors $\mathcal D$ and  $\mathcal D'$ on $\C$ are linearly equivalent if and only if their $\Gamma$-parts 
$D_\Gamma$ and $D'_\Gamma$ are linearly equivalent on $\Gamma$, and (ii) for every $\mathcal D\in \Div(\C)$ 
we have $r_\C(\mathcal D) = r_\Gamma(D_\Gamma)$. 
In the presence of higher genus curves among the $C_v$, the divisor theories on $\Gamma$ and $\C$ can be very different. In addition, different choices of $\mathcal A_v$ can drastically change both the linear equivalence relation and
the rank function.

\medskip

As with the corresponding specialization theory from \cite{bakersp}, the main utility of 
Theorem~\ref{thm:introSpecialization} is that the rank function $r_{\C}$ on a metrized complex of curves
is surprisingly well-behaved; for example, it satisfies an analogue of the Riemann-Roch formula.
In order to state this result, we need to define the canonical class.  
A {\em canonical divisor} on $\C$, denoted $\mathcal K$, is defined to be any divisor linearly equivalent to 
$\sum_{v \in V} (K_v + A_v)$, where $K_v$ is a canonical divisor on $C_v$ and $A_v$ is the sum of the $\deg_G(v)$ points in $\mathcal A_v$. 
The $\Gamma$-part of $\mathcal K$ is $K^\# = \sum_v \left( \deg_G(v) + 2g_v - 2 \right) (v)$, and the $C_v$-part of $\mathcal K$ is 
$K_v + A_v$.

\medskip

The following result generalizes both the classical Riemann-Roch theorem for algebraic curves and the Riemann-Roch theorem for metric graphs:

\begin{thm}[Riemann-Roch for metrized complexes of algebraic curves]\label{thm:introRR-metrizedcomplexes}
Let $\C$ be a metrized complex of algebraic curves over $\kappa$ and $\mathcal K$ a divisor in the canonical class of $\C$. For any divisor $\mathcal D \in \Div(\C)$, we have 
\[r_\C(\mathcal D) - r_\C(\mathcal K -\mathcal D) = \deg(\mathcal D) - g(\C)+1. \]
\end{thm}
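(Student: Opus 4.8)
The plan is to follow the classical Baker--Norine strategy, reducing Riemann--Roch for $\C$ to two more tractable statements: a Riemann inequality and Clifford-type behavior near the canonical class, packaged via a combinatorial notion of ``reduced divisor''. First I would develop a theory of \emph{$v$-reduced divisors} on the metrized complex $\C$, analogous to the $v_0$-reduced divisors of \cite{BN}. Fix a vertex $v_0 \in V$ and a geometric point $x_0 \in C_{v_0}(\kappa)$. Using the three chip-firing moves described above, one shows that every divisor $\D$ on $\C$ is linearly equivalent to a unique $x_0$-reduced divisor: informally, first make the $\Gamma$-part $v_0$-reduced in the metric-graph sense (moves of type (2) and (3)), then on each $C_v$ make the $C_v$-part as concentrated as possible toward the marked points through which "burning" enters $v$, and finally on $C_{v_0}$ make $D_{v_0}$ $x_0$-reduced on the curve $C_{v_0}$ in the sense that $h^0(C_{v_0}, D_{v_0} - (x_0)) < h^0(C_{v_0}, D_{v_0})$ unless $D_{v_0}$ has negative rank. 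The key lemma is that $r_\C(\D) \geq 0$ if and only if the $x_0$-reduced divisor equivalent to $\D$ is effective, and more precisely $r_\C(\D)$ can be read off from a "greedy" burning algorithm. Uniqueness and existence of $x_0$-reduced divisors is where I expect most of the combinatorial work to go; the curve-theoretic part at each vertex is controlled by classical Riemann--Roch on $C_v$.

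Next I would establish the key symmetry: a divisor $\D$ is \emph{maximal} (i.e. $r_\C(\D) = -1$ and $r_\C(\D + (x)) \geq 0$ for some, hence in a suitable sense every, point $x$) if and only if $\mathcal{K} - \D$ is maximal. The self-duality of the problem is forced by the shape of $K^\# = \sum_v(\deg_G(v) + 2g_v - 2)(v)$: on the $\Gamma$-side this is exactly the canonical divisor of the vertex-weighted graph $(G, g)$, and on each $C_v$-side, $K_v + A_v$ is precisely the divisor that makes the adjoint/residue pairing work. Concretely, I would define an "orientation" or acyclic flow datum on $\C$ refining the Baker--Norine $G$-parking functions: a maximal non-special divisor corresponds to a choice of spanning data on $G$ together with, on each $C_v$, a general effective divisor of degree $g_v - 1$ supported away from $\mathcal{A}_v$. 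Counting degrees: $\deg = \sum_v(g_v - 1) + \sum_v(\deg_G(v) - 1)/\text{(edges)} + (g(\Gamma) - 1)$ reorganizes to $g(\C) - 1$, consistent with the Riemann--Roch target at $\D$ maximal, where it reads $0 - 0 = (g(\C)-1) - g(\C) + 1$. The involution $\D \mapsto \mathcal{K} - \D$ swaps the burning directions on $\Gamma$ and swaps $D_v$ with $K_v + A_v - D_v$ on each $C_v$, and one checks via classical Serre duality on $C_v$ plus the Baker--Norine bijection on $G$ that maximality is preserved.

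With the duality in hand, the endgame is formal, exactly as in \cite{BN}: for arbitrary $\D$, one shows $r_\C(\D) - r_\C(\mathcal{K} - \D) \leq \deg(\D) - g(\C) + 1$ by choosing a maximal divisor $\mathcal{N}$ with $\mathcal{N} - \D$ and $\mathcal{K} - \D - (\mathcal{K} - \mathcal{N}) = \mathcal{N} - \D$ controlling both ranks simultaneously, and the reverse inequality follows by applying the first to $\mathcal{K} - \D$ and using $r_\C(\mathcal{K} - (\mathcal{K} - \D)) = r_\C(\D)$. I would also need the elementary bound that $r_\C(\D) \geq \deg(\D) - g(\C)$ always (the Riemann inequality for $\C$), which itself drops out of the reduced-divisor algorithm: burning from $x_0$ on a $v_0$-reduced divisor of large degree must leave a nonnegative coefficient at $x_0$, and subtracting an effective divisor of degree $\deg(\D) - g(\C)$ and re-reducing preserves this. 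The main obstacle, I expect, is proving existence and uniqueness of $x_0$-reduced divisors on $\C$: one must interleave the metric-graph reduction on $\Gamma$ with classical reduction on each $C_v$, and show the process terminates and is confluent; the subtlety is that firing on $C_v$ (move (1)) can change $\deg(D_v) = D_\Gamma(v)$ only trivially (it doesn't), but moves (2) interact with the marked points $\mathcal{A}_v$ in a way that couples all the vertices. Once that is nailed down, the rest is bookkeeping combining Riemann--Roch on $\Gamma$ (from \cite{BN,GK,MZ}) with Riemann--Roch on each $C_v$.
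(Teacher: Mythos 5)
Your plan follows essentially the same route as the paper: reduced divisors on $\C$, a classification of the extremal (minimal non-special) divisors as ``moderators'' built from an acyclic orientation of $\Gamma$ together with a degree-$(g_v-1)$ non-special divisor on each $C_v$, the observation that $\mathcal K$ minus a moderator is again a moderator, and the Baker--Norine $\deg^+$ endgame. One correction: strict uniqueness of the reduced divisor cannot hold, since replacing $D_v$ by a linearly equivalent divisor on $C_v$ (a type (1) move) preserves any reasonable reducedness condition; the paper proves only quasi-uniqueness (the $\Gamma$-part and the classes $[D_v]$ are unique), which is all the argument needs. Also, reduction is most naturally taken with respect to a point $v_0\in\Gamma$, with the non-negativity test at $v_0$ phrased as ``$D_{v_0}$ has non-negative rank on $C_{v_0}$'' rather than via a chosen geometric point $x_0$.
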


The proof of Theorem~\ref{thm:introRR-metrizedcomplexes} makes use of a suitable notion of {\em reduced divisors}
for metrized complexes of curves.

\medskip

Taken together, Theorems \ref{thm:introSpecialization} and \ref{thm:introRR-metrizedcomplexes} have some 
interesting consequences.  For example, the specialization theorem implies that the specialization to $\C\fX$ of any
canonical divisor on $X$ has degree $2g-2$ and rank at least $g-1$ (where $g = g(X) = g(\C\fX)$), while the Riemann-Roch theorem easily implies that any such divisor on $\C\fX$ belongs to the canonical divisor class.  Thus we have:

\begin{cor}
\label{cor:introcanonspec}
For any canonical divisor $K_X$ on $X$, its specialization $\tau^{\C\fX}_*(K_X)$ belongs to the canonical divisor
class on $\C\fX$.
\end{cor}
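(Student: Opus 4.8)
The plan is to deduce this directly from three inputs: the classical Riemann--Roch theorem on $X$, the Specialization Theorem (Theorem~\ref{thm:introSpecialization}), and the Riemann--Roch theorem for metrized complexes (Theorem~\ref{thm:introRR-metrizedcomplexes}). Write $g=g(X)$, and recall that $g(\C\fX)=g$ as well: the first Betti number of the dual graph of $\bar\fX$ plus the sum of the geometric genera of its components equals the arithmetic genus of the semistable special fiber, which in turn equals the genus of the generic fiber $X$. By classical Riemann--Roch on $X$, a canonical divisor $K_X$ satisfies $\deg(K_X)=2g-2$ and $r_X(K_X)=g-1$. Since $\tau^{\C\fX}_*$ preserves linear equivalence, the divisor class of $\tau^{\C\fX}_*(K_X)$ on $\C\fX$ is well-defined, so it is legitimate to speak of ``the'' specialization of $K_X$.

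First I would use the fact (from the overview) that $\tau^{\C\fX}_*$ preserves degrees, so that $\D:=\tau^{\C\fX}_*(K_X)$ has $\deg(\D)=2g-2$. Then I would invoke the Specialization Theorem to get
\[
r_{\C\fX}(\D)\;\geq\; r_X(K_X)\;=\;g-1 .
\]

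Next I would apply Theorem~\ref{thm:introRR-metrizedcomplexes} to $\D$. Let $\mathcal K$ be a divisor in the canonical class of $\C\fX$, so $\deg(\mathcal K)=2g(\C\fX)-2=2g-2$. Riemann--Roch on $\C\fX$ gives
\[
r_{\C\fX}(\D)-r_{\C\fX}(\mathcal K-\D)\;=\;\deg(\D)-g(\C\fX)+1\;=\;(2g-2)-g+1\;=\;g-1 .
\]
Combining this equality with the inequality $r_{\C\fX}(\D)\geq g-1$ forces $r_{\C\fX}(\mathcal K-\D)\geq 0$, i.e.\ $\mathcal K-\D$ is linearly equivalent to an effective divisor $\mathcal E$ on $\C\fX$. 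But $\deg(\mathcal E)=\deg(\mathcal K)-\deg(\D)=(2g-2)-(2g-2)=0$, and the only effective divisor of degree $0$ is the zero divisor; hence $\mathcal K-\D$ is principal, which is exactly the assertion that $\D=\tau^{\C\fX}_*(K_X)$ lies in the canonical class of $\C\fX$.

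There is essentially no hard step in this argument; it is a formal consequence of the two Riemann--Roch theorems and the semicontinuity of rank under specialization. The only points that deserve a word of care are the identity $g(\C\fX)=g(X)$ (and the parallel fact that $\deg\mathcal K=2g-2$, which one checks from $\mathcal K\sim\sum_v(K_v+A_v)$ using $\sum_v\deg_G(v)=2|E|$ and $g(\Gamma)=|E|-|V|+1$), and the observation that applying Theorem~\ref{thm:introSpecialization} to the class $K_X$ is harmless precisely because $\tau^{\C\fX}_*$ respects linear equivalence.
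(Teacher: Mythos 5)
Your argument is correct and is essentially identical to the paper's proof: the authors likewise combine the specialization inequality $r_{\C\fX}(\tau^{\C\fX}_*(K_X))\geq r_X(K_X)=g-1$ with Riemann--Roch for metrized complexes to conclude that the degree-zero divisor $\mathcal K-\tau^{\C\fX}_*(K_X)$ has non-negative rank and is therefore principal. The extra checks you flag ($g(\C\fX)=g(X)$, $\deg\mathcal K=2g-2$) are accurate and consistent with the paper's setup.
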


In particular, Corollary~\ref{cor:introcanonspec} implies that $\tau_*(K_X)$ is a canonical divisor on $\Gamma$.  
This fact was proved in \cite{bakersp}, for discretely valued $R$, by a completely different argument based on the adjunction formula for arithmetic surfaces.  
It does not seem easy to prove the general (not necessarily discretely valued) case using arithmetic intersection theory, since there does not seem to be a satisfactory theory of relative dualizing sheaves and adjunction in the non-Noetherian setting.

\medskip

Our theory of linear series on metrized complexes of curves has close connections with the Eisenbud-Harris theory
of limit linear series for strongly semistable curves of compact type, and suggests a way to generalize the Eisenbud-Harris theory to more general semistable curves.
A proper nodal curve $X_0$ over $\kappa$ is of {\em compact type} if its dual graph $G$ is a tree.
For such curves, Eisenbud and Harris define a notion of (crude) {\em limit $\g^r_d$}, which we discuss in detail in 
Section~\ref{sec:limitcompact}  below.
A {\it crude limit} $\g^r_d$ $L$ on $X_0$ is the data of a (not necessarily complete) degree $d$ and rank $r$ linear series $L_v$ on $X_v$ for each vertex $v \in V$ such that if two components $X_u$ and $X_v$ of $X_0$ meet at a node $p$, then for any $ 0\leq i \leq r,$
\begin{equation*}
 a^{L_v}_{i}( p ) + a^{L_u}_{r-i}( p ) \, \geq \, d\,,
 \end{equation*}
where $a^{L}_i (p)$ denotes the $i^{\rm th}$ term in the vanishing sequence of a linear series $L$ at $p$.
A crude limit series is {\it refined} if all the inequalities  in the above definition are equalities.  
For simplicity, all limit linear series in the remainder of this introduction will be crude.

\medskip

We can canonically associate to a proper strongly 
 curve $X_0$ a metrized complex $\C X_0$ of $\kappa$-curves, called the {\em regularization} of $X_0$, by assigning a length of $1$ to each edge of $G$, and we write $X_v$ for the irreducible component of $X_0$ corresponding to a vertex $v \in V$. (This is the metrized complex associated to any regular smoothing $\fX$ of $X_0$ over any discrete valuation ring $R$ with residue field $\kappa$.) 

\begin{thm}
\label{thm:introLLS}
Let $\C X_0$ be the metrized complex of curves associated to a proper strongly semistable curve $X_0 / \kappa$ of compact
type.  Then there is a bijective correspondence between the following:

\begin{itemize}
\item Crude limit $\g^r_d$'s on $X_0$.
\item Equivalence classes of pairs $(\H,\D)$, where $\H = \{ H_v \}$, $H_v$ is an $(r+1)$-dimensional subspace of $\kappa(X_v)$ for each $v \in V$, and 
$\D$ is a divisor of degree $d$ supported on the vertices of $\C X_0$ with 
$r_{\C X_0,\H}(\D) = r$.  Here we say that $(\H,\D) \sim (\H', \D')$ if there is a rational function $\f$
on $\C X_0$ such that $D' = D + \div(\f)$ and $H_v = H'_v \cdot f_v$ for all $v \in V$, where $f_v$ denotes the $C_v$-part of $\f$.
\end{itemize}
\end{thm}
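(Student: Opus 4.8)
The plan is to construct the bijection in both directions and check that the constructions are mutually inverse up to the stated equivalence. First I would set up the dictionary between the combinatorics of the tree $G$ and divisors supported on the vertices of $\C X_0$. Since $X_0$ is of compact type, the dual graph $G$ is a tree, so $\Pic(\Gamma)$ is trivial and every degree-$d$ divisor $D_\Gamma$ on $\Gamma$ is equivalent to $d(v_0)$ for any fixed vertex $v_0$; more usefully, for any two vertices $u,v$ there is an (essentially unique up to additive constants) rational function $f_\Gamma$ on $\Gamma$ with prescribed integer slopes along the edges, and the $\Gamma$-part of linear equivalence is entirely governed by these slopes. This means a divisor $\D$ on $\C X_0$ supported on vertices is, up to linear equivalence via type (2) and (3) moves, determined by how much ``mass'' sits on each component, and the outgoing slopes $\mathrm{slp}_e(f_\Gamma)$ at a node $p$ corresponding to an edge $e=uv$ satisfy $\mathrm{slp}_e(f_\Gamma)$ at $u$ equals $-\mathrm{slp}_e(f_\Gamma)$ at $v$. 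Combined with equation~\eqref{eq:divf2}, firing along $\Gamma$ transfers chips between $C_u$ and $C_v$ at exactly the marked points $x^e_u, x^e_v$, and this is the mechanism that will translate into the Eisenbud-Harris node inequalities.

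Next, given a crude limit $\g^r_d$ $L=\{L_v\}$ on $X_0$, I would produce the pair $(\H,\D)$ as follows. Write each $L_v$ as $L_v = (D_v, H_v)$ where $D_v$ is an effective divisor of degree $d$ on $X_v=C_v$ and $H_v \subseteq \kappa(C_v)$ is the $(r+1)$-dimensional space with $L_v = |D_v|_{H_v} = \{ D_v + \div(f_v) : f_v \in H_v \}$; we may always arrange (using linear equivalence on $C_v$, i.e.\ type (1) moves) that $D_v$ is supported away from $\mathcal A_v$, or alternatively track the vanishing orders at the marked points directly. The vanishing sequence $a^{L_v}_\bullet(x^e_v)$ records the orders of vanishing at the node; the key point is to choose, for each edge $e = uv$, an integer $d_e$ interpolating the two vanishing filtrations so that the ``twisted'' divisors $D_v^{(e)} := D_v + (\text{slope contributions})$ glue, across the tree, into a single divisor class. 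Concretely, root the tree, and for each vertex $v$ let $f^v_\Gamma$ be the piecewise-linear function on $\Gamma$ that is the distance-type function encoding the chosen $d_e$'s; then set $\D$ to be the common value $D_v + \div_v(f^v_\Gamma)$ pushed to a fixed reference, and let $H_v$ be $H_v$ itself. One checks that the compact-type (tree) hypothesis is exactly what guarantees such a consistent choice of $d_e$ exists and is unique up to the equivalence $\sim$; and that the defining inequality $a^{L_v}_i(p) + a^{L_u}_{r-i}(p) \geq d$ is equivalent, after the twist, to the statement that for every effective $\E$ of degree $r$ one can use a function $\f$ with $f_v \in H_v$ plus type (2)/(3) moves to make $\D - \E$ effective, i.e.\ to $r_{\C X_0,\H}(\D) = r$. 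This last equivalence is the technical heart: it says that the Eisenbud-Harris compatibility at nodes is precisely the $\H$-restricted rank condition, and I would prove it by induction on the number of vertices of the tree, peeling off a leaf and using the vanishing-sequence inequality at the unique node of that leaf to control how chips can be moved off the leaf component.

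For the reverse direction, given $(\H,\D)$ with $r_{\C X_0,\H}(\D) = r$, I would define $L_v$ by restricting: let $D_v$ be (a representative of) the $C_v$-part obtained by an appropriate twist of $\D$ along the tree so that $D_v$ is effective of degree $d$ — the existence of such a twist again follows from $\Pic(\Gamma)$ being trivial plus the $\H$-restricted rank being nonnegative — and let $L_v = |D_v|_{H_v}$. One must verify this is a $\g^r_d$ (degree $d$ clear; rank exactly $r$ follows because $\dim H_v = r+1$ and $r_{\C X_0, \H}(\D)=r$ forces $L_v$ to separate points to the correct order) and that the node inequalities hold — again by the induction-on-leaves argument run in reverse, reading off the vanishing orders at $x^e_v$ from the slopes that $\f$ is forced to have. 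Finally I would check that the two constructions are inverse up to $\sim$: the choice of twist at each stage is unique modulo adding $\div(f_\Gamma)$ for $f_\Gamma$ on $\Gamma$, which corresponds exactly to replacing $(\H,\D)$ by $(\H',\D')$ with $D' = D + \div(\f)$, $H_v = H'_v \cdot f_v$, and the reference-vertex choice washes out.

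The main obstacle I expect is the equivalence ``$L$ satisfies the Eisenbud-Harris node inequalities $\iff$ $r_{\C X_0,\H}(\D) = r$.'' The $\geq$ node inequality giving $r_{\C X_0,\H}(\D)\ge r$ is reasonably direct once the bookkeeping of twists is in place, but showing that $r_{\C X_0,\H}(\D)$ cannot exceed $r$ — equivalently, that $\dim H_v = r+1$ really does cap the restricted rank — requires an argument that an $\H$-restricted rational function is determined on each component by its $C_v$-part in $H_v$, so the ``global'' linear system cannot be larger than $r$; and going the other way, deducing the full set of node inequalities (for every $i$, not just $i=0$) from the single scalar condition $r_{\C X_0,\H}(\D)=r$ is where the leaf-induction has to be carried out carefully, testing against cleverly chosen effective divisors $\E$ of degree $r$ concentrated near a single node to extract each individual vanishing-order inequality.
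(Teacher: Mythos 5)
Your proposal is correct and follows essentially the same route as the paper: root the tree, define $\D$ by twisting each $\overline D_u$ by $-d$ at the node toward the root, prove the equivalence of the Eisenbud--Harris node inequalities with $r_{\C X_0,\H}(\D)=r$ by induction on the tree via bridge-edge decompositions (the paper peels at the root using the $\eta$-function/connected-sum rank formula of Proposition~\ref{prop:ConnectedSumRankForMetrizedComplexes2}, where you peel leaves, but this is the same induction), cap the restricted rank by $\min_v \dim H_v - 1$ exactly as in Proposition~\ref{prop:basic}, and extract each individual vanishing-order inequality by testing against effective divisors of degree $r$ split between the two components at a node. The obstacles you flag are precisely the ones the paper's proof of Theorem~\ref{thm:limitseries} addresses, and your proposed resolutions match.
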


Theorem~\ref{thm:introLLS}, combined with our Riemann-Roch theorem for metrized complexes of curves, provides an arguably more conceptual proof of the fact (originally established in \cite{EH86}) that limit linear series satisfy 
analogues of the classical theorems of Riemann and Clifford.
The point is that $r_{\C X_0, \H}(\D) \leq r_{\C X_0}(\D)$ for all $\D \in \Div(\C X_0)$ and therefore upper
bounds on $r_{\C X_0}(\D)$ which follow from Riemann-Roch imply corresponding upper bounds on the restricted rank
$r_{\C X_0, \H}(\D)$.

\medskip

Motivated by Theorem~\ref{thm:introLLS}, we propose the following definition.
Let $X_0$ be a proper strongly semistable (but not necessarily compact type) 
curve over $\kappa$ with regularization $\C X_0$.
A {\em limit $\g^r_d$} on $X_0$ is an equivalence class of pairs $(\H = \{ H_v \},\D)$ as above,
where $H_v$ is an $(r+1)$-dimensional subspace of $\kappa(X_v)$ for each $v \in V$,
and $\D$ is a degree $d$ divisor on $\C X_0$ with $r_{\C X_0,\H}(\D) = r$.

\medskip

As a partial justification for this definition, we prove that if $R$ is a discrete valuation ring with residue field $\kappa$ and 
$\fX/R$ is a regular arithmetic surface whose generic fiber $X$ is smooth and whose special fiber $X_0$ is strongly semistable, then for any divisor $D$ on $X$ with $r_X(D)=r$ and $\deg(D) = d$, our specialization machine 
gives rise in a natural way to a limit $\g^r_d$ on $X_0$ 
(see Theorem~\ref{thm:regularlimitseries} below for a precise statement, and 
Theorem~\ref{thm:limitseriesgeneral} for a more general statement).

\section{Metrized complexes of algebraic curves}
\label{sec:basics}

Let $\kappa$ be an algebraically closed field of arbitrary characteristic and let $\C$ be a metrized complex of
$\kappa$-curves, as defined in Section~\ref{section:overview}.  

\subsection{Divisors and their rank}

If ${\mathcal G}$ is a subgroup of $\RR$ containing the lengths of all edges of $G$,
we write $\mathrm{Div}(\C)_{\mathcal G}$ for the subgroup of $\mathrm{Div}(\C)$
consisting of all divisors $\mathcal D$ such that the support of the $\Gamma$-part of $\D$ 
consists entirely of 
${\mathcal G}$-{\it rational points} of $\Gamma$ (i.e., points of $\Gamma$ whose distances to the vertices of $G$ are in $\mathcal G$).

\medskip

Recall from Section~\ref{section:overview} that the {\em rank} $r_\C$ of a divisor 
$\mathcal D$ on $\C$ is the largest integer $k$ such that $\mathcal D - \mathcal E$ is linearly equivalent to an effective divisor
for all effective divisors $\mathcal E$ of degree $k$ on $\C$. If ${\mathcal G}$ is any subgroup of $\RR$ containing the lengths of all edges of $G$, one can define an analogous rank function $r_{\C,\mathcal G}$ for divisors in $\Div(\C)_\mathcal G$ by restricting the effective divisors $\mathcal E$  in the above definition to lie in $\Div(\C)_\mathcal G$. 
By Corollary~\ref{cor:f-widthG}, $r_{\C,\mathcal G} (\mathcal D)= r_{\C}(\mathcal D)$ for any divisor $\mathcal D \in \Div(\C)_\mathcal G$, so the restriction in the definition of $r_{\C,\mathcal G}$ does not affect the rank of divisors. 
 
 \subsection{Regularization of nodal curves}\label{sec:mcnodal}

Let $X_0$ be a (reduced) connected projective nodal curve over $\k$ with irreducible components $X_{v_1},\dots,X_{v_k}$. We can associate to $X_0$ a metrized complex $\C X_0$, called the {\it regularization} of $X_0$, as follows. The underlying metric graph $\Gamma_0$ has model the dual graph $G_0=(V_0,E_0)$ of $X_0$. Recall that the vertex set $V_0$ of $G_0$ consists of vertices $v_1,\dots, v_k$ (in bijection with the irreducible components of $X_0$) and the edges of $G_0$ are in bijection with singular points of $X_0$. (Note that $G_0$ might have loop edges.) The length of all the edges in $\Gamma_0$ are equal to one. The $\k$-curve $C_{v_i}$ in $\C X_0$  is the normalization of $X_i$ for $i=1,\dots,k$. The collection $\mathcal A_{v_i}$ is the set of all $\k$-points of $C_{v_i}$ which lie over a singular point of $X_0$ in $X_{v_i}$. By the definition of the dual graph, these points are in bijection with the edges adjacent to $v_i$ in $G_0$.  

\medskip

If $X_0$ is {\it strongly semistable}, so that $G_0$ does not have any loop edge and $C_{v_i} = X_{v_i}$, 
the rank function $r_{\C X_0,\mathbb Z}$ on $\Div(\C X_0)_\mathbb Z$ can be reformulated as follows. 
Let $\Pic(X_0)$ be the Picard group of $X_0$, and consider the restriction map $\pi: \Pic(X_0) \rightarrow \oplus_i \Pic(X_{v_i})$. For any line bundle $\mathcal L$ on $X_0$, and $v\in V_0$, denote by $\mathcal L_{v}$ the restriction of $\mathcal L$ to $X_v$.  

\medskip

Two line bundles $\mathcal L$ and $\mathcal L'$ in $\Pic(X_0)$ are said to be {\it combinatorially equivalent} if there exists a function $f: V_0 \rightarrow \mathbb Z$ such that 
$\mathcal L'_v = \mathcal L_v(\div_{v}(f))$ in $\Pic(X_v)$ for any vertex $v$ of $G_0.$ 
(In a regular smoothing of $X_0$, c.f. Section~\ref{sec:limitseries} for the definition, 
the role of $f$ is to specify a particular twist of 
$\mathcal L$ by a divisor supported on the irreducible components of $X_0$.)
In particular, two line bundles $\mathcal L$ and $\mathcal L'$ in $\Pic(X_0)$ with $\pi(\mathcal L) = \pi(\mathcal L')$ are combinatorially equivalent.
In the above definition, $\div_v$ is defined in analogy with (\ref{eq:divf2}) as follows:
$$\div_v(f) = \sum_{u: \{u, v\} \in E_0} (f(u) -f(v))(x^{\{u,v\}}_v)$$ where $x_v^{\{u,v\}}$ is the point of $C_v$ labelled with 
the edge $\{u,v\}$. 
Note that the function $f: V_0 \rightarrow \mathbb Z$ in the definition of combinatorially equivalent line bundles above is unique up to an additive constant. For such an $f$, we denote by $\mathcal L^f$ any line bundle $\mathcal L'$ in $\Pic(X_0)$ such that $\mathcal L'_v = \mathcal L_v(\div_{v}(f))$ for all $v\in V_0$. 

\medskip

For a line bundle $\mathcal L$ on $X_0$, define the {\it combinatorial rank} of $\mathcal L$, denoted $r_{c}(\mathcal L)$, to be the maximum  integer $r$ such that for any effective divisor $E = \sum_{v \in V_0} E(v) (v)$ on $G_0$ of degree $r$, there is a line bundle $\mathcal L'$ combinatorially equivalent to $\mathcal L$ with
$$\dim_\k H^0(X_v, \mathcal L'_v) \geq E(v)+1$$
for all $v \in V$.
In particular, if no non-negative integer $r$ with the above property exists, the combinatorial rank of $\mathcal L$ will be equal to $-1$.  
The combinatorial rank of $\mathcal L$ clearly depends only on the combinatorial equivalence class of $\mathcal L$. 

\medskip
Let $\mathcal D$ be a divisor in $\Div(\C X_0)_\mathbb Z$ with $X_v$-part $D_v$. We denote by $\mathcal L(\mathcal D)$ any line bundle on $X_0$ whose restriction to $X_v$ is $\mathcal L(D_v)$ for each vertex $v$ of $G_0$. 

\begin{prop} \label{prop:combinrank}
Let $X_0$ be a strongly semistable curve over $\k$ and let $\C X_0$ be the corresponding metrized complex.  Then:
\begin{enumerate}
\item Two divisors $\mathcal D$ and $\mathcal D'$  in $\Div(\C X_0)_\mathbb Z$ are linearly equivalent iff $\mathcal L(\mathcal D)$ and $\mathcal L(\mathcal D')$ are combinatorially equivalent. 
\item For any divisor $\mathcal D \in \Div(\C X_0)_\mathbb Z$, the combinatorial rank of $\mathcal L(\mathcal D)$ is equal to $r_{\C X_0}(\mathcal D).$
\end{enumerate}
\end{prop}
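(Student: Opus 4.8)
The plan is to prove (1) directly, by unwinding what it means for $\div(\f)$ to be $\mathbb Z$-rational, and then to deduce (2) from (1) together with a reduction to $\mathbb Z$-rational test data and a genericity argument. Throughout I will use the following: since every edge of $G_0$ has length $1$ and $X_0$ is strongly semistable (so $G_0$ has no loops and $C_v=X_v$), a divisor $\mathcal D\in\Div(\C X_0)_{\mathbb Z}$ is the same thing as a choice of divisor $D_v$ on $X_v$ for each $v\in V_0$ (its $\Gamma$-part being forced to equal $\sum_v\deg(D_v)(v)$), and $\mathcal L(\mathcal D)$ is any line bundle on $X_0$ restricting to $\mathcal L(D_v)$ on each $X_v$.

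For (1): the $\Gamma$-part of $\div(\f)$ is $\div(f_\Gamma)$, so $\div(\f)\in\Div(\C X_0)_{\mathbb Z}$ forces $\div(f_\Gamma)$ to be supported on $V_0$, i.e.\ $f_\Gamma$ is affine on the interior of each edge; since $f_\Gamma$ has integer slopes and every edge has length $1$, this gives $\mathrm{slp}_e(f_\Gamma)=f_\Gamma(u)-f_\Gamma(v)\in\mathbb Z$ for $e=\{u,v\}$. Subtracting an additive constant from $f_\Gamma$ (which changes neither $\div(\f)$ nor any $\div_v(f_\Gamma)$) we may assume $f_\Gamma$ is $\mathbb Z$-valued on $V_0$, and then $\div_v(f_\Gamma)$ is exactly the divisor $\div_v(f_\Gamma|_{V_0})$ of Section~\ref{sec:mcnodal}. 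Since the $X_v$-part of $\div(\f)$ is $\div(f_v)+\div_v(f_\Gamma)$, the relation $\mathcal D'=\mathcal D+\div(\f)$ (with $\mathcal D,\mathcal D'\in\Div(\C X_0)_{\mathbb Z}$) is equivalent to the existence of $f\colon V_0\to\mathbb Z$ and $f_v\in\k(X_v)^{*}$ with $\mathcal L(D'_v)\cong\mathcal L(D_v)(\div_v(f))$ in $\Pic(X_v)$ for every $v$ --- which is precisely combinatorial equivalence of $\mathcal L(\mathcal D)$ and $\mathcal L(\mathcal D')$. The converse direction just builds $\f$ by taking $f_\Gamma$ to be the affine interpolation of $f$ on $\Gamma_0$. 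This proves (1).

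For (2), I would first reduce to $\mathbb Z$-rational data: by Corollary~\ref{cor:f-widthG} one may compute $r_{\C X_0}(\mathcal D)$ using only effective test divisors $\mathcal E\in\Div(\C X_0)_{\mathbb Z}$, and (by the reduced-divisor theory for metrized complexes underlying that corollary) a $\mathbb Z$-rational divisor linearly equivalent to an effective divisor is linearly equivalent to a $\mathbb Z$-rational one. Using (1), the condition ``$\mathcal D-\mathcal E$ is linearly equivalent to an effective divisor'' becomes: there is $f\colon V_0\to\mathbb Z$ with $h^0\!\bigl(X_v,\mathcal L(D_v+\div_v(f))(-E_v)\bigr)\ge 1$ for all $v$, where $E_v$ is the $X_v$-part of $\mathcal E$; and an effective $\mathcal E\in\Div(\C X_0)_{\mathbb Z}$ of degree $k$ is just a tuple $(E_v)$ of effective divisors on the $X_v$ with $\sum_v\deg(E_v)=k$. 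On the combinatorial side, since $h^0(X_v,M)\ge n+1$ iff $h^0(X_v,M(-F))\ge 1$ for every effective divisor $F$ of degree $n$ on $X_v$, the definition of $r_c$ says that $r_c(\mathcal L(\mathcal D))\ge k$ iff for every tuple $(n_v)$ of nonnegative integers with $\sum_v n_v=k$ there is $f\colon V_0\to\mathbb Z$ with $h^0\!\bigl(X_v,\mathcal L(D_v+\div_v(f))\bigr)\ge n_v+1$ for all $v$. With both ranks in this form, the implication $r_c(\mathcal L(\mathcal D))\ge k\Rightarrow r_{\C X_0}(\mathcal D)\ge k$ is immediate: given $(E_v)$, apply the combinatorial condition with $n_v=\deg(E_v)$ and note that subtracting $E_v$ drops $h^0$ by at most $\deg(E_v)$.

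The hard part is the reverse implication $r_{\C X_0}(\mathcal D)\ge k\Rightarrow r_c(\mathcal L(\mathcal D))\ge k$: the function $f$ furnished by $r_{\C X_0}$ is allowed to depend on the chosen $E_v$, whereas $r_c$ demands a single $f$ working against all effective divisors of the prescribed degrees. I would resolve this by observing that only finitely many twists of $\mathcal L(\mathcal D)$ are relevant: if $h^0(X_v,\mathcal L(D_v+\div_v(f)))\ge 1$ for all $v$, then $\bigl(\deg\mathcal L(D_v+\div_v(f))\bigr)_v$ is a tuple of nonnegative integers summing to $\deg(\mathcal D)$, hence lies in a finite set; this degree-tuple equals $-Lf$ for the Laplacian $L$ of $G_0$, so since $G_0$ is connected ($\ker L$ the constants) it already determines $f$ up to an additive constant, hence determines $\bigl(\div_v(f)\bigr)_{v}$ and thus the line-bundle tuple $\bigl(\mathcal L(D_v+\div_v(f))\bigr)_v$. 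Listing these finitely many tuples as $(\mathcal L^{(j)}_v)_v$, $j=1,\dots,N$, and given a profile $(n_v)$ with $\sum_v n_v=k$, I would choose for each $v$ a sufficiently general effective divisor $E_v$ of degree $n_v$ on $X_v$ so that $h^0(X_v,\mathcal L^{(j)}_v(-E_v))=0$ whenever $h^0(X_v,\mathcal L^{(j)}_v)\le n_v$ (possible because a general effective divisor of degree $\ge h^0(L)$ kills all sections of $L$). Applying $r_{\C X_0}(\mathcal D)\ge k$ to the $\mathcal E$ with these $X_v$-parts produces an $f$ with $h^0(X_v,\mathcal L(D_v+\div_v(f))(-E_v))\ge 1$ for all $v$; this $f$ realizes one of the $N$ tuples, and genericity of the $E_v$ forces $h^0(X_v,\mathcal L^{(j)}_v)\ge n_v+1$ for every $v$, so the line bundle with $X_v$-restrictions $\mathcal L^{(j)}_v$ is combinatorially equivalent to $\mathcal L(\mathcal D)$ and witnesses $r_c(\mathcal L(\mathcal D))\ge k$. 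I expect this finiteness-plus-genericity step to be the only real obstacle; everything else is bookkeeping resting on (1) and the reduced-divisor machinery already in place.
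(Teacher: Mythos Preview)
Your proof is correct and follows essentially the same strategy as the paper: part (1) by unwinding the definitions, and part (2) by first reducing to $\mathbb Z$-rational test data via Corollary~\ref{cor:f-widthG}, handling the easy direction directly, and for the hard direction exploiting finiteness of the relevant twists to find one that works uniformly. The only difference in execution is in that last step: the paper covers $\prod_v X_v^{(n_v)}$ by the finitely many Zariski-closed loci (one per twist) and uses irreducibility to conclude one of them is everything, whereas you argue dually by choosing a generic tuple $(E_v)$ that kills sections of every under-dimensioned twist---these are two sides of the same coin. One harmless slip: the degree tuple is $D_\Gamma - Lf$, not $-Lf$; since $D_\Gamma$ is fixed your conclusion that it determines $f$ up to an additive constant is unaffected.
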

\begin{proof}
The first part follows by definition. To see the second part, let $r$ be the combinatorial rank of $\mathcal L(\mathcal D)$. It follows from Corollary~\ref{cor:f-widthZ}
that $r_{\C X_0}(\mathcal D) = r_{\C X_0,\mathbb Z}(\mathcal D)$. Thus, it will be enough to show that  $r_{\C X_0, \mathbb Z}(\mathcal D)=r$.
  
\medskip
  
We first prove that $r_{C X_0,\mathbb Z}(\mathcal D) \geq r$. 
Let $\mathcal E$ be an effective divisor in $\Div(\C X_0)_\mathbb Z$ with $\Gamma$-part $E_\Gamma$ and $X_v$-part $E_v$. 
By the definition of the combinatorial rank, there exists a function $f:V_0 \rightarrow \mathbb Z$ such that 
$$\dim_\k H^0(X_v, \mathcal L^f_v) \geq E_\Gamma(v)+1 \qquad \textrm{for any vertex $v$ in $G_0$}.$$
Since $\deg(E_v) = E_\Gamma(v)$, this implies the existence of a global section $f_v$ of $\mathcal L^f_v$ 
such that $f_v$ has order of vanishing at least $E_v(x)$ at any point $x$ of $X_v$. 

\medskip

Let $f_\Gamma$ be the rational function defined by $f$ on $\Gamma$. 
(Recall that $f_\Gamma|_{V_0} = f$ and $f$ is integer affine with slope $f(u)-f(v)$ on each edge $\{u,v\}$.) 
For the rational function $\f$ on $\C X_0$ defined by $f_\Gamma$ and $\{ f_v \}_{v\in V_0}$, one has $\div(\f) + 
\mathcal D - \mathcal E \geq 0$.  This shows that 
$r_{\C X_0,\mathbb Z}(\mathcal D) \geq r$.

\medskip

We now prove that $r \geq r_{\C X_0,\mathbb Z}(\mathcal D)$. 
Let $\mathcal L = \mathcal L(\mathcal D)$ be a line bundle on $X_0$ defined by $\mathcal D$. Let $E$ be an
effective divisor of degree $r_{\C X_0,\mathbb Z}(\mathcal D)$ on $G_0$, and let 
$\mathcal E$ be any effective divisor in  $\Div(\C X_0)_\mathbb Z$ with $\Gamma$-part equal to $E$. 
Then there exists a rational function $\f$ with $\div(\f) \in \Div(\C X_0)_\mathbb Z$ such that 
$\div(\f) +\mathcal D -\mathcal E \geq 0$. In particular, the $\Gamma$-part $f_\Gamma$ of 
$\f$ is linear on each edge of $G_0$ and $\div(f_\Gamma) + D_\Gamma \geq 0$, 
which shows that up to an additive constant, the set $F$ of all functions $f_\Gamma$ with these properties is finite.
Let $S_\Gamma$ denote the finite set $\{ \div(f_{\Gamma}) \; | \; f_{\Gamma} \in F \}$.
Let $d_v = E(v)$, and for $D_0 \in S_{\Gamma}$, let $S_{D_0}$ be the subset of
$\prod_{v \in V(G_0)} X_v^{(d_v)}$ 
defined by all collections of effective divisors $E_v$ of degree $d_v$
such that there exists a rational function $\f$ on $\C X_0$ with $\Gamma$-component $f_\Gamma$ satisfying 
$\mathcal D -\mathcal E +\div(\f)\geq 0$. 
(Here $X^{(n)}$ denotes the $n^{\rm th}$ symmetric product of a curve $X$.)
Now let $S$ be the union over all $D_0 \in S_{\Gamma}$ of $S_{D_0}$.
Then one sees easily that:
\begin{itemize} 
\item[(i)] Each $S_{D_0}$ is Zariski-closed in $\prod_v X_v^{(d_v)}$.  (Use the fact that $S_{D_0}$ is the projection of a Zariski closed subset of a product of proper curves.)
\item[(ii)] $\bigcup_{D_0 \in S_{\Gamma}} S_{D_0} = \prod_v X_v^{(d_v)}$.
\end{itemize}

Therefore there exists $D_0 \in S_{\Gamma}$ such that $S_{D_0} =\prod_v X_v^{(d_v)}$. 
In other words, we can suppose that the $\Gamma$-part $f_\Gamma$ 
of all the rational functions $\f$ giving $\mathcal D -\mathcal E +\div(f)\geq0 $, when the effective divisors $E_v$ vary, is constant (and we have $D_0=\div(f_\Gamma)$). 
If we fix a corresponding twist $\mathcal L^{f_\Gamma}$, then
\[
\dim_\k H^0(X_v, \mathcal L^{f_\Gamma}_v) \geq E(v)+1.
\]

Since $E$ was an arbitrary effective divisor of degree 
$r_{\C X_0,\mathbb Z}(\mathcal D)$ on $G_0$, we obtain the desired inequality.

\end{proof}

\section{A Riemann-Roch theorem for metrized complexes of curves}\label{sec:R-R}

Our aim in this section is to formulate and prove a Riemann-Roch theorem for metrized complexes of $\kappa$-curves. 

\medskip

Let $\C$ be a metrized complex of algebraic curves over $\k$.  
For each vertex $v \in V$, let $g_v$ be the genus of $C_v$ and let $g(\C) = g(\Gamma) + \sum_v g_v$ 
be the genus of $\C$. For each curve $C_v$, let $K_v$ denote a divisor of degree $2g_v-2$ in the canonical class of 
$C_v$. Denote by $A_v$ the divisor in $C_v$ consisting of the sum of the $\deg_G(v)$ points in $\mathcal A_v$. 
The {\em canonical class of $\C$} is defined to be the linear equivalence class of the divisor 
$\mathcal K = \sum_{v\in V} (K_v + A_v).$

\begin{remark}\label{rem:canonicalsemi-stable} Let $X_0$ be a strongly semistable curve. The definition of the canonical class $\mathcal K$ in $\C X_0$ is compatible 
in a natural sense with the definition of the dualizing sheaf 
(or sheaf of logarithmic $1$-forms) $\omega_{X_0/\kappa}$. Indeed, if $\alpha: X \rightarrow X_0$ denotes the normalization of $X_0$, and $y^e$,$z^e$ denote the points of $X$ above the singular point $x^e$ of $X_0$ for $e\in E(G_0)$, then sections of the sheaf $\omega_{X_0/\kappa}$ on an open set $U$ consist of all the meromorphic 1-forms $\delta$ on $X$ which are regular everywhere on $U$ except for possible simple poles at $y^e,z^e$, with $\mathrm{Res}_{y^e}(\delta)+\mathrm{Res}_{z^e}(\delta) =0$ for all $e$. From this, it is easy to see that the restriction of $\omega_{X_0/\kappa}$ to each component $X_v$ of $X_0$ is the invertible sheaf $\omega_{X_v}(A_v)$ corresponding to $K_v+A_v$ for any canonical divisor $K_v$ on $X_v$.  
\end{remark}

\medskip

For a divisor $\mathcal D$ on $\C$, let $r_\C(\mathcal D)$ denote the rank of $\mathcal D$ as defined 
 in Section~\ref{section:overview}.
 
 \begin{thm}[Riemann-Roch for metrized complexes of algebraic curves]\label{thm:RR-metrizedcomplexes}
 Let $\C$ be a metrized complex of $\kappa$-curves and $\mathcal K$ a divisor in the canonical class of $\C$. For every divisor $\mathcal D \in \Div(\C)$, we have 
 \[r_\C(\mathcal D) - r_\C(\mathcal K -\mathcal D) = \deg(\mathcal D) - g(\C)+1. \]
 \end{thm}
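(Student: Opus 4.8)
The plan is to deduce the Riemann--Roch formula for $\C$ from the two classical Riemann--Roch theorems it generalizes: the one for algebraic curves applied to each $C_v$, and the one for metric graphs (Baker--Norine / Gathmann--Kerber / Mikhalkin--Zharkov) applied to $\Gamma$. The key bridge is the following reformulation of the rank. Given $\mathcal D \in \Div(\C)$, I would show that one may compute $r_\C(\mathcal D)$ by a ``reduced divisor'' argument: fix a point $x_0 \in |\C|$ and argue that there is a unique $\mathcal D$-reduced divisor equivalent to $\mathcal D$ with respect to $x_0$, and that $r_\C(\mathcal D)\ge 0$ precisely when the $x_0$-reduced representative is effective. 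The notion of reducedness should combine being $v_0$-reduced on $\Gamma$ (in the metric graph sense) with having each $C_v$-part be ``as concentrated as possible'' --- i.e. one first makes the $\Gamma$-part $v_0$-reduced, and then on each $C_v$ one uses moves of type (1) together with the constraint coming from $\div_v(f_\Gamma)$. Existence and uniqueness of such reduced divisors is exactly what Section~\ref{sec:R-R} advertises (``The proof makes use of a suitable notion of reduced divisors''), so I would develop that machinery first.

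\textbf{Key steps, in order.} (i) Prove existence and uniqueness of $x_0$-reduced divisors on $\C$, by a Dhar-type burning algorithm run on $\Gamma$ combined with the classical statement that on a curve $C_v$ a divisor of degree $\ge g_v$ moves (so one can push chips along edges toward $v_0$ until the combinatorial obstruction on $\Gamma$ stops the process). (ii) Show $r_\C(\mathcal D)\ge 0 \iff$ the $x_0$-reduced form of $\mathcal D$ is effective; more precisely, relate $r_\C(\mathcal D)$ to a minimum, over all $x_0$, of the coefficient of $x_0$ in the reduced representative, in the spirit of the Baker--Norine proof. (iii) Establish the two inequalities. For the Riemann inequality $r_\C(\mathcal D)\ge \deg(\mathcal D)-g(\C)$, use that after making $\mathcal D$ reduced at a generic point and comparing with the genus bound $g(\C)=g(\Gamma)+\sum_v g_v$: the metric-graph part contributes $g(\Gamma)$ and each curve part contributes $g_v$ via classical Riemann--Roch on $C_v$. (iv) For the reverse, argue that $\mathcal K-\mathcal D$ enters the picture symmetrically: show $\mathcal K$ is characterized by the property that its $\Gamma$-part is the canonical-type divisor $K^\# = \sum_v(\deg_G(v)+2g_v-2)(v)$, which is exactly $K_\Gamma^{\mathrm{BN}} + \sum_v(2g_v)(v)$ where $K_\Gamma^{\mathrm{BN}}$ is the Baker--Norine canonical divisor on $\Gamma$, and its $C_v$-part is $K_v+A_v$, exactly the restriction of $\omega_{X_0/\kappa}$ as in Remark~\ref{rem:canonicalsemi-stable}. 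Then combine graph Riemann--Roch on $\Gamma$ with curve Riemann--Roch on each $C_v$, keeping careful track of the ``boundary'' contributions $A_v$ and $\deg_G(v)(v)$, which cancel in pairs across each edge.

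\textbf{Main obstacle.} The hard part will be step (i)--(ii): setting up reduced divisors for $\C$ so that they simultaneously control the metric-graph behavior and the behavior on the curves $C_v$, and proving the crucial ``$r_\C(\mathcal D)\ge 0$ iff reduced form effective, and moreover the Baker--Norine-style min formula holds'' statement. On a metric graph one has Dhar's burning algorithm; on a curve $C_v$ one does not have chip-firing but does have the classical fact that effectivity is governed by $h^0$. Interfacing these --- i.e. deciding when chips can be moved \emph{off} of $C_v$ along an incident edge, which depends on whether the relevant twist of the line bundle on $C_v$ has a section vanishing appropriately at the marked points $\mathcal A_v$ --- is the genuinely new technical content, and essentially amounts to proving the combinatorial-rank reformulation (Proposition~\ref{prop:combinrank}) in the form needed. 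Once the reduced divisor theory is in place, deriving the displayed formula is a bookkeeping exercise: add the classical Riemann--Roch deficiencies on the $C_v$ to the Baker--Norine deficiency on $\Gamma$ and check that the cross terms $\sum_v \deg_G(v)$ match $2\#E(G) = 2(g(\Gamma)-1+\#V)$ so that everything assembles into $\deg(\mathcal D)-g(\C)+1$.
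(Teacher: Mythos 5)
Your overall architecture --- reduced divisors, then a Baker--Norine-style minimum formula, then Riemann--Roch --- is the route the paper takes, but there are two genuine gaps. First, the minimum formula you propose in step (ii) (minimizing over base points $x_0$ the coefficient of $x_0$ in the reduced representative) only characterizes when $r_\C(\D)\geq 0$; it does not compute the rank. What the argument actually needs is the formula $r_\C(\D)=\min_{\mathcal N\in\mathfrak N}\deg^+(\D-\mathcal N)-1$, where $\mathfrak N$ is the set of minimal non-special divisors on $\C$ (degree $g(\C)-1$ and empty linear system), and --- this is the heart of the proof and is entirely absent from your outline --- an explicit description of $\mathfrak N$: every such divisor is dominated by, and in fact equivalent to, a \emph{moderator} $\D^{\pi,\{D_v\}}$ built from an acyclic orientation $\pi$ of $\Gamma$ together with a minimal non-special divisor $D_v$ on each $C_v$, with the coupling term $A^\pi_v$ recording which marked points of $C_v$ lie on outgoing edges. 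The Riemann--Roch symmetry is then realized by the involution $\pi\mapsto\bar\pi$, $D_v\mapsto K_v-D_v$, which sends a moderator $\mr$ to a dual moderator $\bar\mr$ with $\mr+\bar\mr$ in the canonical class, so that $\mathcal K-\mathcal N$ ranges over $\mathfrak N$ as $\mathcal N$ does. Without this classification (Lemmas~\ref{lem:non-special1} and~\ref{lem:reduced}, Corollary~\ref{cor:non-special2}) the final substitution cannot be carried out.

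Second, your closing claim that the theorem then follows by ``adding the classical Riemann--Roch deficiencies on the $C_v$ to the Baker--Norine deficiency on $\Gamma$'' is not a valid reduction: $r_\C(\D)$ is not determined by $r_\Gamma(D_\Gamma)$ together with the ranks $r_{C_v}(D_v)$, and no additive decomposition of the deficiency holds. The paper proves only a one-sided inequality relating $r_\C$ to ranks of twisted $\Gamma$-parts (Proposition~\ref{prop:wrank3}), and it stresses that the divisor theories on $\C$ and $\Gamma$ ``can be very different'' once some $g_v>0$. The correct final step does not invoke the metric-graph Riemann--Roch theorem at all; it is the formal identity $\deg^+(\D-\mathcal N)=\deg(\D)-g(\C)+1+\deg^+(\mathcal N-\D)$ combined with the observation that $\mathcal N-\D=(\mathcal K-\D)-(\mathcal K-\mathcal N)$ and that $\mathcal K-\mathcal N$ again runs over $\mathfrak N$.
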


Let $X_0$ be a strongly semistable curve of genus $g$ over $\kappa$. For any $\mathcal L \in \Pic(X_0)$, let $r_{c}(\mathcal L)$ be the combinatorial rank of $\mathcal L$ as defined in Section~\ref{sec:basics}. As a corollary of Theorem~\ref{thm:RR-metrizedcomplexes}, Remark~\ref{rem:canonicalsemi-stable}, and Proposition~\ref{prop:combinrank}, we have:
\begin{cor}[Riemann-Roch for strongly semistable curves]
For any $\mathcal L\in \Pic(X_0)$, 
$$r_{c}(\mathcal L) - r_{c}(\omega_{X_0/\kappa}\otimes \mathcal L^{-1}) = \deg(\mathcal L) - g(X_0)+1.$$
\end{cor}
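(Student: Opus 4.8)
The plan is to translate both sides of the asserted identity into rank statements on the metrized complex $\C X_0$ and then invoke Theorem~\ref{thm:RR-metrizedcomplexes} directly. First I would pick, for each irreducible component $X_v$ of $X_0$, a divisor $D_v$ on $X_v$ with $\mathcal O_{X_v}(D_v) \cong \mathcal L_v$, and form $\mathcal D := \sum_{v \in V_0} D_v$, regarded as a divisor in $\Div(\C X_0)_{\mathbb Z}$ supported on geometric points (its $\Gamma$-part is $\sum_v \deg(D_v)(v)$, so it is automatically $\mathbb Z$-rational). By construction $\pi(\mathcal L(\mathcal D)) = \pi(\mathcal L)$, so $\mathcal L(\mathcal D)$ and $\mathcal L$ are combinatorially equivalent; hence $r_c(\mathcal L) = r_c(\mathcal L(\mathcal D)) = r_{\C X_0}(\mathcal D)$ by Proposition~\ref{prop:combinrank}(2), together with the fact that the combinatorial rank depends only on the combinatorial equivalence class.

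Next I would treat the second term $\omega_{X_0/\kappa}\otimes\mathcal L^{-1}$. By Remark~\ref{rem:canonicalsemi-stable}, the restriction of $\omega_{X_0/\kappa}$ to $X_v$ is $\mathcal O_{X_v}(K_v + A_v)$, where $K_v$ is a canonical divisor on $X_v$ and $A_v$ the sum of the marked points $\mathcal A_v$. Choosing $\mathcal K = \sum_{v \in V_0} (K_v + A_v)$ as the representative of the canonical class of $\C X_0$, the divisor $\mathcal K - \mathcal D$ again lies in $\Div(\C X_0)_{\mathbb Z}$, and $\mathcal L(\mathcal K - \mathcal D)$ restricts on each $X_v$ to $\mathcal O_{X_v}(K_v + A_v - D_v)$, which is exactly the restriction of $\omega_{X_0/\kappa} \otimes \mathcal L^{-1}$ to $X_v$. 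So $\mathcal L(\mathcal K - \mathcal D)$ and $\omega_{X_0/\kappa}\otimes\mathcal L^{-1}$ are combinatorially equivalent, and Proposition~\ref{prop:combinrank}(2) gives $r_c(\omega_{X_0/\kappa}\otimes\mathcal L^{-1}) = r_{\C X_0}(\mathcal K - \mathcal D)$.

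With this dictionary in place I would apply Theorem~\ref{thm:RR-metrizedcomplexes} to $\mathcal D$, obtaining
\[
r_{\C X_0}(\mathcal D) - r_{\C X_0}(\mathcal K - \mathcal D) = \deg(\mathcal D) - g(\C X_0) + 1,
\]
and then match the numerical data: $\deg(\mathcal D) = \sum_v \deg(D_v) = \sum_v \deg(\mathcal L_v) = \deg(\mathcal L)$, using that the degree of a line bundle on the nodal curve $X_0$ is the sum of the degrees of its restrictions to the components; and $g(\C X_0) = g(\Gamma_0) + \sum_v g_v = \sum_v g_v + b_1(G_0) = g(X_0)$ by the standard formula for the arithmetic genus of a nodal curve (here $g_v = g(X_v)$ since $X_0$ is strongly semistable, so its components are smooth, and $g(\Gamma_0)$ is the first Betti number of the dual graph $G_0$). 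Substituting the rank, degree, and genus identifications into the displayed equation yields exactly $r_c(\mathcal L) - r_c(\omega_{X_0/\kappa}\otimes\mathcal L^{-1}) = \deg(\mathcal L) - g(X_0) + 1$.

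There is no substantive obstacle here; the argument is entirely a matter of bookkeeping, and all the real content sits in Theorem~\ref{thm:RR-metrizedcomplexes} and Proposition~\ref{prop:combinrank}. The only points requiring brief care are that $\mathcal D$ and $\mathcal K - \mathcal D$ genuinely lie in $\Div(\C X_0)_{\mathbb Z}$ so that Proposition~\ref{prop:combinrank} applies, that the construction is independent of the chosen divisors $D_v$ and $K_v$ (which holds because combinatorial equivalence is coarser than equality of $\pi$-images and the combinatorial rank is a combinatorial-equivalence invariant), and that the conventions for $\deg(\mathcal L)$ and $g(X_0)$ on the reducible curve $X_0$ agree with the metrized-complex invariants $\deg(\mathcal D)$ and $g(\C X_0)$.
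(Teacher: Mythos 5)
Your proof is correct and follows exactly the route the paper intends: the paper derives this corollary by simply citing Theorem~\ref{thm:RR-metrizedcomplexes}, Remark~\ref{rem:canonicalsemi-stable}, and Proposition~\ref{prop:combinrank}, and your argument is the natural unpacking of that citation, with the degree and genus bookkeeping handled correctly.
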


The Riemann-Roch theorem has a number of well-known formal consequences which can be transported to our situation.  For example, we obtain the following analogue of Clifford's bound for the rank of a special divisor.  (A divisor $\mathcal D$ on a metrized complex $\C$ is called {\em special} if 
$K_{\C} - {\mathcal D}$ is linearly equivalent to an effective divisor.)
\begin{thm}[Clifford's theorem for metrized complexes] 
\label{thm:MCClifford}
For any special divisor $\mathcal D$ on a metrized complex $\C$, $r_\C(\mathcal D) \leq \deg(\mathcal D)/2$.
\end{thm}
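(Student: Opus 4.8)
The plan is to deduce Clifford's theorem for metrized complexes as a purely formal consequence of the Riemann--Roch theorem (Theorem~\ref{thm:RR-metrizedcomplexes}) together with the following two ``positivity'' properties of the rank function $r_\C$, which mimic the classical situation: (a) for any divisor $\mathcal D$ on $\C$, one has $r_\C(\mathcal D) \le \deg(\mathcal D)$ whenever $\mathcal D$ is linearly equivalent to an effective divisor, and $r_\C(\mathcal D)=-1$ otherwise; (b) a ``subadditivity'' inequality $r_\C(\mathcal D_1) + r_\C(\mathcal D_2) \le r_\C(\mathcal D_1 + \mathcal D_2)$ for divisors $\mathcal D_1, \mathcal D_2$ that are both linearly equivalent to effective divisors. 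Property (a) is immediate from the definition of rank (an effective divisor of degree $\deg(\mathcal D)+1$ cannot be subtracted from $\mathcal D$ while staying effective, for degree reasons), and property (b) follows from the definition of rank by the standard argument: given an effective divisor $\mathcal E$ of degree $r_\C(\mathcal D_1) + r_\C(\mathcal D_2)$, split it (after moving it in its equivalence class if necessary, or by working directly) into effective pieces $\mathcal E_1, \mathcal E_2$ of the appropriate degrees, make $\mathcal D_i - \mathcal E_i$ effective up to linear equivalence, and add.

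The argument then runs exactly as in the classical and graph-theoretic cases. Let $\mathcal D$ be special, so that both $\mathcal D$ and $\mathcal K_\C - \mathcal D$ are linearly equivalent to effective divisors; in particular $r_\C(\mathcal D) \ge 0$ and $r_\C(\mathcal K_\C - \mathcal D) \ge 0$. Applying subadditivity (b) to $\mathcal D_1 = \mathcal D$ and $\mathcal D_2 = \mathcal K_\C - \mathcal D$ gives
\[
r_\C(\mathcal D) + r_\C(\mathcal K_\C - \mathcal D) \le r_\C(\mathcal K_\C).
\]
Now one needs the value $r_\C(\mathcal K_\C)$. Applying Riemann--Roch to $\mathcal D = \mathcal K_\C$ and using $\deg(\mathcal K_\C) = 2g(\C) - 2$ together with $r_\C(\mathcal O) = 0$ (the zero divisor has rank $0$, since it is effective but subtracting any effective divisor of positive degree destroys effectiveness by degree count) yields $r_\C(\mathcal K_\C) = g(\C) - 1$. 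Combining, $r_\C(\mathcal D) + r_\C(\mathcal K_\C - \mathcal D) \le g(\C) - 1$.

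Finally, apply Riemann--Roch to $\mathcal D$ itself: $r_\C(\mathcal D) - r_\C(\mathcal K_\C - \mathcal D) = \deg(\mathcal D) - g(\C) + 1$. Adding this to the previous inequality eliminates $r_\C(\mathcal K_\C - \mathcal D)$ and gives $2\,r_\C(\mathcal D) \le \deg(\mathcal D)$, i.e. $r_\C(\mathcal D) \le \deg(\mathcal D)/2$, as desired. The only genuine content beyond Riemann--Roch is the subadditivity property (b); I expect verifying it to be the main (though still routine) obstacle, since one must be a little careful about how an arbitrary effective divisor $\mathcal E$ of degree $r_\C(\mathcal D_1)+r_\C(\mathcal D_2)$ decomposes — the cleanest route is to note that it suffices to prove the inequality after checking it against effective test divisors, pick $\mathcal E = \mathcal E_1 + \mathcal E_2$ an arbitrary such decomposition into effectives of degrees $r_\C(\mathcal D_1)$ and $r_\C(\mathcal D_2)$, find rational functions $\f_i$ on $\C$ with $\mathcal D_i - \mathcal E_i + \div(\f_i) \ge 0$, and observe that $\f_1 \f_2$ (product of $\Gamma$-parts and of $C_v$-parts) satisfies $\div(\f_1\f_2) = \div(\f_1) + \div(\f_2)$, so that $\mathcal D_1 + \mathcal D_2 - \mathcal E + \div(\f_1\f_2) \ge 0$; every effective $\mathcal E$ of the total degree admits such a decomposition because effective divisors on $\C$ are just nonnegative combinations of points.
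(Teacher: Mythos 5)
Your proof is correct and is precisely the argument the paper has in mind: the paper offers no written proof of Theorem~\ref{thm:MCClifford}, asserting it only as a ``well-known formal consequence'' of Riemann--Roch, and your derivation (subadditivity of $r_\C$ on divisor classes of non-negative rank, $r_\C(\mathcal K)=g(\C)-1$ from Riemann--Roch applied to the zero divisor, then adding the two relations) is the standard formal deduction, with the subadditivity step correctly justified via splitting an effective test divisor $\mathcal E=\mathcal E_1+\mathcal E_2$ and adding principal divisors. One small point: the paper's parenthetical definition of ``special'' only requires $\mathcal K_\C-\mathcal D$ to be equivalent to an effective divisor, whereas your argument (necessarily, and in line with the classical convention) also assumes $r_\C(\mathcal D)\ge 0$ --- without that extra hypothesis the stated inequality can fail for divisors of degree less than $-2$, so your reading is the correct one.
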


\begin{remark} Our Riemann-Roch theorem seems to be rather different from the classical Riemann-Roch theorem on semistable curves. We refer to~\cite{C-LS} for a discussion of Riemann's and Clifford's theorems for linear series on semistable curves, and for examples showing the failure of Clifford's theorem for $h^0(\mathcal L)$ with $\mathcal L \in \Pic(X_0)$ (even for line bundles in the compactified Picard scheme of $X_0$). 
\end{remark}

The rest of this section is devoted to the proof of Theorem~\ref{thm:RR-metrizedcomplexes}. 
Our proof follows and extends  the original arguments of~\cite{BN} in the proof of Riemann-Roch theorem for graphs. 
In particular, we are going to first extend the notion of reduced divisors to the context of metrized complexes of 
algebraic curves, and then study the minimal non-special divisors.

\subsection{Reduced divisors: existence and uniqueness}\label{sec:reduced}
Let $v_0$ be a fixed base point of $\Gamma$. We introduce the notion of $v_0$-reduced divisors and show that each equivalence class of divisors on $\C$ contains a quasi-unique $v_0$-reduced divisor, in a precise sense to be defined immediately preceding Theorem~\ref{thm:reduced.divisors} below.

\medskip

For a closed connected subset $S$ of $\Gamma$ and a point $v \in \partial S$ (the topological boundary of $S$), the number of ``outgoing'' tangent directions at $v$ (i.e., tangent directions emanating outward from $S$) is denoted by $\mathrm{outdeg}_S(v)$; this is also the maximum size of a
collection of internally disjoint segments in $\Gamma \setminus (S -\{v\})$ with one end at $v$.  If in addition we have $v \in V$, we denote by $\div_v(\partial S)$ the divisor in $C_v$ associated to the outgoing edges at $v$; this is by definition the sum of all the points $x^e_v$ of $C_v$ indexed by the edges $e$ leaving $S$ at $v$.  
In what follows we refer to a closed connected subset  $S$ of $\Gamma$ as a {\it cut} in $\Gamma$.
 
 \medskip
 
Let $\mathcal D$ be a divisor on $\C$ with $\Gamma$-part $D_\Gamma$ and $C_v$-part $D_v$, and let $S$ be a cut in $\Gamma$.
A boundary point $x \in \partial S$ is called
{\it saturated} with respect to $\mathcal D$ and $S$  if
\begin{itemize}
\item $x \notin V$ and $\mathrm{outdeg}_S(x) \leq D_\Gamma(x)$; or
\item $x=v$ for some $v\in V$ and $D_v- \div_v(\partial S)$ is equivalent to an effective divisor on $C_v$. 
\end{itemize}

\noindent Otherwise, $x\in \partial S$ is called {\it non-saturated}. A cut $S$ is said to be saturated if all its boundary points are saturated. (When talking about saturated and non-saturated points, we will sometimes omit the divisor $\D$ or the set $S$ if they are clear from the context.)

\medskip

The divisor $\mathcal D$ is said to be {\it $v_0$-reduced} if the following three properties are satisfied: 

\begin{itemize}
\item[(i)] For all points $x \neq v_0$ of $\Gamma$, $D_\Gamma(x) \geq 0$, i.e., all the coefficients of $D_\Gamma$ are non-negative except possibly at the base point $v_0$. 
\item[(ii)] For all points $v \in V \setminus \{v_0\}$ ($=V$ if $v_0$ does not belong to $V$), 
there exists an effective divisor $E_v$ linearly equivalent to $D_v$ on $C_v$.
\item[(iii)] For every cut $S$ of  $\Gamma$ which does not contain $v_0$, there exists a non-saturated point $x \in \partial S$.
\end{itemize}

\medskip

\begin{remark}
\label{rmk:burningalg}
There is an efficient ``burning algorithm'' for checking whether or not a given divisor $\D$ on $\C$ which satisfies (i) is $v_0$-reduced.
The algorithm can be described informally as follows (compare with \cite[\S{2}]{Luo} or \cite[\S{5.1}]{BS}).
Imagine that $\Gamma$ is made of a flammable material, and light a fire at $v_0$.  The fire begins spreading across $\Gamma$ in a continuous manner and can only be blocked at a point $x \in \Gamma$ if one of the following holds:
\begin{itemize}
\item $x \notin V$ and the number of burnt directions coming into $x$ exceeds $D_\Gamma(x)$. (One should imagine $D_\Gamma(x)$ firefighters standing at $x$, each of
whom can block fire in one incoming direction.)
\item $x=v$ for some $v\in V$ and $D_v$ minus the sum of the marked points corresponding to burnt directions is not equivalent to any effective divisor on $C_v$. 
\end{itemize}
It is straightforward to check, following the same ideas as the proof of \cite[Algorithm 2.5]{Luo}, that $\D$ is $v_0$-reduced if and only if the fire eventually burns through all of $\Gamma$.  We omit the details since we will not need this result in the sequel.
\end{remark}

\medskip

We now show that every divisor $\mathcal D$ on $\C$ is linearly equivalent to a quasi-unique $v_0$-reduced divisor $\mathcal D^{v_0}$.
The quasi-uniqueness is understood in the following sense: the $\Gamma$-part $D_\Gamma^{v_0}$ of $\mathcal D^{v_0}$ is unique, 
and for all $v \in V$, the divisor class $[D_v^{v_0}]$ on $C_v$ defined by the $v$-part of $\mathcal D^{v_0}$ is unique. 
 
 \begin{thm} \label{thm:reduced.divisors}
Let $\C$ be a metrized complex of $\k$-curves and $v_0$ a base point of $\Gamma$. For every divisor $\mathcal D \in \Div(\C)$, there exists a quasi-unique $v_0$-reduced divisor $\mathcal D^{v_0}$ such that $\mathcal D^{v_0} \sim \mathcal D$.
\end{thm}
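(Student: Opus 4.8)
The plan is to adapt the classical argument for reduced divisors on graphs (as in \cite{bakersp,BN,Luo}) to the metrized complex setting, handling existence and uniqueness separately. For \emph{existence}, I would first reduce to the case where $D_\Gamma$ has nonnegative coefficients away from $v_0$ and each $D_v$ is equivalent to an effective divisor on $C_v$: starting from an arbitrary $\mathcal D$, apply type-(1) moves to make each $D_v$ effective, then use the standard argument on metric graphs (repeatedly firing cuts not containing $v_0$, i.e., moves of type (2) and (3)) to push chips toward $v_0$ and arrange property (i). The subtle point is that moves of type (2) change the $C_v$-parts by subtracting $A_v$ (or $\div_v(\partial S)$), so one must interleave type-(1) and type-(2)/(3) moves. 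I would set up an energy/potential function — something like a weighted sum of distances of chips to $v_0$ on $\Gamma$, refined by a secondary quantity measuring how far each $D_v$ is from being effective — and argue that as long as some cut $S \not\ni v_0$ has all boundary points saturated, firing $S$ (replacing $D_v$ by $D_v - \div_v(\partial S)$ for $v \in V \cap \partial S$, which stays equivalent to an effective divisor by the saturation hypothesis, and moving the $\Gamma$-chips outward across $\partial S$) strictly decreases the potential while preserving (i) and (ii). Since the potential takes values in a discrete well-ordered set (using that we may work inside $\Div(\C)_{\mathcal G}$ for a suitable finitely generated $\mathcal G$, via Corollary~\ref{cor:f-widthG}), this process terminates, and the terminal divisor is $v_0$-reduced.

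For \emph{quasi-uniqueness}, suppose $\mathcal D$ and $\mathcal D'$ are both $v_0$-reduced and $\mathcal D \sim \mathcal D'$, so $\mathcal D' = \mathcal D + \div(\f)$ for some rational function $\f$ on $\C$ with $\Gamma$-part $f_\Gamma$ and $C_v$-parts $f_v$. The goal is to show $D'_\Gamma = D_\Gamma$ and $[D'_v] = [D_v]$ for all $v$. Following the classical argument, I would consider the set $S$ where $f_\Gamma$ attains its maximum value on $\Gamma$; $S$ is a closed subset. If $v_0 \notin S$, I claim every boundary point of $S$ is saturated with respect to $\mathcal D$: at a non-vertex boundary point $x$, the number of outgoing (strictly decreasing) tangent directions is at most $\ord_x(f_\Gamma) + D'_\Gamma(x) \le$ something forcing $\mathrm{outdeg}_S(x) \le D_\Gamma(x)$ (using $D'_\Gamma(x) \ge 0$ since $x \ne v_0$); at a vertex $v \in \partial S$, the outgoing slopes of $f_\Gamma$ contribute $-\div_v(\partial S)$ plus possibly more to $D_v$ via the identity $D_v = D'_v - \div(f_v) - \div_v(f_\Gamma)$ combined with the fact that $D'_v$ is equivalent to an effective divisor on $C_v$, which shows $D_v - \div_v(\partial S)$ is equivalent to an effective divisor. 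This contradicts property (iii) for $\mathcal D$. Hence $v_0 \in S$, i.e., $f_\Gamma$ attains its max at $v_0$; by the symmetric argument applied to $-f_\Gamma$ and $\mathcal D'$ (whose cuts are governed by the \emph{minimum} of $f_\Gamma$), $f_\Gamma$ also attains its minimum at $v_0$, so $f_\Gamma$ is constant. Then $\div(f_\Gamma) = 0$ on $\Gamma$ and $\div_v(f_\Gamma) = 0$ for all $v$, whence $D'_\Gamma = D_\Gamma$ and $D'_v = D_v + \div(f_v)$, giving $[D'_v] = [D_v]$.

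The step I expect to be the main obstacle is the \emph{boundary-point saturation claim at vertices} in the uniqueness argument — making precise the bookkeeping that relates the outgoing slopes of $f_\Gamma$ at a boundary vertex $v$, the term $\div_v(f_\Gamma)$ appearing in the $C_v$-part of $\div(\f)$, and the effectivity-up-to-equivalence of $D'_v$ on $C_v$, so as to conclude that $D_v - \div_v(\partial S)$ is equivalent to an effective divisor on $C_v$. One has to be careful that only the \emph{strictly} outgoing tangent directions contribute to $\mathrm{outdeg}_S(v)$ while tangent directions along which $f_\Gamma$ is constant (staying in $S$) do not, and that the slopes in those directions are $\le 0$, so they only help. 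A secondary technical point is ensuring the existence argument's potential function is genuinely well-founded; this is where invoking $\Div(\C)_{\mathcal G}$ for finitely generated $\mathcal G$ (together with Corollary~\ref{cor:f-widthG}, which says the rank — and by the same token the relevant finiteness — is unaffected) keeps everything discrete. I would also note that the burning-algorithm description in Remark~\ref{rmk:burningalg} gives a clean conceptual restatement of why the terminal divisor of the existence procedure satisfies (iii), though a direct verification suffices.
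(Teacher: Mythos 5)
Your uniqueness argument is essentially the paper's, just run contrapositively: the paper uses property (iii) of $\mathcal D$ to pick a \emph{non-saturated} boundary point of a connected component $S$ of the max-set of $f_\Gamma$ and derives a contradiction with properties (i)--(ii) of $\mathcal D'$, whereas you show every boundary point of $S$ is saturated (using (i)--(ii) for $\mathcal D'$) and contradict (iii) for $\mathcal D$, then symmetrize in $\pm f_\Gamma$ instead of the paper's ``without loss of generality the maximum is not attained at $v_0$.'' The vertex bookkeeping you flag as the delicate point is exactly right and goes through: outgoing slopes are $\leq -1$ and slopes into $S$ are $0$, so $\div_v(f_\Gamma) \leq -\div_v(\partial S)$ coefficientwise, hence $D_v - \div_v(\partial S) + \div(f_v) \geq D'_v$, which has non-negative rank by (ii) for $\mathcal D'$ (note $v \neq v_0$ since $v_0 \notin S$). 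The only thing to add is that you must take $S$ to be a \emph{connected component} of the max-set, since cuts are required to be connected.

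The existence half is where you genuinely diverge from the paper, and your version has a real gap. The paper does \emph{not} run a terminating firing algorithm: it forms the set $\mathcal T_{\mathcal D}$ of divisors linearly equivalent to $\mathcal D$ satisfying (i), (ii), and maximality of the coefficient at $v_0$, shows the set $T_{\mathcal D}$ of their $\Gamma$-parts is a \emph{compact} subset of $\mathrm{Sym}^N\Gamma$, and minimizes a continuous lexicographic functional $F$ (the sorted multiset of distances to $v_0$); if the minimizer had a saturated cut avoiding $v_0$, an $\epsilon$-firing would strictly decrease $F$, so the minimizer is $v_0$-reduced. Your replacement --- ``the potential strictly decreases and takes values in a discrete well-ordered set'' --- is not valid as stated: a finitely generated subgroup $\mathcal G \subset \RR$ is dense unless it is cyclic, so the potential values on $\Div(\C)_{\mathcal G}$ are not well-ordered and strict decrease does not imply termination; moreover the type-(2)/(3) moves carry a continuous parameter $\varepsilon$, so without specifying that one fires by the maximal amount until the combinatorics change (an iterated Dhar algorithm in the style of Luo, which the paper only says can ``presumably'' be carried out) there is no reason the process even stays in $\Div(\C)_{\mathcal G}$. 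The appeal to Corollary~\ref{cor:f-widthG} is also circular, since its proof goes through Lemma~\ref{lem:reduced}, which rests on Theorem~\ref{thm:reduced.divisors}. Finally, a smaller point in your preprocessing: you cannot begin by making each $D_v$ effective via type-(1) moves, because $\deg D_v$ may be negative; one must first push at least $g_v$ chips onto each vertex by moves on $\Gamma$ and only then invoke Riemann--Roch on $C_v$ to get non-negative rank, which is the order the paper uses.
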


\begin{proof} 
Let $\mathcal T_{\mathcal D}$ be the set of all divisors $\mathcal D'$ linearly equivalent to $\mathcal D$ such that:

\begin{itemize}
\item All the coefficients of the $\Gamma$-part $D'_\Gamma$ of $\mathcal D'$ are non-negative at every point of $\Gamma$ except possibly at $v_0$.
\item For each point $v \in V \setminus \{v_0\}$, the $v$-part $D'_v$ of $\mathcal D'$ has non-negative rank on $C_v$.
\item The coefficient of $v_0$ in $D'_\Gamma$ is maximal with respect to the two properties above.
\end{itemize}

One shows as in~\cite[Proof of Theorem 2]{amini} or \cite[Proof of Proposition 3.1]{BN}
that there is a divisor $\mathcal D'$ linearly equivalent to $\mathcal D$ on $\C$  with the property that the coefficient of every point $x \in \Gamma \setminus (V \cup \{v_0\})$ is non-negative and the coefficient of $D'_\Gamma$ at each point $v \in V$ (which coincides with the degree of $D'_v$) is at least $g_v$. 
(The intuitive idea is to repeatedly fire $v_0$ and then use further chip-firing moves to spread chips wherever needed on $\Gamma$, assuring that each $v \in V$ gets at
least $g_v$ chips and that no $p \in \Gamma$ except for $v_0$ remains in debt.)
By the (classical) Riemann-Roch theorem, each $D'_v$ has non-negative rank on $C_v$, which shows that $\mathcal T_{\mathcal D}$ is non-empty. 

\medskip

Define $T_{\mathcal D}$ as the set of all $D'_\Gamma \in \Div(\Gamma)$ for which there exists
${\mathcal D}' \in \mathcal T_{\mathcal D}$ whose $\Gamma$-part is $D'_\Gamma$.
Each divisor $D'_\Gamma \in T_{\mathcal D}$ has degree $\deg(\mathcal D)$, and
all divisors in $T_{\mathcal D}$ have the same coefficient at $v_0$.
Thus the number
$N := \sum_{x \in \Gamma \setminus \{v_0\}} D'_\Gamma(x)$
is independent of the choice of a divisor $D'_\Gamma$ in $T_{\mathcal D}$.
As a consequence, $T_{\mathcal D}$ inherits a natural topology from the topology of $\Gamma$, since it can be embedded as a subset of $\mathrm{Sym}^N\Gamma$, and is compact.
 Let $\mathcal A$ be the subset of $\mathbb R^{N}$ defined by
$$\mathcal A := \Bigl\{(x_1,\dots,x_N)\,|\,0\leq x_1\leq x_2\leq \dots\leq x_N\Bigr\},$$
equipped with the total order defined by the lexicographical rule $(x_i)\leq (y_i)$ iff 
$x_1=y_1,\dots,x_i =y_i$ and $x_{i+1} < y_{i+1}$. One considers a map $F:  T_{\mathcal D} \rightarrow \mathcal A$ defined as follows.  For each divisor $D'_\Gamma \in T_{ \mathcal  D}$, consider the multiset $A(D'_\Gamma)$ of points in $\Gamma \setminus \{v_0\}$ where each point $v\neq v_0$ appears in this multiset exactly $D'_\Gamma(v)$ times. Define $F(D'_\Gamma)$ to be the point of $\mathcal A$ defined by the multiset of distances $\mathrm{dist}_\Gamma(v,v_0)$ for $v\in A(D'_\Gamma)$, ordered in an increasing way.
It is straightforward to verify that the map $F$ is continuous.
Since $T_{\mathcal D}$ is compact and $F$ is continuous, there exists a divisor $D^{v_0}_\Gamma$ in $T_{\mathcal D}$ such that $F$ takes its minimum value at $D^{v_0}_\Gamma$, i.e., $F(D_\Gamma^{v_0}) = \min_{D'_\Gamma \in T_{\mathcal D}} F(D'_\Gamma)$. Let $\mathcal D^{v_0}$ be an element of $\mathcal T_{\mathcal D}$ 
whose $\Gamma$-part is $D^{v_0}_\Gamma$.

\medskip

{\bf Claim:} The divisor $\mathcal D^{v_0}\sim \mathcal D$ is $v_0$-reduced.

\medskip

Properties (i) and (ii) above are clearly satisfied. The only point one needs to check is that every cut $S$ which  does not contain $v_0$ has a non-saturated point on its boundary. For the sake of contradiction, suppose this is not the case, and let $S$ be a closed connected set violating this condition. This means that the following hold:
 \begin{itemize}
 \item For all $x \in \partial S \setminus V$, $\mathrm{outdeg}_S(x) \leq D^{v_0}_\Gamma(v)$.
 \item For each $v \in \partial S\cap V$, $D^{v_0}_v - \div_v(\partial S)$ has non-negative rank on $C_v$, i.e., there exists a rational function $f_v$ on $C_v$ such that $D^{v_0}_v - \div_v(\partial S) + \div(f_v) \geq 0$.
 \end{itemize}
 By the definition of $\mathrm{outdeg}_S$, there exists an $\epsilon>0$ such that for each vertex $x \in \partial S$, there are closed segments $I_1^x,\dots,I^x_{\mathrm{outdeg}_S(x)}$ emanating from $x$ with the following properties:
\begin{itemize}
\item For $x \in \partial S$ and $1\leq  j\leq \mathrm{outdeg}_S(x)$, the half-open segments $I^x_j \setminus x$ are 
disjoint from $S$ and from each other and do not contain $v_0$.
\item Each segment $I_j^x$ has length $\epsilon$, for $x \in \partial S$ and $1\leq  j\leq \mathrm{outdeg}_S(x)$.
\end{itemize}
These data give rise to a rational function $f_\Gamma: \Gamma \rightarrow \mathbb R$ which is identically zero on $S$, is linear of slope $-1$ on each interval $I_j^x$ for $x\in \partial S$ and $1\leq j \leq \mathrm{outdeg}_S(x)$, and takes the constant value $-\epsilon$ at all points of $\Gamma \setminus \Bigl(S\cup \bigcup_{x,j } I_j^x\Bigr)$. 
Consider the divisor $\mathcal D^*= \mathcal D^{v_0} + \div(\f)$ on $\C$, where $\f$ is the rational function on $\C$ consisting of the rational function $f_\Gamma$ on $\Gamma$ and the rational functions $f_v$ on $C_v$. 
Letting $D_\Gamma^*$ be the $\Gamma$-part of $\D^*$,
one verifies that $\mathcal D^*$ and $D_\Gamma^*$ lie in  $\mathcal T_{\mathcal D}$ and $T_{\mathcal D}$, respectively, and that $F(D^*_\Gamma) < F(D^{v_0}_\Gamma)$, contradicting the choice of $D^{v_0}_\Gamma$. 
This proves the claim and hence the existence part of the theorem. 

\medskip 

It remains to prove the quasi-uniqueness.  Assume for the sake of contradiction that there are linearly equivalent $v_0$-reduced divisors $\mathcal D$
and $\mathcal D'$ on $\C$ such that either $D_\Gamma \neq D'_\Gamma$ or $D_v$ and $D'_v$ are not linearly equivalent on $C_v$
for some $v \in V$.  Then there exists a non-constant rational function $\f$ on $\C$ such that $\mathcal D' = \mathcal D+\div(\f)$. 
If $f_\Gamma$ is constant then we obtain an immediate contradiction, so we may assume that $f_\Gamma$ is non-constant.
Without loss of generality, we may assume that $f_\Gamma$ does not take its maximum at $v_0$ (otherwise, we can interchange the role of $\mathcal D$ and $\mathcal D'$). Let $S$ be a connected component of the set of all points where $f_\Gamma$ takes its maximum. Note that $v_0 \notin S$. For all points $x\in \partial S$, the slope of $f_\Gamma$ at any outgoing segment $I_j^x$ emanating from $v$ is at most $-1$. 
Since $\mathcal D$ is $v_0$-reduced, there exists a point $x\in \partial S$ such that either $x \notin V$ and $D_\Gamma(x) < \mathrm{outdeg}_S(x)$, or $x =v$ for some $v\in V$ and the divisor $D_v  - \div(\partial S)$ has negative rank. In the first case, $D'_\Gamma(x) \leq  D_\Gamma(x) - \mathrm{outdeg}_S(x) <0$, contradicting the assumption that the coefficient of $D'_\Gamma$ is non-negative at $x \neq v_0$. In the second case, $D'_v \sim D_v + \sum_{e\in E:\,e\sim v} \mathrm{slp}_e(f_\Gamma) (x^e_v) \leq D_v - \div_v(\partial S)$, which implies that $D'_v$ has negative rank, a contradiction. 
\end{proof}

\begin{remark}
One can presumably give an ``iterated Dhar algorithm'' for algorithmically finding a $v_0$-reduced divisor equivalent to a given divisor $\D$ by combining the ideas
in the proof of Theorem~\ref{thm:reduced.divisors} with the ideas behind \cite[Algorithm 2.12]{Luo}.  
\end{remark}

\subsection{Description of minimal non-special divisors} 
Let $\C$ be a metrized complex of algebraic curves. For each $v \in V$, 
let $\mathfrak N_v$ be the set of all {\em minimal non-special} divisors on $C_v$:
\[\mathfrak N_v = \{D \in \Div(C_v)\,\, :\,\, \deg(D) =g(C_v)-1 \,\,\textrm{and} \,\, |D| =\emptyset\}.\]

\begin{remark}
 Recall from \cite[IV, Example 1.3.4]{HartshorneAG} that a divisor $D$ on a smooth projective curve $C$ of 
genus $g$ over $\k$ is called {\em special} if $r(K_C-D) \geq 0$. 
By Riemann-Roch, every divisor of degree at most $g-2$ on $C$ is special, and if
$\deg(D)= g-1$ then $D$ is special if and only if $r(D)\geq 0$.  
Moreover, every non-special divisor dominates a minimal non-special divisor.
This explains the term ``minimal non-special divisor''.
\end{remark}

Similarly, define the set of minimal non-special divisors on $\C$ as 
\[\mathfrak N = \{\mathcal D \in \Div(\C)\,\, :\,\, \deg(\mathcal D) =g(\C)-1 \,\,\textrm{and} \,\, |\mathcal D| =\emptyset\}.\]
In this section we provide an explicit description of the minimal non-special divisors 
on metrized complexes of algebraic curves, generalizing the corresponding description of minimal non-special divisors 
on graphs and metric graphs from~\cite{BN, MZ}.

\medskip

An {\em acyclic orientation} on $\Gamma$ is an acyclic orientation on some model of $\Gamma$, i.e., a decomposition of $\Gamma$ into closed directed edges with disjoint interiors such that no directed cycles are formed.
Given an acyclic orientation $\pi$ on $\Gamma$, we denote by ${\rm deg}^+_\pi(x)$ the number of tangent directions emanating from $x$ which are compatible with $\pi$. Note that for all but finitely many points of $\Gamma$, $\deg^+_\pi(x)=1$.
For $v \in V$, we denote by $E^+(v)$ the set of edges incident to $v$ which (locally near $v$) are oriented outward from $v$.
A point $x$ with $\deg^+_\pi(x)=0$ is called a {\em sink}.  It is well known and easy to prove that every acyclic orientation contains at least one sink (start at any
point and follow the orientation until you get stuck, which must eventually happen by acyclicity).

\medskip

Given a collection of minimal non-special divisors $D_v \in  \mathfrak N_v$ for each $v\in V$, together with an acyclic orientation $\pi$ of $\Gamma$, define a corresponding divisor $\mathcal D^{\pi, \{D_v\}}$ by the formula
\[
D^{\pi, \{D_v\}} = \sum_{x \in \Gamma \backslash V} (\deg^+_\pi(x) - 1) (x)  + \sum_{v \in V} \left( A^\pi_v + D_v \right),
\]
where $A^\pi_v$ is the sum of all points $x^e_v \in \mathcal A_v$ for which the $\pi$-orientation on $e$ points away from $v$.

The $\Gamma$-part of $D^{\pi, \{D_v\}}$ is 
$$D_\Gamma^\pi := \sum_{x\in \Gamma} (\deg^+_\pi(x)+g_{x}-1) (x)$$
and the $C_v$-part of $D^{\pi, \{D_v\}}$ is $D^\pi_v := A^\pi_v + D_v$.
If $\widetilde G = (\widetilde V, \widetilde E)$ is a (loopless) model of $\Gamma$ such that the orientation $\pi$ of $\Gamma$ is induced by an acyclic orientation $\widetilde G^\pi$ on the edges of $\widetilde G$, then the degree of $\mathcal D^{\pi ,\{D_v\}}$ is given by 
\begin{align*}
\deg(\mathcal D^{\pi ,\{D_v\}}) &= \sum_{v\in \widetilde{V}} \bigl( \deg^+_\pi(v)+g_{v}-1 \bigr) \\
&= \Bigl[ \sum_{v\in \widetilde{V}} \bigl(\deg^+_\pi(v) -1\bigr)\Bigr] +  \sum_{v\in V} g_v  = g(\Gamma)-1+\sum_v g_v = g(\C) -1.
\end{align*} 

A divisor $\mr \in \Div(\C)$ of the form $\D^{\pi,\{ D_v \}}$ is called a {\it moderator} on $\C$.  (This terminology comes from \cite{MZ}.)
Given a moderator $\mr = \D^{\pi,\{D_v\}}$, the {\em dual moderator} $\bar{\mr}$ is defined to be $\D^{\bar{\pi}, \{ K_v - D_v \}}$, where $\bar{\pi}$ is obtained from $\pi$ by reversing the orientation of every oriented segment.  
It is easy to see that $\mr + \bar{\mr}$ belongs to the canonical class on $\C$.
 
\medskip 
 
The following two lemmas are essentially what we need for the proof of Theorem~\ref{thm:RR-metrizedcomplexes}.
 
\begin{lemma} \label{lem:non-special1} For any acyclic orientation $\pi$ of $\Gamma$ and any collection $D_v \in \mathfrak N_v$ of minimal non-special divisors on $C_v$, the  moderator $\mathcal D^{\pi,\{D_v\}}$ is a minimal non-special divisor on $\C$. 
\end{lemma}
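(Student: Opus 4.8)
The plan is to show that $\mathcal D^{\pi, \{D_v\}}$ has degree $g(\C) - 1$ (already computed in the excerpt) and empty linear system, i.e. $r_\C(\mathcal D^{\pi,\{D_v\}}) = -1$. Since the degree is exactly $g(\C)-1$, minimality is automatic once we know $|\mathcal D^{\pi,\{D_v\}}| = \emptyset$: any divisor of degree $g(\C)-1$ with empty linear system lies in $\mathfrak N$. So the whole content is to prove that $\mathcal D^{\pi,\{D_v\}}$ is \emph{not} linearly equivalent to any effective divisor.

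First I would argue by contradiction: suppose $\mathcal D^* := \mathcal D^{\pi,\{D_v\}} + \div(\f) \geq 0$ for some nonzero rational function $\f = (f_\Gamma, \{f_v\})$ on $\C$. The idea is to locate a sink of the orientation $\pi$ and derive a contradiction from the local structure of $\mathcal D^{\pi,\{D_v\}}$ there, using the fact that at a sink all incident tangent directions point inward. More precisely, I would consider the $\Gamma$-part: $D_\Gamma^\pi = \sum_{x} (\deg^+_\pi(x) + g_x - 1)(x)$ where $g_x = g_v$ if $x = v \in V$ and $g_x = 0$ otherwise. The key point is that at a sink $x$, the coefficient of $D_\Gamma^\pi$ is $g_x - 1$ (which is $-1$ if $x \notin V$), and for the divisor $D^*_\Gamma = D_\Gamma^\pi + \div(f_\Gamma)$ to be effective, $f_\Gamma$ must have enough negative contribution to $\ord_x$ somewhere — but acyclicity forces a sink in the region where $f_\Gamma$ attains its maximum, and there the slopes of $f_\Gamma$ are all non-positive, so $\ord_x(f_\Gamma) \leq 0$, forcing a contradiction at a non-vertex sink. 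At a vertex sink $v$, I need the $C_v$-part: $D^*_v = A^\pi_v + D_v + \div(f_v) + \div_v(f_\Gamma)$; here $A^\pi_v = 0$ at a sink (no outgoing edges), and $\div_v(f_\Gamma) = \sum_{e \ni v}\mathrm{slp}_e(f_\Gamma)(x^e_v) \leq 0$ since all slopes are non-positive, so $D^*_v \leq D_v + \div(f_v)$, meaning $D_v$ would have non-negative rank, contradicting $D_v \in \mathfrak N_v$.

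The careful part — and what I expect to be the main obstacle — is setting up the "maximum region" argument so that it interacts correctly with both the graphical and geometric pieces simultaneously. Following the pattern of the reduced-divisor uniqueness proof (Theorem~\ref{thm:reduced.divisors}), I would let $S$ be a connected component of $\{x \in \Gamma : f_\Gamma(x) \text{ is maximal}\}$; then at every boundary point of $S$ the outgoing slopes are $\leq -1$, while inside $S$ the function is locally constant. The orientation $\pi$ restricted to $S$ is still acyclic, so it has a sink $x_0$ relative to $S$ — but I must be careful: $x_0$ being a sink of $\pi|_S$ does not make it a sink of $\pi$ globally. The resolution is that at such $x_0$, $\deg^+_\pi(x_0)$ counts only directions leaving $S$ (all others point inward), and these are precisely directions where $f_\Gamma$ decreases, so $\mathrm{slp}_e(f_\Gamma) \leq -1$ for each. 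Then $\ord_{x_0}(f_\Gamma) = \sum_e \mathrm{slp}_e(f_\Gamma) \leq -\deg^+_\pi(x_0)$ (summing over directions out of $S$, since inward directions within $S$ contribute $0$), so the coefficient of $D^*_\Gamma$ at $x_0$ is $\leq (\deg^+_\pi(x_0) + g_{x_0} - 1) - \deg^+_\pi(x_0) = g_{x_0} - 1$. If $x_0 \notin V$ this is $-1 < 0$, contradicting effectivity. If $x_0 = v \in V$, then similarly $D^*_v \leq A^\pi_v + D_v + \div(f_v) + \sum_{e \text{ out of }S} \mathrm{slp}_e(f_\Gamma)(x^e_v)$; since $A^\pi_v$ involves only edges oriented away from $v$ and $\div_v(\partial S)$ (sum over edges leaving $S$ at $v$) dominates the corresponding part, a short bookkeeping shows $D^*_v \leq D_v + \div(f_v) + (A^\pi_v - \div_v(\partial S))$, and one checks $A^\pi_v \leq \div_v(\partial S)$ at a sink of $\pi|_S$ (outgoing edges of $\pi$ at $v$ that leave $S$), giving $D^*_v \leq D_v + \div(f_v)$, so $r_{C_v}(D_v) \geq 0$, contradicting $D_v \in \mathfrak N_v$. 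Either way we reach a contradiction, proving $|\mathcal D^{\pi,\{D_v\}}| = \emptyset$ and hence, together with the degree computation, that $\mathcal D^{\pi,\{D_v\}} \in \mathfrak N$.
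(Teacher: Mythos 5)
Your proposal is correct and follows essentially the same route as the paper's proof: argue by contradiction, take the set where $f_\Gamma$ attains its maximum, locate a sink of $\pi$ restricted to that set, and derive a contradiction there by splitting into the non-vertex case (negative coefficient in the $\Gamma$-part) and the vertex case ($A^\pi_v + \div_v(f_\Gamma) \leq 0$ forces $D_v + \div(f_v) \geq 0$, contradicting $D_v \in \mathfrak N_v$). The only cosmetic difference is that the paper first shows $\partial S$ lies in the model's vertex set and that the sink must lie on $\partial S$, whereas your uniform slope estimate handles interior and boundary sinks together.
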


\begin{proof} 
Suppose for the sake of contradiction that there is a rational function 
$\f$ on $\C$ such that $\mathcal D^{\pi,\{D_v\}} +\div(\f) \geq 0$. 
Let $\widetilde G = (\widetilde V, \widetilde E)$ be a (loopless) model of $\Gamma$ such that the orientation $\pi$ of $\Gamma$ is induced by an acyclic orientation $\widetilde G^\pi$ on the edges of $\widetilde G$.  
Let $S$ be the closed subset of $\Gamma$ consisting of all the points of $\Gamma$ where the $\Gamma$-part $f_\Gamma$ of $\f$ takes its maximum value. It is easy to see that $\partial S \subseteq \widetilde V$, since at a hypothetical point $x \in \partial S \setminus \widetilde V$ we would necessarily have 
$D^\pi_\Gamma(x)+\div_x(f_\Gamma) <0$, contradicting the assumption on $\f$. 

Note that since $\partial S \subset \widetilde V$, for any edge $e$ of $\widetilde G$ either $S$ contains $e$ entirely or $S$ does not intersect the interior of $e$. Consider now the restriction of the orientation $\pi$ to the induced subgraph $\widetilde G[\widetilde V \cap S]$. 
The resulting directed graph $\widetilde G^\pi[\widetilde V \cap S]$ is acyclic and therefore contains a sink vertex $u$. 

\medskip

{\bf Claim:} $u\in \partial S$. 

\medskip

It suffices to prove that no sink vertex of the directed graph $\widetilde G^\pi$ belongs to $S$, so assume for the sake of contradiction that 
$w \in S$ is a sink vertex of $\widetilde G^\pi$. There are two cases to consider:
\begin{itemize}
\item  If $w \in V$, then $D_{w} +  \div(f_{w}) \geq D^\pi_{w} +  \div_{w}(f_\Gamma) + \div(f_{w}) \geq 0$ (since $f_\Gamma$ achieves its maximum value at $w$ and we supposed that $\mathcal D^{\pi,\{D_v\}} +\div(\f) \geq 0$),  contradicting the assumption that $D_{w}$ is a minimal non-special divisor on $C_{w}$.  
\item If $w \notin V$,  then by definition the coefficient of $w$ in $D^\pi_\Gamma$ is at most  $-1$, from which it follows that the coefficient of $w$ in $\mathcal D^{\pi,\{D_v\}} +\div(\f)$ is negative, a contradiction.
\end{itemize}

This proves the claim.

\medskip

By our choice of $u \in \partial S$ and the definition of $S$, $f_\Gamma$ has strictly negative slope along all outgoing 
edges at $u$,
and it has slope zero along all other edges incident to $u$.
This shows in particular that $\mathrm{ord}_{u}(f_\Gamma) \leq  - \mathrm{outdeg}_S(u) \leq -\deg^+_\pi(u)$. 
There are now two different cases to consider, both of which will lead to a contradiction (and hence complete our proof of the lemma):

\begin{itemize}
\item If $u \in \widetilde V\setminus V$, then $D^\pi_\Gamma(u) + \mathrm{ord}_u(f_\Gamma)\leq  \deg^+_\pi(u)-1-  \deg^+_\pi(u)<0$, contradicting the choice of the rational function $\f$ on $\C$.
\item If $u \in V$, then for each edge $e$ incident to $u$ we have $\mathrm{slp}_e(f_\Gamma) \leq 0$, with strict inequality if 
$e$ is an outgoing edge at $u$ in $\widetilde G^\pi$ (because $u$ is a sink of the oriented graph $\widetilde G^\pi[S\cap \tilde V]$).
By the definition of $D^\pi_u$, we infer that $D^\pi_u + \sum_{e\in \widetilde E: e\sim u} \mathrm{slp}_e(f_\Gamma)(x^e_u) \leq D_u$, which shows that  $D^\pi_u + \sum_{e\in \widetilde E: e\sim u} \mathrm{slp}_e(f_\Gamma)(x^e_u) + \div(f_u)$ 
is not effective, since $D_u$ is minimal non-special. This contradicts our assumptions on $\f$.
 \end{itemize}
\end{proof}

For $v_0 \in \Gamma$, we denote by $\mathcal{AO}_{v_0}(\Gamma)$ the set of all acyclic orientations of $\Gamma$ with a unique sink at $v_0$, i.e., such that $v_0$ has out-degree zero and all other points of $\Gamma$ have out-degree at least one. 

\begin{lemma}\label{lem:reduced}
Let $\mathcal D$ be a $v_0$-reduced divisor on $\C$.  Then $r_\C(\mathcal D) \geq 0$ 
if and only if the following two conditions hold:
\begin{itemize}
\item The coefficient $D_\Gamma(v_0)$ of $D_\Gamma$ at $v_0$ is non-negative.
\item If $v_0 \in V$, the divisor $D_{v_0}$ on $C_{v_0}$ has non-negative rank. 
\end{itemize}
 More precisely, if the above conditions do not both hold, then there exists a moderator $\mr = \D^{\pi,\{D^*_v\}}$ with $\pi\in \mathcal{AO}_{v_0}(\Gamma)$ such that $\mathcal D \leq \mr$. 
\end{lemma}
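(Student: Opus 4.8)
The plan is to prove the contrapositive of the "if" direction together with the "more precisely" clause in one go, since the easy "only if" direction is immediate: if $r_\C(\D) \geq 0$ then $\D$ is linearly equivalent to an effective divisor, and since $\D$ is $v_0$-reduced the quasi-uniqueness part of Theorem~\ref{thm:reduced.divisors} forces $\D$ itself to be effective (an effective divisor is trivially $v_0$-reduced, with the same $\Gamma$-part and $v$-part divisor classes as $\D$), whence $D_\Gamma(v_0) \geq 0$ and $D_{v_0}$ has non-negative rank. So assume $\D$ is $v_0$-reduced but at least one of the two conditions fails. The goal is to construct an acyclic orientation $\pi \in \mathcal{AO}_{v_0}(\Gamma)$ and minimal non-special divisors $D^*_v \in \mathfrak N_v$ so that $\D \leq \D^{\pi,\{D^*_v\}}$; once we have this, Lemma~\ref{lem:non-special1} says $\D^{\pi,\{D^*_v\}}$ is non-special, and any divisor dominated by a non-special divisor has empty linear system (if $\D \leq \mr$ and $\D \sim E \geq 0$ then $\mr \sim E + (\mr - \D) \geq 0$, contradicting $|\mr| = \emptyset$), so $r_\C(\D) = -1 < 0$.

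The construction of $\pi$ is where the $v_0$-reducedness gets used, and I expect this to be the main obstacle. The idea is to run the "burning algorithm"/Dhar-type process from Remark~\ref{rmk:burningalg}: because $\D$ is $v_0$-reduced, the fire started at $v_0$ eventually burns all of $\Gamma$, and the order in which points burn, together with the directions along which the fire propagates, gives a natural orientation of (a suitable model of) $\Gamma$ in which every edge points "away from the fire source," i.e., toward later-burnt points. Concretely: choose a fine enough model $\widetilde G$ of $\Gamma$ so that every point carrying a coefficient of $D_\Gamma$, and every branch point, is a vertex; orient each edge of $\widetilde G$ from the endpoint that catches fire first to the endpoint that catches fire later. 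This orientation is acyclic (fire-time is strictly monotone along edges) and has $v_0$ as its unique source when reversed — equivalently, reversing it gives $\bar\pi$ with unique sink $v_0$; one should be slightly careful about which of $\pi,\bar\pi$ is the element of $\mathcal{AO}_{v_0}(\Gamma)$ and set notation accordingly, but the combinatorics is routine. The key point is the inequality $\deg^+_\pi(x) - 1 \leq D_\Gamma(x)$ at every non-vertex-curve point $x \neq v_0$, and at each $v \in V$, $D_v - A^\pi_v$ has non-negative rank (so that some $D^*_v \in \mathfrak N_v$ can be chosen with $D_v \leq A^\pi_v + D^*_v$, using that every divisor of non-negative rank and degree $\geq g_v - 1$ dominates — after the standard manipulation — a minimal non-special divisor, or more directly: a divisor $D'_v$ with $\deg \geq g_v - 1$ and $r(D'_v) \geq 0$ satisfies $D'_v \geq D^*_v$ for some $D^*_v \in \mathfrak N_v$ up to passing through Riemann–Roch). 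These inequalities say precisely that when the fire is blocked at $x$ (either because $\operatorname{outdeg}$ of the burnt set exceeds $D_\Gamma(x)$ fails, i.e. it does NOT exceed, or because the residual divisor on $C_v$ stays effective-up-to-equivalence), the coefficients of $\D$ are dominated by the corresponding coefficients of $\D^{\pi,\{D^*_v\}}$. The subtlety is at the base point $v_0$: there the hypothesis is that $D_\Gamma(v_0) < 0$ or $D_{v_0}$ has negative rank, and one checks that in $\D^{\pi,\{D^*_v\}}$ the coefficient at $v_0$ is $\deg^+_\pi(v_0) - 1 + g_{v_0}$-type quantity with $\deg^+_\pi(v_0) = 0$, i.e. the $\Gamma$-coefficient is $g_{v_0} - 1 \geq -1$ (for $v_0 \notin V$ it is $-1$), and the $C_{v_0}$-part is $A^{\pi}_{v_0} + D^*_{v_0}$ with $A^\pi_{v_0} = 0$ since $v_0$ is a sink — so we need $D_\Gamma(v_0) \leq -1$, resp. $D_{v_0} \leq D^*_{v_0}$ for some minimal non-special $D^*_{v_0}$, which is exactly what "$r(D_{v_0}) = -1$" and "$\deg D_{v_0} = D_\Gamma(v_0)$" give after a degree bookkeeping check. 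I would organize the formal argument as: (1) set up the model and run the burning algorithm; (2) record that termination fails at $v_0$ precisely, extract $\pi$; (3) verify the pointwise domination $\D \leq \D^{\pi,\{D^*_v\}}$ separately at non-vertex points, at vertices $v \neq v_0$, and at $v_0$; (4) conclude via Lemma~\ref{lem:non-special1}. The delicate verification is step (3) at $v_0$, matching the failure of non-negative rank of $D_{v_0}$ to the minimal-non-special bound, and keeping the degree accounting consistent so that $\deg \D^{\pi,\{D^*_v\}} = g(\C) - 1$ as computed above.

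One technical point worth flagging: "every divisor of non-negative rank and degree at least $g_v - 1$ on $C_v$ dominates a minimal non-special divisor" is not literally true as stated for degree $> g_v - 1$ — the correct statement I will use is that such a divisor $D'_v$ can be written $D'_v = D^*_v + P$ with $D^*_v \in \mathfrak N_v$ and $P$ effective \emph{only after} subtracting off enough points; more precisely, by Riemann–Roch, $D'_v - E_v$ has non-negative rank for any effective $E_v$ of degree $\deg(D'_v) - g_v + 1$, so choosing $E_v$ generic makes $D'_v - E_v$ have degree $g_v - 1$ and non-negative rank, hence effective; then $D^*_v := K_v - (\text{any effective divisor equivalent to } D'_v - E_v) $... — in any case, the upshot that I need is just the existence, for each $v \neq v_0$, of $D^*_v \in \mathfrak N_v$ with $D_v - A^\pi_v + (\text{effective}) \sim$ something dominated appropriately; this is standard and I will phrase it cleanly in the proof rather than here. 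If it turns out that forcing $\D \leq \mr$ on the nose (rather than up to linear equivalence) requires adjusting $\D$ within its class, note that $\D$ is already $v_0$-reduced and the burning process respects this, so no adjustment is needed — the inequality holds for the given representative $\D$ itself. The main obstacle, to reiterate, is cleanly translating the three defining conditions of "$v_0$-reduced" into the three pointwise inequalities defining "$\D \leq \D^{\pi,\{D^*_v\}}$," which is essentially the content of Dhar's algorithm adapted to metrized complexes.
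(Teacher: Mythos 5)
Your overall strategy is the paper's: run a Dhar/burning process from $v_0$, use $v_0$-reducedness to extract an acyclic orientation $\pi\in\mathcal{AO}_{v_0}(\Gamma)$ together with minimal non-special divisors $D^*_v$, verify the pointwise domination $\D \leq \D^{\pi,\{D^*_v\}}$, and conclude via Lemma~\ref{lem:non-special1}. (The paper makes the burning process rigorous as a recursion that repeatedly invokes condition (iii) of reducedness --- every cut $S\not\ni v_0$ has a non-saturated boundary point --- to choose the next vertex to orient; your plan is the same argument in informal clothing.) But there are two genuine problems. First, you never prove the ``if'' direction (conditions hold $\Rightarrow r_\C(\D)\geq 0$). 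Your quasi-uniqueness argument proves the implication $r_\C(\D)\geq 0\Rightarrow$ conditions, and so does your moderator construction via its contrapositive; the remaining implication is logically independent of both and needs its own (easy) argument: conditions (i) and (ii) of reducedness together with the two hypotheses give $D_\Gamma\geq 0$ and $r_{C_v}(D_v)\geq 0$ for every $v$, so a rational function with constant $\Gamma$-part and suitable $f_v$ exhibits $\D$ as equivalent to an effective divisor. Moreover, the quasi-uniqueness argument itself rests on the false claim that an effective divisor is automatically $v_0$-reduced: effectivity gives conditions (i) and (ii) but not the cut condition (iii), so Theorem~\ref{thm:reduced.divisors} cannot be applied to conclude that $\D$ itself is effective.

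Second, the key local fact at vertices is stated with the wrong sign and the wrong direction of domination. Non-saturation of a vertex $v$ on the boundary of the unburnt cut $S$ means that $D_v-\div_v(\partial S)$ has \emph{negative} rank, and the fact you need is that any divisor of negative rank on $C_v$ is \emph{dominated by} some $D^*_v\in\mathfrak N_v$ (by Riemann--Roch one can add general points to reach degree $g_v-1$ while keeping $h^0=0$); this yields $D_v\leq A^\pi_v+D^*_v$ since $A^\pi_v=\div_v(\partial S)$ by construction of $\pi$. You instead assert that $D_v-A^\pi_v$ has non-negative rank and ``dominates'' a minimal non-special divisor --- but a divisor dominated by a minimal non-special divisor necessarily has negative rank, so your version cannot produce the inequality $\D\leq\mr$, and your closing ``technical point'' paragraph acknowledges the trouble without resolving it. The analogous inequality at non-vertex points is also reversed: the required bound is $D_\Gamma(x)\leq\deg^+_\pi(x)-1$, not $\deg^+_\pi(x)-1\leq D_\Gamma(x)$. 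These sign issues are exactly where the content of $v_0$-reducedness enters, so they need to be fixed before the plan constitutes a proof.
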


\begin{proof}

The intuitive idea is that one constructs the moderator $\mr = \D^{\pi,\{D^*_v\}}$ using the burning algorithm
described in Remark~\ref{rmk:burningalg}.  Beginning with a $v_0$-reduced divisor $\mathcal D$, 
burn through $\Gamma$ following the algorithm and keep track
of the direction in which the fire spreads; reversing all the arrows defines an acyclic orientation $\pi\in \mathcal{AO}_{v_0}(\Gamma)$.  
If $\mr$ is the corresponding 
moderator, one checks that $\mathcal D \leq \mr$ outside $v_0$, and if $\mathcal D$ is not effective then $\mathcal D \leq \mr$ everywhere.
A more rigorous version of this argument is as follows.

\medskip

First suppose that $D_\Gamma(v_0) \geq0$, and in the case $v_0 \in V$ that there exists a rational function $f_{v_0}$
on $C_{v_0}$ such that $D_{v_0}+\div(f_{v_0}) \geq 0$. We infer that $D_\Gamma\geq 0$, and since each 
$D_v$ for $v\in V \setminus \{v_0\}$ is of non-negative rank, there exists a rational function $f_v$ on $C_v$ 
such that $D_v + \div(f_v) \geq 0$. Let $\f$ be a rational function on $\C$ consisting of a constant rational 
function on $\Gamma$ and $f_v$ on $C_v$ for  $v \in V$. We obviously have $\mathcal D+\div(\f) \geq 0$, 
which implies that $r_\C(\mathcal D)\geq 0$.

\medskip

For the other direction, assume that 
$D_\Gamma(v_0) <0$ if $v_0 \notin V$, and that $D_{v_0}$ has negative rank if $v_0 \in V$.
To show that $r_\C(\mathcal D)=-1$, by Lemma~\ref{lem:non-special1} it will be enough to show the existence of an acyclic orientation $\pi\in \mathcal{AO}_{v_0}(\Gamma)$ and a set of minimal non-special divisors $D^*_v \in \mathfrak N_v$
such that $\mathcal D \leq \mathcal D^{\pi, \{D^*_v\}}$. 

\medskip 

 Let $\widetilde V$ be the union of $V$ and all the points in the support of $D_\Gamma$. Note that in both cases above, $v_0 \in \widetilde V$. 
Let $\widetilde G=(\widetilde V,\widetilde E)$ be the corresponding model of $\Gamma$. We are going to recursively define an orientation $\pi $ of $\widetilde G$ and the collection $\{D^*_v\}$ by handling at each step the orientation of all the edges incident to a vertex $v \in \widetilde V$ and the minimal non-special divisor $D^*_v$ in the case $v \in V$.  We start by considering the vertex $v_0 \in \widetilde V$. The orientations of all the edges incident to $v_0$ are defined so that $\deg^+_\pi(v_0) = 0$; in other words, all these oriented edges are incoming at $v_0$. Note that in the case $v_0 \notin V$, we have $D_\Gamma(v_0) \leq -1 = D_\Gamma^\pi(v_0)$.
In the case $v_0 \in V$, since $r(D_{v_0})<0$ it follows from the Riemann-Roch theorem for $C_{v_0}$ that there exists a minimal non-special divisor $D^*_{v_0}$ such that $D_{v_0} \leq D^*_{v_0}$.

\medskip

Suppose that the orientation of all edges adjacent to vertices $v_0, \dots, v_i \in \widetilde V$ has been defined, and that
for all $v_j \in V$ with $j\leq i$, a minimal non-special divisor $D^*_{v}$ on $C_{v}$ has been given.    
Let $S_i$ be a connected component of the induced subgraph $\widetilde G[\widetilde V \setminus \{v_0,\dots, v_i\}]$.
Since $\mathcal D$ is $v_0$-reduced and $S_i$ is a cut not containing $v_0$,  there exists a point $v_{i+1}$ on the boundary of $S_i$ which is non-saturated. (Note that $v_{i+1}$ also lies in $\widetilde V$.) This means that either:
\begin{itemize}
\item[(1)] $v_{i+1} \in \Gamma \setminus V$ and $\mathrm{outdeg}_{S_{i}}(v_{i+1}) > D_\Gamma(v_{i+1})$; or
\item[(2)] $v_{i+1} \in V$ and $D_v - \div_v(\partial S_{i})$ has negative rank. 
\end{itemize}

\medskip

All outgoing edges from $S_i$ adjacent to $v_{i+1}$ have already been oriented (and are outgoing from $v_{i+1}$ by the definition of the orientation $\pi$). Orient all other edges incident to $v_{i+1}$ in such a way that they are all incoming at $v_{i+1}$. Note  that 
$\deg^+_\pi(v_{i+1}) =\mathrm{outdeg}_{S_{i}}(v_{i+1}),$ and so in Case (1), 
$$D_{\Gamma}(v_{i+1}) \leq \deg^+_\pi(v_{i+1})-1.$$
 In Case (2),  since $r(D_v -\div_v(\partial S_{i} )) =-1$, it follows from the Riemann-Roch theorem for $C_v$ that there exists a minimal non-special divisor $D^*_v$ such that 
 $$ D_v - \div_v(\partial S_{i}) \leq D^*_v .$$
  This shows that $D_v \leq \div_v(\partial S_{i}) + D^*_v$. By the definition of the orientation $\pi$ we must have $D^{*\,\pi}_{v+1} = \div_{v_{i+1}}(\partial S_{i}) + D^*_v$. Therefore, by the definition of the orientation $\pi$ at $v_{i+1}$ and the choice of $D^*_{v_{i+1}}$, we have $D_{v_{i+1}} \leq D^{*\,\pi}_{v_{i+1}}$. 

\medskip

Let $\pi$ be the orientation of $\Gamma$ just constructed, and let $\{D^*_v\}_{v\in V}$ be the collection of minimal non-special divisors on $C_v$ defined above. By the definition of $\pi$, the only vertex of $\widetilde V$ with out-degree zero is $v_0$ and thus $\pi$ belongs to $\mathcal{AO}_{v_0}(\Gamma)$. We clearly have $\mathcal D \leq \mathcal D^{\pi,\{D^*_v\}}$ as well, which completes the proof.
\end{proof}

As a corollary of the above lemma, we obtain:
 \begin{cor}\label{cor:non-special2}
Any minimal non-special divisor $\mathcal D$ on $\C$ is linearly equivalent to a moderator.  
\end{cor}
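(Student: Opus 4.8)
The plan is to deduce Corollary~\ref{cor:non-special2} directly from Lemma~\ref{lem:reduced} together with the existence of reduced divisors from Theorem~\ref{thm:reduced.divisors}. Let $\mathcal D$ be a minimal non-special divisor on $\C$, so $\deg(\mathcal D) = g(\C) - 1$ and $r_\C(\mathcal D) = -1$. Fix an arbitrary base point $v_0 \in \Gamma$ and, by Theorem~\ref{thm:reduced.divisors}, replace $\mathcal D$ by a $v_0$-reduced divisor in its linear equivalence class; since rank and degree are linear-equivalence invariants, this new divisor is still minimal non-special, so without loss of generality $\mathcal D$ itself is $v_0$-reduced.

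Next I would apply Lemma~\ref{lem:reduced}. Since $r_\C(\mathcal D) = -1 < 0$, the two conditions in that lemma cannot both hold, so the "more precisely" clause provides a moderator $\mr = \mathcal D^{\pi, \{D^*_v\}}$ with $\pi \in \mathcal{AO}_{v_0}(\Gamma)$ such that $\mathcal D \leq \mr$. Both $\mathcal D$ and $\mr$ have degree $g(\C) - 1$: for $\mathcal D$ this is the hypothesis, and for $\mr$ this is the degree computation carried out just before Lemma~\ref{lem:non-special1}. An inequality $\mathcal D \leq \mr$ between divisors of equal degree forces $\mathcal D = \mr$, since $\mr - \mathcal D$ is an effective divisor of degree zero and hence is $0$. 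Therefore $\mathcal D$ is itself a moderator, and in particular linearly equivalent to one.

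There is essentially no obstacle here: the corollary is a clean bookkeeping consequence of the two preceding lemmas, and the only thing one must check carefully is that passing to a $v_0$-reduced representative and then invoking the degree equality is legitimate, which it is because degree and rank are preserved under linear equivalence and because the moderator's degree was already computed to be $g(\C) - 1$. One minor point worth stating explicitly is that we are free to choose the base point $v_0$ however we like — any choice works — so the statement "linearly equivalent to a moderator" holds with no constraint on which vertex the moderator's unique sink sits at.
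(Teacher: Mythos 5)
Your proof is correct and follows essentially the same route as the paper: pass to a $v_0$-reduced representative, invoke the ``more precisely'' clause of Lemma~\ref{lem:reduced} to obtain a moderator $\mr$ with $\mathcal D \leq \mr$, and conclude from the degree computation that $\mr - \mathcal D$ is an effective divisor of degree zero, hence zero.
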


\begin{proof} Fix $v_0\in \Gamma$. 
Let $\mathcal D$ be a minimal non-special divisor on $\C$ and let $\mathcal D^{v_0}$ be the $v_0$-reduced divisor linearly equivalent to $\mathcal D$. 
By Lemma~\ref{lem:reduced},  there exists  a moderator $\mr$ such that $\mathcal D^{v_0} \leq \mr$. The effective divisor $\mr - \mathcal D^{v_0}$ has degree zero, and so must be equal to  zero.
\end{proof}

\subsection{Proof of Theorem~\ref{thm:RR-metrizedcomplexes}}
 The proof of Theorem~\ref{thm:RR-metrizedcomplexes} can now be completed using (a slight simplification of) the idea behind the proof of 
 Riemann-Roch theorem for graphs given in \cite{BN}. 
 The key is the following formula, which provides a useful description of the rank of $\mathcal D$ in terms of minimal non-special divisors.
 We denote by $\deg^+(\mathcal D)$ (resp. $\deg^-(\mathcal D)$) 
 the sum of non-negative (resp. non-positive) coefficients in $\mathcal D$. 
 (Note that $\deg^+(\mathcal D) + \deg^-(\mathcal D) = \deg(\mathcal D)$).
 \begin{prop} \label{prop:magicformula}
  For any divisor $\mathcal D$ on $\C$, we have $$r_\C(\mathcal D) = \min_{\mathcal N \in \mathfrak N}\,\,
  \deg^+(\mathcal D -\mathcal N)-1.$$
 \end{prop}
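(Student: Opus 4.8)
The plan is to prove the two inequalities $r_\C(\mathcal D) \leq \min_{\mathcal N \in \mathfrak N}\deg^+(\mathcal D - \mathcal N) - 1$ and $r_\C(\mathcal D) \geq \min_{\mathcal N \in \mathfrak N}\deg^+(\mathcal D - \mathcal N) - 1$ separately, closely following the argument of Baker--Norine but using the machinery of $v_0$-reduced divisors (Theorem~\ref{thm:reduced.divisors}) and moderators (Lemmas~\ref{lem:non-special1} and~\ref{lem:reduced}) developed above. Set $\rho := \min_{\mathcal N \in \mathfrak N}\deg^+(\mathcal D - \mathcal N)$, so the claim is $r_\C(\mathcal D) = \rho - 1$.

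For the inequality $r_\C(\mathcal D) \leq \rho - 1$: fix a minimal non-special divisor $\mathcal N$ achieving the minimum, write $\mathcal D - \mathcal N = \mathcal P - \mathcal Q$ where $\mathcal P, \mathcal Q \geq 0$ have disjoint support, so $\deg(\mathcal P) = \deg^+(\mathcal D - \mathcal N) = \rho$. I would take the effective divisor $\mathcal E := \mathcal P$ of degree $\rho$ and show $\mathcal D - \mathcal E = \mathcal N - \mathcal Q$ is not linearly equivalent to an effective divisor: if it were, then since $\mathcal Q \geq 0$ we would get $r_\C(\mathcal N) \geq 0$, contradicting that $\mathcal N$ is non-special (i.e. $|\mathcal N| = \emptyset$). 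Hence $r_\C(\mathcal D) < \rho$, i.e. $r_\C(\mathcal D) \leq \rho - 1$.

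For the reverse inequality $r_\C(\mathcal D) \geq \rho - 1$: let $k = \rho - 1$ and let $\mathcal E$ be an arbitrary effective divisor of degree $k$; I must show $\mathcal D - \mathcal E$ is equivalent to an effective divisor. Suppose not. Pick a base point $v_0$ in the support of $\mathcal E$ (or anywhere, if $k = 0$ is handled separately via the $\rho = 1$ case meaning $\mathcal D$ dominates some $\mathcal N$ minus nothing), and let $\mathcal D'$ be the $v_0$-reduced divisor equivalent to $\mathcal D - \mathcal E$. Since $r_\C(\mathcal D') = r_\C(\mathcal D - \mathcal E) = -1$, Lemma~\ref{lem:reduced} gives a moderator $\mr = \mathcal D^{\pi, \{D^*_v\}}$ with $\pi \in \mathcal{AO}_{v_0}(\Gamma)$ such that $\mathcal D' \leq \mr$, hence $\mathcal D - \mathcal E \leq \mr + \div(\f)$ for some rational function $\f$; equivalently $\mathcal D - \mathcal N \geq \mathcal E - \div(\f)$ where $\mathcal N := \mr - \div(\f)$... the cleaner route: from $\mathcal D' \leq \mr$ and $\mathcal D' \sim \mathcal D - \mathcal E$ we get $\mathcal D - \mathcal E \sim \mathcal D'$ and $\mathcal D' + (\mr - \mathcal D') = \mr$ with $\mr - \mathcal D' \geq 0$ of degree $\deg(\mr) - \deg(\mathcal D') = (g(\C) - 1) - (\deg(\mathcal D) - k)$. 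Writing $\mathcal N' := \mathcal D' - (\text{its negative part contribution})$, one shows $\deg^+(\mathcal D - \mathcal N) \leq k$ for the moderator $\mathcal N = \mr$ pulled back to be equivalent appropriately, contradicting $\rho = k + 1$. Concretely: $\deg^+(\mathcal D' - \mr) = 0$ since $\mathcal D' \leq \mr$, and relating $\deg^+(\mathcal D - \mathcal N)$ to $\deg^+(\mathcal D' - \mr) + \deg(\mathcal E)$ for a suitable moderator $\mathcal N$ linearly equivalent to the pullback of $\mr$, one obtains $\deg^+(\mathcal D - \mathcal N) \leq \deg(\mathcal E) = k = \rho - 1 < \rho$, contradicting minimality of $\rho$.

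\textbf{Main obstacle.} The delicate point is the bookkeeping in the second inequality: one must correctly transfer the inequality $\mathcal D' \leq \mr$ (which lives at the level of a particular $v_0$-reduced representative) back to a statement about $\deg^+(\mathcal D - \mathcal N)$ for a genuine minimal non-special divisor $\mathcal N \in \mathfrak N$, keeping careful track of how the effective divisor $\mathcal E$ and the correction $\div(\f)$ interact with the positive/negative decomposition. The identity $\deg^+(\mathcal D - \mathcal N) = \deg^+((\mathcal D - \mathcal E) - \mathcal N) + \deg(\mathcal E)$ when $\mathcal E$ is supported where $\mathcal D - \mathcal N$ is already non-negative (which can be arranged by the disjoint-support normal form), together with Corollary~\ref{cor:non-special2} guaranteeing $\mr$ is itself minimal non-special, is what makes the argument close; verifying these support conditions and the degree computation $\deg(\mr) = g(\C) - 1$ carefully is where the real work lies, but each individual step is routine given the lemmas already established.
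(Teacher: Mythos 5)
Your proposal is correct and follows essentially the same route as the paper: the upper bound comes from taking $\mathcal E$ to be the positive part of $\mathcal D-\mathcal N$, and the lower bound from dominating the $v_0$-reduced representative $\mathcal D'$ of $\mathcal D-\mathcal E$ by a moderator $\mr$ via Lemma~\ref{lem:reduced} and transporting back along the linear equivalence (the decomposition $\mathcal D-\mathcal N=(\mathcal D'-\mr)+\mathcal E$ with $\mathcal N:=\mr+\div(\f)$ makes your ``concrete'' step immediate with no support conditions, since $\deg^+$ is subadditive and $\deg^+(\mathcal D'-\mr)=0$). The only slip is a citation: the fact that moderators, and hence their translates $\mathcal N$, lie in $\mathfrak N$ is Lemma~\ref{lem:non-special1} together with the invariance of $|\cdot|=\emptyset$ under linear equivalence, not Corollary~\ref{cor:non-special2}.
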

\begin{proof}
Fix $\mathcal N\in \mathfrak N$, and 
let $\mathcal E$ be the non-negative part of $\mathcal D-\mathcal N$, which is of degree $\deg^+(\mathcal D -\mathcal N)$.
Then $r_\C(\mathcal D) \leq \deg^+(\mathcal D -\mathcal N)$, since $\mathcal D -\mathcal E \leq \mathcal N$ and $r(\mathcal N)=-1$
imply that $|\mathcal D-\mathcal E|=\emptyset$. 
This shows that $r_\C(\mathcal D) \leq \min_{\mathcal N \in \mathfrak N} \,\, \deg^+(\mathcal D -\mathcal N)-1 $. 

To prove the opposite inequality, let $\mathcal E$ be an effective divisor of degree  
$r_\C(\mathcal D)+1$ such that $|D- \mathcal E| = \emptyset$. By Lemma~\ref{lem:reduced}, there exists $\mathcal N \in \mathfrak N$ such that 
$\mathcal D-\mathcal E \leq \mathcal N$, or equivalently, $\mathcal D - \mathcal N \leq \mathcal E$. In particular, 
$\deg^+(\mathcal D-\mathcal N)\leq \deg(\mathcal E) = r_\C(\mathcal D)+1$, which proves the proposition. 
\end{proof}

 To finish the proof of Theorem~\ref{thm:RR-metrizedcomplexes}, note that
 $$\deg^+(\mathcal D- \mathcal N) = \deg(\mathcal D -\mathcal N) - \deg^-(\mathcal D-\mathcal N) = 
 \deg(\mathcal D) - g+1 + \deg^+(\mathcal N-\mathcal D)$$
 and observe that $\mathcal N-\mathcal D = \mathcal K -\mathcal D - (\mathcal K -\mathcal N)$.
 If $\mathcal N$ is linearly equivalent to the moderator 
 $\mathcal M$, c.f. Corollary~\ref{cor:non-special2}, then $\mathcal K -\mathcal N$ 
 is linearly equivalent to the dual moderator $\bar \mr$, and so belongs to $\mathfrak N$. 
 Thus, by Proposition~\ref{prop:magicformula}, we have 
 \begin{align*}
  r_\C(\mathcal D)  
  &= \deg(\mathcal D) - g+1 + \min_{\mathcal N \in \mathfrak N}\,\, \deg^+(\mathcal K - \mathcal D - 
  (\mathcal K -\mathcal N))-1\\
  &= \deg(\mathcal D) - g+1 + \min_{\mathcal N' \in \mathfrak N}\,\, \deg^+(\mathcal K - \mathcal D - \mathcal N')-1 = \deg(\mathcal D) - g+1 + 
  r_\C(\mathcal K - \mathcal D).
 \end{align*}

\section{The specialization map and specialization inequality} \label{sec:specialization}

Let $\K$ be a complete and algebraically closed
non-Archimedean field with non-trivial absolute value $|.|$, $R$ the valuation ring of $\mathbb K$, and 
$\k$ its (algebraically closed) residue field. Let ${\mathcal G} = {\rm val}(\mathbb K^\times)$ be the value group of $\mathbb K$.  
Let $X$ be a smooth, proper, connected curve over $\mathbb K$ and 
let $X^{\rm an}$ be the Berkovich analytic space associated to $X$.  (We assume the reader is familiar with the theory of Berkovich
analytic curves, see e.g. \cite[Section 5]{BPR} which contains everything we need.)

Our first goal will be to define a metrized complex $\C\fX$ associated to a strongly semistable $R$-model ${\mathfrak X}$ for $X$.
We then define a specialization homomorphism from $\Div(X)$ to $\Div(\C\fX)$ and prove a specialization inequality which refines 
Lemma 2.8 and Corollary 2.11 from \cite{bakersp}.
Finally, we give some applications to specialization of canonical divisors, Brill-Noether theory, and Weierstrass points as in {\em loc. cit.} 


\subsection{The metrized complex associated to a semistable model}

Recall that a connected reduced algebraic curve over $\kappa$ is called 
{\em semistable} if all of its singularities are ordinary double points, and is called
{\em strongly semistable} if in addition its irreducible components are all smooth.
A {\em (strongly) semistable model} for $X$ is a flat and integral proper relative curve $\fX$ over $R$
whose generic fiber is isomorphic to $X$ and whose special fiber $\bar{\fX}$ is a (strongly) semistable curve. 

\medskip

Given a semistable model $\fX$ for $X$, there is a canonical associated reduction map ${\rm red} : X(\K) \to \bar{\fX}(\kappa)$ which is defined using the natural bijection between $X(\K)$ and $\fX(R)$.
This extends naturally to a map ${\rm red} : X^{\an} \to \bar{\fX}$.

\medskip

We define a metrized complex associated to a strongly semistable model $\fX$ for $X$ as follows.
Let $G$ be the dual graph of $\bar{\fX}$, so that vertices of $G$ correspond to irreducible components of 
$\bar{\fX}$ and edges of $G$ correspond to intersections between irreducible components.
If $x^e$ is the ordinary double point of $\bar{\fX}$ corresponding to an edge $e$ of $G$, 
the {\em formal fiber} $\red^{-1}(x^e)$ is isomorphic to an open annulus ${\mathbf A}$. 
We define the {\em length of the edge $e$} to be the length of the skeleton of ${\mathbf A}$, 
i.e., the {\em modulus} $\log(b)-\log(a)$ of ${\mathbf A} \cong \{ x \in ({\mathbf A}^1)^{\an} \; | \; a < |T|_x < b \}$.
(The modulus of an open annulus is well-defined independent of the choice of such an analytic isomorphism.)
In this way, we have defined a metric graph $\Gamma = \Gamma_{\fX}$ associated to $\fX$ together with a model $G$.
The irreducible components $C_v$ of $\bar{\fX}$ correspond bijectively to the vertices $v$ of $G$, 
and we let ${\mathcal A}_v \subset C_v$ be the finite set of double points of $\bar{\fX}$ contained in $C_v$, so that there is a natural
bijection between ${\mathcal A}_v$ and the edges of $G$ incident to $v$.  In this way we have defined a metrized complex
$\C\fX$ canonically associated to $\fX$.  

\medskip

One can show that essentially every metrized complex of curves comes from this construction.
The following result is proved in \cite[Theorem 3.24]{ABBR}.

\begin{thm} \label{thm:graph.to.curve}
  Let $\C$ be a metrized complex of $\kappa$-curves whose edge lengths are
  contained in the value group of $\K$.  There exists a
  smooth, proper, connected curve $X$ over $\K$ and a semistable
  model $\fX$ for $X$ such that $\C \cong \C\fX$.  
\end{thm}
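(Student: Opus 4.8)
\emph{Proof proposal.} The plan is to lift to $R$ the nodal curve canonically attached to $\C$, prescribing at each node the smoothing parameter whose valuation is the corresponding edge length. After subdividing loop edges of $G$ if necessary (which does not change $\Gamma$ and only enlarges $G$ by vertices carrying genus zero curves), we may assume $G$ has no loops. As in \S\ref{sec:mcnodal}, form the connected proper nodal curve $X_0$ over $\kappa$ by gluing the $C_v$, introducing one ordinary double point $n_e$ for each edge $e=\{u,v\}$ via the identification $x^e_u=x^e_v$; since $G$ is loopless, the two branches at each $n_e$ lie on distinct smooth components, so $X_0$ is strongly semistable with dual graph $G$ and components $C_v$. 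Recall that for a strongly semistable $R$-model the length of $e$ in $\C\fX$ equals $\mathrm{val}(\varpi_e)$, where $xy=\varpi_e$ with $\varpi_e\in\mathfrak{m}_R$ is a local equation for $n_e$, and that the dual graph, the curves $C_v$, and the marked-point sets $\mathcal A_v$ of $\C\fX$ are read off directly from $\bar{\fX}$. Thus it suffices to produce a proper flat strongly semistable $R$-model $\fX$ with $\bar{\fX}\cong X_0$ in which $n_e$ has a local equation $xy=\varpi_e$ for a chosen $\varpi_e\in\mathfrak{m}_R$ with $\mathrm{val}(\varpi_e)=\ell(e)$; such $\varpi_e$ exist precisely because $\ell(e)\in\G=\mathrm{val}(\K^\times)$.

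Since only finitely many $\varpi_e$ occur, I would first descend to a complete Noetherian local ring $R_0$ with residue field $\kappa$ and a local homomorphism $R_0\to R$ through which each $\varpi_e$ factors, say $\varpi_e$ being the image of some $t_e\in R_0$ (such an $R_0$ exists by the structure theory of complete local rings). Deformations of the nodal curve $X_0$ are unobstructed --- the relevant obstruction group vanishes for dimension reasons because $X_0$ is a curve --- and the tangent space of the deformation functor surjects onto $\bigoplus_e T_e$, where $T_e$ is the one-dimensional space of smoothings of $n_e$, with coordinate the parameter $t$ in its local equation $xy=t$ (surjectivity again following from the vanishing of $H^2$ of a curve, so that there is no local-to-global obstruction). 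Proceeding by induction over the Artinian quotients $R_0/\mathfrak{m}_{R_0}^n$, I would obtain a proper flat formal deformation $\widehat{\fX}_0$ of $X_0$ over $R_0$ in which $n_e$ has local equation $xy=t_e$.

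By Grothendieck's existence theorem $\widehat{\fX}_0$ algebraizes to a proper flat $R_0$-scheme $\fX_0$ with special fiber $X_0$, which is strongly semistable. Put $\fX:=\fX_0\times_{R_0}R$; this is a proper flat strongly semistable $R$-model in which $n_e$ has local equation $xy=\varpi_e$. Its generic fiber $X:=\fX_{\K}$ is smooth (over $\K$ each node equation $xy=\varpi_e$ with $\varpi_e\neq0$ defines a smooth affine curve, and $\fX$ is smooth over $R$ away from the nodes), proper, and connected (the latter because $X_0$ is connected and $\fX$ is flat). Unwinding the definition of $\C\fX$: its dual graph is $G$, its vertex curves are the $C_v$, its set of marked points at $v$ is the preimage in $C_v$ of the nodes --- which is $\mathcal A_v$ by construction --- and its edge lengths are $\mathrm{val}(\varpi_e)=\ell(e)$. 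Hence $\C\fX\cong\C$.

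I expect the main obstacle to be the combination of two issues: prescribing the smoothing parameters $\varpi_e$ \emph{exactly}, which is what forces $\ell(e)\in\G$ and which rests on the surjectivity of the deformation tangent space onto the node directions; and performing deformation theory and algebraization over $R$, which is in general neither Noetherian nor discretely valued, so that the $\mathfrak{m}$-adic induction and Grothendieck's existence theorem do not apply to $R$ directly and one must work over a Noetherian subring $R_0$ and then base change. A more Berkovich-theoretic alternative would be to glue smooth formal lifts of the $C_v\setminus\mathcal A_v$ to standard formal models of open annuli of prescribed modulus $\ell(e)$, verify that the gluing obstructions vanish using the same curve-theoretic vanishing statements, and algebraize the resulting proper formal $R$-curve; here too, prescribing the modulus is exactly the step that uses $\ell(e)\in\G$.
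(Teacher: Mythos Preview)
The paper does not give its own proof of this result; it simply cites \cite[Theorem 3.24]{ABBR}, so there is no in-paper argument to compare against. Your deformation-theoretic strategy is sound and would succeed, but one step is underjustified: the existence of a complete Noetherian local $R_0$ with residue field $\kappa$ together with a local homomorphism $R_0\to R$ hitting the chosen $\varpi_e$. The phrase ``by the structure theory of complete local rings'' is too quick here, since Cohen's structure theorem is stated for complete \emph{Noetherian} local rings and $R$ is in general highly non-Noetherian. What is actually needed is a map $\Lambda\to R$ from a Cohen ring $\Lambda$ for $\kappa$ (so $\Lambda=\kappa$ in equal characteristic and $\Lambda=W(\kappa)$ in mixed characteristic), after which $R_0=\Lambda[[t_e : e\in E]]$ with $t_e\mapsto\varpi_e$ works. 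Such a map $\Lambda\to R$ does exist --- via a coefficient-field lifting argument using that $R$ is Henselian with algebraically closed (hence perfect) residue field, or in mixed characteristic via the universal property of Witt vectors applied to the $p$-adically complete, $p$-torsion-free ring $R$ --- but this is a genuine extra input, not a consequence of the usual Cohen theorem. Once that map is supplied, your Artinian induction, the local-to-global surjectivity onto the node deformation spaces, and Grothendieck algebraization all go through as you describe. The argument in \cite{ABBR} proceeds along the formal/analytic gluing lines you sketch at the end, working directly over $R$ with admissible formal schemes and thereby sidestepping the Noetherian descent entirely; your route has the virtue of staying within classical algebraic deformation theory, at the cost of this coefficient-ring step.
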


\subsection{The metrized complex associated to a semistable vertex set}

It is sometimes useful to define skeleta and metrized complexes in terms of semistable vertex sets rather than semistable models.
(By \cite[Theorem 5.38]{BPR}, there is a bijective correspondence between the latter two objects.)
A {\em semistable vertex set} for $X^{\an}$ is a finite set $V$ of type 2 points of $X^{\an}$ such that the complement of $V$ in $X^{\an}$ is isomorphic
(as a $\K$-analytic space) to the disjoint union of a finite number of open annuli and an infinite number of open balls.  (Such a disjoint union is called
the {\em semistable decomposition} of $X^{\an}$ associated to $V$.)
Any finite set of type 2 points of $X^{\an}$ is contained in a semistable vertex set
(\cite[Proposition 5.27]{BPR}).
The {\em skeleton} $\Gamma = \Sigma(X^{\an},V)$ of $X^{\an}$ with respect to a semistable vertex set $V$ is the union (inside $X^{\an}$) of $V$ and the skeletons of
each of the open annuli in the semistable decomposition associated to $V$.
Using the canonical metric on the skeletons of these open annuli, $\Gamma$ can be naturally viewed as a (finite) {metric graph} 
contained in $X^{\an}$ (see \cite[Definition 5.17]{BPR} for more details).
The skeleton $\Gamma$ comes equipped with a natural model $G$ whose vertices are the points of $V$ and whose edges correspond bijectively to the open annuli in the semistable decomposition associated to $V$.  A semistable vertex set $V$ is called {\em strongly semistable} if the graph $G$ has no loop edges.
Every semistable vertex set is contained in a strongly semistable vertex set.

\medskip

Given a strongly semistable vertex set $V$ for $X^{\an}$, 
there is a canonical corresponding metrized complex $\C V$ of $\kappa$-curves.
Indeed, we have already defined a metric graph $\Gamma = \Sigma(X^{\an},V)$ corresponding to $V$, together with a corresponding model $G$.
For $v \in V$, let $C_v$ be the unique smooth projective curve over $\kappa$ with function field 
$\widetilde{{\mathcal H}(v)}$.
(For $x \in X^{\an}$ of type 2, recall that the residue field $\widetilde{{\mathcal H}(x)}$ of the completed residue field ${\mathcal H}(x)$ of $x$ has transcendence degree one 
over $\kappa$.)
It remains to specify, for each $v \in V$, a bijection $\psi_v$ from the edges of $G$ incident to $v$ to a subset 
$\mathcal A_v$ of $C_v(\kappa)$.  Given such an edge $e$, we define $\psi(e)$ to be the point of $C_v(\kappa)$
corresponding to the tangent direction at $v$ defined by $e$.
(Recall from \cite[Paragraph 5.67]{BPR} that if $x$ is of type 2, there is a canonical bijection between $T_x$ and the set of discrete valuations on 
$\widetilde{{\mathcal H}(x)} = \kappa(C_x)$ which are trivial on $\kappa$.)

\begin{remark}
\label{rmk:modelcompatibility}
Passing to a larger semistable vertex set is compatible with linear equivalence of divisors and does not change the rank of divisors. One can thus associate  a canonical group $\Pic \C X^{\an}$ to $X^{\an}$, defined as 
$\Pic(\C V)$ for any semistable vertex set $V$, together with a canonical rank function $r : \Pic \C X^{\an} \to \ZZ$.
\end{remark}

One can define in a similar way semistable vertex sets and skeleta for an affine curve $X'$ (see \cite[Definition 5.19]{BPR}).
In this case, one must also allow a finite number of punctured open balls in the semistable decomposition and the skeleton is a topologically finite 
but not necessarily finite length metric graph -- it will contain a finite number of infinite rays corresponding to the points of 
$X \setminus X'$, where $X$ is the projective completion of $X'$.

\subsection{Specialization of divisors from curves to metrized complexes}
There is a canonical embedding of $\Gamma_{\fX}$ in the
Berkovich analytification $X^{\an}$ of $X$, as well as a canonical retraction map $\tau : X^{\an} \to \Gamma_{\fX}$.
In addition, there is a canonical reduction map ${\rm red} : X^{\an} \to \bar{\fX}$ sending $X(\K)$
surjectively onto the closed points of $\bar{\fX}$.
The retraction map $\tau$ induces by linearity a specialization map $\tau_* : \Div(X) \to \Div(\Gamma_{\fX})$
which is studied in \cite{bakersp}.
We can promote this to a map $\tau_*^{\C\fX}$ whose target is the larger group $\Div(\C\fX)$ as follows. 
If $P \in X^{\an}$ satisfies $\tau(P)=v \in V$ then either $P$ is the unique point of
$X^{\an}$ with $\red(P)$ equal to the generic point of $C_v$,
or else $\red(P)$ is a nonsingular closed point of $C_v$.  The map $\tau^{\C\fX}_* : \Div(X) \to \Div(\C\fX)$ is obtained by linearly extending the map
$\tau^{\C\fX} : X(\K) \to \Div(\C\X)$ defined by
\[
\tau^{\C\fX}(P) = \left\{
\begin{array}{ll}
\tau(P) & \tau(P) \not\in V \\
{\rm red}({P}) & \tau(P) \in V. \\
\end{array}
\right.
\]

\subsection{Reduction of rational functions and specialization of principal divisors}
\label{section:ReductionOfRationalFunctions}

In this section, we show that if $\mathcal D$ is a principal divisor on $X$, then $\tau^{\C\fX}_*(\mathcal D)$ is a principal divisor on $\C\fX$.
In fact, we show a more precise result (Theorem~\ref{thm:PLspecialization} below).

\medskip

Let $x \in X^{\an}$ be a point of type 2.
Given a nonzero rational function $f$ on $X$, choose $c \in \K^\times$ such that $|f(x)|=|c|$.
Define $f_x \in \kappa(C_x)^\times$ to be the image of $c^{-1}f$ in
$\widetilde{{\mathcal H}(x)} \cong \kappa(C_x)$.
Although $f_x$ is only well-defined up to multiplication by an element of $\kappa^\times$,
its divisor $\div(f_x)$ is canonical and the resulting map $\Prin(X) \to \Prin(C_x)$ is a homomorphism. We call $f_x$ the {\em normalized reduction} of $f$.

\medskip

If $H$ is a $\K$-linear subspace of $\K(X)$, the collection of all possible reductions of nonzero elements of $H$, together with $\{ 0 \}$,
forms a $\k$-vector space $H_x$.  For later use we note the following elementary lemma, which says that $\dim H = \dim H_x$:

\begin{lemma}\label{lem:dimensionreduction} Let $X$ be a smooth proper curve over $\K$, and $x\in X^{\mathrm{an}}$  
a point of type 2. The $\k$-vector space $H_x$ defined by the reduction  to $\widetilde{\H(x)}$ 
of an $(r+1)$-dimensional $\K$-subspace 
$H \subset \K(X)$  has dimension $r+1$. 
\end{lemma}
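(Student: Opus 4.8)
The plan is to establish the two inequalities $\dim_\k H_x \le \dim_\K H$ and $\dim_\k H_x \ge \dim_\K H$ separately. For the first inequality, I would argue that any $\k$-linearly independent collection of normalized reductions $\bar f_1,\dots,\bar f_m$ (with $\bar f_i = (f_i)_x$, $f_i \in H$ chosen with $|f_i(x)| = 1$ after scaling by a suitable element of $\K^\times$) lifts to a $\K$-linearly independent collection $f_1,\dots,f_m \in H$. Indeed, if $\sum \lambda_i f_i = 0$ with $\lambda_i \in \K$ not all zero, one may scale so that $\max_i |\lambda_i| = 1$; reducing the relation modulo the maximal ideal of the valuation ring of $\mathcal H(x)$ then yields a nontrivial $\k$-linear dependence among those $\bar f_i$ whose coefficient $\bar\lambda_i$ is nonzero, contradicting independence. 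This uses only that the completed residue field $\mathcal H(x)$ carries a valuation extending that of $\K$ and that reduction of functions is compatible with reduction of scalars.

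For the reverse inequality, the key point is that reduction does not collapse a basis: given a $\K$-basis $f_1,\dots,f_{r+1}$ of $H$, I want to produce $r+1$ elements of $H$ whose normalized reductions are $\k$-linearly independent in $\k(C_x)$. The natural device is a ``Gram--Schmidt'' or successive-minima argument with respect to the norm $f \mapsto |f(x)|$ on the finite-dimensional $\K$-space $H$: since $\K$ is non-trivially valued and algebraically closed (hence $|\K^\times|$ is divisible), one can choose a basis $g_1,\dots,g_{r+1}$ of $H$ that is \emph{orthogonal} for this norm (an ``adapted'' or ``strictly orthogonal'' basis), meaning $|\sum \lambda_i g_i|_x = \max_i |\lambda_i|\,|g_i|_x$ for all $\lambda_i \in \K$. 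Rescaling each $g_i$ by an element of $\K^\times$ so that $|g_i|_x = 1$, the normalized reductions $\bar g_1,\dots,\bar g_{r+1}$ are then automatically $\k$-linearly independent: any nontrivial $\k$-relation $\sum \bar\mu_i \bar g_i = 0$ would, upon lifting the $\bar\mu_i$ to units or zero in $R$, force $|\sum \mu_i g_i|_x < 1 = \max|\mu_i|$, contradicting orthogonality.

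The main obstacle — and the only substantive input — is the existence of an orthogonal (equivalently, ``strictly compatible'') basis of the finite-dimensional $\K$-vector space $H$ for the norm $\|f\| := |f(x)|$. This is where algebraic closedness (or at least divisibility of the value group, which guarantees the norm takes values in $|\K^\times|$ and so every vector can be rescaled to norm $1$) and completeness of $\K$ enter; over a spherically complete or maximally complete field this is standard, and for $\K$ complete and algebraically closed it follows from the theory of normed vector spaces over non-Archimedean fields (e.g.\ one builds the basis greedily, at each stage picking a vector realizing the relevant quotient norm, which is possible because $|\K^\times|$ is divisible). Once this orthogonal basis is in hand, both inequalities are immediate from the valuation-reduction bookkeeping sketched above, and combining them gives $\dim_\k H_x = \dim_\K H = r+1$.
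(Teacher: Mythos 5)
Your proposal is correct and follows essentially the same route as the paper: the upper bound via the observation that a $\K$-linear dependence (after scaling the coefficients to have maximum absolute value $1$) reduces to a nontrivial $\k$-linear dependence among the reductions, and the lower bound by producing an orthogonal basis of $(H, |\cdot|_x)$ whose norm-one representatives have $\k$-independent reductions. The existence of that orthogonal basis, which you correctly identify as the only substantive input, is exactly what the paper supplies by the explicit greedy construction $g_{i+1} = c_{i+1}^{-1}\bigl(f_{i+1} - \sum_{j \leq i} a_j g_j\bigr)$ with the correction term chosen to minimize the norm.
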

\begin{proof} 
The inequality $\dim_\k(H_x) \leq r+1$ follows from the observation that 
if $f_0, \dots, f_n$ are linearly dependent in $H$, with $|f_i|=1$ for all $i$, then the reductions
$f_{0,x}, \dots, f_{n,x}$ are linearly dependent in $H_x$.

To prove $\dim_\k(H_x) \geq r+1$, choose a $\K$-basis $f_0, \dots, f_r$ for $H$ consisting of elements of norm $1$
and let $f_{0,x}, \dots, f_{r,x}$ in $H_x$ be the corresponding reductions.
It is not true that $f_{0,x}, \dots, f_{r,x}$ necessarily form a basis of $H_x$: for example, if $f$ and $g$ are linearly independent functions of norm $1$ and $c$ is any scalar 
with $|c|<1$, then $f$ and $f+cg$ are linearly independent but they have the same reduction.  
However, by the following procedure one can construct another basis $g_0, \dots, g_r$ of $H$ with $|g_i|=1$ for all $i$
such that the reductions $g_{i,x}$ are linearly independent over $\k$. Define $g_0=f_0$, and recursively define $g_{i+1}$ as follows: 
among all the rational functions of the form $f_{i+1} - \sum_{j\leq i} a_j g_j$ with $|a_j|\leq 1$ for all $j$, choose one, call it $h_{i+1}$, whose norm is minimal. 
Note that $|f_{i+1} - \sum_{j\leq i} a_j g_j|\leq 1$ so such an element exists by compactness.
Let $c_{i+1}$ be a scalar with $|c_{i+1}|=|h_{i+1}|$ and define $g_{i+1} = c_{i+1}^{-1} h_{i+1}$. 
It is clear that the $\{g_i\}$ form a basis of $H$. We claim that the reductions $g_{i,x}$ form a basis of $H_x$. 
Indeed, if not then there exists a minimal index $i$ and scalars $b_j$ with $|b_j| \leq 1$ for $j=0, \ldots, i$ such that 
$|g_{i+1} - \sum_{j\leq i} b_j g_{j}|<1$. But this implies that
$|f_{i+1} - \sum_{j\leq i} (a_j+c_{i+1}b_j)g_j| < c_{i+1}$,
contradicting the choice of $h_{i+1}$.
\end{proof}

We recall the following useful result from \cite[Theorem 5.69]{BPR}.

\begin{thm}[Slope Formula]  \label{thm:PL}
Let $f$ be a nonzero rational function on $X$, let $X'$ be an open affine subset of $X$ on which 
$f$ has no zeros or poles, and let $F = -\log |f|: (X')^{\an}\to\RR$.  
Let $V$ be a semistable vertex set for $X'$, and let $\Sigma = \Sigma(X',V)$ be the corresponding skeleton.  Then:
  \begin{enumerate}
  \item[$(1)$] $F = F\circ\tau_\Sigma$ where $\tau_\Sigma:(X')^{\an}\to\Sigma$ is the retraction.
  \item[$(2)$]  $F$ is piecewise linear with integer slopes, and $F$ is linear on each edge of $\Sigma$. 
  \item[$(3)$]  If $x$ is a type-$2$ point of $X^{\an}$ and $\vec{\nu}\in T_x$ corresponds to the discrete valuation $\ord_v$ on $\kappa(C_x)$, then
    $d_{\vec \nu} F(x) = \ord_v(f_x)$.
  \item[$(4)$]  $F$ is harmonic at all $x \in {\mathbf H}(X^{\an})$. 
  \item[$(5)$]  Let $x\in X \setminus X'$, let $e$ be the ray in $\Sigma$ whose closure in
    $X^{\an}$ contains $x$, let $y\in V$ be the other endpoint of $e$, and
    let $\vec{\nu }\in T_y$ be the tangent direction represented by $e$.  Then $d_{\vec \nu}
    F(y) = \ord_x(f)$.
  \end{enumerate}
\end{thm}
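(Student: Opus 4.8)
The plan is to reduce the statement to the local structure of $(X')^{\an}$ afforded by the semistable decomposition associated to $V$ — a disjoint union of open annuli (one per edge of $\Sigma$), punctured open balls (one per point of $X\setminus X'$), and open balls — and then to carry out two explicit local computations, one on annuli and one on (punctured) balls. The crucial observation is that $f$, having no zero or pole on $X'$, restricts to a unit on each of these pieces. Since enlarging $V$ only refines $\Sigma$, does not change $F$, and leaves all of $(1)$--$(5)$ stable, in the analysis of $(3)$ I would first enlarge $V$ so that the type-$2$ point $x$ in question becomes a vertex of $\Sigma$.

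First I would settle $(1)$, $(2)$ and $(5)$. On an open ball $B$ attached to $\Sigma$ at a vertex $y$, a unit $f$ satisfies $|f|\equiv|f(y)|$: this follows from multiplicativity of the seminorm $y$ together with the fact that $1/f$ is again analytic on $B$, so $F$ is constant on $B$; combined with the (easier) fact that $|f|$ depends only on the retraction on the annular and punctured‑ball pieces, this gives $(1)$. On an open annulus $A\cong\{a<|T|<b\}$, write $f=\sum_{n\in\ZZ}a_nT^n$; since $f$ is a unit on $A$ its Newton polygon over $A$ is a single segment, so there is a unique index $N$ with $|a_N|\,r^N>|a_m|\,r^m$ for all $m\neq N$ and all $a<r<b$, whence $|f|(x_r)=|a_N|\,r^N$. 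Parametrizing the skeleton of $A$ by $s=-\log r$, this says $F$ is affine along that edge with integer slope $N$; globally $F$ is then piecewise linear because $\Sigma$ is finite and $F=F\circ\tau_\Sigma$. Running the same computation on a punctured‑ball neighborhood of a point $x\in X\setminus X'$, and noting that the corresponding index $N$ is exactly the order of the Laurent expansion of $f$ at $x$, gives $(5)$.

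The heart of the proof is assertion $(3)$. With $x$ a vertex, a tangent direction $\vec\nu\in T_x$ corresponds under the canonical bijection to a closed point $\xi$ of $C_x$, i.e.\ to a discrete valuation $\ord_\xi$ on $\widetilde{\H(x)}=\kappa(C_x)$. If $\vec\nu$ points into an open ball then $d_{\vec\nu}F(x)=0$ by the ball computation, while $f_x$ is a nonzero constant near $\xi$ (because $f$ has constant absolute value on that ball), so $\ord_\xi(f_x)=0$ as well. If $\vec\nu$ points along an edge, realize that edge by an annulus $A$ as above, rescale $T$ so that $x$ is the Gauss point with $|T|_x=1$, and identify $\xi$ with the zero locus of $t:=\widetilde T$ on $C_x$. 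Choosing $c\in\K^\times$ with $|c|=|a_N|$, one computes the normalized reduction $f_x=\widetilde{a_N/c}\cdot t^N$ (the non‑dominant monomials of $f/c$ reduce to $0$), so $\ord_\xi(f_x)=N$; on the other hand the edge computation of the previous step gives $d_{\vec\nu}F(x)=N$. This proves $(3)$. Finally $(4)$ follows by cases: at a type‑$2$ point, $(3)$ gives $\sum_{\vec\nu\in T_x}d_{\vec\nu}F(x)=\sum_{\xi}\ord_\xi(f_x)=\deg\div(f_x)=0$ since $C_x$ is a smooth proper curve; at a type‑$3$ point there are exactly two tangent directions, lying on one edge along which $F$ is affine, so the two directional derivatives cancel; and a type‑$4$ point lies in an open ball on which $F$ is constant.

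The main obstacle — the only step that is not formal once the structure theory of Berkovich curves is available — is assertion $(3)$: reconciling the identification of tangent directions at a type‑$2$ point with points of the residue curve $C_x$ against the Newton‑polygon description of $|f|$ on the adjacent annulus. It is precisely here that the definition of $f_x$ as the reduction of a scalar multiple of $f$ with $|f(x)|=1$ enters in an essential way. (Alternatively one could deduce $(1)$--$(5)$ all at once from Thuillier's non‑Archimedean Poincar\'e--Lelong formula $dd^c\!\left(-\log|f|\right)=\delta_{\div(f)}$ on $X^{\an}$, but that argument merely relocates the same local computation inside his potential theory on Berkovich curves.)
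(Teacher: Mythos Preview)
The paper does not prove this result; it is quoted from \cite[Theorem~5.69]{BPR} and used as a black box. Your outline is essentially the proof given in that reference: use the semistable decomposition to reduce to the behavior of a unit on open balls, open annuli, and punctured balls; extract $(1)$, $(2)$, $(5)$ from the dominant-monomial (Newton polygon) description of $|f|$ on an annulus; and prove the key assertion $(3)$ by matching the dominant index $N$ against $\ord_\xi$ of the normalized reduction, via the canonical identification of $T_x$ with closed points of $C_x$. The alternative route you mention, through Thuillier's non-Archimedean Poincar\'e--Lelong formula, is also the motivation cited in the paper's overview. One small imprecision worth flagging: in the ball case of $(3)$, the statement ``$f_x$ is a nonzero constant near $\xi$ because $f$ has constant absolute value on that ball'' has the implication running the wrong way; what you need is that the constant term of the power-series expansion in a uniformizer at $\xi$ has strictly dominant norm (which is exactly the unit condition), so that the reduction is a unit in the local ring at $\xi$.
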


To each nonzero rational function $f$ on $X$ and each strongly semistable model $\fX$ for $X$, one can associate a rational function 
$\f$ on $\C\fX$ whose $\Gamma$-part $f_{\Gamma}$ is the restriction to 
$\Gamma=\Gamma_{\fX}$ of the piecewise linear function $F = \log|f|$ on $X^{\an}$
and whose $C_v$-part is the normalized reduction $f_v$ (which is well-defined up to multiplication by a non-zero scalar).\footnote{We take $F=\log|f|$ instead of $-\log|f|$ because our convention is that $\ord_{u}(F)$ is the sum of the outgoing slopes of $F$ at $u$.
One could equally well take the opposite convention, defining $\ord_{u}(F)$ to be {\em minus} the sum of the 
outgoing slopes of $F$ at $u$, and then defining $F$ to be $-\log|f|$.  Such a modification would also necessitate a change of sign in our definition of $\div_v(F)$.}

As an application of Theorem~\ref{thm:PL}, we obtain the following important formula:

\begin{thm} \label{thm:PLspecialization}
For every nonzero rational function $f$ on $X$,
\[
\tau_*^{\C\fX}(\div(f)) =  \div(\f).
\]
\end{thm}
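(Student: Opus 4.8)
The plan is to verify the identity coefficient by coefficient on $|\C\fX|$, treating the $\Gamma$-part and the $C_v$-parts separately. Since multiplying $f$ by an element of $\K^{\times}$ changes neither side, for any fixed vertex $v$ we may normalize so that $|f(\eta_v)|=1$, where $\eta_v$ is the unique point of $X^{\an}$ reducing to the generic point of $C_v$; then the $C_v$-part $f_v$ of $\f$ is literally the reduction of $f$ in $\widetilde{{\mathcal H}(\eta_v)}=\kappa(C_v)$. Write $F=-\log|f|$ as in Theorem~\ref{thm:PL}, so that $f_\Gamma=-F|_\Gamma$; consequently $\mathrm{slp}_e(f_\Gamma)=-d_{\vec{\nu}_e}F(\eta_v)$ for the tangent direction $\vec{\nu}_e$ at $\eta_v$ along an edge $e$, and $\ord_u(f_\Gamma)=-\sum_{\vec{\nu}}d_{\vec{\nu}}F(u)$, the sum running over tangent directions at $u$ lying inside $\Gamma$.

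\emph{Step 1: the $\Gamma$-part.} By the definition of the $\Gamma$-part of a divisor on a metrized complex, the $\Gamma$-part of $\tau_*^{\C\fX}(\div(f))$ equals the ordinary specialization $\tau_*(\div(f))\in\Div(\Gamma)$ (here one uses that no point of $X(\K)$ reduces to a node, so that a point $P$ with $\tau(P)=v$ has $\tau^{\C\fX}(P)=\red(P)\in C_v(\kappa)$ and contributes $\ord_P(f)$ to $\deg(D_v)$, i.e. to the coefficient of $v$), while the $\Gamma$-part of $\div(\f)$ is $\div(f_\Gamma)$. Thus it suffices to prove $\tau_*(\div(f))=\div(f_\Gamma)$. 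This is the non-Archimedean Poincar\'e--Lelong formula of Thuillier (\S5 of~\cite{BPR}, and~\cite{bakersp} in the discretely valued case), but it also follows from Theorem~\ref{thm:PL}: for $u\in\Gamma$ the type-$1$ points of $\tau^{-1}(u)$ form the disjoint union, over the tangent directions $\vec{\nu}$ at $u$ not lying in $\Gamma$, of the open balls $B_{\vec{\nu}}$ hanging off $u$; one has $\sum_{P\in B_{\vec{\nu}}}\ord_P(f)=d_{\vec{\nu}}F(u)$ (the ``local Poincar\'e--Lelong'' assertion discussed below); and summing over all $\vec{\nu}\in T_u$ and invoking harmonicity of $F$ (Theorem~\ref{thm:PL}(4)) gives $\tau_*(\div(f))(u)=-\sum_{\vec{\nu}\text{ in }\Gamma}d_{\vec{\nu}}F(u)=\ord_u(f_\Gamma)$.

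\emph{Step 2: the $C_v$-parts.} Fix $v$, normalized as above. The $C_v$-part of $\div(\f)$ is $\div(f_v)+\div_v(f_\Gamma)$ with $\div_v(f_\Gamma)=\sum_{e\ni v}\mathrm{slp}_e(f_\Gamma)(x^e_v)$ supported on $\mathcal A_v$, whereas the $C_v$-part of $\tau_*^{\C\fX}(\div(f))$ is supported on $C_v(\kappa)\setminus\mathcal A_v$ with coefficient $\sum_{P:\,\red(P)=y}\ord_P(f)$ at a smooth point $y$. So we must check: (a) for $y\in C_v(\kappa)\setminus\mathcal A_v$, one has $\sum_{P:\,\red(P)=y}\ord_P(f)=\ord_y(f_v)$; and (b) for $x^e_v\in\mathcal A_v$, one has $\ord_{x^e_v}(f_v)+\mathrm{slp}_e(f_\Gamma)=0$. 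For (a): $\red^{-1}(y)$ is an open ball whose only boundary point in $X^{\an}$ is $\eta_v$, the tangent direction at $\eta_v$ entering it corresponds under the canonical bijection $T_{\eta_v}\leftrightarrow\{\text{closed points of }C_v\}$ to $y$, and the claim follows from the local Poincar\'e--Lelong assertion together with Theorem~\ref{thm:PL}(3) at $\eta_v$, which identifies $d_{\vec{\nu}}F(\eta_v)$ with $\ord_y(f_v)$. For (b): the tangent direction at $\eta_v$ along $e$ corresponds, under the same bijection (the one used to construct $\mathcal A_v$ in $\C\fX$), to the point $x^e_v$, so Theorem~\ref{thm:PL}(3) gives $\ord_{x^e_v}(f_v)=d_{\vec{\nu}_e}F(\eta_v)=-\mathrm{slp}_e(f_\Gamma)$. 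Combining (a), (b) and Step~1 proves the theorem.

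\emph{Main obstacle.} The one point requiring real work is the local Poincar\'e--Lelong assertion: for an open ball $B$ whose only boundary point is a type-$2$ point $u$, and for $f$ a rational function on $X$, $\sum_{P\in B\cap X(\K)}\ord_P(f)=d_{\vec{\nu}_B}F(u)$. I would establish it uniformly by passing to the affine curve $X'=X\setminus(\text{zeros and poles of }f)$ and a strongly semistable vertex set $V'$ for $X'$ refining $V$ (so $\Gamma\subseteq\Sigma(X',V')$): Theorem~\ref{thm:PL}(5) converts the infinite rays of $\Sigma(X',V')$ into the orders $\ord_P(f)$ at the punctures $P$; harmonicity (Theorem~\ref{thm:PL}(4)) telescopes these contributions along the finite trees of $\Sigma(X',V')$ contained in $B$; and Theorem~\ref{thm:PL}(1) guarantees that $F$ is constant on the open balls disjoint from $\Sigma(X',V')$. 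Everything else is a direct comparison of coefficients from the definitions of $\div(\f)$, $\tau^{\C\fX}_*$, and of the curves $C_v$ and marked points $\mathcal A_v$ attached to $\C\fX$.
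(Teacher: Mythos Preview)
Your proof is correct and takes essentially the same approach as the paper's: both refine to a semistable vertex set $V'$ for $X'=X\setminus\{\text{zeros and poles of }f\}$, use parts (1), (4), (5) of the Slope Formula to telescope the contributions of the punctures along the finite trees of $\Sigma(X',V')\setminus\Gamma$, and use part (3) at each $\eta_v$ to identify directional derivatives of $F$ with orders of vanishing of $f_v$ on $C_v$. The only organizational difference is that you isolate the ``local Poincar\'e--Lelong assertion'' as a reusable lemma and then invoke it uniformly for both the $\Gamma$-part and the $C_v$-parts, whereas the paper carries out the tree-telescoping argument directly in terms of the trees $\mathcal T_i$ attached to $\Gamma$ and their root points $y_i$; the underlying computation is the same.
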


\begin{proof} 
Let $D$ be the support of $\div(f)$ in $X$. We apply Theorem~\ref{thm:PL} to $X' = X \setminus D$, the invertible rational function $f$ on $X'$, and a semistable vertex set $V'$ for $X'$ which contains $V$. Let $\Sigma =\Sigma(X'^{\an},V')$ and $\Gamma = \Sigma(X,V)$. Since $V$ is a semistable vertex set for $X$, the closure of $\Sigma \setminus \Gamma$ in $\Sigma$ is a disjoint  union of metric trees $\mathcal T_1,\dots, \mathcal T_s$. For each tree $\mathcal T_i$, denote by $x^i_1,\dots, x^i_{n_i}$ all the points in $D \subset X(\K)$ which are in the closure of rays in $\mathcal T_i$, and let $\overline{\mathcal T_i} = \mathcal T_i \cup \{x_1^i,\dots,x^i_{n_i}\}$. Denote by $y_i$ the unique point of $\Gamma\cap \overline{\mathcal T_i}$. 
In addition, if $y_i \in V$, let $z^i_j = \tau_*^{\C\fX}(x^i_j) \in C_{y_i}(\kappa)$. 

\medskip 

For each $i \in \{ 1, \ldots, s \}$, the restriction of $F= \log|f|$ to $\mathcal T_i$ is harmonic at every point of $\overline{\mathcal T_i} \setminus \{y_i, x^i_1,\dots, x^i_{n_i}\}$ by (4). By (5), $\log|f|$ is linear in a sufficiently small neighborhood of $x^i_j$ in $\mathcal T_i$, and has (constant) slope $\ord_{x^i_j}(f)$ along the unique tangent direction in this interval toward the point $y_i$.  
We infer that the order of $F|_{\mathcal T_i}$ at $y_i$ is equal to $-\sum_{1\leq j\leq n_i}\ord_{x^i_j}(f)$.  By (1), $F$ is constant on any connected component of $X^{\an} \setminus \Sigma$. By (4), the restriction $f_\Gamma$ of $F$ to $\Gamma$ satisfies $\ord_{y_i}(f_\Gamma) = - \ord_{y_i}(F|_{\mathcal T_i}) =  \sum_{1\leq j\leq n_i} \ord_{x^i_j}(f)$. Since $f_\Gamma $ is linear on each edge of $\Gamma$ by (2), it follows that $\tau_*(\div(f)) = \div(f_\Gamma)$. 

\medskip

Fix $i \in \{ 1, \ldots, s \}$ and suppose that $y_i \in V$. For any tangent direction $\vec \nu  \in T_{y_i}$, by (3) we have $d_{\vec \nu} F = - \ord_\nu(f_{y_i})$. Since $F$ is locally constant on $X^{\an} \setminus \Sigma$, we infer that $d_{\vec \nu} F = 0$ for any  $\vec \nu \in T_x$ not corresponding to a 
direction in $\Sigma$.  This shows that $\div(f_{y_i})$ is supported on $\{z^i_1,\dots, z^i_{n_i}\}\cup \mathcal A_{y_i}$.  By (3), for any point $\nu$ in $\mathcal A_{y_i}$, corresponding to a tangent direction ${\vec \nu}$, we have $d_{\vec \nu} f_\Gamma + \ord_\nu(f_{y_i}) =0$. This shows that the coefficient of $\nu$ in $\div(\f)$ is zero.  For a point $\nu \in \{z^i_1,\dots, z^i_{n_i}\}$, let $\overline{\mathcal T}_{i,\nu}$ be the closure in $\Sigma$ of the connected component of $\Sigma \setminus y_i$ which contains all the points $x^i_j$ with $z^i_j = \nu$. By (1) and (4), $F$ is harmonic at any point of $\overline{\mathcal T}_{i,\nu} \setminus (D \cup \{y_i\})$, and is linear of slope $\ord_{x^i
 _j}(f)$ in a sufficiently small neighborhood of $x^{i}_j$ in $\mathcal T_{i,\nu}$ along the unique tangent direction in this interval toward the point $y_i$. By (3), we infer that  
 $\ord_\nu(f_{y_i}) = - d_{\vec \nu}(F) = \sum_{x^i_j: \, z^i_j =\nu} \ord_{x^i_j} (f)$. We conclude that $\tau_*^{\C\fX}\div(f) = \div(\f)$.
\end{proof}

In particular, it follows that $\tau_*^{\C\fX}(\Prin(X)) \subseteq \Prin(\C\fX)$.

\subsection{The specialization inequality}

Fix a strongly semistable $R$-model ${\mathfrak X}$ for $X$ and let $\C\fX$ be the metrized complex associated to $\fX$. 
The following is a variant of Lemma 2.8 and Corollary 2.11 from \cite{bakersp}: 

\begin{thm}[Specialization Inequality]
\label{lem:BakSpecLem}
For every divisor $D \in \Div(X)$,
 \[
r_X(D) \leq r_{\mathcal {C}{\fX}}(\tau^{\mathcal C\fX}_*(D)).
 \]
\end{thm}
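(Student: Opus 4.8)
The plan is to reduce the inequality to the definition of rank on both sides. Suppose $r_X(D) = r \geq 0$ (the case $r = -1$ being trivial). We must show that for every effective divisor $\mathcal{E}$ of degree $r$ on $\mathcal{C}\fX$, the divisor $\tau^{\mathcal{C}\fX}_*(D) - \mathcal{E}$ is linearly equivalent to an effective divisor on $\mathcal{C}\fX$. The key point is to ``lift'' the effective divisor $\mathcal{E}$ to an effective divisor $E$ of degree $r$ on $X$ with $\tau^{\mathcal{C}\fX}_*(E) = \mathcal{E}$, apply the hypothesis $r_X(D) \geq r$ to obtain a rational function $f$ on $X$ with $\div(f) + D - E \geq 0$, and then specialize using Theorem~\ref{thm:PLspecialization}.

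\textbf{Lifting the test divisor.} First I would show that any effective divisor $\mathcal{E}$ of degree $r$ on $\mathcal{C}\fX$ is in the image of $\tau^{\mathcal{C}\fX}_*$ restricted to effective divisors of the same degree on $X$. It suffices to treat a single point $x \in |\mathcal{C}\fX|$. If $x$ is a graphical point (lying on an edge $e$, in the interior), then $x$ lies on the skeleton of the open annulus $\red^{-1}(x^e)$; since $\K$ is algebraically closed with value group $\mathcal{G}$, and the edge lengths of $\C\fX$ lie in $\mathcal{G}$, there is a point $P \in X(\K)$ in that annulus with $\tau(P) = x$ (one solves $|T|_P = $ the appropriate value in $|\K^\times|$). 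If $x$ is a geometric point of $C_v$, i.e. a nonsingular closed point of $C_v = C_v \setminus \mathcal{A}_v$ (or could be taken so after perturbing within a linear system, but in fact $\red$ surjects onto closed points of $\bar{\fX}$, hence onto the smooth locus), then since $\red : X(\K) \to \bar{\fX}(\kappa)$ is surjective onto closed points there is $P \in X(\K)$ with $\red(P) = x$ and $\tau(P) = v$, so $\tau^{\mathcal{C}\fX}(P) = x$. Doing this for each point of $\mathcal{E}$ with multiplicity yields the desired effective lift $E \in \Div(X)$ with $\tau^{\mathcal{C}\fX}_*(E) = \mathcal{E}$.

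\textbf{Conclusion via specialization of principal divisors.} Given such a lift $E$, since $r_X(D) = r$ and $\deg(E) = r$, there is a nonzero rational function $f$ on $X$ with $D - E + \div(f) \geq 0$. Let $\f$ be the associated rational function on $\mathcal{C}\fX$ (with $\Gamma$-part $\log|f|$ restricted to $\Gamma$ and $C_v$-parts the normalized reductions $f_v$). Applying $\tau^{\mathcal{C}\fX}_*$, which is a degree-preserving homomorphism that sends effective divisors to effective divisors, and using Theorem~\ref{thm:PLspecialization}, we get
\[
\tau^{\mathcal{C}\fX}_*(D) - \mathcal{E} + \div(\f) = \tau^{\mathcal{C}\fX}_*\bigl(D - E + \div(f)\bigr) \geq 0.
\]
Hence $\tau^{\mathcal{C}\fX}_*(D) - \mathcal{E}$ is linearly equivalent to an effective divisor on $\mathcal{C}\fX$. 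Since $\mathcal{E}$ was an arbitrary effective divisor of degree $r$, this shows $r_{\mathcal{C}\fX}(\tau^{\mathcal{C}\fX}_*(D)) \geq r = r_X(D)$.

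\textbf{Main obstacle.} The substantive step is the lifting lemma: verifying that $\tau^{\mathcal{C}\fX}_*$ hits every effective divisor of a given degree. For geometric points this is just surjectivity of the reduction map onto closed points of the special fiber (standard for proper models). For graphical points one needs that every $\mathcal{G}$-rational point of $\Gamma_{\fX}$ (and here the edge lengths — hence the relevant radii — lie in $\mathcal{G}$ automatically) is the retraction of a $\K$-point lying in the corresponding formal annulus, which follows from the explicit description of annuli and the fact that $\K$ is algebraically closed and nontrivially valued. One must also be slightly careful that $\tau^{\mathcal{C}\fX}_*$ preserves effectivity at marked points $x^e_v$: this is immediate from the definition of $\div(\f)$, whose marked-point coefficient $\ord_{x^e_v}(f_v) + \mathrm{slp}_e(f_\Gamma)$ is exactly the coefficient produced by $\tau^{\mathcal{C}\fX}_*$ applied to the $\K$-points reducing near that node. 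Everything else is formal manipulation with the definitions of rank and linear equivalence.
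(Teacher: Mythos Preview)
Your overall strategy is the right one and matches the paper's, but the lifting step has a genuine gap: it is \emph{not} true that every effective divisor $\mathcal E$ on $\C\fX$ lies in the image of $\tau^{\C\fX}_*$. There are two obstructions. First, a graphical point $x$ in the interior of an edge lies in $\tau(X(\K))$ only if its distance to the endpoints belongs to the value group $\mathcal G = \mathrm{val}(\K^\times)$; since $\K$ algebraically closed only forces $\mathcal G$ to be divisible (e.g.\ $\mathcal G = \QQ$ for $\K = \CC_p$), a point at irrational distance from the vertices cannot be hit. Your parenthetical ``the edge lengths --- hence the relevant radii --- lie in $\mathcal G$ automatically'' conflates the length of the edge with the position of an arbitrary point on it. Second, and more decisively, the marked points $x^e_v \in \mathcal A_v$ are \emph{never} in the image of $\tau^{\C\fX}$: by construction, if $\tau(P)=v$ then $\red(P)$ lands in the smooth locus $C_v \setminus \mathcal A_v$, and if $\tau(P)\notin V$ then $\tau^{\C\fX}(P)$ is graphical. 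So an effective $\mathcal E$ supported at a node has no effective lift at all.

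The paper closes this gap by invoking the theory of rank-determining sets (Theorem~\ref{thm:f-width} in the appendix): one chooses, for each $v$, a set $\R_v \subset C_v(\k)\setminus \mathcal A_v$ of size $g_v+1$, and proves that $\R = \bigcup_v \R_v$ is rank-determining in $\C\fX$. Then in computing $r_{\C\fX}(\tau^{\C\fX}_*(D))$ one only needs to test against effective $\mathcal E$ supported on $\R$, and every such $\mathcal E$ \emph{does} lift (by surjectivity of reduction onto smooth closed points). After that restriction, your argument via Theorem~\ref{thm:PLspecialization} goes through verbatim. So the missing ingredient is precisely the rank-determining machinery, which is not a technicality here but the substantive point that makes the proof work.
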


\begin{proof}
Let $\mathcal D := \tau^{\C\fX}_*(D)$. 
It suffices to prove that if $r_X(D) \geq k$, then $r_{\C{\fX}}(\mathcal D) \geq k$ as well.

\noindent The base case $k = -1$ is obvious.  Now suppose $k = 0$, so that $r_X(D) \geq 0$.
Then there exists an effective divisor $E \in \Div(X)$ with
$D - E \in \Prin(X)$.  Since $\tau^{\C\fX}_*$ is a homomorphism and 
takes principal (resp. effective) divisors on $X$ to principal (resp. effective) 
divisors on $\C{\fX}$, we have $\mathcal D = \tau_*^{\C\fX}(D) \sim \tau_*^{\C\fX}(E) \geq 0$, so that 
$r_{\C{\fX}}(\mathcal D) \geq 0$ as well.

\medskip

We may therefore assume that $k \geq 1$. For each $v\in V$, let  
$\R_v \subset C_v(\k) \setminus \mathcal A_v$ be a subset of size $g_v+1$ and denote by $\R$ the union of the $\R_v$. 
By Theorem~\ref{thm:f-width}, $\R$ is a rank-determining set in $\C$.
Let $\mathcal E$ be an effective divisor of degree $k$ with support in $\R$. For each $x \in {\rm supp}(\mathcal E)$, there exists a point $P \in X(\K)$ whose reduction is $x$,
so there is an effective divisor $E$ of degree $k$ on $X$ with $\tau^{\C\fX}_*(E) = \mathcal E$.
By our assumption on the rank of $D$, $r_X(D - E) \geq 0$, and so $r_{\C{\fX}}(\mathcal D - \mathcal E) \geq 0$. 
Since this is true for any effective divisor of degree $k$ with support in $\R$, and since $\R$ is rank-determining,    we infer that $r_{\C\fX}(\mathcal D) \geq k$ as desired.
\end{proof}

\begin{remark}
There are many examples where the inequality in Theorem~\ref{lem:BakSpecLem} can be strict.
For example, if $g_v = 0$ for all $v \in V$ then 
$r_{\mathcal {C}{\fX}}(\tau^{\mathcal C\fX}_*(D)) = r_{\Gamma}(\tau_*(D))$
and thus the examples of strict inequality from \cite{bakersp} apply.
At the other extreme, if $C = \bar{\fX}$ is smooth (so that $G$ is a point) then the specialization inequality becomes the well-known semicontinuity statement $h^0(D) \leq h^0(\bar{D})$, where $\bar{D} \in \Div(C)$ is the reduction of $D$, and
it is clear that such an inequality can be strict; for example, take $D=(P)-(Q)$ where 
$X$ has genus at least $1$ and $P,Q \in X(\K)$ are distinct points with the same reduction in $C(\kappa)$.
\end{remark}

\subsection{Refined versions of weighted specialization lemma}

In this section we use the specialization lemma for divisors on metrized complexes to provide a framework for obtaining stronger versions of the weighted specialization lemma from~\cite{AC}. 
Our key technical tool for this will be the ``$\eta$-function'' on a metrized complex.  
To help motivate the definition, we first study an analogous $\eta$-function which controls the rank of divisors on connected sums.

\subsubsection{The $\eta$-function and the rank of divisors on connected sums}\label{sec:rank-csum}

Suppose the underlying graph of a metrized complex $\C$  has a bridge edge $e = \{v_1,v_2\}$, 
and denote by $\C_1$ and $\C_2$ the two metrized complexes obtained by removing the edge $e$ from $\C$ ($v_i$ is a vertex of $\C_i$). 
Denote by $x_1$ and $x_2$ the points of $\C_1$ and $\C_2$, respectively, corresponding to the edge $e$. 
We say that $\C$ is a {\it connected sum} of 
$\C_1$ and $\C_2$.

There is an addition map $\Div(\C_1) \oplus \Div(\C_2) \rightarrow \Div(\C)$ 
which associates to any pair of divisors $(\mathcal D_1,\mathcal D_2) \in \Div(\C_1) \oplus \Div(\C_2)$
the divisor $\mathcal D_1+\mathcal D_2$  on $\C$ whose restriction to each $\C_i$ is $\mathcal D_i$.

The following proposition provides a precise description of the $\C$-rank of $\mathcal D_1 + \mathcal D_2$ in terms of the two rank functions 
$r_{\C_1}(\cdot)$ and $r_{\C_2}(\cdot)$. 
In order to state the result, we introduce a function $\eta = \eta_{x_2,\mathcal D_2}: \mathbb N\cup\{0\} \rightarrow \mathbb Z$ 
 defined by the condition that $\eta(k)$ is the smallest integer $n$ such that 
$r_{\C_2}(\mathcal D_2 + n(x_2)) = k$.
(By the Riemann-Roch theorem for $\C_2$, $\eta$ is well-defined.)

\begin{prop} \label{prop:ConnectedSumRankForMetrizedComplexes} 
With notation as above, we have
\begin{equation}\label{eq:wrank3}
 r_{\C}(\mathcal D) = \min_{k\in \mathbb N\cup\{0\}} \Bigl\{\,k + r_{\C_1}(\mathcal D_1-\eta(k)(x_1))\,\Bigr\}.
 \end{equation}
\end{prop}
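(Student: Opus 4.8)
The plan is to reduce the proposition to two elementary observations reflecting the fact that $e$ is a bridge; throughout write $\mathcal D = \mathcal D_1 + \mathcal D_2$. \emph{(Transfer.)} For every $n \in \ZZ$ one has $n(x_1) \sim n(x_2)$ on $\C$, witnessed by the rational function that is constant on each $\C_i$ and whose $\Gamma$-part is affine of slope $-n$ along $e$ (its divisor is $-n(x_1)+n(x_2)$). More generally, a rational function on $\C_i$ extends to one on $\C$ with the same divisor, by extending its $\Gamma$-part by a constant across $e$ and taking the curve-parts constant on the other side, so linear equivalence on $\C_i$ implies linear equivalence on $\C$; hence $\mathcal D \sim (\mathcal D_1 - n(x_1)) + (\mathcal D_2 + n(x_2))$ for all $n$. \emph{(Splitting.)} Every effective divisor on $\C$ is linearly equivalent to one avoiding the relative interior of $e$ (a single chip at an interior point $p$ of $e$ satisfies $(p) \sim (x_1)$ via an explicit piecewise-linear function on $e$, and one clears interior chips one at a time). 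Moreover, if $\mathcal A_1 + \mathcal A_2 \sim \mathcal B_1 + \mathcal B_2$ on $\C$ with $\mathcal A_i, \mathcal B_i \in \Div(\C_i)$, then there is an integer $m$ with $\mathcal A_1 - m(x_1) \sim \mathcal B_1$ on $\C_1$ and $\mathcal A_2 + m(x_2) \sim \mathcal B_2$ on $\C_2$: a rational function $\f$ realizing the equivalence has $\Gamma$-divisor vanishing on the interior of $e$, hence is affine there of some slope $m$, and its restrictions to $\C_1$ and $\C_2$ are rational functions whose divisors agree with the corresponding parts of $\div(\f)$ up to a correction of $\mp m$ at the (now unmarked) point $x_i$ — the edge-slope that $\div(\f)$ had recorded at the marked point $x_i$ of $\C$.

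For the lower bound, set $N := \min_{k\ge 0}\{k + r_{\C_1}(\mathcal D_1 - \eta(k)(x_1))\}$, so $N \ge -1$. Let $\mathcal E$ be an effective divisor of degree $N$ on $\C$; by Splitting we may assume $\mathcal E = \mathcal E_1 + \mathcal E_2$ with $\mathcal E_i \in \Div(\C_i)$ effective, and put $k := \deg(\mathcal E_2)$. Then $\deg(\mathcal E_1) = N - k \le r_{\C_1}(\mathcal D_1 - \eta(k)(x_1))$, so $\mathcal D_1 - \eta(k)(x_1) - \mathcal E_1$ is linearly equivalent to an effective divisor on $\C_1$; and since $r_{\C_2}(\mathcal D_2 + \eta(k)(x_2)) = k = \deg(\mathcal E_2)$ by definition of $\eta$, likewise $\mathcal D_2 + \eta(k)(x_2) - \mathcal E_2$ is linearly equivalent to an effective divisor on $\C_2$. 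Adding these equivalences, lifting to $\C$, and applying Transfer with $n = \eta(k)$ shows that $\mathcal D - \mathcal E$ is linearly equivalent to an effective divisor on $\C$. Hence $r_\C(\mathcal D) \ge N$.

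For the upper bound, fix $k \ge 0$ and set $s := r_{\C_1}(\mathcal D_1 - \eta(k)(x_1))$. Choose an effective divisor $\mathcal E_1$ on $\C_1$ of degree $s+1$ for which $\mathcal D_1 - \eta(k)(x_1) - \mathcal E_1$ is not linearly equivalent to an effective divisor on $\C_1$. Since $\eta(k)$ is minimal with $r_{\C_2}(\mathcal D_2 + \eta(k)(x_2)) = k$ and adjoining one point raises the rank by at most one (immediate from the definition of rank), we have $r_{\C_2}(\mathcal D_2 + (\eta(k)-1)(x_2)) = k-1$; choose an effective divisor $\mathcal E_2$ on $\C_2$ of degree $k$ for which $\mathcal D_2 + (\eta(k)-1)(x_2) - \mathcal E_2$ is not linearly equivalent to an effective divisor on $\C_2$. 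Set $\mathcal E := \mathcal E_1 + \mathcal E_2$, of degree $k + s + 1$. If $\mathcal D - \mathcal E$ were linearly equivalent to an effective divisor on $\C$, then Splitting would produce $m \in \ZZ$ with $r_{\C_1}(\mathcal D_1 - \mathcal E_1 - m(x_1)) \ge 0$ and $r_{\C_2}(\mathcal D_2 - \mathcal E_2 + m(x_2)) \ge 0$; the $\C_2$-condition forces $m \ge \eta(k)$, because for $m \le \eta(k)-1$ one has $\mathcal D_2 + m(x_2) - \mathcal E_2 \le \mathcal D_2 + (\eta(k)-1)(x_2) - \mathcal E_2$, which has negative rank. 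But then $\mathcal D_1 - \mathcal E_1 - m(x_1) \le \mathcal D_1 - \mathcal E_1 - \eta(k)(x_1)$ also has negative rank, a contradiction. Therefore $r_\C(\mathcal D) \le \deg(\mathcal E) - 1 = k + s$; minimizing over $k$ and combining with the lower bound gives the asserted equality.

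The only genuine technical point is the Splitting lemma — in particular, checking that a rational function on $\C$ realizing a linear equivalence between divisors supported away from the interior of $e$ is forced to be affine along the bridge, and keeping the $\mp m$ correction at $x_i$ (together with the outgoing-slope sign convention) straight when restricting to $\C_1$ and $\C_2$. Everything else is formal manipulation of the definition of rank together with the monotonicity and unit-step behaviour of $n \mapsto r_{\C_2}(\mathcal D_2 + n(x_2))$.
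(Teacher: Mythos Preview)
Your proof is correct. Both directions rely on the same bridge structure, but your argument differs from the paper's in a few noteworthy ways. For the lower bound the two proofs are essentially the same: the paper glues explicit rational functions $\f_1,\f_2$ across $e$ with slope $-\eta(k)$, which is exactly your Transfer lemma in action. For the upper bound, however, the paper runs a max--min argument: for each effective $\mathcal E_2$ of degree $k$ it picks a rational function maximizing the $x_2$-coefficient of $\div(\f_2)+\mathcal D_2-\mathcal E_2$, then minimizes this maximum over $\mathcal E_2$ and analyses the resulting slope along $e$. You instead exploit the unit-step behaviour of $n\mapsto r_{\C_2}(\mathcal D_2+n(x_2))$ to produce, in one stroke, a single bad divisor $\mathcal E_1+\mathcal E_2$ of degree $k+s+1$. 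This is more direct and avoids the paper's optimisation over all $\mathcal E_2$. A second difference: the paper invokes the rank-determining set theorem (the appendix result) to restrict to geometric points, whereas you use the more elementary observation that effective divisors can be moved off the interior of the bridge---your route is self-contained here. The trade-off is that the paper's max--min setup generalises verbatim to the restricted-rank version stated later (Proposition~\ref{prop:ConnectedSumRankForMetrizedComplexes2}), while your argument, as written, would need the step-by-one property rechecked for $r_{\C_2,\mathcal F_2}$.
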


\begin{proof}
We prove the equality of the two sides of Equation~\eqref{eq:wrank3} by showing that the inequalities 
$\leq$ and $\geq$ both hold. By Theorem~\ref{thm:f-width}, 
we can assume that all the effective divisors we consider below are supported on geometric points (i.e., on points of the curves $C_v$).

\medskip 

\noindent $(\geq)$
Let $r$ be the right-hand term in \eqref{eq:wrank3} and let $\mathcal E$ be an effective divisor of degree $r$ on 
$\C$ supported on geometric points. We need to show the existence of a rational function $\f$ on $\C$ such that $\div(\f)+\mathcal D -\mathcal 
E\geq 0$. There is a (unique) decomposition of $\mathcal E$ as 
$\mathcal E=\mathcal E_1+\mathcal E_2$ with $\mathcal E_1$ and $\mathcal E_2$  effective divisors
of degree $r-k$ and $k$, respectively, having support on geometric points.
By the definition of $\eta$, there exists a rational function $\f_2$ on $\C_2$ such that 
$\div(\f_2) + \eta(k)(x_2)+ \mathcal D_2 -\mathcal E_2 \geq 0$. By the choice of $r$ as the minimum 
in Equation~\eqref{eq:wrank3},
we have $r_{\C_1}\bigl(\mathcal D_1-\eta(k)(x_1)\bigr)\geq r-k$, so  
there exists a rational function  $\f_1$ on $\C_1$ such that $\div(\f_1) +\mathcal D_1-\eta(k)(x_1)-
\mathcal E_1 \geq 0$. 
We can in addition suppose that the $\Gamma_1$-part of $\f_1$ takes value zero at $v_1$. 
Up to adding a constant to the $\Gamma_1$-part of $\f_1$, we can 
define a rational function $\f$ which restricts to $\f_1$ and $\f_2$ 
on $\C_1$ and $\C_2$, respectively, and which is linear of slope $-\eta(k)$ on the oriented edge $(v_1,v_2)$ in $\Gamma$. 
For this rational function, we clearly have 
\begin{align*}
\div(\f) + \mathcal D -\mathcal E = \div(\f_1)+\mathcal D_1 -\eta(k)&(x_1)-\mathcal E_1  \\
&+ \div(\f_2)+\mathcal D_2 + \eta(k)(x_2)-\mathcal E_2 \geq 0.
\end{align*}

\medskip

\noindent $(\leq)$
It will be enough to show that for each $k\in \mathbb N\cup\{0\}$, 
the inequality 
$$r_\C(\mathcal D) \leq k + r_{\C_1}\bigl(\mathcal 
D_1-\eta(k)(x_1)\bigr)$$ 
holds. 
Let $\widetilde r =  r_\C(\mathcal D)$. 
We can obviously restrict to the case $k\leq \widetilde r$.  
For each effective divisor $\mathcal E_2$ of degree $k$ supported on geometric points of $\C_2$, 
let $\f_{\mathcal E_2}$ be a rational function on $\C_2$ with the property that the coefficients of 
$\div(\f_{\mathcal E_2}) + \mathcal D_2-\mathcal E_2$ outside $x_{2}$ are all non-negative,
and such that in addition, the coefficient of $x_2$ in $\div(\f_{\mathcal E_2}) + \mathcal D_2-\mathcal E_2$ is maximized among all 
the rational functions with this property.
Denote by $n_{\mathcal E_2}$ the coefficient of $x_2$ in $\div(\f_{\mathcal E_2}) + \mathcal D_2-\mathcal E_2$ 
and let $\bar{\mathcal E}_2$  be an effective divisor of degree $k$ on $\C_2$, supported on geometric points of $\C_2$,
for which $n := n_{\bar{\mathcal E}_2}$ is minimal, i.e.,
$$n_{\bar{\mathcal E}_2} = \min\{n_{\mathcal E_2}\,|\,\mathcal E_2\geq 0:\,\deg(\mathcal E_2)=k\}.$$
Note that by the choice of $n$, for an effective divisor $\mathcal E_2$ of degree $k$ supported on geometric points of $\C_2$ 
and the rational function $\f_{\mathcal E_2}$, we have  $\div(\f_{\mathcal E_2}) + \mathcal D_2-\mathcal 
E_2 -n(x_2) \geq 0$, which shows that $r_{\C_2}(\mathcal D_2 -n(x_2))\geq k$. By the definition of 
$\eta$, it follows that $\eta(k) \leq -n$.

\medskip

By the definition of $\f_{\bar{\mathcal E}_2}$, for any rational function $\f_2$ on $\C_2$ with the property 
that $\div(\f_2)+\mathcal D_2-\bar{\mathcal E}_2$ 
has non-negative coefficients outside $x_2$, 
the coefficient of $x_2$ in $\div(\f_2)+\mathcal D_2-\bar {\mathcal E}_2$ is at most $n \leq -\eta(k)$. 

Let $\mathcal E_1$ be an effective divisor of degree $\widetilde r - k$ on $\C_1$ supported on geometric points. 
There exists a rational function $\f$ on $\C$ such that $\div(\f)+\mathcal D -\mathcal E_1-\bar{\mathcal E}_2\geq 0$. 
Let $\f_1$ and $\f_2$ be the restrictions of 
$\f$ to $\C_1$ and $\C_2$, respectively. By the preceding paragraph, the coefficient of $x_2$ in 
$\div(\f_2)+\mathcal D_2-\bar{\mathcal E}_2$ is at most $-\eta(k)$. Since the coefficient of $x_2$ in 
$\div(\f)+\mathcal D -\mathcal E_1-\bar{\mathcal E}_2$ is non-negative, the $\Gamma$-part of $\f$ has slope at least 
$\eta(k)$ along the tangent direction emanating from $v_2$ which corresponds to $x_2$. Since the coefficients of 
$\div(\f)$ at the interior points of the edge $\{v_1,v_2\}$ are all non-negative, it follows that the slope of the $\Gamma$-part of $\f$ 
along the tangent direction emanating from $v_1$ on the edge $\{v_1,v_2\}$ is at most $-\eta(k)$, 
which by the assumption  $\div(\f)+\mathcal D -\mathcal E_1-\bar{\mathcal E}_2\geq 0$ implies that the coefficient of 
 $x_1$ in $\div(\f_1)+\mathcal D_1- \mathcal E_1$ must be at least $\eta(k)$.  It follows that $\div(\f_1)+\mathcal D_1 -\eta(k)(x_1)- \mathcal E_1\geq 0$. Since $\mathcal E_1$ was an arbitrary effective 
 divisor of degree $\widetilde r-k$ supported on geometric points of $\C_1$, it follows that $r_{\C_1}\bigl(\mathcal 
D_1-\eta(k)(x_1)\bigr)\geq \widetilde r-k$, and the claim follows.
\end{proof}
\begin{remark}
In the same spirit, consider two metric graphs $\Gamma_1$ and $\Gamma_2$, 
and suppose that two distinguished points $v_1\in \Gamma_1$ and $v_2\in \Gamma_2$ are given.
The {\it wedge} or {\it direct sum} of $(\Gamma_1,v_1)$ and $(\Gamma_2,v_2)$, denoted $\widetilde{\Gamma} = \Gamma_1 \vee \Gamma_2$, is the metric graph obtained by identifying the points $v_1$ and $v_2$ in the disjoint union of 
$\Gamma_1$ and $\Gamma_2$. Denote by $v \in \widetilde{\Gamma}$ the image of $v_1$ and $v_2$ in $\widetilde \Gamma$. 
(By abuse of notation, we will use $v$ to denote both $v_1$ in $\Gamma_1$ and $v_2$ in $\Gamma_2$.) 
We refer to $v\in \widetilde \Gamma$ as a {\it cut-vertex} and to $\widetilde \Gamma=\Gamma_1\vee \Gamma_2$ 
as the decomposition corresponding  to the cut-vertex $v$. Note that there is an addition map $\Div(\Gamma_1) \oplus \Div(\Gamma_2) \rightarrow \Div(\widetilde \Gamma)$ which associates to any pair of divisors $D_1$ and $D_2$ in $\Div(\Gamma_1)$ and $\Div(\Gamma_2)$ the divisor,
$D_1+D_2$  on $\widetilde \Gamma$ defined by pointwise addition of coefficients in $D_1$ and $D_2$. Let $r_1(\cdot)$, $r_2(\cdot)$, and 
$\widetilde{r}(\cdot) = r_{\Gamma_1 \vee \Gamma_2}(\cdot)$ be 
the rank functions in $\Gamma_1, \Gamma_2$, and $\widetilde{\Gamma}$, respectively. 
For any non-negative integer $k$, denote by $\eta_{v,D_2}(k)$, or simply $\eta(k)$, the smallest integer $n$ such that 
$r_2 \bigl(D_2 + n(v) \bigr) = k$. Then for any divisor $D_1$ in $\Div(\Gamma_1)$, we have (by an entirely similar argument)
\begin{equation}
\label{eq:wrank2}
\widetilde{r}(D_1 + D_2) = \min_{k\in \mathbb N \cup \{0\}}\, \Bigl\{\,k+r_1\bigl(D_1 -\eta(k)\,(v)\bigr)\,\Bigr\}.
\end{equation}
\end{remark}

\begin{remark} The above formalism is very handy for studying metric graphs such as the chain of cycles used in~\cite{CDPR}.
In particular, the lingering lattice paths studied in~\cite{CDPR} in connection with a tropical proof of the Brill-Noether theorem 
can be understood in a natural way as values of $\eta$-functions.
The $\eta$-function formalism is also useful for studying limit linear series, see for example the proof of 
Theorem~\ref{thm:limitseries} below.
\end{remark}

We now define a partial analogue of the $\eta$-function introduced above for a general metrized complex $\C$ 
and provide an analogue of 
Proposition~\ref{prop:ConnectedSumRankForMetrizedComplexes}.

\medskip

Consider a divisor $\mathcal D$ on $\C$ with $C_v$-part $D_v$ for $v \in V$. For every vertex $v$ of $G$, define the function 
$\eta_v: \mathbb N\cup\{0\} \rightarrow \mathbb N\cup \{0\}$ as follows. For any $k\geq 0$, $\eta_v(k)$ is the smallest integer $n \geq 0$  such that there exists a divisor 
$\widetilde D_v$ of degree $n-\deg(D_v)$ supported on the points of $\mathcal A_v \subset C_v(\kappa)$
such that $r_{C_v}(D_v + \widetilde D_v) = k$.  For any effective divisor $E=\sum_{v\in V(G)} E(v)(v)$ on $G$, 
define 
\begin{equation*}
\eta(E) := \sum_{v\in V(G)} \eta_v(E(v))\,(v).
\end{equation*}
\noindent Note that the definition of $\eta$ depends on the data of the divisors $D_v \in \Div(C_v)$.

\begin{prop}
\label{prop:wrank3}
Let $\C$ be a metrized complex of algebraic curves over $\kappa$. For any divisor $\mathcal D$ on $\C$ with $\Gamma$-part $D_\Gamma$, we have
\[ r_\C(\mathcal D)\,\, \leq\,\, \min_{E \geq 0}\, \Bigl(\,\deg(E) + r_\Gamma(D_\Gamma - \eta(E))\,\Bigr), \] 
where the minimum is over all effective divisors $E$ on $G$.
\end{prop}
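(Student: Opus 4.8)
The plan is to fix an arbitrary effective divisor $E$ on $G$ and establish the bound
\[ r_\C(\mathcal D) \;\le\; \deg(E) + r_\Gamma(D_\Gamma - \eta(E)), \]
the proposition then following by taking the minimum over $E$. Write $k=\deg(E)$ and $r=r_\C(\mathcal D)$. Since $r_\Gamma\ge -1$ always, the bound is automatic when $r<k$; and when $G$ has a single vertex and no edges the statement degenerates to a tautology about $C_v$, so I will assume every $\mathcal A_v$ is nonempty, which in particular makes all the integers $\eta_v(\cdot)$ finite. Assuming $r\ge k$, it suffices to show $r_\Gamma\bigl(D_\Gamma-\eta(E)-E'\bigr)\ge 0$ for every effective divisor $E'$ of degree $r-k$ on $\Gamma$; and by the theory of rank-determining sets on metric graphs (\cite{Luo}) I may assume $E'$ is supported on a rank-determining subset $A\subset \Gamma$ chosen to avoid $V$, so that $E'(v)=0$ for all $v\in V$.

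The core construction is to package $E$ and $E'$ into an effective divisor $\mathcal E$ on $\C$ of degree $r$ in such a way that any rational function $\f$ on $\C$ with $\mathcal D-\mathcal E+\div(\f)\ge 0$ — at least one of which exists because $r_\C(\mathcal D)=r$ — has $\Gamma$-part $f_\Gamma$ satisfying $D_\Gamma-\eta(E)-E'+\div(f_\Gamma)\ge 0$. Concretely, I take the $\Gamma$-part of $\mathcal E$ to be $\mathcal E_\Gamma:=E'+\sum_{v\in V}E(v)(v)$ and its $C_v$-part to be an effective divisor $E_v$ of degree $E(v)$ on $C_v$, disjoint from $\mathcal A_v$, to be pinned down below; this is a legitimate effective divisor on $\C$ of degree $(r-k)+k=r$. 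Decomposing $\mathcal D-\mathcal E+\div(\f)\ge 0$ into its $\Gamma$-part and its $C_v$-parts gives (a) $D_\Gamma-\mathcal E_\Gamma+\div(f_\Gamma)\ge 0$, and (b) $r_{C_v}\bigl(D_v+\div_v(f_\Gamma)-E_v\bigr)\ge 0$ for every $v$, where $\div_v(f_\Gamma)=\sum_{e\ni v}\mathrm{slp}_e(f_\Gamma)(x^e_v)$ is supported on $\mathcal A_v$ and has degree $\ord_v(f_\Gamma)$. I want the $E_v$ chosen so that (b) forces $r_{C_v}\bigl(D_v+\div_v(f_\Gamma)\bigr)\ge E(v)$ for all $v$. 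Granting that: subtracting points of $\mathcal A_v$ from $\div_v(f_\Gamma)$ one at a time and using that the rank drops by at most one at each step, one produces a divisor supported on $\mathcal A_v$ of degree at most $\deg\div_v(f_\Gamma)$ at which $r_{C_v}(D_v+\cdot)$ equals exactly $E(v)$, so by definition of $\eta_v$ one gets $\eta_v(E(v))\le \deg(D_v)+\deg\div_v(f_\Gamma)=D_\Gamma(v)+\ord_v(f_\Gamma)$, i.e. $\ord_v(f_\Gamma)\ge \eta_v(E(v))-D_\Gamma(v)$. Combining this with (a) at non-vertices (where $\mathcal E_\Gamma$ agrees with $E'$) and at vertices (where $E'$ vanishes) yields $D_\Gamma-\eta(E)-E'+\div(f_\Gamma)\ge 0$, as desired.

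The main obstacle is to make a single choice of $E_v$ that handles all of the a priori infinitely many functions $f_\Gamma$ that can arise — and to do so over an arbitrary, possibly countable, algebraically closed field $\kappa$, where a naive ``general position'' argument against a countable family of dense opens is unavailable. The key point I will exploit is that the set $\mathcal W_v:=\{\div_v(f_\Gamma)\}$ of divisors on $C_v$ that can actually occur is \emph{finite}. Indeed, the relevant $f_\Gamma$ are precisely those in (a), and $f_\Gamma\mapsto D_\Gamma-\mathcal E_\Gamma+\div(f_\Gamma)$ identifies them (modulo additive constants) with the linear system $|D_\Gamma-\mathcal E_\Gamma|$ on $\Gamma$; this system is nonempty (a valid $\f$ exists since $r_\C(\mathcal D)=r$, forcing $f_\Gamma$ into (a)) and compact (\cite{GK,MZ}), and the slopes $\mathrm{slp}_e(\cdot)$ are continuous $\ZZ$-valued functions on it, hence take only finitely many values. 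Now for each of the finitely many $\widetilde D_v\in\mathcal W_v$ with $r_{C_v}(D_v+\widetilde D_v)<E(v)$, the set of $E_v\in C_v^{(E(v))}$ with $r_{C_v}(D_v+\widetilde D_v-E_v)\ge 0$ is a proper Zariski-closed subset of the irreducible variety $C_v^{(E(v))}$ — it is the image of a proper map from the closed subvariety $\{(G,E_v):E_v\le G\}$ of $|D_v+\widetilde D_v|\times C_v^{(E(v))}$, whose dimension is at most $r_{C_v}(D_v+\widetilde D_v)<E(v)$. Since $\kappa$ is algebraically closed, hence infinite, I may pick $E_v$ in the nonempty complement of these finitely many closed sets and of the divisors meeting $\mathcal A_v$. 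For this $E_v$, any $f_\Gamma$ satisfying (b) must have $\div_v(f_\Gamma)$ outside the ``bad'' part of $\mathcal W_v$, i.e. $r_{C_v}(D_v+\div_v(f_\Gamma))\ge E(v)$, which closes the argument.
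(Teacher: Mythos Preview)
Your argument is correct and follows the same overall architecture as the paper's proof: fix $E$, use a rank-determining set to place the metric-graph test divisor off $V$, assemble an effective divisor $\mathcal E$ of degree $r$ on $\C$, invoke $r_\C(\mathcal D)=r$ to produce $\f$, and then extract the inequality $\ord_v(f_\Gamma)\ge \eta_v(E(v))-D_\Gamma(v)$ from the $C_v$-part. The one substantive difference is in how you pin down the interaction between the geometric test divisors $E_v$ and the slope data $\div_v(f_\Gamma)$. The paper (arguing by contradiction and citing the covering argument from Proposition~\ref{prop:combinrank}) uses that there are only finitely many possible slope configurations, writes $\prod_v C_v^{(E(v))}$ as a finite union of Zariski-closed sets indexed by those configurations, and concludes that a \emph{single} $f_\Gamma$ works for all $E_v$. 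You run the dual argument: for the same finitely many slope configurations, you throw out the proper closed loci in $C_v^{(E(v))}$ where a ``bad'' configuration could survive, and pick a \emph{single} $E_v$ in the complement that forces every admissible $f_\Gamma$ to be ``good''. Both approaches rest on the same finiteness of slope vectors at vertices.

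One small caution: your justification for that finiteness---``the slopes are continuous $\ZZ$-valued functions on the compact system $|D_\Gamma-\mathcal E_\Gamma|$''---is not literally correct, since the outgoing slope at a vertex can jump when a point of the effective divisor slides into that vertex. The conclusion is nonetheless true: the tropical linear system $R(D_\Gamma-\mathcal E_\Gamma)$ is a finite polyhedral complex on each of whose cells the slope vector at every vertex is constant (see e.g.\ Haase--Musiker--Yu), so only finitely many $\div_v(f_\Gamma)$ occur. With that clarification, your proof goes through.
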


\begin{proof}
Fix an effective divisor $E$ supported on the vertices of $G$. We need to prove that
$$r_\C(\mathcal D) \leq \deg(E) + r_\Gamma(D_\Gamma - \eta(E)).$$
For the sake of contradiction, suppose this is not the case, i.e., that
$\deg(E) + r_\Gamma(D_\Gamma - \eta(E)) < r_{\C}(\mathcal D)$. 
Since $r_{\C}(\mathcal D) - \deg(E) > r_\Gamma(D_\Gamma - \eta(E))$, 
Luo's Theorem (which is a special case of Theorem~\ref{thm:f-width}) implies that 
there exists a divisor $E^0$ of degree 
$r_{\C}(\mathcal D) - \deg(E)$ supported on $\Gamma \setminus V(G)$ such that 
$|D_\Gamma - E^0_\Gamma -\eta(E)| =\emptyset.$

\medskip

Consider all the effective divisors on $\C$ of the form $E^0 + \sum_{v\in V} E_v$, where each $E_v$ is an effective divisor of degree $E(v)$ on $C_v$. 
 Note that $E^0 + \sum_{v\in V} E_v$ has degree $r_{\C}(\mathcal D)$, so that by the definition of the rank function, for any choice of effective divisors $E_v$ on $C_v$ there exists a rational function $\f$ on $\C$ 
such that $\div(\f) + \mathcal D- E^0 -\sum_v E_v \geq 0$.
Denote by $f_\Gamma$ and $f_v$ the different parts of $\f$, so in particular we have 
$\div(f_\Gamma) + D_\Gamma -E^0 -E \geq 0.$ One can assume 
as in the proof of Proposition~\ref{prop:combinrank}
that the slope of $f_\Gamma$ along the edges 
incident to each vertex of $G$ is fixed. This shows the existence of the divisor $\widetilde D_v = \div_v(f_\Gamma)$  supported on the points $\mathcal A_v \subset C_v(\kappa)$ such that $D_v + \widetilde D_v$ has rank at least $E(v)=\deg(E_v)$ on $C_v$.  By the definition of $\eta_v$, 
$\widetilde D_v$ has degree at least $\eta_v(E(v)) -\deg(D_v)$. Since the degree of $\widetilde D_v$ 
is the sum of the slopes of $f_\Gamma$ along the edges adjacent to $v$, which is by definition 
$\mathrm{ord}_v(f_\Gamma)$, for each $v\in V(G)$ we obtain 
$\mathrm{ord}_v(f_\Gamma) \geq \eta_v(E(v)) - \deg(D_v)$, i.e., 
$\mathrm{ord}_v(f_\Gamma) + D_\Gamma(v) - \eta_v(E(v)) \geq 0$.
In addition, for each non-vertex point $u \in \Gamma$ we must have 
$\mathrm{ord}_u(f_\Gamma) + D_\Gamma(u) - E^0(u) \geq 0$. 
This implies that $|D_\Gamma - E^0 -\eta(E)|\neq \emptyset$, a contradiction. 
\end{proof}

\subsubsection{Weighted specialization lemma}\label{section:weightedspecialization}
Let $\mathbb K$ be a complete and algebraically closed
non-Archimedean field with non-trivial absolute value $|.|$, $R$ the valuation ring of $\mathbb K$, and 
and $\kappa$ its residue field. Let ${\mathcal G} = {\rm val}(\mathbb K^\times)$ be the value group of $\mathbb K$.  
Let $X$ be a smooth, proper, and connected $\mathbb K$-curve
and fix a semistable $R$-model ${\mathfrak X}$.
Let $(\Gamma,\omega)$ denote the skeleton of ${\mathfrak X}$ 
together with the weight function $\omega: V \rightarrow \mathbb N$ defined  by 
$\omega(v) = g_v$.  Let $\mathcal W = \sum_{v \in V} g_v(v)$.
Following~\cite{AC}, we define a metric graph $\Gamma^\#$ by attaching $g_v$ loops of arbitrary positive length at each point $v \in \Gamma$
(see \cite{AC} for a discussion of the intuition behind this construction.)

\medskip

Any divisor $D$ in $\Gamma$ defines a divisor on $\Gamma^\#$ with the same support and coefficients (by viewing $\Gamma$ as a subgraph of $\Gamma^\#$). By an abuse of notation, we will also denote this divisor by $D$. 
However, we distinguish the ranks of $D$ in $\Gamma$ and $\Gamma^\#$ by using the notation $r_\Gamma(D)$ for the former and $r_{\Gamma^\#}(D)$ for the latter. 
Following~\cite{AC}, the {\em weighted rank} $r^\#$ of $D$ is defined by $r^\#(D):= r_{\Gamma^\#}(D)$. Applying the formalism of the $\eta$-function from the previous subsection, we obtain the following useful formula for $r^\#$:

\begin{cor}
\label{prop:wrank} 
For every $D \in \Div(\Gamma)$, we have
\[
r^\#(D) = \min_{0\leq E \leq \mathcal W} \bigl(\deg(E)+ r_\Gamma(D-2E)\bigr).
\]
\end{cor}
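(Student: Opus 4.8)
The plan is to build $\Gamma^\#$ by attaching the loops one at a time and applying the wedge formula \eqref{eq:wrank2} at each stage. Fix an ordering of the $\sum_v g_v$ loops, producing a chain of metric graphs $\Gamma = \Gamma_0 \subset \Gamma_1 \subset \cdots \subset \Gamma_N = \Gamma^\#$, where $N = \deg(\mathcal W) = \sum_v g_v$ and $\Gamma_{i+1} = \Gamma_i \vee S_{i+1}$ is obtained by wedging a circle $S_{i+1}$ onto $\Gamma_i$ at a vertex $v_{i+1} \in V$. Write $\mathcal W_i = \sum_{1 \le j \le i}(v_j) \in \Div(\Gamma)$, so $\mathcal W_0 = 0$ and $\mathcal W_N = \mathcal W$. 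I will prove by induction on $i$ that
\[
r_{\Gamma_i}(D) = \min_{0 \le E \le \mathcal W_i}\bigl(\deg(E) + r_\Gamma(D - 2E)\bigr) \qquad \text{for all } D \in \Div(\Gamma);
\]
taking $i = N$ yields the corollary, since $r^\#(D) = r_{\Gamma^\#}(D) = r_{\Gamma_N}(D)$. The base case $i = 0$ is immediate, the only competitor being $E = 0$.

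The engine of the induction is a \emph{single-loop formula}: whenever $S$ is a circle wedged onto a metric graph $\Gamma'$ at a point $v$, then for every divisor $D$ supported on $\Gamma'$,
\[
r_{\Gamma' \vee S}(D) = \min\bigl(\, r_{\Gamma'}(D),\ 1 + r_{\Gamma'}(D - 2(v))\,\bigr).
\]
To derive this from \eqref{eq:wrank2} (applied with $\Gamma_1 = \Gamma'$, $\Gamma_2 = S$, $D_1 = D$, $D_2 = 0$) I first compute the relevant $\eta$-function of the circle $S$. Since $S$ has genus $1$ and $\deg(K_S) = 0$, Riemann--Roch on $S$ gives $r_S(0) = 0$ and $r_S(n(v)) = n - 1$ for $n \ge 1$, so $\eta_{v,0}(0) = 0$ and $\eta_{v,0}(k) = k + 1$ for $k \ge 1$. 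Feeding this into \eqref{eq:wrank2} and setting $j = k+1$ gives
\[
r_{\Gamma' \vee S}(D) = \min\Bigl(\, r_{\Gamma'}(D),\ \min_{j \ge 2}\bigl((j-1) + r_{\Gamma'}(D - j(v))\bigr)\,\Bigr),
\]
and the inner minimum is attained at $j = 2$: removing one point from a divisor on a metric graph lowers its rank by at most one, so $r_{\Gamma'}(D - j(v)) \ge r_{\Gamma'}(D - 2(v)) - (j - 2)$ and hence $(j-1) + r_{\Gamma'}(D - j(v)) \ge 1 + r_{\Gamma'}(D - 2(v))$ for every $j \ge 2$. This proves the single-loop formula.

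The inductive step is then pure bookkeeping. Applying the single-loop formula with $\Gamma' = \Gamma_i$ and $v = v_{i+1}$, and expanding $r_{\Gamma_i}(D)$ and $r_{\Gamma_i}(D - 2(v_{i+1}))$ by the induction hypothesis, exhibits $r_{\Gamma_{i+1}}(D)$ as the minimum of $\min_{0 \le E \le \mathcal W_i}(\deg(E) + r_\Gamma(D - 2E))$ and $\min_{0 \le E \le \mathcal W_i}((\deg(E) + 1) + r_\Gamma(D - 2(E + (v_{i+1}))))$. Reindexing the second minimum via $E \mapsto E + (v_{i+1})$ turns it into $\min_{(v_{i+1}) \le E' \le \mathcal W_{i+1}}(\deg(E') + r_\Gamma(D - 2E'))$ (using $\mathcal W_{i+1} = \mathcal W_i + (v_{i+1})$), while the first minimum ranges over those $E$ with $0 \le E \le \mathcal W_{i+1}$ and $E(v_{i+1}) < \mathcal W_{i+1}(v_{i+1})$. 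Since $\mathcal W_{i+1}(v_{i+1}) \ge 1$, these two index sets together cover all $E$ with $0 \le E \le \mathcal W_{i+1}$, and the two expressions agree on the overlap, so the total equals $\min_{0 \le E \le \mathcal W_{i+1}}(\deg(E) + r_\Gamma(D - 2E))$, completing the induction.

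I expect the only genuinely delicate point to be the reduction of \eqref{eq:wrank2} to the clean single-loop formula — specifically, the verification that the $j=2$ term dominates the entire tail of the $\eta$-expansion; everything afterward is formal manipulation of minima. As a variant one can shorten the induction to one step per vertex, attaching all $g_v$ loops at $v$ simultaneously as a rose with $g_v$ petals; this requires knowing that $r(n(v)) = \max(\lfloor n/2 \rfloor,\, n - g_v)$ on such a rose, after which the argument is identical.
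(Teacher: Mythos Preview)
Your proof is correct and follows essentially the same approach as the paper: induct on the number of loops, adding them one at a time, and use the wedge formula \eqref{eq:wrank2} at each stage with $\eta(0)=0$, $\eta(k)=k+1$ for $k\ge 1$ on a circle. The paper's own proof is a terse two-line version of exactly this argument; your write-up simply makes explicit the single-loop reduction (that the $j=2$ term dominates the tail) and the bookkeeping of the inductive step, both of which the paper leaves to the reader.
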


\begin{proof} Proceeding by induction, we are reduced to the case 
$\deg(\mathcal W)=1$.
This case follows from (\ref{eq:wrank2}) applied to $\Gamma_1=\Gamma$, $\Gamma_2$ a circle, $\widetilde \Gamma =  \Gamma_1\vee \Gamma_2 $, $D_1 = D$, and $D_2=0$, since in this case one has $\eta(0) = 0$, $\eta(1)=2$, and $\eta(k) = k+1$ for $k>1$.
\end{proof}

The following result is a generalization to metric graphs $\Gamma$, and to smooth curves $X$ over not necessarily discrete non-Archimedean fields, of
the corresponding statement for graphs which was obtained by different methods in~\cite{AC}.

\begin{thm}[Weighted specialization inequality]
\label{thm:weightedmetricspecialization}
For every divisor $D \in \Div(X)$, we have $r_X(D) \leq r^\#(\tau_*(D))$.
\end{thm}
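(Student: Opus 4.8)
The statement $r_X(D) \leq r^\#(\tau_*(D))$ should follow by combining the Specialization Inequality (Theorem~\ref{lem:BakSpecLem}) with the comparison Corollary~\ref{prop:wrank} expressing $r^\#$ in terms of $r_\Gamma$, together with the inequality in Proposition~\ref{prop:wrank3} relating $r_{\C\fX}$ to $r_\Gamma$ via the $\eta$-function. The key observation is that the metrized complex $\C\fX$ has $C_v$ of genus $g_v$ for each $v$, so the $\eta_v$-function attached to a divisor $\mathcal D$ on $\C\fX$ with $C_v$-part $D_v$ can be estimated: since $\deg(D_v) = D_\Gamma(v)$ and adding marked points to $D_v$ can only raise the rank by the amount of degree added, one gets $\eta_v(k) \leq \deg(D_v) + k + (\text{correction depending on } g_v)$. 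More precisely, for the specialization $\mathcal D = \tau_*^{\C\fX}(D)$ one should compare $\eta_v(k)$ with the values $0, 2, k+1$ appearing in the circle computation of Corollary~\ref{prop:wrank}: I would show that for any divisor $D_v$ on a curve of genus $g_v$, $\eta_v(k) - \deg(D_v) \leq 2k$ when $k \leq g_v$ and $\leq k + g_v$ in general, or whatever bound makes the two minima line up.

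\textbf{Steps.} First, fix a strongly semistable model $\fX$ and pass (if necessary, via Theorem~\ref{thm:graph.to.curve} or a refinement of the model) to a situation where $\Gamma_{\fX} = \Gamma$ with the given weight function $\omega(v) = g_v$. Second, apply Theorem~\ref{lem:BakSpecLem} to get $r_X(D) \leq r_{\C\fX}(\tau_*^{\C\fX}(D))$. Third, apply Proposition~\ref{prop:wrank3} to $\mathcal D = \tau_*^{\C\fX}(D)$ to get
\[
r_{\C\fX}(\mathcal D) \leq \min_{E \geq 0}\Bigl(\deg(E) + r_\Gamma(D_\Gamma - \eta(E))\Bigr),
\]
where $D_\Gamma = \tau_*(D)$ and $\eta$ is built from the $C_v$-parts $D_v$ of $\mathcal D$. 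Fourth, and this is the crux, bound $\eta_v$ from above in terms of the weight $g_v$: one checks that $\eta_v(k) = \deg(D_v)$ for $k=0$, that $\eta_v(k) \leq \deg(D_v) + 2k$ for $k \leq g_v$ (using that on a genus $g_v$ curve one can find an effective divisor of degree $g_v$ supported on $\mathcal A_v$ raising the rank appropriately — Riemann–Roch and base-point considerations), and $\eta_v(k) \leq \deg(D_v) + k + g_v$ always. Fifth, substitute this bound: writing $\eta(E) - D_\Gamma|_{\mathrm{supp}} \leq 2E$ coefficientwise whenever $0 \leq E \leq \mathcal W$, one gets $D_\Gamma - \eta(E) \geq D_\Gamma - D_\Gamma - 2E$... more carefully, $D_\Gamma(v) - \eta_v(E(v)) \geq -2E(v)$ for $E(v) \leq g_v$, so $r_\Gamma(D_\Gamma - \eta(E)) \geq r_\Gamma(\tau_*(D) - 2E)$ by monotonicity. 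Therefore
\[
r_{\C\fX}(\mathcal D) \leq \min_{0 \leq E \leq \mathcal W}\Bigl(\deg(E) + r_\Gamma(\tau_*(D) - 2E)\Bigr) = r^\#(\tau_*(D)),
\]
the last equality being exactly Corollary~\ref{prop:wrank}. Chaining the inequalities gives $r_X(D) \leq r^\#(\tau_*(D))$.

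\textbf{Main obstacle.} The delicate point is Step four: getting the correct comparison between the abstract $\eta_v$-function of the $C_v$-part of the specialized divisor and the circle-$\eta$-function ($0, 2, k+1, k+2, \dots$) used in Corollary~\ref{prop:wrank}. One must verify that on a genus-$g_v$ curve, adding an effective divisor of degree $m$ supported on the fixed marked point set $\mathcal A_v$ can always be arranged to increase the rank by at least $m/2$ up to the Clifford range and eventually by $m - g_v$, so that $\eta_v(k) - \deg(D_v) \leq \max(2k, k + g_v)$ — and crucially that the direction of the inequality in Proposition~\ref{prop:wrank3} (which is only an upper bound on $r_{\C\fX}$) is consistent with the direction needed. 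Since $r^\#$ is itself defined via an outer minimum, we need a \emph{lower} bound on $r_\Gamma(D_\Gamma - \eta(E))$, i.e. an \emph{upper} bound on $\eta(E)$, which is what the $\mathcal A_v$-supported divisor construction provides; one should also confirm that restricting the outer minimum to $0 \leq E \leq \mathcal W$ rather than all effective $E$ does not lose anything, which holds because $\eta_v(k)$ stabilizes to $\deg(D_v) + k + g_v$ once $k > g_v$, making larger $E(v)$ never beneficial — exactly as in the proof of Corollary~\ref{prop:wrank}.
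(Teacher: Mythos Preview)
Your overall architecture (Steps 1--3) matches the paper's proof exactly: apply the Specialization Inequality, then Proposition~\ref{prop:wrank3}, then compare with Corollary~\ref{prop:wrank}. The error is in Step~4, where you have the direction of the bound on $\eta_v$ reversed, and this propagates to make Step~5 invalid.

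Concretely: from Proposition~\ref{prop:wrank3} you have
\[
r_{\C\fX}(\mathcal D) \;\leq\; \min_{E\geq 0}\Bigl(\deg(E) + r_\Gamma\bigl(D_\Gamma - \eta(E)\bigr)\Bigr),
\]
and you want this to be bounded above by $r^\#(D_\Gamma) = \min_{0\leq E\leq\mathcal W}\bigl(\deg(E)+r_\Gamma(D_\Gamma - 2E)\bigr)$. For the termwise comparison to go the right way you need $r_\Gamma(D_\Gamma - \eta(E)) \leq r_\Gamma(D_\Gamma - 2E)$, i.e.\ $\eta(E) \geq 2E$ coefficientwise for $0\leq E\leq\mathcal W$. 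That is a \emph{lower} bound on $\eta_v(k)$, not the upper bound you argue for. Your Step~5 derives $r_\Gamma(D_\Gamma - \eta(E)) \geq r_\Gamma(D_\Gamma - 2E)$ and then asserts $r_{\C\fX}(\mathcal D)\leq\min(\cdots 2E\cdots)$; but from $r_{\C\fX}\leq A$ and $A\geq C$ one cannot conclude $r_{\C\fX}\leq C$. (There is also an internal slip: the bound $\eta_v(k)\leq\deg(D_v)+2k$ does not imply $\eta_v(k)\leq 2k$, which is what your monotonicity step would actually require.)

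The correct bound is the opposite one, and it comes for free from classical Clifford/Riemann--Roch on $C_v$: if $r_{C_v}(D_v+\widetilde D_v)=k$ with $\deg(D_v+\widetilde D_v)=n$, then $n\geq 2k$ when $k\leq g_v$ and $n\geq k+g_v$ always. Hence $\eta_v(k)\geq\bar\eta_v(k):=\min(2k,k+g_v)$, so $D_\Gamma-\eta(E)\leq D_\Gamma-\bar\eta(E)$ and $r_\Gamma(D_\Gamma-\eta(E))\leq r_\Gamma(D_\Gamma-\bar\eta(E))$. This gives $r_{\C\fX}(\mathcal D)\leq\min_{E\geq 0}\bigl(\deg(E)+r_\Gamma(D_\Gamma-\bar\eta(E))\bigr)$; one then checks (as you correctly anticipated at the end) that the minimum is attained for $E\leq\mathcal W$, where $\bar\eta(E)=2E$, and Corollary~\ref{prop:wrank} finishes. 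So your plan is salvageable once you flip the inequality in Step~4 and invoke Clifford's theorem rather than a base-point construction.
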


\begin{proof}
Let $\mathcal D =\tau_*^{\C\fX}(D)$ 
and note that the $\Gamma$-part $D_\Gamma$ of $\mathcal D$ is equal to $\tau_*(D)$. 
Applying Proposition~\ref{prop:wrank3}, we obtain
\begin{equation}
\label{eq:wrank3cor}
r_{\C\fX}(\tau_*^{\C\fX}(D)) \leq \min_{E \geq 0}\, \Bigl(\,\deg(E) + r_\Gamma\bigl(\tau_*(D) - \eta(E)\bigr)\,\Bigr).
\end{equation}

For each vertex $v$ of $G$, define the function $\bar \eta_v : \mathbb N\cup\{0\} \rightarrow \mathbb N\cup\{0\}$ by $\bar\eta_v(k)=2k$ for $k \leq g_v$ and $\bar \eta_v (k) = k+g_v$ for $k \geq g_v$. 
If $E$ is an effective divisor supported on vertices of $G$, define 
$\bar \eta(E) = \sum_v \bar\eta(E(v))(v)$. 

By the definition of $\eta_v$ and Clifford's theorem for the curve $C_v$, for any integer $k\geq 0$ we have $\eta_v(k) \geq \bar \eta_v(k)$.  
Combining this with \eqref{eq:wrank3cor}, we get the inequality
\begin{equation}
\label{eq:wrank3corbis}
r_{\mathcal{C}{\fX}}(\tau_*^{\mathcal C\fX}(D)) \leq \min_{E \geq 0}\, \Bigl(\,\deg(E) + r_\Gamma\bigl(\tau_*(D) - \bar \eta(E)\bigr)\,\Bigr).
\end{equation}
 Note that for $E(v) > g_v$, we have $\bar \eta_v(E(v)) = g_v+E(v)$ and so, whatever the other coefficients of $E$ are, we have 
\begin{align*}
\deg(E) + r_\Gamma\bigl(\tau_*(D)-\bar \eta(E)\bigr) &= \deg(E-(v))+ 1+r_\Gamma\bigl(\tau_*(D)-\bar\eta(E-(v))-(v)\bigr) \\
&\geq \deg(E-(v)) + r_\Gamma(\tau_*(D)-\bar\eta(E-(v))).
\end{align*}
In other words, in taking the minimum in \eqref{eq:wrank3corbis}, one can restrict to effective divisors $E$ with $E(v) \leq g_v$ for all $v$, i.e., $0\leq E \leq \mathcal W$. 
Finally, applying \eqref{eq:wrank3corbis}, Theorem~\ref{lem:BakSpecLem}, and Corollary~\ref{prop:wrank}, 
we obtain the inequality $r_X(D) \leq r^\#(\tau_*(D)).$
\end{proof}

\subsection{Some applications} We give some direct applications of the results of this section.
\subsubsection{Specialization of canonical divisors}
Let $K_X$ be a canonical divisor on the curve $X/\K$. By Theorem~\ref{lem:BakSpecLem}, we have $r_{\C\fX}\left( \tau_*^{\C\fX}(K_X)\right) \geq r(K_X) = g(X)-1 = g(\C\fX)-1$. By the Riemann-Roch theorem for metrized complexes (Theorem~\ref{thm:RR-metrizedcomplexes}), the divisor $\mathcal \tau_*^{\C\fX}(K_X) - \mathcal K$ has degree zero and non-negative rank, and thus $\mathcal \tau_*^{\C\fX}(K_X) \sim \mathcal K$.  In particular, we have $\tau_*(K_X) \sim K^\#$.

\begin{remark}
As mentioned in the Introduction, for discretely valued $R$ the fact that $\tau_*(K_X) \sim K^\#$ follows from the adjunction formula for arithmetic surfaces.  
But it does not seem straightforward to generalize that argument to the not necessarily discretely valued case.
\end{remark}

\subsubsection{Brill-Noether theory}
The following result is an immediate consequence of Theorem~\ref{thm:graph.to.curve}
combined with Theorem~\ref{lem:BakSpecLem}.

\begin{cor}[Brill-Noether existence theorem]
\label{cor:BNexistence}
Let $\k$ be a field and let $g,r,d$ be nonnegative integers.  
If the Brill-Noether number $\rho^r_d(g) := g - (r+1)(g-d+r)$ is nonnegative,
then for every metrized complex $\C$ of $\k$-curves 
with $g(\C) = g$, there exists a divisor $\D \in \Div^d_+(\C)$ such that $r_\C(\D) \geq r$.
\end{cor}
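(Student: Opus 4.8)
The plan is to reduce the statement about an arbitrary metrized complex $\C$ of $\k$-curves to the classical Brill-Noether existence theorem for smooth curves over a non-Archimedean field, via the realization result Theorem~\ref{thm:graph.to.curve} and the specialization inequality Theorem~\ref{lem:BakSpecLem}. The first step is to arrange the hypotheses so that Theorem~\ref{thm:graph.to.curve} applies: that theorem requires the edge lengths of $\C$ to lie in the value group of a suitable non-Archimedean field $\K$. Since $\k$ is given but $\K$ is ours to choose, I would pick $\K$ to be a complete algebraically closed non-Archimedean field with residue field (containing) $\k$ and with value group large enough to contain all the edge lengths of $\C$; for instance one may take $\K$ to be the completion of an algebraic closure of a field of Hahn (generalized power) series over $\k$ with a value group containing the finitely many edge lengths of $\C$. (One should note that replacing $\k$ by its algebraic closure changes nothing, since genus, rank, and existence of divisors are insensitive to this, and in any case the earlier sections already work over algebraically closed $\k$.) With such a $\K$ fixed, Theorem~\ref{thm:graph.to.curve} furnishes a smooth, proper, connected curve $X/\K$ and a strongly semistable model $\fX$ with $\C \cong \C\fX$; in particular $g(X) = g(\C\fX) = g(\C) = g$.

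The second step is to invoke the classical Brill-Noether existence theorem: since $\rho^r_d(g) = g - (r+1)(g-d+r) \geq 0$, the curve $X$ of genus $g$ over the algebraically closed field $\K$ carries a divisor $D$ of degree $d$ with $r_X(D) \geq r$. (This is the existence half of Brill-Noether — due to Kempf and Kleiman-Laksov — valid over any algebraically closed field, hence in particular over $\K$.) Then Theorem~\ref{lem:BakSpecLem} (the specialization inequality) gives
\[
r_\C\bigl(\tau_*^{\C\fX}(D)\bigr) \;=\; r_{\C\fX}\bigl(\tau_*^{\C\fX}(D)\bigr) \;\geq\; r_X(D) \;\geq\; r.
\]
Since $\tau_*^{\C\fX}$ preserves degrees, $\D := \tau_*^{\C\fX}(D)$ is a divisor of degree $d$ on $\C$ with $r_\C(\D) \geq r$, which is the assertion. (If one insists on $\D$ being effective, one observes $r_\C(\D) \geq r \geq 0$ forces $\D$ to be linearly equivalent to an effective divisor, and one replaces $\D$ by that representative; this is why the statement may be phrased with $\D \in \Div^d_+(\C)$.)

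The only genuine subtlety — and the step I would expect to require the most care — is the construction of the auxiliary field $\K$ with the prescribed residue field and a value group containing the given edge lengths, so that Theorem~\ref{thm:graph.to.curve} genuinely applies. Everything else is a formal concatenation: realize $\C$ as the skeleton-complex of a curve, apply classical Brill-Noether upstairs, and push down via the rank-nondecreasing specialization map. One should also double-check that $\rho^r_d(g) \geq 0$ is exactly the hypothesis under which classical Brill-Noether existence is stated, which it is. I do not see any obstruction beyond these bookkeeping points, so the proof is short once Theorems~\ref{thm:graph.to.curve} and~\ref{lem:BakSpecLem} are in hand.
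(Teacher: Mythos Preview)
Your proposal is correct and follows essentially the same route as the paper: the paper states only that the corollary is ``an immediate consequence of Theorem~\ref{thm:graph.to.curve} combined with Theorem~\ref{lem:BakSpecLem},'' which is exactly your argument of realizing $\C$ as $\C\fX$ for some curve $X/\K$, invoking classical Brill--Noether existence on $X$, and pushing down via the specialization inequality. You have in fact supplied more detail than the paper does, particularly regarding the construction of $\K$ with the required value group and the passage to an effective representative.
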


\begin{remark}
In particular, combining the original Specialization Lemma from \cite{bakersp} with 
Theorem~\ref{thm:graph.to.curve} yields a new proof of \cite[Theorem 3.20]{bakersp}, which says that 
if $\rho^r_d(g) \geq 0$ then  for every metrized graph of genus $g$ there exists 
$D \in \Div^d_+(\Gamma)$ such that $r(D) \geq r$.  The present proof is more direct in that one does not need the approximation arguments given in Lemma 3.17 and Corollary 3.18 of {\em loc.~cit.}
\end{remark}

\begin{remark}
The notion of Brill-Noether rank on metric graphs introduced in~\cite{LPP}  and the corresponding specialization lemma extend to the context of metrized complexes as well. Let $\C$ be a metrized complex of $\k$-curves, and let $W^r_d(\C) \subset \Pic^d(\C)$ be the subset of all divisors of degree $d$ and rank at least $r$. Define $w^r_d(\C)$ to be the largest integer $n$ such that for every effective divisor $\E$ of degree $r+n$ on $\C$, there exists $\D \in W^r_d(\C)$ such that $\D-\E \geq 0$.  If $W^r_d(\C) =\emptyset$, define $w^r_d(\C) =-1$. 
If $\fX$ is a strongly semistable $R$-model of a proper smooth curve $X/\K$ such that $\C\fX = \C$, and $W^r_d(X) \subset \Pic^d(X)$ denotes the Brill-Noether locus of $X$, then we have the inequality $w^r_d(\C) \geq \dim\,W^r_d(X)$.
\end{remark}
 
\subsubsection{Weierstrass points on metrized complexes of $\k$-curves}
Let $\C$ be a metrized complex of $\k$-curves.  
We say that a point $x$ in $\C$ is a {\em Weierstrass point} if $r_\C\bigl(g(\C)(x)\bigr) \geq 1$.
By  Theorem~\ref{lem:BakSpecLem}, the specialization of a Weierstrass point on $X$ is a Weierstrass point on $\C$.
And by Theorem~\ref{thm:graph.to.curve} combined with Theorem~\ref{lem:BakSpecLem}, 
any metrized complex of $\k$-curves of genus at least two has at least one Weierstrass point.

\section{Limit linear series}\label{sec:limitseries}
Our aim in this section is to compare our divisor theory on metrized complexes with the Eisenbud-Harris theory of limit linear series for curves of compact type
\cite{EH86}.
As we will see,  our comparison results lead naturally to a generalization of the notion of limit linear series to nodal curves which are 
not necessarily of compact type. 
Note that since Eisenbud and Harris work over a discrete valuation ring, in order to compare our theory to theirs we will sometimes restrict to this setting; we note, however, that most of our results hold over the valuation ring of an arbitrary non-trivially valued non-Archimedean field.

\subsection{Degeneration of linear series}  
We start by reviewing some basic facts and definitions concerning the degeneration of linear series in families.
 Let $Y$ be a smooth projective  curve over $\k$. Recall that a linear series $L$ of (projective) dimension $r$ and degree $d$, or simply a $\g^r_d$,  over $Y$ consists of a pair $(\mathcal L, H)$ with $\mathcal L$ an invertible sheaf  of degree $d$ on $Y$ and $H$ a subspace of $H^0(Y,\mathcal L)$ of $\kappa$-dimension $r+1$.

 \medskip
 
Let $\phi: \mathcal X \rightarrow B = \Spec R$ be a regular smoothing of a strongly semistable curve $X_0$ over a discrete valuation ring $R$ with fraction field $K$. This means that $\phi$ is proper, $\mathcal X$ is regular, the generic fiber $\mathcal X_\eta$ of $\phi$ is smooth, and the special fiber of $\phi$ is $X_0$. Denote by $G=(V,E)$ the dual graph of $X_0$ and let $\{X_v\}_{v\in V(G)}$ be the set of irreducible components of $X_0$.  

\begin{prop} Any line bundle $\mathcal L_\eta$ on $\mathcal X_\eta$ extends to $\mathcal X$. The restriction of any two extensions $\mathcal L_1$ and $\mathcal L_2$ of $\mathcal L_\eta$ to the special fiber $X_0$ are combinatorially equivalent (c.f. Section~\ref{sec:mcnodal}).
\end{prop}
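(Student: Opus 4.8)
The plan is to establish the two assertions in turn, relying on standard facts about regular arithmetic surfaces together with the definition of combinatorial equivalence from Section~\ref{sec:mcnodal}.

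\emph{Existence of an extension.} First I would recall that since $\mathcal X$ is regular and $\phi: \mathcal X \to B = \Spec R$ is a proper flat morphism to a Dedekind scheme, every line bundle on the generic fiber $\mathcal X_\eta$ extends to a line bundle on $\mathcal X$. The argument is classical: a line bundle $\mathcal L_\eta$ on $\mathcal X_\eta$ corresponds to a Weil divisor class (since $\mathcal X_\eta$ is a smooth curve), represented by some divisor $D_\eta = \sum n_i (P_i)$; take the Zariski closure $\overline{D_\eta} = \sum n_i \overline{\{P_i\}}$ in $\mathcal X$. Because $\mathcal X$ is regular (hence locally factorial), the Weil divisor $\overline{D_\eta}$ is Cartier, so it defines a line bundle $\mathcal L = \mathcal O_{\mathcal X}(\overline{D_\eta})$ whose restriction to $\mathcal X_\eta$ is $\mathcal L_\eta$. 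This proves the first sentence.

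\emph{Combinatorial equivalence of two extensions.} Now suppose $\mathcal L_1$ and $\mathcal L_2$ are two line bundles on $\mathcal X$ both restricting to $\mathcal L_\eta$ on $\mathcal X_\eta$. Then $\mathcal M := \mathcal L_1 \otimes \mathcal L_2^{-1}$ is a line bundle on $\mathcal X$ which is trivial on $\mathcal X_\eta$. Any such $\mathcal M$ is of the form $\mathcal O_{\mathcal X}(Z)$ where $Z$ is a divisor supported on the special fiber $X_0$, i.e., $Z = \sum_{v \in V} n_v X_v$ for integers $n_v$ (indeed, a rational function on $\mathcal X_\eta$ trivializing $\mathcal M|_{\mathcal X_\eta}$, viewed as a rational function on $\mathcal X$, has divisor supported on $X_0$ because $\mathcal M$ is already trivial on the generic fiber, and the irreducible components of $X_0$ are exactly the $X_v$). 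Setting $f: V \to \mathbb Z$ by $f(v) = n_v$, I claim $\mathcal L_1$ and $\mathcal L_2$ are combinatorially equivalent via $f$. To check this, restrict to a component $X_w$: one has $(\mathcal L_1)_w \otimes (\mathcal L_2)_w^{-1} = \mathcal O_{X_0}(\sum_v n_v X_v)|_{X_w}$, and the restriction of $\mathcal O_{\mathcal X}(\sum_v n_v X_v)$ to $X_w$ computes, node by node, the intersection multiplicities $X_v \cdot X_w$ for $v \neq w$ together with the self-intersection contribution $n_w (X_w \cdot X_w)$. Using the fact that in a regular smoothing each node of $X_0$ lying on $X_w$ contributes one point (with multiplicity one) for each incident component, and that $X_w \cdot X_0 = 0$ in the fibered surface, one identifies this restriction with $\mathcal O_{X_w}\bigl(\div_w(f)\bigr)$ in the notation of Section~\ref{sec:mcnodal}, where $\div_w(f) = \sum_{u:\{u,w\}\in E} (f(u) - f(w))(x_w^{\{u,w\}})$. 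Hence $(\mathcal L_1)_w = (\mathcal L_2)_w(\div_w(f))$ in $\Pic(X_w)$ for every $w$, which is exactly the definition of combinatorial equivalence.

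\emph{Expected main obstacle.} The genuinely delicate point is the intersection-theoretic identification in the last step: verifying that restricting $\mathcal O_{\mathcal X}(\sum n_v X_v)$ to a component $X_w$ of $X_0$ produces precisely the divisor $\div_w(f)$ with the signs and multiplicities matching the combinatorial definition. This requires care with (i) the local structure of $\mathcal X$ at a node (where $\mathcal X$ has an equation of the form $xy = \varpi$ or a further blow-up thereof, depending on the edge length), (ii) the adjunction/linearity identity $X_w \cdot \bigl(\sum_v n_v X_v\bigr) = 0$ coming from $X_0$ being a fiber, and (iii) correctly matching the marked point $x_w^{\{u,w\}}$ on $X_w$ with the node connecting $X_w$ to $X_u$. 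Everything else is essentially formal. I would organize the write-up so that this intersection computation is isolated as the one substantive lemma, citing the standard description of regular models (e.g. as in \cite{bakersp}) for the local picture at the nodes.
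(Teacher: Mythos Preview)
Your proposal is correct and follows essentially the same route as the paper: extend via Zariski closure using regularity of $\mathcal X$, then write $\mathcal L_1 \otimes \mathcal L_2^{-1} \cong \mathcal O_{\mathcal X}(\sum_v n_v X_v)$ and set $f(v)=n_v$. The paper's proof is in fact terser than yours---it simply asserts that the restriction computation is ``easy to see''---so the intersection-theoretic verification you flag as the main obstacle is exactly the step the authors leave to the reader.
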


\begin{proof} The first part is a well-known consequence of the regularity of $\mathcal X$ (see for example \cite[Chapter II, 6.5(a) and 6.11]{HartshorneAG}).  For the second part, note that any two extensions $\mathcal L_1$ and $\L_2$ of $\L_\eta$ differ by a divisor supported on the special fiber $X_0$. 
In other words, there exist integers $n_v$ for each vertex $v$ of the dual graph of $X_0$ such that $\mathcal L_1 \simeq \mathcal L_2(\sum_v n_v X_v)$. It is now easy to see that, in the notation of Section~\ref{sec:mcnodal}, $\pi(\mathcal L_1|_{X_0})$ and $\pi(\mathcal L_2|_{X_0})$ differ by the divisor of the function $f_G:V(G) \rightarrow \mathbb Z$ which sends $v$ to $n_v$. 
\end{proof}

Consider now a $\g^r_d$ $L_\eta=(\mathcal L_\eta, H_\eta)$ on $\mathcal X_\eta$ and choose an extension $\L$ of $\L_\eta$ to $\mathcal X$. 
Since cohomology commutes with flat base change, there is a natural isomorphism $H^0(X_\eta,\mathcal L_\eta) \cong H^0(X,\L) \otimes_R K$ and 
$H_\eta$ corresponds to a free $R$-submodule H of $H^0(\mathcal X, \mathcal L)$ such that $H \otimes_R K \cong H_\eta$. 
The restriction $H_0$ of $H$ to the special fiber $X_0$ is an $(r+1)$-dimensional subspace of $H^0(X_0,\L_0)$. This shows the existence of the limit of a $\g^r_d$ once the extension $\mathcal L$ of $\L_\eta$ is fixed. However, since the extension is not unique, the limit pair $(\L_0, H_0)$ is not unique.  
If one were to restrict to a single such extension, there would inevitably be a loss of important information; this can be
seen, for example, in the study of limits of ramification points and in the study of limits of smooth hyperelliptic, and more generally, smooth $d$-gonal curves, see for example~\cite{HM82,EH86, EM, Ran}.  Some of the difficulties in formulating a good notion of limits for linear series are already manifest in the atypical behavior of linear series on reducible curves, since (as mentioned earlier) they do not satisfy many of the classical theorems which govern the behavior of (global sections of) linear series on irreducible curves. 

\medskip

The Eisenbud-Harris theory of limit linear series on curves of compact type provides a way to keep track of the geometry in the limit by choosing an {\it aspect} of the limit for each irreducible component, with some extra conditions relating these different aspects at nodes (see below for a precise definition). We refer to~\cite{O, O2} for recent refinements of the Eisenbud-Harris theory, and to~\cite{E98, EM} for generalizations of this theory to certain curves not of compact type.

\subsection{Limit linear series for curves of compact type}\label{sec:limitcompact} 
We recall the definition of limit linear series for curves of compact type, following~\cite{EH86}. 

\medskip

We first need to recall the sequence of orders of vanishing of a $\g^r_d$ $L=(\L,H)$ at a point on a smooth projective curve $Y$ over $\kappa$.
For any closed point $p$ on $Y$ and any section $f \in H$, denote by $\ord_p(f)$ the order of vanishing of $f$ at $p$.  The orders of vanishing at $p$ of all the sections of $\L$ in $H$ define a sequence of integers $0 \leq a_0^L( p ) <  a_{1}^L( p ) <  \dots <a_{r-1}^L( p ) < a_r^L ( p )$.  (This sequence is obtained by induction: let $f_r$ be the section in $H$ with the highest order of vanishing, and define $f_{r-i}$ inductively as a section in $H$ with the highest possible order of vanishing at $p$ among
all sections which are linearly independent of the first $i$ sections $f_r,\dots, f_{r-i+1}$. Then $a_i( p ) = \ord_p(f_i)$.)
 
 \medskip

Let $X_0$ be a strongly semistable curve over $\k$, let $G=(V,E)$ be the dual graph of $X_0$, and for any $v\in V$, let $X_v$ be the corresponding irreducible component of $X_0$.  When $G$ is a tree, the curve $X_0$ is called of {\it compact type}: in this case, $\Pic^0(X_0)$ is compact. 

\noindent (Recall that in general, we have an exact sequence of algebraic groups 
$$0 \rightarrow H^1(G, \mathbb \ZZ) \otimes {\mathbb G}_m \rightarrow \Pic^0(X_0) \rightarrow \prod_{v \in V_0} \Pic^0(X_v) \rightarrow 0.)$$

\medskip

Let $X_0$ be a curve of compact type. A {\it crude limit} $\g^r_d$ $L$ over $X_0$ is by definition the data of a $\g^r_d$ $L_v = (\mathcal L_v, H_v)$ over $X_v$ for each vertex $v \in V$, called the $X_v$-{\it aspect} of $L$,  such that the following property holds: If the two components $X_u$ and $X_v$ of $X_0$ meet at a node $p$ corresponding to 
an edge $\{u, v\}$ in $E$, then 
\begin{equation}\label{eq:crude}
 a^{L_v}_{i}( p ) + a^{L_u}_{r-i}( p ) \, \geq \, d
 \end{equation}
for all $ 0\leq i \leq r$.
A crude limit linear series is a {\it refined limit linear series} if all the inequalities in \eqref{eq:crude}
are equalities.  

\medskip

We are now going to associate to $L$ a degree $d$ divisor on the regularization of $X_0$ (c.f. Section~\ref{sec:mcnodal} for the definition).
 Roughly speaking, this amounts to choosing a divisor of degree $d$ on each $X_v$ 
 in the divisor class of $L_v$. 
 However, the global divisor on $\C X_0$ defined by such a collection of divisors does not have degree $d$.  
 To fix this problem, we need to choose a root $\mathfrak r$ for the tree $G$ 
 and modify the local divisors using the root. This is done as follows.

Let $\Gamma$ be the metric graph associated to $G$ and let $\C X_0$ be the regularization of $X_0$.  
For each vertex $v$ of $G$, let $\overline D_v$ be a divisor of degree $d$ on $X_v$ in the divisor class defining $\mathcal L_v$ (i.e., $\mathcal L(\overline D_v) \simeq \mathcal L_v$). The subspace $H_v\subseteq H^0(X_v, \mathcal L(\overline D_v))$ can be naturally regarded as a subspace of $\k(X_v)$.
  Let $\mathfrak r$  be a fixed vertex of $G$ and let $G_\mathfrak r$ denote $G$ considered as a rooted tree 
 with root $\mathfrak r$. 
 For any vertex $u \neq \mathfrak r$ in $G$, consider the unique path from $u$ to $\mathfrak r$ in $G$ and let $e_u$ be the edge adjacent to $u$ along this path. Let $x_u$ be the node of $X_u$ corresponding to the edge $e_u$. Consider the divisor $\mathcal D$ of degree $d$ on $\C X_0$ defined by 
 
\begin{equation}\label{eq:D}
D_\Gamma := \overline D_\mathfrak r + \sum_{u \neq \mathfrak r} D_u,  \; \textrm{where} \; D_u := \overline D_u - d( x_u ). 
\end{equation}
 
We will see below that $\mathcal D$ has rank at least $r$ in $\C X_0$.
In order to derive a more precise characterization of crude limits in terms of a suitable rank function, 
we need to introduce a refined notion of rank on a metrized complex which 
takes into account the spaces $H_v$.
 
\subsubsection{A refined notion of rank for divisors on a metrized complex}
Let $\C$ be a metrized complex of algebraic curves, $\Gamma$ the underlying metric graph with model $G$, and $\{ C_v \}$ the corresponding family of smooth projective curves over $\k$.  Suppose we are given, for each $v \in V$, a non-empty $\k$-linear subspace $F_v$ of  $\k(C_v)$.  We denote by $\mathcal F$ the collection of all $F_v$.  
Define  the {\em $\mathcal F$-rank} of a divisor $\mathcal D$ on $\C$, denoted $r_{\C, \mathcal F}(\mathcal D)$, to be the maximum integer $r$ such that 
for any effective divisor $\mathcal E$ of degree $r$, there exists a nonzero rational function $\f$ on $\C$, with $C_v$-parts $f_v \in F_v$ for all $v \in V$, such that $\mathcal D + \div(\f) - \mathcal E \geq 0$.

\medskip

The following proposition provides an upper bound for $r_{\C,\mathcal F}$ in terms of the maximal dimension of $F_v$
for $v\in V(G)$.

\begin{prop}\label{prop:basic} Let $s = \max\left( 0, \min_{v\in V(G)} (\dim_\k (F_v)-1) \right).$  
Then for any divisor $\mathcal D$ on $\C$, we have $r_{{\C},\mathcal F}(\mathcal D) \leq s$. 
\end{prop}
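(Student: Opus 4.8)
The plan is to reduce to the simplest case where some $F_v$ has dimension exactly $s+1$ and exhibit a small effective divisor for which the $\mathcal F$-rank condition must fail. First I would dispose of the trivial case: if $s = 0$, then there is nothing to prove beyond $r_{\C,\mathcal F}(\mathcal D) \geq 0 \Rightarrow r_{\C,\mathcal F}(\mathcal D) \geq 0$ being consistent with the bound; more precisely, when $s=0$ the claim is just $r_{\C,\mathcal F}(\mathcal D) \leq 0$, so I only need to rule out $r_{\C,\mathcal F}(\mathcal D) \geq 1$. So assume $s \geq 0$ and pick a vertex $w \in V(G)$ realizing the minimum, i.e., $\dim_\k(F_w) = s+1$ (if the minimum of $\dim_\k(F_v)-1$ is negative this cannot happen, but then $s = 0$ and we are in the previous situation with $\dim_\k F_w \geq 1$ automatically since $F_w$ is nonempty hence nonzero).

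The key step is the following dimension-counting observation on the curve $C_w$: for a generic (in fact, any sufficiently general) effective divisor $E_w$ of degree $s+1$ on $C_w$, no nonzero function $f_w \in F_w$ satisfies $\div(f_w) + D_w - E_w \geq 0$ when we also must absorb the constraint coming from $\Gamma$ at $w$. Indeed, the functions $f_w \in F_w$ with a pole order bounded below by a fixed divisor form a $\k$-subspace of $F_w$ of dimension at most $s+1$, and imposing $s+1$ general vanishing conditions (the points of $E_w$) cuts this down to $0$. I would make this precise by choosing $\mathcal E$ supported entirely on $C_w$, with $C_w$-part $E_w$ a general effective divisor of degree $s+1$: then for any rational function $\f$ on $\C$ with $C_v$-parts in $F_v$, the $C_w$-part relation forces $f_w \in F_w$ to vanish along $E_w$ after correcting by $\div_w(f_\Gamma)$, and since $\div_w(f_\Gamma)$ has bounded degree (equal to $\mathrm{ord}_w(f_\Gamma) = \deg D_w - \deg(\text{rest})$, constrained by effectivity elsewhere) only finitely many such correction divisors arise; for each one the space of admissible $f_w$ has dimension $\leq s+1$ and a general $E_w$ of degree $s+1$ kills it. Hence no such $\f$ exists, so $r_{\C,\mathcal F}(\mathcal D) < s+1$, i.e., $r_{\C,\mathcal F}(\mathcal D) \leq s$.

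A cleaner way to organize the argument, which I would prefer, is to invoke Theorem~\ref{thm:f-width} (rank-determining sets) to restrict attention to effective divisors $\mathcal E$ supported on geometric points, and then to argue purely on $C_w$: place all $s+1$ points of $\mathcal E$ on $C_w$ as a general effective divisor $E_w$. The condition $r_{\C,\mathcal F}(\mathcal D) \geq s+1$ would require, for this particular $\mathcal E$, a function $\f$ with $f_w \in F_w$ and $\div(f_w) + D_w + \div_w(f_\Gamma) - E_w \geq 0$. The twisted divisor $D_w + \div_w(f_\Gamma)$ ranges over a finite set of divisor classes as $f_\Gamma$ varies subject to effectivity on the rest of $\C$ (same finiteness argument as in the proof of Proposition~\ref{prop:combinrank}), so it suffices to show: for a general $E_w$ of degree $s+1$, and each of these finitely many divisors $D'$, there is no nonzero $f_w \in F_w$ with $\div(f_w) + D' - E_w \geq 0$. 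This is immediate since $\{f_w \in F_w : \div(f_w) + D' \geq E_w - (\text{support})\}$ is cut out by $s+1$ linear conditions from a space of dimension $\leq s+1$, and a general choice of the $s+1$ points makes these conditions independent, leaving only $f_w = 0$.

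The main obstacle, and the place requiring genuine care, is the finiteness of the set of relevant twists $D_w + \div_w(f_\Gamma)$: one must argue that effectivity of $\div(\f) + \mathcal D - \mathcal E$ away from $C_w$ bounds $f_\Gamma$ (up to additive constant and hence up to the induced divisor on $\Gamma$) into a finite set, exactly as in the compactness/finiteness step of Proposition~\ref{prop:combinrank}. Once that is in hand, the dimension count on $C_w$ is routine. I should also be slightly careful about the degenerate possibility that $F_w$ consists only of constants (so $s$ could still be $0$ via this vertex): then $\div_w(f_\Gamma) - E_w$ cannot be made effective for a general single point $E_w$ unless $D_w + \div_w(f_\Gamma)$ already dominates that point, and the same finiteness-then-genericity argument applies.
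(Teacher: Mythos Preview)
Your approach is essentially the same as the paper's: pick a vertex $w$ where $\dim_\k F_w = s+1$, place an effective divisor $\mathcal E$ of degree $s+1$ entirely on $C_w$, invoke the finiteness/covering argument from the proof of Proposition~\ref{prop:combinrank} to pin down $\div_w(f_\Gamma)$ independently of $E_w$, and then observe that an $(s+1)$-dimensional space $F_w$ cannot furnish a linear series of projective dimension $\geq s+1$ on $C_w$. One small correction: your handling of the degenerate case is slightly off, since ``nonempty'' for a $\k$-subspace does not imply ``nonzero'' (the paper simply treats $\dim_\k F_v = 0$ separately, noting the restricted rank is then at most $0$), and your appeal to Theorem~\ref{thm:f-width} is unnecessary since you are free to choose $\mathcal E$ supported on $C_w$ directly.
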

\begin{proof}
If $\dim_\k (F_v) =0$ for some $v$, then either $r_{\C,\mathcal F}(\mathcal D) = 0$ (if $\mathcal D \geq 0$) or $r_{\C,\mathcal F}(\mathcal D) = -1$ (otherwise), and the proposition holds. For the sake of contradiction, suppose now that $r_{\C,\mathcal F}(\mathcal D) \geq s+1 \geq 1$. Let $v$ be a vertex with $s = \dim_\k (F_v)-1$. For any effective divisor $E_v$ of degree $s+1$ on $C_v$, there exists a rational function $\f$ on $\C$ with $f_v \in F_v$ such that $\mathcal D - E_v+ \div(\f) \geq 0$.  Taking $\Gamma$-parts shows that $D_\Gamma -(s+1)(v) + \div(f_\Gamma) \geq 0$. 
For a generic choice of $E_v$, we can assume 
as in the proof of Proposition~\ref{prop:combinrank}
that the slopes of $f_\Gamma$ on the edges adjacent to $v$ are fixed. 
For the divisor $D'_v=D_v +\div_v(f_\Gamma)$, this shows that for any choice of $E_v$ 
 on $C_v$, there exists an element $f_v \in F_v$ with $D'_v -E_v +\div(f_v) \geq 0$. This contradicts 
the assumption that $\dim_\k(F_v) = s+1$, since $F_v$ can define a linear system of projective dimension at most $s$ on $C_v$.
\end{proof}

In Section~\ref{sec:rank-csum},
we provided a formula for the rank of a divisor on a metrized complex $\C$ which is a connected sum of two 
metrized complexes $\C_1$ and $\C_2$. We now state 
a straightforward generalization of Proposition~\ref{prop:ConnectedSumRankForMetrizedComplexes} 
to restricted ranks, which will be used in the next section.

Suppose that the underlying graph of $\C$  has a bridge edge $e=\{v_1,v_2\}$ and let
$\C_1$ and $\C_2$ be the two metrized complexes obtained by removing the edge $e$. 
Let $\Gamma_1$, 
$\Gamma_2$, and $\Gamma$ be the underlying metric graphs of $\C_1$, $\C_2$, and $\C$ respectively (with $v_i$ a vertex of $\Gamma_i$).
Denote by $x_1$ and $x_2$ the points of $\C_1$ and $\C_2$, respectively, corresponding to the edge $e$.  
Let $\mathcal F$ be a family of spaces of rational functions $F_v$ for vertices $v$ of $\Gamma$, 
and denote by $\mathcal F_1$ (resp. $\mathcal F_2$) the collection of those $F_v$ for $v$ 
a vertex of $\Gamma_1$ (resp. $\Gamma_2$).

\begin{prop}\label{prop:ConnectedSumRankForMetrizedComplexes2}
Let $\mathcal D = \mathcal D_1 + \mathcal D_2$ be the sum of
divisors $\mathcal D_1$ and $\mathcal D_2$ on $\C_1$ and $\C_2$, respectively. 
For a non-negative integer $k$, define $\eta(k)$ to be the smallest integer $n$ such that 
$r_{\C_1,\mathcal F_1}\bigl(\mathcal D_1 + n(x_1)\bigr) = k$.  Then 
\begin{equation} 
r_{\C,\mathcal F}(\mathcal D) = \min_{k\geq 0} \bigl\{\,k + r_{\C_2,\mathcal F_2}(\mathcal D_2-\eta(k)(x_2))\,\bigr\}. 
\end{equation} 
\end{prop}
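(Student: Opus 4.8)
The plan is to transcribe the proof of Proposition~\ref{prop:ConnectedSumRankForMetrizedComplexes} almost verbatim, carrying along throughout the extra requirement that the $C_v$-parts of every rational function used lie in the prescribed subspaces. This causes essentially no extra work because all of the operations on rational functions occurring in that proof --- gluing two functions across the bridge edge $e$, shifting a function by an additive constant on a connected metric graph, and restricting a function on $\C$ to $\C_1$ or $\C_2$ --- alter only the $\Gamma$-part of the function along $e$, and hence leave the tuple of $C_v$-parts, and in particular their membership in the $F_v$, unchanged. As there, I would begin by reducing to effective divisors supported on geometric points of $\C$ (this reduction, like the chip-firing moves of types (2) and (3), does not involve the $C_v$-parts of rational functions, so the relevant version of Theorem~\ref{thm:f-width} applies verbatim to the restricted rank), so that every effective divisor on $\C$ splits uniquely as $\mathcal E=\mathcal E_1+\mathcal E_2$ with $\mathcal E_i$ supported on $\C_i$. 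One should also record the elementary properties of $\eta$: the function $n\mapsto r_{\C_1,\mathcal F_1}(\mathcal D_1+n(x_1))$ is non-decreasing, changes by at most one at each integer step, equals $-1$ for $n$ sufficiently negative, and --- by Proposition~\ref{prop:basic} --- is bounded, hence eventually constant; thus $\eta(k)$ is defined for every $k$ up to this stable value, and we adopt the convention that $\eta(k)=+\infty$ for larger $k$, reading the $k$-th term of the minimum as $k-1$ in that case (i.e. $r_{\C_2,\mathcal F_2}(\mathcal D_2-\infty(x_2)):=-1$).

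To prove $r_{\C,\mathcal F}(\mathcal D)\ge r$, where $r$ is the right-hand side: given an effective divisor $\mathcal E=\mathcal E_1+\mathcal E_2$ of degree $r$ supported on geometric points, set $k=\deg(\mathcal E_1)$; then $\eta(k)$ is finite, since otherwise $r\le k-1<k=\deg(\mathcal E_1)\le\deg(\mathcal E)=r$. By the definition of $\eta(k)$ there is a rational function $\f_1$ on $\C_1$ with $C_v$-parts in $\mathcal F_1$ and $\div(\f_1)+\mathcal D_1+\eta(k)(x_1)-\mathcal E_1\ge 0$, and by the minimality of $r$ we get $r_{\C_2,\mathcal F_2}(\mathcal D_2-\eta(k)(x_2))\ge r-k=\deg(\mathcal E_2)$, hence a rational function $\f_2$ on $\C_2$ with $C_v$-parts in $\mathcal F_2$ and $\div(\f_2)+\mathcal D_2-\eta(k)(x_2)-\mathcal E_2\ge 0$. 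Normalizing $\f_1$ so that its $\Gamma_1$-part vanishes at $v_1$ and shifting $\f_2$ by the appropriate additive constant, these glue to a rational function $\f$ on $\C$ that restricts to $\f_1$ and $\f_2$ and is linear of slope $\eta(k)$ along $e$ oriented from $v_1$ to $v_2$; its $C_v$-parts still lie in $\mathcal F$, and a direct check --- the slope at $v_1$ adds $+\eta(k)(x_1)$ to $\div(\f)$ at $x_1$, the slope at $v_2$ contributes $-\eta(k)(x_2)$ at $x_2$, and $\ord_u(\f)=0$ at the interior points of $e$ --- gives $\div(\f)+\mathcal D-\mathcal E\ge 0$, so $r_{\C,\mathcal F}(\mathcal D)\ge r$.

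For the reverse inequality it suffices, for each $k$ with $0\le k\le\widetilde r:=r_{\C,\mathcal F}(\mathcal D)$, to show $\widetilde r\le k+r_{\C_2,\mathcal F_2}(\mathcal D_2-\eta(k)(x_2))$. Following the original argument: for each effective divisor $\mathcal E_1$ of degree $k$ on geometric points of $\C_1$ I would pick a rational function on $\C_1$ with $C_v$-parts in $\mathcal F_1$ maximizing the coefficient of $x_1$ in its divisor plus $\mathcal D_1-\mathcal E_1$ subject to all other coefficients being nonnegative, then pick $\bar{\mathcal E}_1$ minimizing this maximal coefficient $n$; one obtains $r_{\C_1,\mathcal F_1}(\mathcal D_1-n(x_1))\ge k$ (so in particular $\eta(k)$ is finite and $\eta(k)\le n$), whence every admissible $\f_1$ with $\div(\f_1)+\mathcal D_1-\bar{\mathcal E}_1$ nonnegative away from $x_1$ has $x_1$-coefficient there at most $n\le -\eta(k)$. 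Now take an arbitrary effective divisor $\mathcal E_2$ of degree $\widetilde r-k$ on geometric points of $\C_2$; by the definition of $\widetilde r$ there is an admissible rational function $\f$ on $\C$ with $\div(\f)+\mathcal D-\bar{\mathcal E}_1-\mathcal E_2\ge 0$, and writing $\f_1,\f_2$ for its restrictions, the outgoing slope of $f_\Gamma$ along $e$ at $v_1$ is at least $\eta(k)$; since $\ord_u(f_\Gamma)\ge 0$ at the interior points of $e$, the function $f_\Gamma|_e$ is convex, so its outgoing slope at $v_2$ is at most $-\eta(k)$, which forces the coefficient of $x_2$ in $\div(\f_2)+\mathcal D_2-\mathcal E_2$ to be at least $\eta(k)$, i.e. $\div(\f_2)+\mathcal D_2-\eta(k)(x_2)-\mathcal E_2\ge 0$. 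As $\mathcal E_2$ was arbitrary of degree $\widetilde r-k$, we conclude $r_{\C_2,\mathcal F_2}(\mathcal D_2-\eta(k)(x_2))\ge\widetilde r-k$, completing the argument.

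I do not expect a genuine obstacle here, consistent with the paper's own description of this statement as a straightforward generalization: the only point requiring attention beyond the proof of Proposition~\ref{prop:ConnectedSumRankForMetrizedComplexes} is the boundedness of the restricted rank from Proposition~\ref{prop:basic}, which must be tracked so that $\eta$ and the right-hand minimum are well-defined; once the $+\infty$ convention is fixed as above, everything else is the existing argument with the phrase ``$C_v$-parts in $F_v$'' inserted throughout.
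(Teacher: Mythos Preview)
Your proposal is correct and matches the paper's intended approach: the paper gives no separate proof of Proposition~\ref{prop:ConnectedSumRankForMetrizedComplexes2}, merely calling it a straightforward generalization of Proposition~\ref{prop:ConnectedSumRankForMetrizedComplexes}, and your write-up is precisely that generalization carried out (with the roles of $\C_1$ and $\C_2$ swapped to match the statement). Your observation that Proposition~\ref{prop:basic} must replace the Riemann-Roch appeal for the well-definedness of $\eta$, together with the $+\infty$ convention, is the one genuinely new ingredient and is handled correctly.
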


\subsubsection{Characterization of limit linear series in terms of the refined rank function} We retain the terminology from the previous sections. Let $X_0$ be a curve of compact type, $G=(V,E)$ the dual graph, $\C X_0$ the regularization of $X_0$, and let $L$ be a collection of $\g^r_d$'s $L_v = (\mathcal L_v,H_v)$ on $X_v$, one for each $v \in V$. Fix a root vertex $\mathfrak r \in V$ and divisors $\overline D_v \in \Div(X_v)$ such that $\mathcal L(\overline D_v) \sim \mathcal L_v$, and let $\mathcal D$ be the divisor defined by~\eqref{eq:D}. Denote by $\mathcal H$ the family of all $H_v \subseteq H^0(X_v, \mathcal L(D_v)) \subset \k(X_v)$.

\begin{thm}\label{thm:limitseries} The following two assertions are equivalent:
\begin{itemize}
\item[(i)] $L$ is a crude limit $\g^r_d$ on $X_0$.
\item[(ii)] $r_{\C X_0, \mathcal H} (\mathcal D) = r$. 
\end{itemize}
\end{thm}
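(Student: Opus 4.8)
The plan is to prove the equivalence by translating the Eisenbud–Harris vanishing-sequence condition \eqref{eq:crude} into a statement about the existence of rational functions on $\C X_0$ with $C_v$-parts in $H_v$, using the tree structure of $G$ to do a reduction along edges. The core of the argument should be an inductive computation of $r_{\C X_0, \H}(\D)$ via the restricted connected-sum formula in Proposition~\ref{prop:ConnectedSumRankForMetrizedComplexes2}. Since $G$ is a tree, every edge is a bridge, and so one can peel off leaves one at a time: writing $\C X_0$ as a connected sum of a leaf component $\C_v$ (a single curve $X_v$ with one marked point, where the restricted rank is governed by the vanishing sequence of $L_v$ at the node) and the rest, Proposition~\ref{prop:ConnectedSumRankForMetrizedComplexes2} expresses $r_{\C X_0, \H}(\D)$ in terms of $r_{\C_v, H_v}$ and an $\eta$-function. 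The key observation making this work is that for a single smooth curve $X_v$ with one marked point $p$ and the space $H_v$, the quantity $r_{X_v, H_v}(\overline D_v + n(p))$ — i.e., the largest $k$ such that one can subtract any effective degree-$k$ divisor and stay effective using only sections from $H_v$ and a function with prescribed order at $p$ — is determined by the vanishing sequence $a^{L_v}_\bullet(p)$. Concretely, $r_{X_v,H_v}(\overline D_v + n(p)) \geq k$ iff $a^{L_v}_{r-k}(p) \geq d - n$ (translating "there is a section vanishing to order $\geq d-n$ at $p$ and with enough freedom off $p$"), so the associated $\eta$-function at $v$ is given by $\eta_v(k) = d - a^{L_v}_{r-k}(p)$ when $k \le r$ and $\eta_v(k)=\infty$ (or large) for $k>r$.

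First I would make precise the single-curve computation just sketched: set up the dictionary between $r_{X_v,H_v}(D'_v)$ and the vanishing/ramification sequence of $L_v$ at the relevant marked point(s). Since a leaf of $G$ corresponds to a component meeting $X_0$ in a single node, only one marked point is involved, and the statement is essentially the classical fact that $a^L_i(p)$ is the largest order of vanishing at $p$ achievable in an $(r+1-i)$-codimensional-from-the-bottom subseries; I would record this as a lemma. Next, I would induct on the number of vertices of the tree $G$. For the inductive step, pick a leaf $v$ with unique neighbor $u$, edge $e=\{u,v\}$, and node $p$. Apply Proposition~\ref{prop:ConnectedSumRankForMetrizedComplexes2} with $\C_1 = \C_v$ (the one-vertex complex on $X_v$) and $\C_2 = \C X_0 \setminus v$, $\mathcal F_1 = \{H_v\}$, $\mathcal F_2 = \{H_w\}_{w\neq v}$, $\D_1 = \overline D_v - d(p)$ (or the rooted variant, depending on whether $v$ is on the path to $\mathfrak r$), $\D_2$ the restriction of $\D$ to $\C_2$. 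This gives
\[
r_{\C X_0, \H}(\D) = \min_{k\ge 0}\Bigl\{ k + r_{\C_2, \mathcal F_2}\bigl(\D_2 - \eta_v(k)(x_2)\bigr)\Bigr\},
\]
with $\eta_v$ computed from $a^{L_v}_\bullet(p)$ as above. Then I would show that the effect of this minimum, after absorbing the $d(p)$ twist and the $\eta_v$-shift into the divisor on the $u$-side, is exactly to impose the pairing condition $a^{L_v}_i(p) + a^{L_u}_{r-i}(p) \ge d$ at the node $e$: the restricted rank $r_{\C X_0,\H}(\D)$ equals $r$ precisely when, for every $i$, subtracting the $\eta_v(r-i)$ worth of chips from the $u$-side is still compatible with $L_u$ having a section vanishing to the complementary order — which is \eqref{eq:crude}. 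Unwinding, $r_{\C X_0,\H}(\D) = r$ iff condition \eqref{eq:crude} holds at $e$ and (by induction) at all edges of $G\setminus v$, i.e., iff $L$ is a crude limit $\g^r_d$.

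I expect the main obstacle to be the bookkeeping in the inductive step: keeping careful track of how the root $\mathfrak r$, the $-d(x_u)$ twists in \eqref{eq:D}, and the $\eta_v$-shifts interact, and verifying that the minimum in the connected-sum formula does not accidentally produce a value larger or smaller than $r$ for a spurious reason (for instance one must check that $r_{\C X_0,\H}(\D) \le r$ always holds, which should follow from Proposition~\ref{prop:basic} since $\dim H_v = r+1$, and that the leaf computation's $\eta_v(k)=\infty$ for $k>r$ is handled cleanly). A secondary subtlety is that the divisors $\overline D_v$ are only well-defined up to the choice of a divisor in the class of $\mathcal L_v$ and up to a rational function on $\C X_0$ — so I should first note (as in the equivalence relation of Theorem~\ref{thm:introLLS}) that both statements (i) and (ii) are independent of these choices, which reduces to the fact that replacing $\overline D_v$ by a linearly equivalent divisor, or $H_v$ by $H_v \cdot f_v$, changes neither the vanishing sequences nor $r_{\C X_0,\H}(\D)$. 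Once these points are in place, the equivalence drops out of the connected-sum recursion.
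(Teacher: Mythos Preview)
Your overall architecture is the same as the paper's: both arguments use the restricted connected-sum formula (Proposition~\ref{prop:ConnectedSumRankForMetrizedComplexes2}) and induct on the tree, reducing to the single-curve computation that expresses the $\eta$-function in terms of the vanishing sequence. Your leaf-peeling is a harmless variant of the paper's peeling of subtrees at the root.

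There is, however, a genuine gap in your inductive step for the direction (i)~$\Rightarrow$~(ii). After removing the leaf $v$, the connected-sum formula requires you to know
\[
r_{\C_2,\mathcal F_2}\bigl(\D_2 - \eta_v(k)(x_2)\bigr) \;\ge\; r-k
\]
for every $k$. Your induction hypothesis, as you have stated it, only gives $r_{\C_2,\mathcal F_2}(\D_2)=r$; it says nothing about how the restricted rank degrades when you subtract multiples of $(x_2)$. The natural move --- restricting $H_u$ to the subspace $F_u=\{f\in H_u:\ord_{x_2}(f)\ge \eta_v(k)\}$, which by the crude condition at $e$ has dimension at least $r-k+1$ --- reduces the question to computing $r_{\C_2,\mathcal F_2'}(\D_2)$ where $\mathcal F_2'$ has $H_u$ replaced by $F_u$. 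But now the induction hypothesis no longer applies, since the collection $\{L_w\}_{w\neq v}$ with $H_u$ shrunk to $F_u$ need not be a crude limit series on the smaller curve (the vanishing sequences of $F_u$ at the \emph{other} nodes of $X_u$ may fail the inequalities).

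The fix, which the paper carries out explicitly, is to strengthen the statement being proved by induction: if $L$ is a crude limit $\g^r_d$ on $X_0$ and one replaces $H_{\mathfrak r}$ at a single chosen vertex by an arbitrary subspace $F_{\mathfrak r}\subset H_{\mathfrak r}$ of dimension $s+1\le r+1$, then the restricted rank with respect to the modified collection is exactly $s$. This stronger statement is stable under the inductive step and is what makes the recursion close. You should flag this strengthening up front; without it the argument does not go through, and this is more than the ``bookkeeping'' you anticipate.
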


\begin{proof}
 Denote by $\mathfrak r_1, \dots, \mathfrak r_h$ all the children of $\mathfrak r$ in the rooted tree $G_\mathfrak r$. 
 For each $1\leq i\leq h$, denote by $x_i = x_{\mathfrak r_i}$ and $y_i$ the $\k$-points of 
$X_{\mathfrak r_i}$ and $X_\mathfrak r$, respectively, 
which correspond to the edge 
$\{\mathfrak r,\mathfrak r_i\}$.

\medskip

Proof of ${\rm (i)} \Rightarrow {\rm (ii)}$: 
Let $L$ be a crude limit $\g^r_d$ and let $s \leq r$ be a non-negative integer. 
Let $F_\mathfrak r \subset H_\mathfrak r$ be a subspace of dimension $s+1$, and denote by 
$\mathcal F$ the collection of $F_\mathfrak r$ and $H_v$ for $v\neq \mathfrak r$ in $V$. We will prove
that $r_{\C X_0, \mathcal F}(\mathcal D) = s$. The result then follows by taking $\mathcal F = \mathcal H$.

By Proposition~\ref{prop:basic}, we have $r_{\C X_0, \mathcal F} (\mathcal D)\leq s$, 
so it will be enough to prove the reverse inequality $r_{\C X_0, \mathcal F} (\mathcal D) \geq s$. 
The proof proceeds by induction on $|V|$, and by applying Proposition~\ref{prop:ConnectedSumRankForMetrizedComplexes2}. 
For the base case of our induction, we have $|V|=1$, and thus $\C X_0 = X_\mathfrak r$ and $r_{X_\mathfrak r, 
\{F_\mathfrak r\}}(D_\mathfrak r) =s$. 

Now suppose that the claim holds for all metrized complexes on at most $|V|-1$ vertices. For each $1\leq i\leq h$, denote by 
$G_i$ the rooted tree at $\mathfrak r_i$ obtained by removing the edge $\{\mathfrak r, \mathfrak r_i\}$ from $G$,
and let $\Gamma_i$ be the metric graph obtained from $G_i$ by taking all edge lengths to be $1$. 
Let $\C_i$ be the sub-metrized complex of 
$\C X_0$ with underlying metric graph $\Gamma_i$ and curves 
$X_v$ for $v$ a vertex of $G_i$, and denote by $\mathcal D_i$ the restriction of $\mathcal D$ to $\C_i$, i.e., 
$D_{i,v} = D_v$ for all vertices $v$ of $G_i$. Denote by $\mathcal H_i$ the family of all $H_v$ for $v$ a vertex of $G_i$. 
Let $\eta_i$ be the $\eta$-function associated to the bridge edge $\{\mathfrak r, \mathfrak r_i\}$, so that
 $\eta_i(k)$ is the smallest integer $n$ such that 
$r_{\C_i,\mathcal H_i}(\mathcal D_i + n(x_{i})) = k$.
Repeatedly applying Proposition~\ref{prop:ConnectedSumRankForMetrizedComplexes2} reduces the calculation of $r_{\C X_0, \mathcal F}(\mathcal D)$ 
to the calculation of the restricted ranks of certain divisors 
on the metrized complex $X_\mathfrak r$ (with a single vertex $\mathfrak r$ and a unique curve $X_\mathfrak r$) 
with respect to the subspace $F_\mathfrak r \subset \k(X_\mathfrak r)$; more precisely,  
\[r_{\C X_0, \mathcal F} (\mathcal D) = 
\min_{s_1,\dots, s_h \geq 0} \bigl\{\,s_1+\dots+s_h + r_{X_\mathfrak r, \{F_\mathfrak r\}}
\bigl(D_\mathfrak r - \sum_{i=1}^h \, \eta_i(s_i) (y_{i})\bigr)\,\bigr\}.\]
We are thus led to prove that each term $s_1+\dots+s_h + r_{X_\mathfrak r, \{F_\mathfrak r\}}
\bigl(D_\mathfrak r - \sum_{i=1}^h \, \eta_i(s_i) (y_{i})\bigr)$ in the above formula, for non-negative integers
$s_i$, is bounded below by $s$. 
We may clearly assume that $s_1+\dots+s_h \leq s+1$. 
\begin{claim}
 For each $1\leq i\leq h$, $\eta_i(s_i) \leq d- a_{r-s_i}^{L_{\mathfrak r_i}}(x_{i})$.
\end{claim}
\noindent{\it Proof of the claim.}
By definition, the subspace $F_{\mathfrak r_i}$ of $H_{\mathfrak r_i}$ consisting of all global sections of 
$\mathcal L_{\mathfrak r_i}$ having an order of vanishing at least 
$a_{r-s_i}^{L_{\mathfrak r_i}}(x_{i})$ at $x_{i}$ has dimension $s_{i}+1$. 
Let $\mathcal D'_i$ be the divisor defined  on $\C_i$ 
by the data of the linear series $L_v=(\mathcal L(\overline D_v), H_v)$, for $v$ a vertex of $G_i$, 
and the root $\mathfrak r_i$, as in~\eqref{eq:D}. Note that 
$\mathcal D'_i = \mathcal D_i + d(x_i)$. 
Let $\mathcal F_i$ be the collection of 
$F_{\mathfrak r_i}$ and $H_v$, for $v\neq \mathfrak r_i$ a vertex of $G_i$. 
By the induction hypothesis applied to the subcurve of $X_0$ consisting of all curves $X_v$ for $v$ a vertex of $G_i$ and the crude limit linear series 
$\{L_v\}_{v\in V(G_i)}$, we have $r_{\C_i, \mathcal F_i}(\mathcal D'_i) = s_i$.
On the other hand, by the definition of $F_{\mathfrak r_i}$, we have 
\[
r_{\C_i, \mathcal F_i}(\mathcal D'_i) = r_{\C_i,\mathcal F_i}(\mathcal D'_i - a_{r-s_i}^{L_{\mathfrak r_i}}(x_{i})(x_{i})) 
 \]
and thus
\begin{align*}
 r_{\C_i,\mathcal F_i}(\mathcal D_i + (d- a_{r-s_i}^{L_{\mathfrak r_i}}(x_i))
 (x_{i})) =r_{\C_i,\mathcal F_i}(\mathcal D'_i - a_{r-s_i}^{L_{\mathfrak r_i}}(x_{i})
(x_{i}))  = s_i.
\end{align*}
By the definition of $\eta_i(s_i)$, we infer that $\eta_i(s_i) \leq d- a_{r-s_i}^{L_{\mathfrak r_i}}(x_{i})$, 
which proves the claim.

\medskip

To finish the proof of ${\rm (i)} \Rightarrow {\rm (ii)}$, note that since $L$ is a crude limit $\g^r_d$, we have 
\[a_{s_i}^{L_\mathfrak r}(y_{i}) \geq d- a_{r-s_i}^{L_{\mathfrak r_i}}(x_{i}).\] 
This shows that the space of all global sections of $\mathcal L_{\mathfrak r}$ with an order of vanishing at least 
$\eta(s_i)$ at $y_{i} $ has codimension at most $s_i$. 
The intersection of all these spaces with $F_\mathfrak r$ has dimension at least  
$s+1 - \sum_{i=1}^h s_i$, which shows that $r_{X_\mathfrak r, \{F_\mathfrak r\}}
\bigl(D_\mathfrak r - \sum_{i=1}^h \, \eta_i(s_i) (y_{i})\bigr) \geq s - \sum_{i=1}^h s_i$. 

\medskip

\noindent Proof of ${\rm (ii)} \Rightarrow {\rm (i)}$: 
By Corollary~\ref{cor:f-widthZ}, in calculating $r_{\C X_0, \mathcal H}(\mathcal D)$ 
we can restrict to effective divisors in $\Div(\C X_0)_\mathbb Z$. 
In this case, since $G$ is a tree, we can suppose that all the rational functions on $\Gamma$ which appear 
in the calculation of $r_{\C X_0, \mathcal H}$ 
arise via linear interpolation from integral functions 
$f_G: V \rightarrow \mathbb Z$. Since $G$ is a tree, the data of an integral function $f_G$ on $V$ is equivalent, 
up to an additive constant, to an assignment of labels $a_{uv}\in \mathbb Z$ 
to each oriented edge $uv$ of $G$ such that $a_{uv} = -a_{vu}$ for any $\{u,v\} \in E$. (Set $a_{uv} = f_G(u) -f_G(v)$.)  
In what follows, we denote by $p_v$ the unique parent of a vertex $v \neq \mathfrak r$ in the rooted tree 
$G_\mathfrak r$.

Suppose now that $r_{\C X_0,\mathcal H}(\mathcal D) = r$. 
We first observe that, for any other choice of a root vertex $\mathfrak r'$, 
if $\mathcal D'$ denotes the corresponding divisor on $\C X_0$ defined as in~\eqref{eq:D} then
the two divisors $\mathcal D$ and $\mathcal D'$ differ by the principal divisor $\div(\f)$, where 
$\f$ is a rational function with $\Gamma$-part $f_G$ and constant $X_v$-part for all $v \in V$.
Thus $r_{\C X_0,\mathcal H}(\mathcal D') = r$, so
it will be enough to prove the validity of Condition~\eqref{eq:crude} for the root $\mathfrak r$ and a child $u$
of $\mathfrak r$ among $\mathfrak r_1,\dots, \mathfrak r_h$. 

Denote by $x_u$ and $y_u$ the $\k$-points of $X_u$ and 
$X_\mathfrak r$, respectively, corresponding to the edge $\{\mathfrak r, u\}$, and
let $ 0 \leq i\leq d$. For any effective divisor $\mathcal E = E_\mathfrak r +E_u$, where $E_\mathfrak r$ and $E_u$ are effective divisors of degree $r-i$ and $i$ 
on $X_\mathfrak r$ and $X_u$, respectively, there must exist a rational function 
$\f$ with $f_w\in H_w$ for all $w$ 
such that $\mathcal D - \mathcal E + \div(\f) \geq 0$. 
For any two adjacent vertices $w$ and $z$ in $G$, let $a_{wz} = f(w) - f(z)$ as above. 
Since for any vertex $w\neq \mathfrak r$, $D_w$ has degree zero, 
a simple induction starting from the leaves and going towards the root shows that $a_{wp_w} \geq 0$.
 This  in particular implies that $D_\mathfrak r + \div(f_\mathfrak r) - E_\mathfrak r - a_{u\mathfrak r}(y_u) \geq 0$
 and $D_u + \div(f_u) - E_u - (d-a_{u\mathfrak r})(x_u) \geq 0$. 
   For a generic choice of $E_\mathfrak r $ and $E_u$, we can assume as in the proof of Proposition~\ref{prop:combinrank}
   that $a_{u\mathfrak r} = a $ is a constant. In other words, 
   there exists an integer $0\leq a\leq d$ such that the sublinear 
   system of $H_\mathfrak r$ (resp., $H_u$) 
   which consists of those sections with an order of vanishing at least $a$ (resp. $d- a$) at $y_u$ (resp. $x_u$) 
   has projective dimension at least $r-i$ (resp. $i$). By the definition of the sequences $a^{L_\mathfrak r}(\cdot)$ 
   and $a^{L_u}(\cdot)$, this simply means that 
   $a^{L_\mathfrak r}_i(y_u) \geq a$ and  $a^{L_u}_{r-i}(x_u) \geq d-a$.  It follows that
   $a^{L_\mathfrak r}_{i}( y_u ) + a^{L_u}_{r-i}( x_u ) \, \geq \, d,$ which is Condition~\eqref{eq:crude} 
   for the two irreducible components $X_\mathfrak r$ and $X_u$ of $X_0$.  
\end{proof}

\begin{cor} Let $\k$ be of characteristic zero. Then $L$ is a refined limit series iff $r_{\C, \mathcal H}(\overline D) =r$ and all the ramification points of $L_v$ are smooth points of  $X_v$ for all $v\in V(G)$. 
\end{cor}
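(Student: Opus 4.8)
The plan is to deduce the Corollary from Theorem~\ref{thm:limitseries} by extracting the equality case of its proof. By Theorem~\ref{thm:limitseries} the hypothesis $r_{\C X_0,\mathcal H}(\mathcal D)=r$ (where $\mathcal D$ is the divisor of \eqref{eq:D}) is equivalent to $L$ being a crude limit $\g^r_d$, so the real content is to characterize, among crude limit series, which ones are refined, and to match ``refined'' with the stated ramification condition on the aspects $L_v$. Since refinement is the assertion that the inequalities~\eqref{eq:crude} are all equalities, the natural strategy is to re-run the inductive argument proving Theorem~\ref{thm:limitseries} while tracking which of the auxiliary inequalities are tight.

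First I would revisit the implication $(i)\Rightarrow(ii)$. There the two estimates doing the work are the Claim $\eta_i(s_i)\le d-a^{L_{\mathfrak r_i}}_{r-s_i}(x_i)$ and the crude inequality $a^{L_\mathfrak r}_{s_i}(y_i)\ge d-a^{L_{\mathfrak r_i}}_{r-s_i}(x_i)$, which combine to give $r_{\C X_0,\mathcal H}(\mathcal D)\ge r$, with Proposition~\ref{prop:basic} supplying the matching upper bound. I would isolate the equality conditions: (a) equality in the Claim, i.e.\ the $\eta$-functions attached to the bridge edges take their ``generic'' value $\eta_i(k)=d-a^{L_{\mathfrak r_i}}_{r-k}(x_i)$; and (b) equality in~\eqref{eq:crude} for each edge $\{\mathfrak r,\mathfrak r_i\}$. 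Then, mirroring the $(ii)\Rightarrow(i)$ argument, I would show that when $L$ is refined the pair $(\mathcal H,\mathcal D)$ records these equalities: in the step where one passes to generic $E_\mathfrak r,E_u$ and extracts the integer $a=a_{u\mathfrak r}$, refinement forces, for every index $i$ realized by a minimizing tuple, that the two sublinear systems of $H_\mathfrak r$ and $H_u$ realize vanishing orders $a$ and $d-a$ with equality, which is exactly equality in~\eqref{eq:crude}; the inductive passage from the subtrees $G_i$ back to $G$ then propagates the equalities to all edges.

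The characteristic-zero hypothesis enters when translating ``$\eta_i$ is generic'' into the ramification condition on the $L_v$. I would invoke the classical fact (valid over a field of characteristic zero, via the general-position/Rolle-type lemma used in \cite{EH86}) that imposing vanishing of order $k$ at a point drops the dimension of a linear series by exactly $k$ unless that point is a ramification point of the series; consequently the combinatorial $\eta$-function associated to an edge at $v$ realizes the expected value built from the vanishing sequence of $L_v$ if and only if the relevant marked point of $\mathcal A_v$ is not a ramification point of $L_v$, i.e.\ the ramification of $L_v$ avoids the nodes. In characteristic $p>0$ this equivalence fails (a linear series can have unexpected vanishing with no geometric ramification), which is why the characterization is stated only in characteristic zero.

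The step I expect to be the main obstacle is the bookkeeping in the second paragraph: the rank function $r_{\C X_0,\mathcal H}$ directly constrains only the minimizing tuples in the $\eta$-formula, so one must show that a failure of~\eqref{eq:crude} at some node and index $i$ inevitably surfaces as a strict inequality along a chain of $\eta$-reductions that the rank computation actually sees --- in effect, that one spurious unit of vanishing at a node cannot be ``absorbed'' elsewhere in the tree without forcing $r_{\C X_0,\mathcal H}(\mathcal D)>r$. A secondary technical point is that the generic-$E_v$ normalizations imported from the proof of Proposition~\ref{prop:combinrank} must be carried out simultaneously on all edges of the tree so that all the integers $a_{uv}$ (equivalently all slopes of the $\Gamma$-part) can be fixed at once; the refined version of the Corollary is essentially the equality case of all these normalizations taken together.
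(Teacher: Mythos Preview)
Your approach is much more elaborate than what is needed, and it has a genuine gap. The paper's proof is one sentence: by Theorem~\ref{thm:limitseries}, the condition $r_{\C X_0,\mathcal H}(\mathcal D)=r$ is equivalent to $L$ being a \emph{crude} limit $\g^r_d$, and the residual equivalence ``crude $+$ ramification condition $\Longleftrightarrow$ refined'' is a purely classical statement about linear series on the smooth components $X_v$, which follows immediately from the Pl\"ucker formula in characteristic zero \cite[Proposition~1.1]{EH86}. There is no need to revisit the inductive mechanism of Theorem~\ref{thm:limitseries} at all.

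The gap in your plan is this: you propose to detect refinement by tracking when the auxiliary inequalities in the proof of Theorem~\ref{thm:limitseries} are tight, but that proof already yields $r_{\C X_0,\mathcal H}(\mathcal D)=r$ for \emph{every} crude limit series, refined or not---the upper bound is Proposition~\ref{prop:basic}, which only uses $\dim_\k H_v=r+1$, and the lower bound uses the crude inequalities~\eqref{eq:crude} as stated. Thus tightness of the internal estimates for some minimizing tuple $(s_1,\dots,s_h)$ cannot, by itself, separate refined from non-refined crude series; the ``absorption'' problem you flag as the main obstacle is not a technical nuisance but a reflection of the fact that the restricted rank simply does not see the distinction. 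The refined condition must be imposed as an additional hypothesis, and linking it to the ramification condition on the aspects is a global counting argument via Pl\"ucker, not a local tightness condition in the $\eta$-calculus. Relatedly, your characteristic-zero input is misstated: the codimension of the subspace of sections with $\ord_p\ge k$ is $\#\{i:a_i^L(p)<k\}$, which depends on the full vanishing sequence at $p$ and not merely on whether $p$ is a ramification point, and this is characteristic-free. What characteristic zero actually buys is the exactness of the Pl\"ucker count of total ramification (no wild ramification), which is precisely the tool the paper invokes.
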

\begin{proof}
This is a direct consequence of Theorem~\ref{thm:limitseries} above and the Pl\"ucker formula, c.f. \cite[Proposition 1.1]{EH86}.
\end{proof}

We note that the {\em a priori} dependence of the family $\mathcal H$ on the $X_v$-aspects $L_v$ can be removed, and recovered from the condition on the rank, in the following sense. For any crude  limit $\g^r_d$ $L$ on $X_0$ with $X_v$-aspect $L_v = (\mathcal L_v, H_v)$, choose a divisor $D_v \in \Div^d(X_v)$ with $\mathcal L_v \simeq \L(D_v)$. Two divisors $\D$ and $\D'$ in $\Div(\C X_0)_\mathbb Z$ are called {\em combinatorially equivalent} if they differ by the divisor of a rational function $\f$ on $\C X_0$ with all $f_v$ constant.

The following result is essentially a reformulation of Theorem~\ref{thm:introLLS} from the Introduction and easily implies the result stated there.

\begin{thm}
Let  $X_0$ be a curve of compact type and $\C X_0$ the regularization of $X_0$. Then there is a bijective correspondence between the following:

\begin{itemize}
\item Pairs $(L,\{D_v\})$ consisting of a crude limit $\g^r_d$ $L$ on $X_0$ and a collection of divisors $D_v$ on $X_v$ with $L_v\simeq \mathcal L(D_v)$ for any irreducible component $X_v$ of $X_0$; and
\item Pairs $(\H,[\D])$, where $\H = \{ H_v \}$, $H_v$ is an $(r+1)$-dimensional subspace of $\kappa(X_v)$ for each $v \in V$, and $[\D]$ is the combinatorial linear equivalence class of a divisor $\D \in \Div(\C X_0)_\mathbb Z$ of degree $d$ on $\C X_0$ with 
$r_{\C X_0,\H}(\D) = r$.
\end{itemize}

In particular, the data of a crude limit $\g^r_d$ on $X_0$ is equivalent to the data of a pair $(\mathcal H, [\mathcal D])$ with $r_{\C X_0, \mathcal H}(\mathcal D) = r$.
\end{thm}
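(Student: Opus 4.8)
The plan is to exhibit the bijection explicitly, using Theorem~\ref{thm:limitseries} as the engine and treating everything else (root choices, passage to combinatorial equivalence classes, recovering line bundles from subspaces of $\kappa(X_v)$) as bookkeeping. \textbf{The forward map.} Given $(L,\{D_v\})$, fix a root $\mathfrak r$ of the tree $G$, set $\overline D_v:=D_v$, let $\mathcal D$ be the degree $d$ divisor on $\C X_0$ attached by~\eqref{eq:D}, and let $\mathcal H=\{H_v\}$, where $H_v\subseteq\kappa(X_v)$ is the image of the $X_v$-aspect of $L$ under the embedding $H^0(X_v,\mathcal L(D_v))\hookrightarrow\kappa(X_v)$ (this subspace does not depend on the chosen isomorphism $\mathcal L(D_v)\simeq\mathcal L_v$, since rescaling fixes a linear subspace). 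Theorem~\ref{thm:limitseries} gives $r_{\C X_0,\mathcal H}(\mathcal D)=r$, and the opening paragraph of the proof of $(\mathrm{ii})\Rightarrow(\mathrm{i})$ there shows that another choice of root replaces $\mathcal D$ by a combinatorially equivalent divisor (a principal divisor with constant $C_v$-parts); so $(\mathcal H,[\mathcal D])$ is a well-defined pair of the asserted type. Moreover, replacing $D_v$ by $D_v+\div(g_v)$ (a different representative of $\mathcal L_v$) replaces $\mathcal H$ by $\{g_v^{-1}H_v\}$ and $[\mathcal D]$ by $[\mathcal D+\div(\mathfrak g)]$ with $\mathfrak g$ having $C_v$-parts $g_v$; this is exactly the move in Theorem~\ref{thm:introLLS}, which is how the ``in particular'' assertion will follow once the bijection is established.

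\textbf{The inverse map.} Given $(\mathcal H,[\mathcal D])$ with $r_{\C X_0,\mathcal H}(\mathcal D)=r$, fix a root $\mathfrak r$. First I would normalize the divisor: every combinatorial class contains a unique representative $\mathcal D'$ whose $X_v$-part $D'_v$ has degree $0$ for all $v\neq\mathfrak r$ (and hence degree $d$ at $\mathfrak r$). Existence holds because a combinatorial move along $\f$ with constant $C_v$-parts changes $\deg D_v$ by $\ord_v(f_\Gamma)=-\Delta_G f_\Gamma(v)$, and on a tree the combinatorial Laplacian maps $\mathbb Z^V$ onto the group of degree zero integral divisors; uniqueness holds because two such representatives differ by $\div(\f)$ with $f_\Gamma$ harmonic on the connected graph $G$, hence constant. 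Setting $\overline D_{\mathfrak r}:=D'_{\mathfrak r}$ and $\overline D_u:=D'_u+d(x_u)$ for $u\neq\mathfrak r$ (all of degree $d$), the divisor $\mathcal D'$ is precisely the one attached to $(\{\overline D_v\},\mathfrak r)$ by~\eqref{eq:D}. Put $L_v:=(\mathcal L(\overline D_v),H_v)$. Since $r_{\C X_0,\mathcal H}$ is invariant under combinatorial equivalence we still have $r_{\C X_0,\mathcal H}(\mathcal D')=r$, so once $L_v$ is known to be a genuine $\g^r_d$, Theorem~\ref{thm:limitseries} applied to $\mathcal D'$ shows that $L=\{L_v\}$ is a crude limit $\g^r_d$ and that $(L,\{\overline D_v\})$ maps back to $(\mathcal H,[\mathcal D])$.

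\textbf{The main obstacle.} The step that needs real work is verifying $H_v\subseteq H^0(X_v,\mathcal L(\overline D_v))$, i.e. that $L_v$ really is a linear series. The idea: since the $\Gamma$-part of $\mathcal D'$ is $d(\mathfrak r)$, any rational function $\f$ witnessing $r_{\C X_0,\mathcal H}(\mathcal D')\geq 0$ against an effective divisor $\mathcal E$ supported on $X_v$ has $\Gamma$-part $f_\Gamma$ that is superharmonic away from $\mathfrak r$ with $d(\mathfrak r)+\div(f_\Gamma)\geq 0$; a downward induction along the rooted tree $G_{\mathfrak r}$ then shows the outgoing slope of $f_\Gamma$ at each $u\neq\mathfrak r$ toward its parent lies in $[0,d]$, so $\div_u(f_\Gamma)\leq d(x_u)$ and consequently $\div(f_v)+\overline D_v\geq\div(f_v)+D'_v+\div_v(f_\Gamma)\geq\mathcal E|_{X_v}\geq 0$ for the particular $f_v\in H_v$ used. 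Applying this with $\mathcal E=r(p)$ for a generic point $p\in X_v$ produces, for each such $p$, a function $h_p\in H_v$ vanishing to order $\geq r$ at $p$ with $\div(h_p)+\overline D_v\geq 0$; since no nonzero element of a proper subspace of the $(r+1)$-dimensional space $H_v$ can vanish to order $r$ at a generic point, the $h_p$ span $H_v$, which yields $H_v\subseteq H^0(X_v,\mathcal L(\overline D_v))$ (the case $r=0$ is immediate, as then $\dim H_v=1$).

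\textbf{Injectivity and conclusion.} Injectivity of the forward map is then immediate: two source pairs with the same image, compared through a common root, produce combinatorially equivalent divisors of the shape~\eqref{eq:D} with the same normalized degree distribution, hence equal divisors; reading off their $X_v$-parts recovers the $D_v$, and equality of $\mathcal H$ recovers the aspects. That the two constructions are mutually inverse then follows formally, and the ``in particular'' statement is obtained from the remark at the end of the forward-map step, which matches the equivalence relation of Theorem~\ref{thm:introLLS}. I expect the induction in the ``main obstacle'' step to be the only genuinely delicate point; the rest is routine once Theorem~\ref{thm:limitseries} is in hand.
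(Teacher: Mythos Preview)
Your argument is essentially the same as the paper's: both use Theorem~\ref{thm:limitseries} for the forward direction, and for the inverse both reduce to the key step of showing $H_v\subseteq H^0(X_v,\mathcal L(\overline D_v))$ via a tree-based slope bound. Your normalization to the root and the paper's ``push all degree to $v$'' produce the same $\overline D_v$, and your downward induction on slopes is exactly the content of the paper's observation that $\div_v(f_\Gamma)\geq\div_v(f'_\Gamma)$ whenever $D_\Gamma+\div(f'_\Gamma)\geq 0$.

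There is one genuine gap, however, in your ``main obstacle'' step. You test only against divisors $\mathcal E=r(p)$ for a generic point $p$ and then invoke the claim that ``no nonzero element of a proper subspace of the $(r+1)$-dimensional space $H_v$ can vanish to order $r$ at a generic point''. This is a ramification statement that fails in positive characteristic: for instance, in characteristic $p$ the $2$-dimensional space $W'=\kappa\cdot 1+\kappa\cdot t^p\subset\kappa(\mathbb P^1)$ has, for \emph{every} $a\in\kappa$, the nonzero element $t^p-a^p=(t-a)^p$ vanishing to order $p$ at $a$. So your spanning argument does not force $H'_v:=H_v\cap L(\overline D_v)$ to have dimension $r+1$ over an arbitrary algebraically closed field. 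The paper sidesteps this by testing against \emph{all} effective divisors $E$ of degree $r$ on $X_v$: the same slope bound gives $f_v^E\in H'_v$ with $\div(f_v^E)+\overline D_v\geq E$, so the linear system $(\mathcal L(\overline D_v),H'_v)$ has projective dimension at least $r$ directly, hence $\dim_\kappa H'_v\geq r+1$. Replacing your $\mathcal E=r(p)$ by an arbitrary $\mathcal E$ supported on $X_v$ of degree $r$ repairs the argument with no further changes.
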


\begin{proof}
We already know by Theorem~\ref{thm:limitseries} that any crude limit $\g^r_d$ gives a pair $(\mathcal H, [\mathcal D])$ with $r_{\C X_0, \mathcal H}(\mathcal D) = r$.

\medskip

Let $\mathcal H$ be a collection  of $(r+1)$-dimensional subspaces $H_v \subset \k(X_v)$ for $v \in V$,
and let  $\mathcal D$ be a divisor of degree $d$ in $\Div(\C X_0)_\mathbb Z$ with $r_{\C X_0, \mathcal H}(\mathcal D) = r.$ We show the existence of a $\g_d^r$ $\mathcal L_v = (L_v, H_v )$ on $X_v$ for each $v \in V$ such that the collection $L = \{ L_v \}$ defines a crude limit $\g^r_d$ on $X_0$, and such that $\mathcal D$ is (combinatorially) linearly equivalent to the divisor $\mathcal D^L$ associated to $L$ by~\eqref{eq:D}.  

\medskip

Let $D_\Gamma$ (resp. $D_v$) denote the $\Gamma$-part (resp. $X_v$-part) of $\D$.
Since $\Gamma$ has genus zero, for any vertex $v$ in $V(G)$ there exists a rational function $f_{\Gamma}$ on $\Gamma$ such that $D_\Gamma +\div(f_{\Gamma}) = d(v)$. Define $\widetilde D_v := D_v + \div_v(f_{\Gamma})$, where $\div_v(f_\Gamma)$ is defined as in
(\ref{eq:divf2}). 
We claim that $H_v \subset H^0(X_v, \mathcal L_v)$; this amounts to showing that $\div(f) + \widetilde D_v \geq 0$ for any $f \in H_v$.  If we denote by $H'_v$ the subspace of $H_v$ consisting of all rational functions in $H_v$ with this property, it will be enough show that $\dim_\k (H'_v) = r+1$.

\medskip

First, we note that for any rational function $f'_\Gamma$ on $\Gamma$ with $D_\Gamma +\div(f'_\Gamma) \geq 0$, we have $\div_v(f_{\Gamma}) - \div_v(f'_\Gamma) \geq0$. Indeed,  $D_\Gamma +\div(f'_\Gamma) \geq 0$ implies that for any edge $e$ adjacent to $v$, the slope of $f_{\Gamma}$ in the tangent direction corresponding to $e$ is  bounded above by the sum of the coefficients $D_\Gamma(w)$ of the points $w$ lying on the connected component of $\Gamma \setminus \{v\}$ which contains the edge $e$. This sum is precisely the slope of $f_{\Gamma}$ in the tangent direction corresponding to $e$.

\medskip

For any effective divisor $E$ of degree $r$ on $X_v$, let  $\mathcal E$ denote the corresponding divisor on $\C X_0$. Since $r_{\C X_0, \mathcal H}(\mathcal D) = r$, there exists a rational function $\f^E$ such that $\div(\f^E) + \mathcal D - \mathcal E \geq 0$.
Since $\div(f^{E}_\Gamma) + D_\Gamma \geq 0$, the preceding remark implies that $\widetilde D_v =  D_v+ \div_v(f_{\Gamma}) \geq D_v + \div_v(f^E_\Gamma)$. By the definition of $\div(\f)$,
we conclude that $\div(f_v^E) + \widetilde D_v -E \geq 0$, and in particular $f_v^E$ belongs to $H'_v$. Since this holds for any $E$, we infer that the linear series on $X_v$ defined by $H'_v$ has (projective) dimension at least $r$ and so  $H'_v$ has dimension at least $r+1$, thus $ H'_v= H_v $. We conclude that $L_v = (\mathcal L_v, H_v)$ is a $\g^r_d$ on $X_v$, and thus the collection of $L_v$
for $v \in V$ defines a crude limit $\g^r_d$ $L$ on $X_0$.
If $\mathcal D^L$ is the divisor associated to $L$ by~\eqref{eq:D}, it is straightforward to check that $\mathcal D^L$ and  $\mathcal D$ differ by the divisor of a rational function $\f$ on $\C X_0$
whose $X_v$-parts are all constant and whose $\Gamma$-part is
the linear interpolation of an integer-valued function $f: V \rightarrow \mathbb Z$. 
\end{proof}

\subsection{Limit linear series for metrized complexes.}

Let $\C$ be a metrized complex of algebraic curves over $\k$, $\Gamma$ the underlying metric graph with model $G=(V,E)$, and $\{ C_v \}$ the corresponding collection of smooth projective curves over $\k$.

\medskip

We define a {\em (crude) limit $\g^r_d$} on $\C$ to be an equivalence class of pairs $(\mathcal H, \mathcal D)$ consisting of a  divisor $\mathcal D$ of degree $d$ on $\C$ and a collection $\mathcal H$ of $(r+1)$-dimensional subspaces $H_v \subset \k(C_v)$, for $v \in V(G)$, such that $r_{\C, \mathcal H}(\mathcal D) = r$.  Two pairs $(\H,\D)$ and $(\H', \D')$ are considered equivalent if there is a rational function $\f$
on $\C$ such that $D' = D + \div(\f)$ and $H_v = H'_v \cdot f_v$ for all $v \in V$, where $f_v$ denotes the $C_v$-part of $\f$.

\begin{remark}
Since $r_{\C, \H}(\D) \leq r_\C (\D)$ for all $\mathcal H$ as above and all $\D \in \Div(\C)$, it follows immediately from
Theorem~\ref{thm:RR-metrizedcomplexes} that for any limit $\g^r_d$ on $\C$ we have if $d\geq 2g$ or $r\geq g$, then $r+g \leq d$  (Riemann's inequality), and if $\D$ is {\em special} then $r \leq d/2$ (Clifford's inequality).  For curves of compact type, these inequalities are established (by a completely different proof) in \cite{EH86}.
\end{remark}

\begin{thm}\label{thm:limitseriesgeneral}
Let $X$ be a smooth proper curve over $\K$, $\fX$ a strongly semistable model for $X$, 
and $\C \fX$ the metrized complex associated to $\fX$. 
Let $D$ be a divisor on $X$ and let $L_\eta = (\mathcal L(D), H_\eta)$, 
for $H_\eta \subseteq H^0(X, \mathcal L(D)) \subset \K(X)$, be a $\g^r_d$ on $X$. 
For any vertex $v \in V$, define $H_v$ as the $\k$-vector space defined by the 
reduction to $\k(C_v)$ of all the rational functions in $H_\eta$ (c.f. Section~\ref{section:ReductionOfRationalFunctions}), and let $\mathcal H = \{H_v\}_{v\in V}$. 
Then the pair $(\tau^{\C \fX}_*(D), \mathcal H)$ defines a limit $\g^r_d$ on $\C \fX$. 
\end{thm}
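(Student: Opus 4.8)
The plan is to establish two inequalities: $r_{\C\fX, \H}(\tau^{\C\fX}_*(D)) \leq r$ and $r_{\C\fX, \H}(\tau^{\C\fX}_*(D)) \geq r$, which together give equality and hence exhibit $(\tau^{\C\fX}_*(D), \H)$ as a limit $\g^r_d$. First, note that by Lemma~\ref{lem:dimensionreduction}, each $H_v$ is genuinely $(r+1)$-dimensional, so $\H$ is an admissible collection of subspaces and the definition of $r_{\C\fX,\H}$ applies. The upper bound $r_{\C\fX,\H}(\tau^{\C\fX}_*(D)) \leq r$ is essentially formal: it follows from Proposition~\ref{prop:basic}, since $\min_{v} (\dim_\k H_v - 1) = r$, so the $\H$-restricted rank of any divisor on $\C\fX$ is at most $r$.

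The substantive content is the lower bound $r_{\C\fX,\H}(\tau^{\C\fX}_*(D)) \geq r$. Here I would run the same strategy as in the proof of Theorem~\ref{lem:BakSpecLem}, but keeping careful track of the reductions of sections. Write $\mathcal D = \tau^{\C\fX}_*(D)$. Fix a rank-determining subset $\R = \bigcup_v \R_v$ with $\R_v \subset C_v(\k)\setminus \mathcal A_v$ of size $g_v+1$ as in Theorem~\ref{thm:f-width}; it suffices to show that for every effective divisor $\mathcal E$ of degree $r$ supported on $\R$, there is a rational function $\f$ on $\C\fX$ with $C_v$-parts $f_v \in H_v$ and $\mathcal D - \mathcal E + \div(\f) \geq 0$. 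Given such an $\mathcal E$, choose for each point $x \in \operatorname{supp}(\mathcal E)$ a point $P_x \in X(\K)$ with $\tau^{\C\fX}(P_x) = x$ (possible because $x$ reduces to a smooth point of $C_v$), and let $E = \sum_x \mathcal E(x)(P_x)$, an effective divisor of degree $r$ on $X$ with $\tau^{\C\fX}_*(E) = \mathcal E$. Since $r_X(D) = r$, the linear series $H_\eta$ contains a nonzero section $f$ with $\div(f) + D - E \geq 0$. Now apply Theorem~\ref{thm:PLspecialization} to $f$: setting $\f$ to be the associated rational function on $\C\fX$ (with $\Gamma$-part $\log|f|$ and $C_v$-parts equal to the normalized reductions $f_v$), we get $\tau^{\C\fX}_*(\div(f)) = \div(\f)$, and applying $\tau^{\C\fX}_*$ to the inequality $\div(f) + D - E \geq 0$ (which sends effective divisors to effective divisors) yields $\div(\f) + \mathcal D - \mathcal E \geq 0$. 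Crucially, by the definition of $f_v$ as the normalized reduction of $f \in H_\eta$, we have $f_v \in H_v$ for every $v \in V$. This is exactly what is needed, so $r_{\C\fX,\H}(\mathcal D) \geq r$.

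The main obstacle I anticipate is a subtle point in the lower-bound argument: the rank-determining set $\R$ from Theorem~\ref{thm:f-width} lets us restrict attention to effective divisors $\mathcal E$ supported on $\R$ (hence on smooth points of the $C_v$), which is precisely what allows us to lift each point of $\operatorname{supp}(\mathcal E)$ to a point of $X(\K)$; one must verify that the notion of ``$\H$-rank-determining'' behaves correctly, i.e. that it suffices to test the $\H$-restricted rank on effective divisors supported on such a set. If Theorem~\ref{thm:f-width} (or its corollaries) is stated only for the ordinary rank $r_\C$ and not directly for restricted ranks $r_{\C,\F}$, one needs the observation that the defining property of a rank-determining set is purely about which divisor classes can be made effective after chip-firing, and the restriction ``$f_v \in H_v$'' does not interfere with the reduction step in the proof of Theorem~\ref{thm:f-width} — one simply carries the constraint along. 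Modulo this bookkeeping, the proof is a direct combination of Proposition~\ref{prop:basic}, Lemma~\ref{lem:dimensionreduction}, Theorem~\ref{thm:PLspecialization}, and the rank-determining set machinery, with no new ideas required.
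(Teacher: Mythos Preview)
Your proposal is correct and follows essentially the same argument as the paper: invoke Lemma~\ref{lem:dimensionreduction} for $\dim H_v = r+1$, Proposition~\ref{prop:basic} for the upper bound, then for the lower bound restrict to effective $\mathcal E$ supported on a rank-determining set $\R$ via Theorem~\ref{thm:f-width}, lift $\mathcal E$ to $E$ on $X$, pick $f\in H_\eta$ with $D-E+\div(f)\geq 0$, and apply Theorem~\ref{thm:PLspecialization}. The concern you flag---that Theorem~\ref{thm:f-width} is stated for $r_\C$ rather than $r_{\C,\H}$---is handled in the paper exactly as you suggest: the paper simply invokes Theorem~\ref{thm:f-width} without further comment, implicitly relying on the fact that the inductive reduction in Lemma~\ref{lem:rank-determ} carries the constraint $f_v\in H_v$ along verbatim.
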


\begin{proof} 
By Lemma~\ref{lem:dimensionreduction}, all the subspaces $H_v$ have dimension $r+1$ over $\k$.
By Proposition~\ref{prop:basic}, we only have to show that $r_{\C\fX,\mathcal H} \geq r$.  
By Theorem~\ref{thm:f-width},
it suffices to show that for any effective divisor $\mathcal E \in \Div(\C \fX)$ of degree $r$ supported on a subset $\R = \bigcup_v \R_v$ of $\C\fX$ with $\R_v \subset C_v(\k)\setminus \mathcal A_v$ of size $g_v+1$, 
there exists a rational function $\f$ on $\C \fX$ such that  $f_v\in H_v$ for all $v \in V$ and $\div(\f) + \tau^{\C\fX}_*(D) - \mathcal E \geq 0$. For any such $\mathcal E$, there exists an effective divisor of degree $r$ on $X$ such 
that $\tau^{\C\fX}_*(E) = \mathcal E$. Since $L_\eta$ is a $\g^r_d$ on $X$, there exists a rational function $f \in H_\eta$ such that $D -E +\div(f) \geq 0$. Let $\f$ be the corresponding rational function on $\C\fX$ (as in the paragraph preceding 
Theorem~\ref{thm:PLspecialization}).
  We conclude that $\tau^{\C\fX}_*(D-E+\div(f)) = \tau^{\C\fX}_*(D) -\mathcal E +\tau^{\C\fX}_*(\div(f)) = \tau^{\C\fX}_*(D) -\mathcal E +\div(\f) \geq 0$. Since $f_v \in H_v$, this shows that $r_{\C\fX,\mathcal 
 H} \geq r$ as desired.
\end{proof}

\medskip

For the regularization $\C X_0$ of a strongly semistable curve $X_0$ over $\k$, Theorem~\ref{thm:limitseriesgeneral} can be reformulated as follows.
Let $\phi: \mathcal X\rightarrow \Spec R$ be a regular smoothing of $X_0$ over  a discrete valuation ring $R$ and let $L_\eta=(\mathcal L_\eta,H_\eta)$ be a $\g^r_d$ over the generic fiber $\mathcal X_\eta$ of $\phi$. Consider an extension $\mathcal L$ of $L_\eta$ to $\mathcal X$, and let $\mathcal L_0$ be the restriction of $\mathcal L$ to $X_0$. Let $\mathcal D$ be a divisor on $\C X_0$ in the linear equivalence class of $\pi(\mathcal L_0)$, where $\pi:\Pic(X_0) \rightarrow \oplus_{v\in V(G)} \Pic(X_v)$ is as in Section~\ref{sec:basics}. There exists a family $\mathcal H=\{H_v\}_{v\in V(G)}$ of $(r+1)$-dimensional subspaces $H_v \subset \k(X_v)$  such that $r_{\C X_0, \mathcal H}(\mathcal D)=r$. In addition, $H_v$ can be explicitly constructed as follows. For any vertex $v$, choose a rational function $\f_{v}$ on $\C X_0$ such that 
the divisor $\mathcal D + \div(f_{G,v})$ is $v$-reduced and let $f_{G,v}: V(G) \rightarrow \mathbb Z$ be the $\Gamma$-part of $\f_v$.
Consider the extension $\mathcal L\,\bigl(\,\sum_v f_{G,v}(u) X_u\,\bigr) $ of $\mathcal L_\eta$ to $\mathcal X$ and let $H_{f_{G,v}}$ be the closure of $H_\eta$ in $H^0\bigl(\,\mathcal X, \mathcal L\,\bigl(\,\sum_v f_{G,v}(u) X_u\,\bigr)\,\bigr)$. Define $H_v:=H_{f_{G,v}}|_{X_v}$ and $\tau_*(L_\eta):=(\D, \mathcal H)$. 
Finally, for any $v \in V$ let $\mathcal D^v$ be the $v$-reduced divisor linearly equivalent to $\mathcal D$, and define $d_v:= \mathcal D_\Gamma(v) = \deg(D^v_v)$.

\begin{thm} 
\label{thm:regularlimitseries}
Let $X_0$ be a strongly semistable curve over $\k$ and $\C X_0$ the regularization of $X_0$.
For any regular smoothing $\pi: X\rightarrow \Spec R$ of $X_0$ over a discrete valuation ring $R$, $\tau_*(L_\eta)$ as 
defined above is a limit $\g^r_d$ on $X_0$. In addition, for each vertex $v\in V(G)$ we have $H_v \subseteq H^0(X_v, \mathcal L(D^v_v))$, so that $(\mathcal L(D^v_v), H_v)$ defines a $\g^r_{d_v}$ on the irreducible component $X_v$ of $X_0$. 
\end{thm}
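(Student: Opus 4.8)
The plan is to deduce Theorem~\ref{thm:regularlimitseries} from Theorem~\ref{thm:limitseriesgeneral} applied to the generic fiber $X = \mathcal X_\eta$, by carefully matching up the two constructions of the family $\mathcal H$. First I would fix the notation: let $\mathcal L_\eta$ be the invertible sheaf on $X$, let $\mathcal L$ be a chosen extension to $\mathcal X$, and let $\mathcal L_0$ be its restriction to $X_0$. Working over the completion $\widehat R$ (whose fraction field is a complete discretely valued non-Archimedean field with algebraically closed residue field after passing to the algebraic closure, or one argues that completion and extension of scalars do not affect ranks, reductions, or linear equivalence classes of the relevant divisors), the model $\mathcal X$ gives rise to the strongly semistable vertex set whose associated metrized complex is precisely $\C X_0$, with the length function assigning $1$ to each edge. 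Then $\tau^{\C X_0}_*(\div(s))$ for a global section $s$ of $\mathcal L_\eta$ recovers, via Theorem~\ref{thm:PLspecialization}, the divisor $\div(\f)$ where $f_v$ is the normalized reduction of $s/s_0$ for a reference section $s_0$; this is the bridge between the two descriptions.

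The key steps, in order, are as follows. (1) Identify the divisor $\mathcal D$ in the statement, which lies in the linear equivalence class of $\pi(\mathcal L_0)$, with a divisor of the form $\tau^{\C X_0}_*(D_\eta)$ for a suitable divisor $D_\eta$ on $X$ with $\mathcal L(D_\eta) \cong \mathcal L_\eta$; here one uses Proposition~\ref{prop:combinrank}(1) (linear equivalence on $\Div(\C X_0)_\mathbb Z$ corresponds to combinatorial equivalence of line bundles on $X_0$) together with the fact that any two extensions of $\mathcal L_\eta$ are combinatorially equivalent. (2) Show that the family $\mathcal H = \{H_v\}$ constructed in the statement via the $v$-reduced representatives and the twisted extensions $\mathcal L(\sum_u f_{G,v}(u)X_u)$ agrees, up to the equivalence relation defining limit $\g^r_d$'s on $\C$, with the family of normalized reductions of $H_\eta$ to the $C_v = X_v$ appearing in Theorem~\ref{thm:limitseriesgeneral}: twisting $\mathcal L$ by $\sum_u f_{G,v}(u)X_u$ on the regular total space $\mathcal X$ has exactly the effect on the special fiber of replacing each local divisor $D_v$ by $D_v + \div_v(f_{G,v})$, which is the combinatorial / chip-firing move encoded by the $\Gamma$-part of $\f_v$; and the closure $H_{f_{G,v}}$ of $H_\eta$ restricted to $X_v$ is, by the flat-base-change / normalized-reduction formalism of Section~\ref{section:ReductionOfRationalFunctions} and Lemma~\ref{lem:dimensionreduction}, exactly the $(r+1)$-dimensional reduction space $H_v$. (3) Apply Theorem~\ref{thm:limitseriesgeneral} to conclude $r_{\C X_0, \mathcal H}(\mathcal D) = r$, i.e.\ that $(\mathcal D, \mathcal H)$ is a limit $\g^r_d$. (4) For the last assertion, take $v$-reduced representatives: since $\mathcal D \sim \mathcal D^v$ with $\mathcal D^v$ the $v$-reduced divisor, and $d_v = D^v_\Gamma(v) = \deg(D^v_v)$, the equivalence defining limit linear series lets us replace $(\mathcal D, \mathcal H)$ by an equivalent pair whose $X_v$-part is $D^v_v$ and whose $v$-component subspace is $H_v$; one then checks $H_v \subseteq H^0(X_v, \mathcal L(D^v_v))$ directly — given $f \in H_v$, effectivity of $\D - \E + \div(\f)$ for suitable $\E$ (or directly from the restricted-rank inequality together with $r_{\C,\mathcal H}(\mathcal D)=r\ge 0$) forces $\div(f_v) + D^v_v \geq 0$, which is precisely membership in $H^0(X_v,\mathcal L(D^v_v))$. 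Dimension $r+1$ of $H_v$ is already guaranteed by Lemma~\ref{lem:dimensionreduction}, so $(\mathcal L(D^v_v), H_v)$ is a genuine $\g^r_{d_v}$ on $X_v$.

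I expect the main obstacle to be step (2): verifying that the two a priori different recipes for $H_v$ — the one via closures of $H_\eta$ in global sections of explicitly twisted line bundles on the regular model $\mathcal X$, and the one via normalized reductions to the residue curves $\widetilde{\H(v)} = \kappa(C_v)$ — produce the same subspace of $\kappa(X_v)$ (up to the scaling ambiguity inherent in normalized reductions, which is absorbed into the equivalence relation $H_v = H'_v \cdot f_v$). This amounts to a compatibility between the intersection-theoretic / model-theoretic description of twisting in a regular arithmetic surface and the Berkovich-analytic description of reduction of rational functions; the cleanest route is to observe that twisting $\mathcal L$ by a vertical divisor $\sum n_u X_u$ and then restricting to $X_v$ is the same operation, on divisor classes on $X_v$, as applying $\div_v$ of the piecewise-linear function $f_G(u) = n_u$ on $\Gamma$, which is built into the definition of $\div(\f)$ via \eqref{eq:divf2}, so that the chosen twist realizing the $v$-reduced representative corresponds exactly to the chip-firing move $\f_v$. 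Once this dictionary is set up, everything else is bookkeeping with the already-established results.
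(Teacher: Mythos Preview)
Your proposal is correct and takes essentially the same approach as the paper: the paper presents Theorem~\ref{thm:regularlimitseries} explicitly as a reformulation of Theorem~\ref{thm:limitseriesgeneral} in the discrete-valuation-ring setting (see the paragraph immediately preceding the statement) and gives no separate proof, so what you are doing is filling in the details the paper leaves to the reader. Your identification of step~(2)---the compatibility between the twisted-line-bundle description of $H_v$ and the normalized-reduction description---as the main thing to check is exactly right, and your sketch of why twisting by $\sum_u f_{G,v}(u)X_u$ on the regular total space corresponds on $X_v$ to the chip-firing move $\div_v(f_{G,v})$ is the correct computation (using that $\sum_u X_u$ is principal on $\fX$). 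One small simplification: for step~(4), the inclusion $H_v \subseteq H^0(X_v,\mathcal L(D^v_v))$ follows more directly from the construction than your effectivity argument suggests, since by definition $H_v = H_{f_{G,v}}|_{X_v}$ is already a subspace of $H^0(X_v,\mathcal L(\sum_u f_{G,v}(u)X_u)|_{X_v}) \cong H^0(X_v,\mathcal L(D^v_v))$, the last isomorphism being exactly the twisting computation from step~(2).
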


We conclude this section with the following strengthening of Theorem~\ref{thm:limitseriesgeneral} which will be used in \S\ref{ChabautySection}.

\begin{thm}\label{thm:limitseriesgeneral2}
Let $X$ be a smooth proper curve over $\K$, $\fX$ a strongly semistable model for $X$, 
and let $\C \fX$ be the metrized complex associated to $\fX$. 
Let $D$ be a divisor on $X$ and set $\D = \tau^{\C \fX}_*(D)$.
Let $L_\eta = (\mathcal L(D), H_\eta)$ be a $\g^r_d$ on $X$
and let $\mathcal H = \{H_v\}_{v\in V}$ be as in the statement of Theorem~\ref{thm:limitseriesgeneral}.
Let $E^\circ$ be an effective divisor of degree $e$ supported on the smooth locus of 
the special fiber of $\fX$, and define $\E^\circ = \sum_{v \in V} E^\circ_v \in \Div(\C\fX)$,
where $E^\circ_v$ is the restriction of $E^\circ$ to $C_v$.
Suppose that $H_v \subseteq L(D_v - E^\circ_v)$ for all $v$.
Then the pair $(\D - \E^\circ, \mathcal H)$ is a limit $\g^r_{d-e}$ on $\C \fX$. 
\end{thm}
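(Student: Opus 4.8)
The strategy is to follow the proof of Theorem~\ref{thm:limitseriesgeneral} almost verbatim, the one new ingredient being a judicious choice of rank-determining set that avoids the support of $E^\circ$. First note that $\deg(\D-\E^\circ)=\deg(D)-\deg(E^\circ)=d-e$ since $\tau^{\C\fX}_*$ preserves degrees, and that each $H_v$ has dimension $r+1$ over $\k$ by Lemma~\ref{lem:dimensionreduction}; Proposition~\ref{prop:basic} therefore already gives $r_{\C\fX,\mathcal H}(\D-\E^\circ)\le r$. So the content of the theorem is the reverse inequality $r_{\C\fX,\mathcal H}(\D-\E^\circ)\ge r$.

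To prove it, invoke Theorem~\ref{thm:f-width}: it suffices to fix one rank-determining set $\R=\bigcup_{v}\R_v$ with $\R_v\subset C_v(\k)\setminus\mathcal A_v$ of size $g_v+1$, and to show that for every effective divisor $\mathcal E$ of degree $r$ supported on $\R$ there is a rational function $\f$ on $\C\fX$ with $f_v\in H_v$ for all $v$ and $\div(\f)+\D-\E^\circ-\mathcal E\ge 0$. Since $\k$ is algebraically closed, hence infinite, while $\mathrm{supp}(E^\circ_v)$ is finite, we may and do choose each $\R_v$ disjoint from $\mathrm{supp}(E^\circ_v)$. Given such an $\mathcal E$, every point of $\R$ is a smooth point of $\bar{\fX}$ and hence lifts under $\mathrm{red}$ to a point of $X(\K)$, so there is an effective divisor $E$ of degree $r$ on $X$ with $\tau^{\C\fX}_*(E)=\mathcal E$. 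As $L_\eta$ is a $\g^r_d$ on $X$ and $\deg E=r$, there exists $f\in H_\eta$ with $\div(f)+D-E\ge 0$. Let $\f$ be the associated rational function on $\C\fX$ (as in the paragraph preceding Theorem~\ref{thm:PLspecialization}); then $f_v$, being the normalized reduction of $f\in H_\eta$, lies in $H_v$, and by Theorem~\ref{thm:PLspecialization} we have $\div(\f)+\D-\mathcal E=\tau^{\C\fX}_*\bigl(\div(f)+D-E\bigr)\ge 0$.

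It remains to see that $\div(\f)+\D-\E^\circ-\mathcal E$ is still effective after subtracting $\E^\circ$. Away from the vertices the $\Gamma$-part agrees with that of $\div(\f)+\D-\mathcal E$, since both $\mathcal E$ and $\E^\circ$ are supported on geometric points; so by the characterization of effective divisors on $\C\fX$ it is enough to check that every $C_v$-part is effective. At a point of $\mathcal A_v$ neither $\mathcal E$ (supported on $\R_v\subset C_v\setminus\mathcal A_v$) nor $\E^\circ$ (supported on the smooth locus of $\bar{\fX}$) contributes, so the coefficient equals that of $\div(\f)+\D-\mathcal E$ and is $\ge 0$. At a point $x\in C_v(\k)\setminus\mathcal A_v$ the coefficient is $\ord_x(f_v)+D_v(x)-E^\circ_v(x)-\mathcal E_v(x)$: if $x\in\mathrm{supp}(\mathcal E_v)$ then $x\notin\mathrm{supp}(E^\circ_v)$ by our choice of $\R_v$, so this equals $\ord_x(f_v)+D_v(x)-\mathcal E_v(x)\ge 0$ by the previous paragraph; while if $x\notin\mathrm{supp}(\mathcal E_v)$ it equals $\ord_x(f_v)+D_v(x)-E^\circ_v(x)\ge 0$ because $f_v\in H_v\subseteq L(D_v-E^\circ_v)$. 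Hence $\div(\f)+\D-\E^\circ-\mathcal E\ge 0$, which establishes $r_{\C\fX,\mathcal H}(\D-\E^\circ)\ge r$ and completes the proof. The one subtle point is precisely this last step: the two effectivity facts available at a given point — one inherited from $\tau^{\C\fX}_*\bigl(\div(f)+D-E\bigr)$, the other from the hypothesis $H_v\subseteq L(D_v-E^\circ_v)$ — cannot in general be combined additively, and disjointness of $\R_v$ from $\mathrm{supp}(E^\circ_v)$ is what lets us sidestep this.
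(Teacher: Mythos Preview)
Your proof is correct and follows essentially the same approach as the paper: both invoke Theorem~\ref{thm:f-width} with a rank-determining set $\R=\bigcup_v\R_v$ chosen disjoint from $\mathrm{supp}(E^\circ_v)$, lift to $X$ to produce $\f$ with $\div(\f)+\D-\mathcal E\ge 0$, and then verify effectivity of $\div(\f)+\D-\E^\circ-\mathcal E$ by a pointwise case analysis exploiting that $\mathrm{supp}(\mathcal E_v)$ and $\mathrm{supp}(E^\circ_v)$ are disjoint. The only cosmetic difference is that the paper splits the case analysis according to whether the point lies in $\mathrm{supp}(E^\circ_v)$, while you split according to $\mathrm{supp}(\mathcal E_v)$; since these supports are disjoint the two decompositions are equivalent.
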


\begin{proof}
In the view of Theorem~\ref{thm:f-width}, the proof is similar to the proof of 
Theorem~\ref{thm:limitseriesgeneral}, by requiring each subset 
$\R_v \subset C_v(\k) \setminus \mathcal A_v$ to be disjoint from the support of $E^\circ_v$. 
For any effective divisor $\E$ of degree $r$ supported on $\R = \bigcup \R_v$, 
the proof of Theorem~\ref{thm:limitseriesgeneral} shows that there exists a rational function 
$\f$ such that $\mathcal D -\E +\div(\f) \geq 0$. 
We claim that in fact $\D - \E +\div(\f) - \E^\circ \geq 0$, which implies that 
$(\D - \E^\circ, \H)$ is a limit $\g^r_{d-e}$ on $\C\fX$ as desired.
To see this, let $E^\circ_\Gamma$ be the $\Gamma$-part of $\mathcal E^\circ$ and 
note that since $E^\circ_\Gamma = \sum_{v \in V} \deg(E^\circ_v) (v)$ and $\div(f_\Gamma) \geq E_\Gamma - D_\Gamma$
by hypothesis, it suffices to prove that the $C_v$-part of $\div(\f)$ is at least the $C_v$-part of 
$\E + \E^\circ - \D$ for all $v \in V$, i.e.,
\begin{equation}
\label{eq:limitseriesgeneral2}
\div(\f)_v := \div(f_v) + \div_v(f_\Gamma) \geq E_v + E^\circ_v - D_v.
\end{equation}

If $z \in \mathrm{supp}(E^\circ_v)$, then since $\mathrm{supp}(E^\circ_v)$ is disjoint from $\mathcal A_v \supset \mathrm{supp}(\div_v(f_\Gamma))$ we have 
$\div(\f)_v(z) = \div(f_v)(z)$, and since $\mathrm{supp}(E^\circ_v)$ is disjoint from $\R_v \supset \mathrm{supp}(E_v)$ we have $E_v(z) = 0$.
Thus (\ref{eq:limitseriesgeneral2}) holds in this case since $H_v \subset L(D_v -E^\circ_v)$ implies that 
$\div(f_v)(z) \geq E^\circ_v(z) - D_v(z)$.
On the other hand, if $z \not\in \mathrm{supp}(E^\circ_v)$, then $E^\circ_v(z)=0$ and thus (\ref{eq:limitseriesgeneral2}) holds because by assumption we have
$\div(\f) \geq \E - \mathcal D$ and in particular $\div(\f)_v(z) \geq E_v(z) - D_v(z)$.

Thus (\ref{eq:limitseriesgeneral2}) is valid for all $z \in C_v(\k)$ as desired.
\end{proof}

\subsection{Remarks on completion and smoothing}

Given a divisor $\D$ of degree $d$ and rank $r$ on $\C$, one may ask if there exists a collection $\H = \{ H_i \}$, with $H_i$ an $(r+1)$-dimensional subspace of $\kappa(C_i)$
for all $i$, such that $r_{\C,\H}(\D) = r$.  (Recall that we always have $r_{\C,\H}(\D) \leq r_{\C}(\D)$.)  If so, we say that $\D$ can be {\em completed to a limit $\g^r_d$} on $\C$.
Moreover, Theorem~\ref{thm:limitseriesgeneral} shows that if $X$ is a smooth proper curve over $\K$, $(D,L_\eta)$ is a $\g^r_d$ on $X$,
and $\C \fX$ is the metrized complex associated to a strongly semistable model $\fX$ for $X$, then the associated pair $(\tau^{\C \fX}_*(D), \mathcal H)$ 
(which we call the {\em specialization} of $(\D,\H)$) is a limit $\g^r_d$ on $\C \fX$. 
We say that a limit $\g^r_d$ $(\D,\H)$ on a metrized complex $\C$ over $\kappa$ is {\em smoothable} if there exists a smooth proper curve $X$ over $\K$ such that
$\C=\C\fX$ for some strongly semistable model $\fX$ of $X$ and $(\D,\H)$ arises the specialization of a limit $\g^r_d$ on $X$.
Note that by \cite[Theorem 3.24]{ABBR}, every ${\rm val}(\K^*)$-rational metrized complex $\C$ over $\kappa$ is of the form $\C=\C\fX$ for some smooth proper curve $X$ over $\K$ and some strongly semistable model $\fX$ of $X$, where {\em ${\rm val}(\K^*)$-rational} means that the edge lengths in $G$ all belong to the value group of $\K$.

\medskip

In this section, we show by example that not every divisor $\D$ of degree $d$ and rank $r$ on a metrized complex $\C$ can be completed to a limit $\g^r_d$,
and not every limit $\g^r_d$ on a ${\rm val}(\K^*)$-rational metrized complex $\C$ is smoothable.

\medskip

\begin{example}[A degree $d$ rank $r$ divisor which cannot be completed to a limit $\g^r_d$]
\label{ex:NotCompletable}  
Consider the weighted graph $\Gamma$ depicted in 
Figure~\ref{fig:nog12} (with arbitrary edge lengths), 
and let $\C$ be a metrized complex whose underlying weighted metric graph is $\Gamma$ 
(so each $C_{v_i}$ has genus $1$ and $C_v$ has genus $0$).
For any point $p$ in $C_v \simeq \mathbb P^1$, consider the degree $2$ divisor $\mathcal D = 2(p)$.
We claim that $\mathcal D$ has rank $1$ on $\C$ but cannot be completed to a limit $\g^1_2$.
The fact that $\mathcal D$ has rank $1$ is straightforward and left to the reader.
Assume for the sake of contradiction that there exists a limit $\g^1_2$ 
$(\H, \mathcal D)$ on $\C$. Let $x_1, x_2, x_3$ (resp. $y_1,y_2,y_3$) be the points of 
$C_v = \mathbb P^1$ (resp. $C_i := C_{v_i}$) corresponding to the three edges of $\Gamma$. Fix $i \in \{ 1,2,3 \}$, choose a point $q_i \neq y_i$ in $C_i$ and let $\mathcal E = (q_i)$. For any rational function $\f$ on $\C$ such that $\mathcal D - \mathcal E + \div(\f) \geq 0$, the fact that $C_i$ has genus one
implies that $\div_{v_i}(f_\Gamma) \geq 2(y_i)$, since there is no rational function on $C_i$ whose divisor is $(q_i)-(y_i)$.
Therefore $\div(f_\Gamma) = 2(v_i) - 2(v)$.  Restricting $\f$ to $C_v$ gives a rational function $f_i$ on $C_v$ with
$\div(f_i) = 2(x_i) - 2(p)$.  Since $(\H, \mathcal D)$ is by assumption a limit $\g^1_2$, we must have $f_1,f_2,f_3 \in H_v$.
But a simple calculation shows that the $f_i$ are linearly independent, implying that $\dim_\k (H_v) \geq 3$, a contradiction.
\end{example}

\begin{figure}[!tb]
\scalebox{.48}{\input{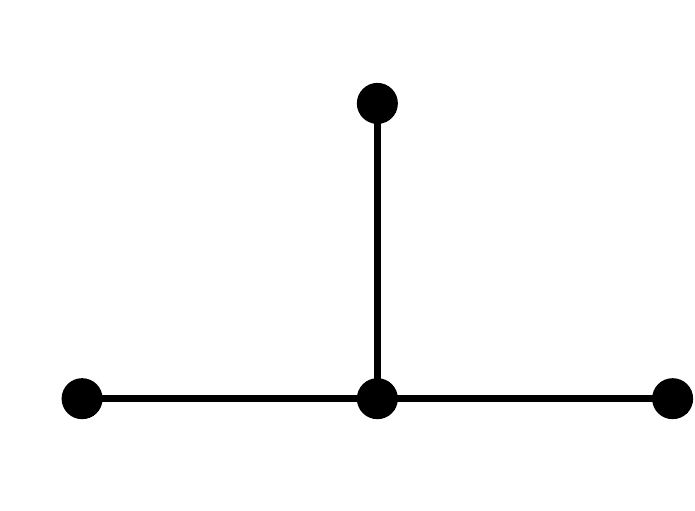_t}}
\caption{A weighted metric graph underlying a metrized complex possessing a degree $2$ rank $1$ divisor which
cannot be completed to a limit $\g^1_2$.}
\label{fig:nog12}
\end{figure}

\begin{remark}
Suppose the residue field $\kappa$ of $\K$ has characteristic different from $2$. 
Then there exists a metrized complex $\C$ over $\kappa$ (with all curves $C_v$ isomorphic to $\mathbb P^1$) 
and a divisor $\D$ of degree 2 and rank 1 on $\C$ which cannot be completed to a limit $\g^1_2$.
Indeed, any hyperelliptic metric graph which cannot be realized as the skeleton of a hyperelliptic curve over $\K$ provides an example of this phenomenon
(and one can construct many examples of such metric graphs, using the characterization of those hyperelliptic metric graphs which can be lifted to a hyperelliptic curve over $\K$ given in~\cite{caporaso-gonality} and~\cite{ABBR}.)
Here is a sketch of the argument.  It is not difficult to show that
a limit $\g^1_2$ on a metrized complex $\C$ gives rise, in a natural way,
to a finite harmonic morphism of degree two (in the terminology of~\cite[Definition 2.22]{ABBR}) 
from $\C$ to a metrized complex of genus zero .  By the lifting result \cite[Corollary 1.5]{ABBR},
such a finite harmonic morphism can be lifted to a
degree two map from a smooth proper curve $X/\K$ with associated metrized complex $\C$
to $\mathbb P^1$, which implies that the underlying metric graph of $\C$ is the skeleton of a hyperelliptic curve over $\K$. 
\end{remark}

\begin{example}[A limit $\g^r_d$ which is not smoothable]
\label{ex:NotSmoothable}
This phenomenon occurs already in the compact type case: the original paper of Eisenbud-Harris on limit linear series \cite{EH86} 
provides an example (Example 3.2 of {\it loc.~cit.}) of a limit $\g^2_4$ which cannot be smoothed; we recall their example here.

The metrized complex $\C$ in question is associated to a semistable curve $X_0$ with two components meeting transversely at a point $p$; one of the
components is a hyperelliptic curve $Y$ of genus at least $4$ and the other component $Z$ has genus zero.  The point $p$ is a ramification point of the 
$\g^1_2$ on $Y$, and without loss of generality we may assume that $p$ corresponds to the point $0$ on $\PP^1 \cong Z$.
The dual graph $G$ of $X_0$ has two vertices $v_Y$ and $v_Z$ connected by a single edge, which may take to be of length one in $\Gamma$.
Now define a divisor $\D$ on $\C$ by setting $\mathcal D = 4(\infty)$ for the point 
$\infty\in \mathbb P^1 \simeq Z$. 
If we define $\H = \{ H_Y,H_Z \}$, where $H_Y = H^0(Y,4(p))$ and $H_Z$ is the span of $1,t^2+t^3,t^4$ (with $t$ a local parameter at $0$ on $\PP^1$),
then by \cite[Example 3.2]{EH86} and Theorem~\ref{thm:limitseries} above, the pair 
$(\D,\H)$ defines a limit $\g^2_4$ on $\C$.  (It is also not hard to show this directly using our definitions.)
This limit $\g^2_4$ cannot be smoothed.
\end{example}

\subsection{Application to bounding the number of rational points on curves}
\label{ChabautySection}

In this section, we explain how limit linear series on 
metrized complexes of curves can be used to illuminate the proof of a recent theorem due 
to E. Katz and D. Zureick-Brown \cite{KZB} and put it into a broader context.

\medskip

Let $K$ be a number field and suppose $X$ is a smooth, proper, 
geometrically integral curve over $K$ of genus $g \geq 2$.
Let $J$ be the Jacobian of $X$, which is an abelian variety of dimension $g$ defined over $K$.
If the Mordell-Weil rank $r$ of $J(K)$ is less than $g$, 
Coleman \cite{Co} adapted an old method of Chabauty to prove that if
$p>2g$ is a prime which is unramified in $K$ and $\mathfrak{p}$ is a prime of 
good reduction for $X$ lying over $p$, then $\# X(K) \leq \# \bar{X}({\mathbf F}_{\mathfrak p}) + 2g - 2$.
Here $\bar{X}$ denotes the special fiber of a smooth proper model for $X$ over the completion ${\mathcal O}_{\mathfrak p}$ of ${\mathcal O}_K$ at $\mathfrak{p}$ and ${\mathbf F}_{\mathfrak p} = {\mathcal O}_K / \mathfrak{p}$.
Stoll \cite{St} improved this bound by replacing $2g-2$ with $2r$.  Lorenzini and Tucker \cite{LT} (see also \cite{MP})
proved the same bound as Coleman without
assuming that $X$ has good reduction at $\mathfrak{p}$; in their bound, $\bar{X}({\mathbf F}_{\mathfrak p})$ is replaced by $\bar{\mathfrak X}^{\rm sm}({\mathbf F}_{\mathfrak p})$
where ${\mathfrak X}$ is a proper regular model for $X$ over ${\mathcal O}_{\mathfrak p}$ and $\bar{\mathfrak X}^{\rm sm}$ is the smooth locus of the special fiber of ${\mathfrak X}$.
Katz and Zureick-Brown combine the improvements of Stoll and Lorenzini-Tucker by proving:

\begin{thm}
\label{thm:Katz-ZB}
Let $K$ be a number field and suppose $X$ is a smooth, proper, geometrically integral curve over $K$ of genus $g \geq 2$.
Suppose the Mordell-Weil rank $r$ of $J(K)$ is less than $g$, and that $p>2g$ is a prime which is unramified in $K$.
Let $\mathfrak{p}$ be a prime of ${\mathcal O}_K$ lying over $p$ and let ${\mathfrak X}$ be a proper regular model for $X$ over ${\mathcal O}_{\mathfrak p}$.
Then 
\[
\# X(K) \leq \# \bar{\mathfrak X}^{\rm sm}({\mathbf F}_{\mathfrak p}) + 2r.
\]
\end{thm}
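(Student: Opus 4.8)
The plan is to follow the classical Chabauty--Coleman strategy, using the theory of limit linear series on metrized complexes developed above to control the behaviour of Coleman integrals near the singular locus of the special fiber. Fix a prime $\mathfrak p$ over $p$, write $\mathcal O = \mathcal O_{\mathfrak p}$, $k = {\mathbf F}_{\mathfrak p}$, and let $\K$ be the completed maximal unramified extension (or an algebraic closure) so that we are in the non-Archimedean setting of Section~\ref{sec:specialization}; after a base change we may assume $\fX$ gives a strongly semistable model, at the cost of replacing the smooth locus count by a count weighted by the multiplicities, which only helps. First I would recall the standard input: since $r = {\rm rank}\, J(K) < g$, the closure of $J(K)$ in $J(K_{\mathfrak p})$ has dimension $\leq r$, so there is a $K_{\mathfrak p}$-subspace $V \subseteq H^0(X_{K_{\mathfrak p}}, \Omega^1)$ of dimension $\geq g - r$ all of whose elements annihilate $J(K)$ under the integration pairing; each nonzero $\omega \in V$ gives a locally analytic function on $X(K_{\mathfrak p})$ (the Coleman integral $\int \omega$) vanishing at every rational point. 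The number of rational points is then bounded, residue disk by residue disk, by the number of zeros of such an $\omega$.

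The heart of the argument is a bound on the number of zeros of a well-chosen $\omega \in V$ in terms of geometry on $\C\fX$. Here I would use Theorem~\ref{thm:limitseriesgeneral} and its refinement Theorem~\ref{thm:limitseriesgeneral2}: the space $V$ of ``vanishing differentials'' is an $H_\eta$ inside $H^0(X, \mathcal L(K_X))$, and by Theorem~\ref{thm:limitseriesgeneral} its reductions $H_v \subseteq \kappa(C_v)$ give a limit $\g^{s}_{2g-2}$ on $\C\fX$ with $s \geq \dim V - 1 \geq g - r - 1$; by Corollary~\ref{cor:introcanonspec} the underlying divisor is (linearly equivalent to) the canonical divisor $\mathcal K$ of $\C\fX$. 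The key point is that one wants an $\omega$ whose reduction $\omega_v$ on each component $C_v$ is \emph{nonzero}; this is arranged by a pigeonhole/genericity argument on the $(g-r)$-dimensional space $V$ together with Lemma~\ref{lem:dimensionreduction} (which guarantees the reduction map on $V$ is injective on each $C_v$, so the ``bad'' loci where $\omega_v = 0$ are proper subspaces). For such an $\omega$, the zeros of the Coleman integral in the residue disks attached to a component $C_v$ are controlled by $\deg(\div(\omega_v)) = 2g_v - 2 + \deg_G(v)$ on that component plus contributions from the annuli (edges), and the edge contributions are exactly what the $\eta$-function bookkeeping from \S\ref{sec:rank-csum} and the bound $\eta_v(k) \geq \bar\eta_v(k)$ from the proof of Theorem~\ref{thm:weightedmetricspecialization} let us track. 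Summing the local bounds $\# (\text{zeros in disks over }C_v^{\rm sm}) \leq \# C_v^{\rm sm}(k) + (\text{newton-polygon correction})$ and using that $\sum_v (2g_v - 2 + \deg_G(v)) + 2\,(\text{edge corrections}) = \deg \mathcal K = 2g - 2$ while $\dim V \geq g - r$ forces the number of ``free'' zeros to drop by $2(g-r)$, one obtains $\# X(K) \leq \sum_v \# C_v^{\rm sm}(k) + 2r = \# \bar{\fX}^{\rm sm}(k) + 2r$.

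I would organise the write-up as follows: (1) reduce to the strongly semistable case and set up $\K$, $\fX$, $\C\fX$; (2) produce the space $V$ of vanishing differentials and, via a genericity argument, a single $\omega \in V$ whose reductions $\omega_v$ are all nonzero, invoking Theorem~\ref{thm:limitseriesgeneral}, Corollary~\ref{cor:introcanonspec}, and Lemma~\ref{lem:dimensionreduction}; (3) for each residue disk of $X(K_{\mathfrak p})$, bound the number of zeros of $\int\omega$ using the Newton polygon of $\omega$ in that disk, treating disks reducing to smooth points of $C_v$ and disks/annuli reducing to nodes separately, expressing the node contributions via the slopes $\mathrm{slp}_e(f_\Gamma)$ / $\eta$-functions; (4) sum over all components and annuli, using the degree identity for $\mathcal K$ on $\C\fX$ and the inequality $\eta_v \geq \bar\eta_v$ to absorb the annulus terms, and conclude.

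The main obstacle I anticipate is step (3) combined with the summation in step (4): one must show that the ``extra'' zeros forced into the annuli by a differential whose $\Gamma$-part $f_\Gamma = \log|\omega|$ has nonzero slopes do not exceed what is gained by the rank drop from $g-1$ to $\geq g-r-1$, i.e. that the annulus contributions are bounded by $2r$ in aggregate. This is precisely a statement about the combinatorics of the $\eta$-function on $\C\fX$ (and is where the hypothesis $p > 2g$, hence no wild ramification and controlled Newton polygons, is genuinely used); packaging it cleanly will require the connected-sum/$\eta$-function formalism of \S\ref{sec:rank-csum} together with the weighted-rank bound $r^\#(\tau_*(D)) = \min_{0 \leq E \leq \mathcal W}(\deg E + r_\Gamma(D - 2E))$ from Corollary~\ref{prop:wrank}, applied to $D = K_X$. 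The remaining steps are, by comparison, standard Chabauty--Coleman bookkeeping.
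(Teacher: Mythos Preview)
Your plan has a genuine gap at the point where you claim ``$\dim V \geq g-r$ forces the number of `free' zeros to drop by $2(g-r)$.'' Once you commit in step~(2) to a \emph{single} differential $\omega$ whose reductions $\omega_v$ are all nonzero, the divisor $\div(\widetilde\omega)$ summed over components and annuli has total degree exactly $2g-2$, and no amount of $\eta$-function or weighted-rank bookkeeping can make that sum smaller: the dimension of $V$ simply does not enter. What a single $\omega$ gives you is the Lorenzini--Tucker bound $\#\bar{\fX}^{\rm sm}({\mathbf F}_{\mathfrak p}) + 2g-2$, not $2r$. The improvement to $2r$ comes from Stoll's idea of letting $\omega$ vary with the residue class: one defines $n_{\widetilde Q} = \min_{0 \neq \omega \in V} \ord_{\widetilde Q}\widetilde\omega$ and forms the Chabauty divisor $D_{\rm chab} = \sum n_{\widetilde Q}(\widetilde Q)$, and the content of the theorem is that $\deg D_{\rm chab} \leq 2r$.

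The paper proves this degree bound as follows, and this is what should replace your steps~(2)--(4). After reducing to a strongly semistable model, view $D_{\rm chab}$ as a divisor $\D$ on $\C\fX$ supported on the smooth locus, and reduce $V_{\rm chab}$ to a collection $\H = \{H_v\}$. By construction every $f_v \in H_v$ vanishes to order at least $n_{\widetilde Q}$ at each $\widetilde Q \in C_v$, i.e.\ $H_v \subseteq L(D_v - E^\circ_v)$ with $E^\circ = D_{\rm chab}$. This is exactly the hypothesis of Theorem~\ref{thm:limitseriesgeneral2}, which together with Corollary~\ref{cor:introcanonspec} shows that $(\mathcal K_{\C\fX} - \D, \H)$ is a limit $\g^{s}_{2g-2-d}$ with $s \geq g-r-1$ and $d = \deg D_{\rm chab}$. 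In particular $r_{\C\fX}(\mathcal K_{\C\fX} - \D) \geq g-r-1$. Now apply Clifford's inequality for metrized complexes (Theorem~\ref{thm:MCClifford}) to $\mathcal K_{\C\fX} - \D$: since this divisor is special, $g-r-1 \leq r_{\C\fX}(\mathcal K_{\C\fX} - \D) \leq \tfrac{1}{2}(2g-2-d)$, giving $d \leq 2r$. The $\eta$-function and weighted-rank machinery you invoke are not needed; the single missing ingredient in your outline is Clifford for $\C\fX$, applied to $\mathcal K - \D_{\rm chab}$ rather than to any single $\div(\widetilde\omega)$.
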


In order to explain the main new idea in the paper of Katz and Zureick-Brown, we first quickly recall the basic arguments used by Coleman, Stoll, and Lorenzini-Tucker.
(See \cite{MP} for a highly readable and more detailed overview.)
Assume first that we are in the setting of Coleman's paper, so that $r < g$, $p>2g$ is a prime which is unramified in $K$, and $X$ has good reduction at the prime ${\mathfrak p}$ lying over $p$.
Fix a rational point $P \in X(K)$ (if there is no such point, we are already done!).  
Coleman associates to each regular differential $\omega$ on $X$ over $K_{{\mathfrak p}}$ (the ${\mathfrak p}$-adic completion of $K$) 
a ``definite $p$-adic integral'' $\int_P^Q \omega \in K_{{\mathfrak p}}$.  If $V_{\rm chab}$ denotes the vector space of all $\omega$ such that 
$\int_P^Q \omega = 0$ for all $Q \in X(K)$, Coleman shows that ${\rm dim} V_{\rm chab} \geq g-r > 0$.  
Locally, $p$-adic integrals are obtained by formally integrating a power series expansion for $\omega$ with respect to a local parameter.
Using this observation and an elementary Newton polygon argument, Coleman proves that
\[
\# X(K) \leq \sum_{\widetilde{Q} \in \bar{X}({\mathbf F}_{\mathfrak p})} \left( 1 + n_{\widetilde{Q}} \right),
\]
where $n_{\widetilde{Q}}$ is the minimum over all nonzero $\omega$ in $V_{\rm chab}$ of 
${\rm ord}_{\widetilde{Q}} \widetilde{\omega}$; here $\widetilde{\omega}$ denotes the reduction of a suitable rescaling $c \omega$ of $\omega$ to $\bar{X}$,
where the scaling factor is chosen so that $c \omega$ is regular and non-vanishing along the special fiber $\bar{X}$.
If  we choose any nonzero $\omega \in V_{\rm chab}$,
then the fact that the canonical divisor class on $\bar{X}$ has degree $2g-2$ gives 
\[
\sum_{\widetilde{Q} \in \bar{X}({\mathbf F}_{\mathfrak p})} n_{\widetilde{Q}} \leq \sum_{\widetilde{Q} \in \bar{X}({\mathbf F}_{\mathfrak p})} {\rm ord}_{\widetilde{Q}} \widetilde{\omega}
\leq 2g - 2,
\]
which yields Coleman's bound.

\medskip

Stoll observed that one could do better than this by adapting the differential $\omega$ to the point $\widetilde{Q}$ rather than using the
same differential $\omega$ on all residue classes.
Define the {\it Chabauty divisor}
\[
D_{\rm chab} = \sum_{\widetilde{Q} \in \bar{X}({\mathbf F}_{\mathfrak p})} n_{\widetilde{Q}} (\widetilde{Q}).
\]
Then $D_{\rm chab}$ and $K_{\bar{X}} - D_{\rm chab}$ are both equivalent to effective divisors, so by Clifford's inequality (applied to the smooth proper curve $\bar{X}$) 
we have $r(D_{\rm chab}) := h^0(D_{\rm chab}) - 1 \leq \frac{1}{2} {\rm deg} (D_{\rm chab})$.
On the other hand, by the semicontinuity of $h^0$ under specialization we have $h^0(D_{\rm chab}) \geq {\rm dim} V_{\rm chab} \geq g-r$.
Combining these inequalities gives 
\[
\sum_{\widetilde{Q} \in \bar{X}({\mathbf F}_{\mathfrak p})} n_{\widetilde{Q}} \leq 2r 
\]
which leads to Stoll's refinement of Coleman's bound.

\medskip

Lorenzini and Tucker observed that one can generalize Coleman's bound to the case of bad reduction as follows.
Let ${\mathfrak X}$ be a proper regular model for $X$ over ${\mathcal O}_{\mathfrak p}$ and note that points of $X(K)$ specialize to 
$\bar{\mathfrak X}^{\rm sm}({\mathbf F}_{\mathfrak p})$.
One obtains by a similar argument the bound
\begin{equation}
\label{eq:LTinequality}
\# X(K) \leq \sum_{\widetilde{Q} \in \bar{\mathfrak X}^{\rm sm}({\mathbf F}_{\mathfrak p})} \left( 1 + n_{\widetilde{Q}} \right),
\end{equation}
where $n_{\widetilde{Q}}$ is the minimum over all nonzero $\omega$ in $V_{\rm chab}$ of 
${\rm ord}_{\widetilde{Q}} \widetilde{\omega}$; here $\widetilde{\omega}$ denotes the reduction of (a suitable rescaling of) $\omega$ to the unique irreducible component of the special
fiber of ${\mathfrak X}$ containing $\widetilde{Q}$ and ${\rm dim} V_{\rm chab} \geq g-r > 0$ as before.
Choosing a nonzero $\omega \in V_{\rm chab}$ as in Coleman's bound, the fact that the relative dualizing sheaf for ${\mathfrak X}$ has degree $2g-2$
gives the Lorenzini-Tucker bound.

\medskip

In order to combine the bounds of Stoll and Lorenzini-Tucker, we see that it is natural to form the Chabauty divisor
\[
D_{\rm chab} = \sum_{\widetilde{Q} \in \bar{\mathfrak X}^{\rm sm}({\mathbf F}_{\mathfrak p})} n_{\widetilde{Q}} (\widetilde{Q})
\]
and try to prove, using some version of semicontinuity of $h^0$ and Clifford's inequality, that its degree is at most $2r$.
This is the main technical innovation of Katz and Zureick-Brown, so we state it as a theorem:

\begin{thm}[Katz--Zureick-Brown]
\label{thm:Katz-ZB_Degree_Bound}
The degree of $D_{\rm chab}$ is at most $2r$.
\end{thm}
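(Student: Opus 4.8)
The plan is to realize the Chabauty divisor $D_{\rm chab}$ as the divisor $\E^\circ$ occurring in Theorem~\ref{thm:limitseriesgeneral2}, applied to the canonical linear series on $X$ cut out by $V_{\rm chab}$, and then to read off the bound $\deg D_{\rm chab}\le 2r$ from Clifford's theorem for metrized complexes (Theorem~\ref{thm:MCClifford}) together with the classical lower bound $\dim V_{\rm chab}\ge g-r$ of Coleman and Stoll. First I would reduce to a strongly semistable situation: after replacing $K_{\mathfrak p}$ by a sufficiently large complete non-Archimedean field $\K$ with algebraically closed residue field $\k$, and $\fX$ by an associated strongly semistable model, one obtains a metrized complex $\C\fX$ over $\k$ of genus $g$ together with the retraction $\tau$ and the specialization map $\tau^{\C\fX}_*$. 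The $\mathbf F_{\mathfrak p}$-points of $\bar{\fX}^{\rm sm}$ lift to $X(\K)$ and specialize to distinct geometric points lying on the smooth loci $C_v(\k)\setminus\mathcal A_v$ of the components, so $D_{\rm chab}$ gives an effective divisor $\E^\circ=\sum_{v\in V}E^\circ_v$ on $\C\fX$ of degree $e:=\deg D_{\rm chab}$, supported on the smooth locus of the special fibre.

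Next, take $D=K_X$ an effective canonical divisor on $X$ and $H_\eta=V_{\rm chab}\subseteq H^0(X,\mathcal L(K_X))$, which is a $\g^s_{2g-2}$ on $X$ with $s=\dim V_{\rm chab}-1\ge g-r-1$. Set $\mathcal D=\tau^{\C\fX}_*(K_X)$ and let $\H=\{H_v\}_{v\in V}$ be the family of reductions of $V_{\rm chab}$ to the curves $C_v$, using Remark~\ref{rem:canonicalsemi-stable} to identify the restriction of the relative dualizing sheaf to $C_v$ with $\mathcal L(K_v+A_v)$. The key point is essentially the definition of $n_{\widetilde Q}$ together with the fact that a regular differential reduces to a regular section of the relative dualizing sheaf: for every $\widetilde Q\in\bar{\fX}^{\rm sm}(\mathbf F_{\mathfrak p})$ lying on $C_v$ and every $\omega\in V_{\rm chab}$, the reduction $\widetilde\omega$ to $C_v$ is regular and vanishes to order at least $n_{\widetilde Q}$ at $\widetilde Q$. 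Identifying, via a differential analogue of Theorem~\ref{thm:PLspecialization}, the divisor of the reduction of a fixed rational $1$-form representing the trivialization with the $C_v$-part $D_v$ of $\mathcal D$, this says precisely that $H_v\subseteq L(D_v-E^\circ_v)$ for all $v\in V$. Theorem~\ref{thm:limitseriesgeneral2} then applies and shows that $(\mathcal D-\E^\circ,\H)$ is a limit $\g^s_{2g-2-e}$ on $\C\fX$; in particular $r_{\C\fX,\H}(\mathcal D-\E^\circ)=s$, and hence $r_{\C\fX}(\mathcal D-\E^\circ)\ge s\ge g-r-1$.

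To conclude, observe that since $\tau^{\C\fX}_*(K_X)\sim\mathcal K$ (Corollary~\ref{cor:introcanonspec}) we have $\mathcal K-(\mathcal D-\E^\circ)\sim\E^\circ\ge 0$, so $\mathcal D-\E^\circ$ is a special divisor of degree $2g-2-e$ on $\C\fX$, and Clifford's theorem for metrized complexes (Theorem~\ref{thm:MCClifford}) gives
\[
g-r-1 \;\le\; r_{\C\fX}(\mathcal D-\E^\circ) \;\le\; \tfrac{1}{2}\bigl(2g-2-e\bigr) \;=\; g-1-\tfrac{e}{2},
\]
whence $e\le 2r$, that is, $\deg D_{\rm chab}\le 2r$. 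The main obstacle is the reduction bookkeeping in the first two paragraphs: one must pass correctly from the proper regular $\mathcal O_{\mathfrak p}$-model $\fX$ to a strongly semistable model over $\K$ while keeping track of the $\mathbf F_{\mathfrak p}$-points of $\bar{\fX}^{\rm sm}$ and of the multiplicities $n_{\widetilde Q}$, and one must check that the ``reduction of a differential'' used by Katz and Zureick-Brown matches, under the identification of Remark~\ref{rem:canonicalsemi-stable}, the normalized reduction of the corresponding rational function, so that the hypothesis $H_v\subseteq L(D_v-E^\circ_v)$ of Theorem~\ref{thm:limitseriesgeneral2} is genuinely met; granting this, the remaining steps are formal consequences of the Specialization Inequality and the Riemann-Roch package already established.
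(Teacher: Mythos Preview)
Your proposal is correct and follows essentially the same approach as the paper's proof: both identify $V_{\rm chab}$ with a subspace of $L(K_X)$, apply Theorem~\ref{thm:limitseriesgeneral2} with $\E^\circ$ the Chabauty divisor to produce a limit $\g^s_{2g-2-e}$ on $\C\fX$, invoke Corollary~\ref{cor:introcanonspec} to see the resulting divisor is special, and conclude via Clifford's inequality (Theorem~\ref{thm:MCClifford}). Your write-up is a bit more explicit about the reduction-to-semistable bookkeeping and the compatibility of differential reductions, which the paper handles in the paragraph preceding the proof; the core argument is otherwise identical.
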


Combining Theorem~\ref{thm:Katz-ZB_Degree_Bound} with \eqref{eq:LTinequality} yields Theorem~\ref{thm:Katz-ZB}.
As noted by Katz and Zureick-Brown, if one makes a base change from $K_{{\mathfrak p}}$ to an extension field $K'$ over which there is a regular semistable model ${\mathfrak X'}$ for $X$ dominating the base change of ${\mathfrak X}$, 
then the corresponding Chabauty divisors satisfy $D'_{\rm chab} \geq D_{\rm chab}$.
(Here $D'_{\rm chab}$ is defined relative to the $K'$-vector space $V'_{\rm chab} = V_{\rm chab} \otimes_K K'$; one does not want to look at the Mordell-Weil group of $J$ over extensions of $K$.)
In order to prove Theorem~\ref{thm:Katz-ZB_Degree_Bound}, we may therefore assume that $\mathfrak X$ is a regular semistable model for $X$ (and also that the residue field of $K'$ is algebraically closed).

\medskip

Let $d={\rm deg}(D_{\rm chab})$.  
We now explain how to prove that $d \leq 2r$ when $\mathfrak X$ is a semistable regular model using limit linear series on metrized complexes of curves. 
Our proof is different from (and arguably streamlined than) the one given in \cite{KZB}.

\begin{proof}[Proof of Theorem~\ref{thm:Katz-ZB_Degree_Bound}]
Let $s = \dim_{K'} V'_{\rm chab} - 1 \geq g-r-1 \geq 0$.  We can identify $V'_{\rm chab}$ with an $(s+1)$-dimensional space $W$ of rational functions on $X$ in the usual way by identifying $H^0(X,\Omega^1_X)$ with 
$L(K_X) = \{ f \; : \; {\rm div}(f) + K_X \geq 0 \}$ for a canonical divisor $K_X$ on $X$.
The divisor $D_{\rm chab}$ on $\bar{\mathfrak X}^{\rm sm}$ defines in a natural way a divisor $\D = \sum_v D_v$ of degree $d$ on the metrized complex $\C\mathfrak X$ associated to $\mathfrak X$, where $D_v$ is the restriction of $D_{\rm chab}$ to $C_v$.
We promote the divisor $K_{\C\mathfrak X} - \D$ to a limit linear series 
$\L = (K_{\C\mathfrak X} - \D, \{ H_v \})$ on $\C\mathfrak X$ by defining $H_v$ to be the reduction of $W$ to $C_v$ for each $v \in V(G)$.  
By the definition of $D_{\rm chab}$, each element of $H_v$ vanishes to order at least $n_{\widetilde{Q}}$ at each point $\widetilde{Q}$ in ${\rm supp}(D_v)$.
By Corollary~\ref{cor:introcanonspec} and Theorem~\ref{thm:limitseriesgeneral2}, $\L$ is a limit $\g^{s}_{2g-2-d}$ on $\C\mathfrak X$.
In particular, we must have $r_{\C\mathfrak X}(K_{\C\mathfrak X} - \D) \geq g-r-1$.
On the other hand, Clifford's inequality for metrized complexes (Theorem~\ref{thm:MCClifford}) implies that
$r_{\C\mathfrak X}(K_{\C\mathfrak X} - \D) \leq \frac{1}{2}(2g-2-d)$.
Combining these inequalities gives $d \leq 2r$ as desired.
\end{proof}

\appendix
\section{Rank-determining sets for metrized complexes}\label{sec:rank-determining}
We retain the terminology from Section~\ref{sec:basics}. Let $\C$ be a metrized complex of algebraic curves, $\Gamma$ the underlying metric graph, 
$G=(V,E)$ a model of $\Gamma$ and $\{C_v\}_{v\in V}$ 
the collection of smooth projective curves over $\k$ corresponding to $\C$.
In this section, we generalize some basic results concerning rank-determining 
sets~\cite{Luo, HKN} from metric graphs to metrized complexes
by following and providing complements to the arguments of~\cite{amini} 
(to which we refer for a more detailed exposition).

\medskip

Let $\R$ be a set of geometric points of $\C$ (i.e., a subset of $\bigcup_{v\in V} C_v(\k)$). 
The set $\R$ is called {\it rank-determining} if for any divisor $\mathcal D$ on $\C$, $r_\C(\mathcal D)$ coincides with $r_{\C}^\R(\mathcal D)$, defined as the largest integer $k$ such that $\mathcal D - \mathcal E$ 
is linearly equivalent to an effective divisor
for all degree $k$ effective divisors $\mathcal E$ on $\C$ with support in $\R$.  In other words, $\R$ is rank-determining
if in the definition of rank given in Section~\ref{sec:basics}, one can restrict to effective divisors $\mathcal E$ with support in $\R$.

\medskip

The following theorem is a common generalization of (a) Luo's theorem~\cite{Luo} (see also~\cite{HKN}) that
$V$ is a rank-determining set for any loopless model $G=(V,E)$ of a metric graph $\Gamma$ and (b) the classical fact
(also reproved in \cite{Luo}) that for any smooth projective curve $C$ of genus $g$ over $\k$, every subset of $C(\k)$ of size $g+1$ is rank-determining. 

\begin{thm}\label{thm:f-width} Let $\C$ be a metrized complex of algebraic curves, and suppose that the given model $G$
of $\Gamma$ is loopless.  Let $\R_v \subset C_v(\k)$ be a subset of size $g_v+1$ and let $\R = \cup_{v\in V} \R_v$. Then $\R$ is a rank-determining subset of $\C$.
\end{thm}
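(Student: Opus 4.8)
The plan is to adapt the ``$\f$-width'' / reduced-divisor strategy of \cite{Luo, amini} to the setting of metrized complexes, the key point being that we already have at our disposal the theory of $v_0$-reduced divisors (Theorem~\ref{thm:reduced.divisors}) for metrized complexes and the classical fact that any $g_v+1$ points on $C_v$ form a rank-determining set on $C_v$. First I would reduce to showing the following: for any divisor $\D$ on $\C$ and any geometric point $x \in |\C|$ (say $x \in C_w(\k)$), if $r_{\C}(\D) \geq 0$ then there is an effective divisor $\E$ of degree $1$ supported on $\R$ such that, whenever $r_{\C}(\D - \E') < r_{\C}(\D)$ for \emph{some} effective $\E'$ of degree $r_{\C}(\D)$ with support in $\R$, one already detects the drop using $\R$ alone. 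More precisely, following \cite{amini}, it suffices to prove that $\R$ \emph{almost-reduces} divisors: for each $x \in |\C|$ there is a point $x' \in \R_w$ (or, if $x$ lies on $\Gamma\setminus V$, a point in some $\R_v$) such that $r_{\C}(\D - (x)) = r_{\C}(\D) - 1$ implies $r_{\C}(\D - (x')) = r_{\C}(\D) - 1$; iterating this one point at a time along an arbitrary effective divisor $\E$ of degree $r_{\C}(\D)+1$ and using the definition of rank gives $r_{\C}^{\R}(\D) = r_{\C}(\D)$.

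To establish the almost-reduction property I would argue as follows. Fix $\D$ with $k := r_{\C}(\D)\ge 0$ and a geometric point $x$, and assume $r_{\C}(\D - (x)) = k-1$; we must find $x'\in\R$ with the same property. Choose the base point $v_0 := w$ (the vertex carrying the relevant curve $C_w$, or, in the graphical case, $v_0$ any vertex near $x$) and pass to the $v_0$-reduced divisor $\D^{v_0} \sim \D$ using Theorem~\ref{thm:reduced.divisors}. By Lemma~\ref{lem:reduced}, since $r_{\C}(\D^{v_0})\ge 0$, the coefficient $D_\Gamma^{v_0}(v_0)$ is nonnegative and, if $v_0 \in V$, the $C_{v_0}$-part $D_{v_0}^{v_0}$ has nonnegative rank on $C_{v_0}$. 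Now the condition $r_{\C}(\D-(x)) = k-1$ translates, after reducing $\D$ at $v_0$ chosen adapted to $x$, into a failure of effectivity that is ``localized'' at $C_w$ (resp. at $x\in\Gamma\setminus V$): one shows that subtracting $(x)$ forces the $C_w$-part to become a divisor of rank exactly $r(D_w^{v_0})-1$ on $C_w$ (using that firing moves on $\Gamma$ away from $w$ cannot help recover the lost chip once $\D$ is already reduced towards $w$). At that stage the classical statement that $\R_w$ is rank-determining on $C_w$ (the $C_v$ case of the theorem, reproved in \cite{Luo}) produces a point $x'\in \R_w$ with $r_{C_w}(D_w^{v_0} - (x')) = r_{C_w}(D_w^{v_0})-1$, and unwinding the equivalence gives $r_{\C}(\D-(x')) = k-1$, as required. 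The graphical case $x\in\Gamma\setminus V$ is handled exactly as in Luo's theorem, since for a loopless model $V$ is rank-determining on $\Gamma$ and $\R_v$ is nonempty for every $v$, so a chip at a non-vertex can be pushed to a vertex and then onto a curve.

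The main obstacle I anticipate is bookkeeping the interaction between the $\Gamma$-part and the $C_v$-parts in the reduction step: one needs a clean statement to the effect that, for a $v_0$-reduced divisor $\D^{v_0}$ with $v_0$ chosen adapted to the point $x\in C_w$, any rational function $\f$ on $\C$ realizing $\D^{v_0} - (x) + \div(\f)\ge 0$ can be taken with $\Gamma$-part locally constant near $w$ (so that no slope contributions leak into the $C_w$-part at the marked points $\mathcal A_w$), exactly as in the ``$a$-constant'' arguments used repeatedly in the proofs of Proposition~\ref{prop:combinrank} and Proposition~\ref{prop:basic}. Granting this normalization — which follows from a compactness/finiteness argument on the finitely many possible $\Gamma$-parts together with a genericity choice of $x'$ in $\R_w$ — the reduction of Theorem~\ref{thm:f-width} to the two already-known cases (Luo's theorem on $\Gamma$ and the classical rank-determining statement on each $C_v$) goes through, and $\R=\bigcup_v\R_v$ is rank-determining on $\C$.
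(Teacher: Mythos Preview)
Your reduction to a rank-one statement and your treatment of the geometric case $x\in C_w(\k)$ are essentially correct and match the paper: once $\D$ is replaced by its $w$-reduced representative $\D^w$, Lemma~\ref{lem:reduced} says $r_\C(\D-(x))\ge 0$ iff $D^w_w-(x)$ has nonnegative rank on $C_w$, and since $\R_w$ has $g_w+1$ elements it is rank-determining on $C_w$, so this part goes through.

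The genuine gap is in the graphical case $x\in\Gamma\setminus V$. You write that this ``is handled exactly as in Luo's theorem, since for a loopless model $V$ is rank-determining on $\Gamma$'', but Luo's theorem is a statement about the rank function $r_\Gamma$ on $\Gamma$, not about $r_\C$. What you must show is that if $D^v_\Gamma(v)\ge 1$ for every vertex $v$ (which you have, from the geometric case), then $D^P_\Gamma(P)\ge 1$ for every $P\in\Gamma$. This does not follow from Luo's theorem applied to $\Gamma$, because the $P$-reduced divisor $\D^P$ on $\C$ is \emph{not} determined by the $\Gamma$-part alone: as $P$ moves along an edge, the behavior of $D^P_\Gamma$ depends on the curves $C_v$ through the saturation condition (whether $D_v-\div_v(\partial S)$ is effective on $C_v$). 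Your normalization idea via ``$a$-constant'' arguments does not address this; those arguments freeze slopes for a \emph{fixed} base point, whereas here the base point is varying.

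The paper closes this gap by proving that the reduced-divisor map $\Red_\D:\Gamma\to\Gamma^{(d)}$, $P\mapsto D^P_\Gamma$, is continuous. This requires an explicit local description of $\Red_\D$ near each $P$: one shows that along a tangent direction $\vec\mu$ either $\D^P$ remains $(P+\epsilon\vec\mu)$-reduced, or $\D^{P+\epsilon\vec\mu}=\D^P+\div(\f^{(\alpha\epsilon,\alpha)})$ for an explicit piecewise-linear $\f^{(\gamma,\alpha)}$ whose slope $\alpha$ at $P$ is, when $P=v\in V$, the largest $n$ with $r_{C_v}\bigl(D_v-n(x_v^{e_0})-\sum_{i\ge 1}(x_v^{e_i})\bigr)\ge 0$. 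This is where the $C_v$-geometry genuinely enters. Continuity then gives that $\Gamma_0:=\{P:D^P_\Gamma(P)\ge 1\}$ is closed; since it contains $V$, if $\Gamma_0\neq\Gamma$ one finds an open segment $(P,Q)$ in an edge disjoint from $\Gamma_0$, on which $\Red_\D$ is forced to be constant, and then the complementary cut $S=\Gamma\setminus(P,Q)$ is saturated for $\D^Z=\D^P$ while not containing $Z\in(P,Q)$, contradicting $Z$-reducedness. You need this (or an equivalent) argument; the appeal to Luo's theorem on $\Gamma$ is not enough.
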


Let $\mathcal D$ be a divisor on $\C$. For any point $P \in \Gamma$, 
let $\D^P$ be the quasi-unique $P$-reduced divisor on $\C$ linearly equivalent to $\mathcal D$,
and denote by $D^P_\Gamma$ (resp. $D^P_v$) the $\Gamma$-part (resp. $C_v$-part) of $\D^P$.

\begin{lemma}\label{lem:rankone}
A divisor $\mathcal D$ on $\C$ has rank at least one if and only if
\begin{itemize}
\item[(1)] For any point $P$ of $\Gamma$, $D^P_\Gamma( P )\geq 1$, and 
\item[(2)] For any vertex $v\in V(G)$, the divisor $D^v_v$ has rank at least one on $C_v$. 
\end{itemize}
\end{lemma}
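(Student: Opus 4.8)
The plan is to prove Lemma~\ref{lem:rankone} by establishing both directions, using the quasi-uniqueness of reduced divisors (Theorem~\ref{thm:reduced.divisors}) together with the chip-firing characterization of linear equivalence and the basic properties of $P$-reduced divisors developed in Section~\ref{sec:reduced}.

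\smallskip

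\noindent\textbf{The easy direction ($r_\C(\mathcal D)\geq 1 \Rightarrow$ (1), (2)).} Suppose $r_\C(\mathcal D)\geq 1$. For any point $P\in\Gamma$, the divisor $\mathcal D - (P)$ is linearly equivalent to an effective divisor, hence so is $\D^P - (P)$ where $\D^P$ is the $P$-reduced divisor equivalent to $\mathcal D$. The point is that a $P$-reduced divisor minus $(P)$ is effective if and only if the divisor itself, after subtracting one chip at $P$, remains ``in the effective cone'' in the strong reduced sense; concretely, since $\D^P$ is $P$-reduced and $\D^P-(P)\sim$ effective, one checks that $\D^P-(P)$ must itself be effective (this is the analogue of \cite[Lemma]{amini} or the metric-graph fact used in \cite{Luo}: subtracting a chip at the base point of a reduced divisor preserves the reduced property, and a reduced divisor is effective iff it is equivalent to an effective divisor, which follows from Lemma~\ref{lem:reduced}). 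This forces $D^P_\Gamma(P)\geq 1$ when $P\notin V$, giving (1); and when $P=v\in V$, effectiveness of $\D^v-(v)$ together with Lemma~\ref{lem:reduced} applied to the base point $v$ shows $D^v_v$ has non-negative rank, and a small refinement (subtracting one more chip, supported on $C_v$) gives (2). For (2) at a general vertex one argues the same way, picking an arbitrary point $q\in C_v(\kappa)$ and using $r_\C(\mathcal D)\geq 1$ to make $\mathcal D-(q)$ effective, then reducing to the base point $v$.

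\smallskip

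\noindent\textbf{The harder direction ((1), (2) $\Rightarrow r_\C(\mathcal D)\geq 1$).} This is where the real work lies. We must show that for every geometric or graphical point $x\in|\C|$, the divisor $\mathcal D-(x)$ is linearly equivalent to an effective divisor. The strategy follows \cite{amini}: given $x$, pass to the $x$-reduced divisor $\D^x$ equivalent to $\mathcal D$ (when $x$ is a geometric point on $C_v$, we use the base point $v$ and then track the behavior on $C_v$ separately; when $x$ is graphical, we use $x$ itself as base point, which may require enlarging the model to include $x$ as a vertex). By hypothesis (1), $D^x_\Gamma(x)\geq 1$ if $x$ is graphical; by hypothesis (2) and the fact that $\D^x$ is $x$-reduced with $x\in C_v$, one shows $D^x_v$ has rank $\geq 1$ on $C_v$, so we may subtract $(x)$ on the curve $C_v$ and stay effective on $C_v$, while conditions (i)--(iii) of the definition of $v$-reduced guarantee the $\Gamma$-part stays effective away from $v$. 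The key subtlety, exactly as in the metric graph case, is that the reduced divisor for a point $P$ varies ``continuously'' as $P$ moves, so the hypotheses (1) and (2), stated for \emph{all} points $P$, are strong enough to handle every $x$ simultaneously; one does not need to separately verify the cut condition (iii) for $\mathcal D-(x)$ because subtracting a chip at the base point of a reduced divisor preserves reducedness. I expect the main obstacle to be precisely this bookkeeping: carefully justifying that $\D^x-(x)\geq 0$ follows from (1) and (2), including the case where $x$ is a graphical point not in the chosen model $G$ (requiring a subdivision argument) and the case where $x\in\mathcal A_v$ is a marked point, where the interaction between $\mathrm{slp}_e(f_\Gamma)$ and $\div(f_v)$ in the definition of $\div(\f)$ must be handled. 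The cleanest route is to import the corresponding statement and proof technique from \cite[Theorem 2, proof]{amini} and adapt it verbatim, replacing ``$D(P)\geq 1$'' by the conjunction of (1) and (2) and using Lemma~\ref{lem:reduced} as the substitute for the trivial effectiveness criterion on metric graphs.
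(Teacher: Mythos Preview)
Your core idea---reduce to the base point and apply Lemma~\ref{lem:reduced}---is exactly right, but you have badly misjudged the difficulty: there is no ``harder direction'' here, and none of the machinery you invoke (continuity of the reduced divisor map, subdivision, special handling of marked points) is needed. The paper's proof is a three-line direct application of Lemma~\ref{lem:reduced}. The point is that Lemma~\ref{lem:reduced} is an \emph{if and only if}, and that subtracting a single point lying over the base point $P$ preserves $P$-reducedness (conditions (i)--(iii) in the definition only constrain points and cuts away from $P$). Hence for $P\in\Gamma\setminus V$, the divisor $\D^P-(P)$ is $P$-reduced, so by Lemma~\ref{lem:reduced} it has rank $\geq 0$ iff $D^P_\Gamma(P)-1\geq 0$; and for $x\in C_v(\kappa)$, the divisor $\D^v-(x)$ is $v$-reduced, so it has rank $\geq 0$ iff $D^v_\Gamma(v)\geq 1$ and $r_{C_v}(D^v_v-(x))\geq 0$. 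Quantifying over all such $x$ gives precisely conditions (1) and (2), in both directions at once.

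The continuity of $\Red_{\mathcal D}$ and the subdivision/rank-determining arguments you mention are used later in the paper, in the proof of Theorem~\ref{thm:f-width}, where one must pass from knowing (1) and (2) only at a \emph{finite} set of test points to knowing them everywhere. But Lemma~\ref{lem:rankone} as stated already hypothesizes (1) for \emph{every} $P\in\Gamma$ and (2) for every vertex, so there is nothing to interpolate. Also, a small imprecision: it is not true that a $P$-reduced divisor of nonnegative rank is literally effective (its $C_v$-parts for $v\neq P$ are only required to have nonnegative rank); Lemma~\ref{lem:reduced} gives exactly the right criterion without needing that stronger claim.
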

\begin{proof}
The condition $r_\C(\mathcal D) \geq 1$ is equivalent to requiring that $r_\C(\mathcal D - \mathcal E ) \geq 0$ for every effective divisor $\mathcal E$ of degree $1$ on $\C$. 
For $P \in \Gamma\setminus V$, the divisor $\mathcal D -( P )$ has non-negative rank if and only if $D^P_\Gamma( P )\geq 1$ (by Lemma~\ref{lem:reduced}). Similarly, for $v \in V$ and $x\in C_v(\k)$, 
the divisor $ \mathcal D - (x)$ has non-negative rank in $\C$ if and only if $D^v_\Gamma(v) \geq 1$ and $D^v_v - (x)$ has non-negative rank on $C_v$ (by Lemma~\ref{lem:reduced}). These are clearly equivalent to (1) and (2).  
\end{proof}

\begin{lemma}\label{lem:rank-determ}
A subset  $\R \subseteq \bigcup_{v\in V} C_v(\k)$ which has non-empty intersection with each $C_v(\k)$ is rank-determining  if and only if for every divisor $\mathcal D$ of non-negative rank on $\C$, the following two assertions are equivalent:
\begin{itemize}
\item[(i)] $r_\C(\mathcal D) \geq 1$.
\item[(ii)] For any vertex $u  \in V$, and for any point $z \in \R\, \cap\, C_u(\k)$, $D_u^u -(z)$ has non-negative rank on $C_u$.
\end{itemize}
\end{lemma}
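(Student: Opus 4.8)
The plan is to combine two things: a purely formal ``level-one'' reduction, which is the same as in the metric-graph case treated in~\cite{Luo,amini}, and an identification of the condition ``$r^{\R}_{\C}(\mathcal D)\ge 1$'' with condition (ii), which is the one point that genuinely uses the curves $C_v$ and is extracted from Lemma~\ref{lem:reduced}.

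First I would carry out the identification. By definition, $r^{\R}_{\C}(\mathcal D)\ge 1$ says exactly that $\mathcal D-(z)$ is linearly equivalent to an effective divisor for every $z\in\R$, so it suffices to analyze $\mathcal D-(z)$ for a single point $z\in\R\cap C_u(\k)$. Here one checks that $\mathcal D^u-(z)$ is again $u$-reduced: it differs from the $u$-reduced divisor $\mathcal D^u$ only in its $C_u$-part and in the coefficient of its $\Gamma$-part at $u$, whereas conditions (i)--(iii) defining $u$-reducedness only involve points of $\Gamma$ different from $u$ (note that a cut $S$ with $u\notin S$ automatically has $u\notin\partial S$, since $S$ is closed). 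Applying Lemma~\ref{lem:reduced} with base point $u$ to $\mathcal D^u-(z)\sim\mathcal D-(z)$, one gets that $\mathcal D-(z)$ is equivalent to an effective divisor if and only if $D^u_\Gamma(u)-1\ge 0$ and $D^u_u-(z)$ has non-negative rank on $C_u$. Since $r_{\C}(\mathcal D)\ge 0$ forces $D^u_\Gamma(u)=\deg(D^u_u)\ge 0$ (Lemma~\ref{lem:reduced} again, applied to $\mathcal D^u$), two cases remain: if $D^u_\Gamma(u)=0$ then $\deg(D^u_u-(z))=-1$, so both ``$\mathcal D-(z)$ equivalent to effective'' and ``$D^u_u-(z)$ of non-negative rank'' are false; and if $D^u_\Gamma(u)\ge 1$ the condition $D^u_\Gamma(u)-1\ge 0$ is automatic. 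Either way, $\mathcal D-(z)$ is equivalent to an effective divisor if and only if $D^u_u-(z)$ has non-negative rank on $C_u$. Ranging over all $z\in\R$ then gives: for a divisor $\mathcal D$ with $r_{\C}(\mathcal D)\ge 0$, one has $r^{\R}_{\C}(\mathcal D)\ge 1$ if and only if condition (ii) holds.

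Granting this, the ``only if'' direction of the lemma is immediate: if $\R$ is rank-determining then $r^{\R}_{\C}=r_{\C}$, so for $\mathcal D$ of non-negative rank the three statements (i), (ii), and ``$r^{\R}_{\C}(\mathcal D)\ge 1$'' all agree, hence (i)$\iff$(ii). For the ``if'' direction, I would feed the identification into the level-one reduction. Assuming (i)$\iff$(ii) for all $\mathcal D$ of non-negative rank, the identification rewrites this as ``$r_{\C}(\mathcal D)\ge 1\iff r^{\R}_{\C}(\mathcal D)\ge 1$ for all such $\mathcal D$''; it then remains to upgrade this to $r^{\R}_{\C}(\mathcal D)=r_{\C}(\mathcal D)$ for all $\mathcal D$. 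Since $r^{\R}_{\C}\ge r_{\C}$ always, one must show $r^{\R}_{\C}(\mathcal D)\ge k\Rightarrow r_{\C}(\mathcal D)\ge k$, which is done by induction on $k$ exactly as in~\cite{Luo,amini}, using the elementary recursion $r^{\R}_{\C}(\mathcal D)\ge k\iff r^{\R}_{\C}(\mathcal D-(z))\ge k-1$ for all $z\in\R$ (valid for $k\ge 1$) together with the observation that every graphical point of $\C$ is linearly equivalent to a marked point $x^e_v$, so that $r_{\C}$ may be computed using only effective divisors supported on geometric points. The hypothesis that $\R$ meets every $C_v(\k)$ is exactly what allows one to peel off points of $\R$ in this induction.

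The step I expect to require the most care is transcribing the Luo--Amini induction to metrized complexes: making sure the recursion and the passage from arbitrary effective divisors to ones supported on geometric points (and then on $\R$) interact correctly with linear equivalence on the individual curves $C_v$. By contrast, the identification of ``$r^{\R}_{\C}(\mathcal D)\ge 1$'' with (ii) is short once Lemma~\ref{lem:reduced} and the degree count collapsing the two cases are in hand.
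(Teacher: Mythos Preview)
Your identification of condition (ii) with ``$r^{\R}_{\C}(\mathcal D)\ge 1$'' via Lemma~\ref{lem:reduced} is correct and in fact slightly more direct than the paper's route (which passes through Lemma~\ref{lem:rankone} instead). The verification that $\mathcal D^u-(z)$ is again $u$-reduced, and the degree dichotomy collapsing the two conditions of Lemma~\ref{lem:reduced} into the single condition ``$D^u_u-(z)$ has non-negative rank,'' are both fine. The ``only if'' direction then follows as you say.

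There is, however, a genuine gap in your sketch of the inductive step. The claim that ``every graphical point of $\C$ is linearly equivalent to a marked point $x^e_v$'' is false whenever $g(\Gamma)\ge 1$: the $\Gamma$-part of a principal divisor on $\C$ is a principal divisor on $\Gamma$, and on a metric graph that is not a tree two distinct points are never linearly equivalent (the Abel--Jacobi map is injective). More importantly, even if one could reduce to geometric test points, your two stated ingredients do not close the induction: from $r^{\R}_{\C}(\mathcal D)\ge k$ and the recursion you only obtain $r_{\C}(\mathcal D-(z))\ge k-1$ for $z\in\R$, not for arbitrary $x\in|\C|$, so you cannot yet conclude $r_{\C}(\mathcal D)\ge k$.

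The mechanism that actually works (and is what both the paper and~\cite{Luo} do) is to apply the \emph{base case} inside the inductive step. Assuming $r^{\R}_{\C}(\mathcal D)\ge r+1$, fix a degree-$r$ effective $\mathcal E$ supported on $\R$; then $r^{\R}_{\C}(\mathcal D-\mathcal E)\ge 1$, so by the base case $r_{\C}(\mathcal D-\mathcal E)\ge 1$, hence $r_{\C}(\mathcal D-\mathcal E-(x))\ge 0$ for \emph{every} $x\in|\C|$. Now fix $x$ and let $\mathcal E$ vary: the inductive hypothesis applied to $\mathcal D-(x)$ gives $r_{\C}(\mathcal D-(x))\ge r$, and since $x$ was arbitrary, $r_{\C}(\mathcal D)\ge r+1$. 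No statement about linear equivalence of graphical points is needed.
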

\begin{proof}  In view of Lemma~\ref{lem:rankone}, for a rank-determining set the two conditions (i) and (ii) are equivalent. Suppose now that  (i) and (ii) are equivalent for any divisor $\mathcal D$ on $\C$. By induction on $r$, 
we prove that $r_\C(\mathcal D)\geq r$ if and only if for every effective divisor $\mathcal E$ of  degree $r$ with support in $\R$,  
$r_\C(\mathcal D - \mathcal E) \geq 0$. This will prove that $\R$ is rank-determining.

The case $r=1$ follows by the hypothesis and Lemma~\ref{lem:rankone}. 
Supposing now that the statement holds for some integer $r \geq 1$, we prove that it also holds for $r+1$.  

Let $\mathcal D$ be a divisor with the property that $r_\C(\mathcal D -\mathcal E) \geq 0$ 
for every effective divisor $\mathcal E$ of  degree $r+1$ with support in $\R$. 
Fix an effective divisor $\mathcal E$ of degree $r$ with support in $\R$.  
By the base case $r=1$, the divisor $\mathcal D - \mathcal E$ has rank at least $1$ on $\C$ because
$r_\C(\mathcal D -\mathcal E -(x))\geq 0$ for any $x\in \R$. Thus $r_\C(\mathcal D - (x) - \mathcal E) \geq 0$ 
for any point of $|\C|$. This holds for any effective divisor $\mathcal E$ of degree $r$ with support in $\R$, and so
from the inductive hypothesis we infer that  $\mathcal D - (x)$ has rank at least $r$ on $\C$. 
Since this holds for any $x \in |\C|$, we conclude that $\mathcal D$ has rank at least $r+1$.
\end{proof}

Let $\mathcal D$ be a divisor of degree $d$ and non-negative rank on $\C$, and let $D_\Gamma$ and $D_v$ be the $\Gamma$ and $C_v$-parts of $\D$,
respectively.
Define
\[|D_\Gamma| :=\{E\geq 0 \,|\,\, E\in \Div(\Gamma) \textrm{ and } E\sim D_\Gamma \}.\]
Note that $|D_\Gamma|$ is a non-empty subset of the symmetric product $\Gamma^{(d)}$ of $d$ copies of $\Gamma$. 

\medskip

Consider the {\it reduced divisor} map $\Red_\mathcal D : \Gamma \rightarrow \Gamma^{(d)}$ which sends a point $P\in \Gamma$ to 
$D^P_\Gamma$, the $\Gamma$-part of the $P$-reduced divisor $\mathcal D^P$. 
The following theorem extends~\cite[Theorem 3]{amini} to divisors on metrized complexes. 

\begin{thm}
  For any divisor $\mathcal D$ of degree $d$ and non-negative rank on $\C$, the reduced divisor map $\Red_\mathcal D : \Gamma \rightarrow \Gamma^{(d)}$ is continuous. 
\end{thm}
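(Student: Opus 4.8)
The plan is to reduce the statement to the analogous result for metric graphs, namely \cite[Theorem 3]{amini}, which asserts the continuity of the reduced divisor map on a metric graph. First I would recall the structure of the $P$-reduced divisor $\mathcal D^P$: its $\Gamma$-part $D^P_\Gamma$ is genuinely unique, while its $C_v$-parts are only determined up to linear equivalence; so the map $\mathrm{Red}_{\mathcal D}$, which only records the $\Gamma$-part, is well-defined as a map $\Gamma \to \Gamma^{(d)}$. The key observation is that $D^P_\Gamma$ can be characterized purely in terms of data on $\Gamma$, once we account for how the curves $C_v$ contribute. Concretely, for each vertex $v \in V$ one should record the ``effective capacity'' of $C_v$: given the $C_v$-part $D^P_v$, whether $D^P_v - E$ is equivalent to an effective divisor on $C_v$ for a prescribed effective divisor $E$ supported on $\mathcal A_v$ of a given multidegree. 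This is governed by the rank function $r_{C_v}$, which only depends on the linear equivalence class $[D_v]$ and hence is constant along chip-firing moves of type (1).

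The main step is to show that $\mathrm{Red}_{\mathcal D}$ is the composition of a continuous map with the reduced divisor map of an auxiliary metric graph, or more directly, to run the proof of \cite[Theorem 3]{amini} in the metrized-complex setting. I would argue as follows. Fix $P_0 \in \Gamma$ and a sequence $P_n \to P_0$. Using Theorem~\ref{thm:reduced.divisors}, pass to the $P_0$-reduced divisor $\mathcal D^{P_0}$; it suffices to prove continuity at $P_0$ assuming $\mathcal D = \mathcal D^{P_0}$ is already $P_0$-reduced. For $P_n$ close to $P_0$, the $P_n$-reduced divisor is obtained from $\mathcal D$ by a ``small'' sequence of burning-algorithm / chip-firing moves localized near $P_0$ (this is the content of the burning algorithm, Remark~\ref{rmk:burningalg}, combined with the stability of the reduction process under small perturbations of the base point). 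The crucial point is that the relevant moves of type (2) — firing a vertex $v$, which replaces $D_v$ by $D_v - A_v$ and modifies $D_\Gamma$ near $v$ — are triggered precisely when a boundary vertex of the current burnt region is saturated, and saturation at $v$ is the condition that $D_v - \mathrm{div}_v(\partial S)$ has non-negative rank on $C_v$. Since this is a condition on the $C_v$-part that does not change as $P$ varies continuously (the $C_v$-parts stay in a fixed linear equivalence class along the algorithm), the combinatorics of the burning process, and hence the $\Gamma$-part of the output, varies continuously with $P$.

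The cleanest way to make this rigorous is to appeal directly to \cite[Theorem 3]{amini} via the following device: for each vertex $v$, the curve $C_v$ contributes to reductions exactly like a ``black box'' that answers, for each non-negative integer multidegree on $\mathcal A_v$, whether a given divisor class on $C_v$ dominates an effective divisor. But this is exactly the data encoded by the $\eta_v$-functions of Section~\ref{sec:rank-csum} (or, for the purposes of reduced divisors, by the functions recording effectivity). Thus one can model $\mathrm{Red}_{\mathcal D}$ on $\Gamma$ by the reduced divisor map on a slightly modified metric graph whose combinatorial structure at each $v$ reflects these capacities, and \cite[Theorem 3]{amini} applies verbatim. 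Alternatively — and this is the route I would actually write up — one repeats the proof of \cite[Theorem 3]{amini} line by line: that proof shows $D^P_\Gamma$ depends continuously on $P$ by analyzing the firing moves emanating from $P$, and every step there that invokes ``does the coefficient at a vertex permit further firing'' gets replaced by ``does the $C_v$-part have non-negative rank after subtracting the boundary divisor,'' which is a locally constant condition in the relevant sense.

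\emph{Remark on the main obstacle.} The genuine subtlety — and the step I expect to require the most care — is verifying that the $C_v$-parts really do stay in a fixed linear equivalence class throughout a continuously-varying family of reduced divisors, so that the ``capacity'' of each $C_v$ is constant and the argument of \cite{amini} transfers cleanly. In principle a type (1) move could be forced at an intermediate stage, changing the representative $D_v$ but not its class; one must check this causes no discontinuity. This is where the quasi-uniqueness clause of Theorem~\ref{thm:reduced.divisors} (the class $[D^P_v]$ is unique) does the essential work: it guarantees that although $\mathrm{Red}_{\mathcal D}$ forgets the $C_v$-parts, the forgotten data is canonically determined, so no hidden discontinuity can enter through them. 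Once this is in place, continuity of $P \mapsto D^P_\Gamma$ follows from the metric-graph case exactly as in \cite{amini}.
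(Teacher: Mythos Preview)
Your overall strategy---repeat the proof of \cite[Theorem~3]{amini} line by line, replacing the numerical saturation check at a vertex by the rank condition on $C_v$---is exactly what the paper does. But your sketch misidentifies where the real modification lies, and the claim you flag as the ``main obstacle'' is actually false as stated.

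You assert that the $C_v$-parts stay in a fixed linear equivalence class as $P$ varies, and you use this to argue that the saturation conditions are locally constant. But the class $[D_v^P]$ does change: when $P$ moves across a vertex $v$ along a tangent direction $\vec\mu$, the paper's explicit transition function $\f^{(\gamma,\alpha)}$ has nontrivial $\div_v(f_\Gamma)$, so $[D_v^{P+\epsilon\vec\mu}]$ differs from $[D_v^P]$ by a divisor supported on $\mathcal A_v$. What is true is that this class is \emph{piecewise} constant along each outgoing segment, but that is a consequence of the construction, not an input to it.

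The genuine new content, which your sketch does not supply, is the \emph{determination of the slope $\alpha$} at a vertex $v=P$ in the firing function $f_\Gamma^{(\gamma,\alpha)}$. In the metric-graph case $\alpha = D_\Gamma^P(P) - \mathrm{outdeg}_S(P) + 1$; on a metrized complex this must be replaced by the largest integer $n\geq 1$ such that $D_v^P - n(x_v^{e_0}) - \sum_{i\geq 1}(x_v^{e_i})$ has non-negative rank on $C_v$, where $e_0$ is the edge carrying $\vec\mu$ and $e_1,\dots,e_l$ are the other edges at $v$ leaving the maximal saturated cut $S$. This is not a binary ``is $v$ saturated?'' check but a quantitative one, and it is precisely what makes $\mathcal D^P + \div(\f^{(\alpha\epsilon,\alpha)})$ the $(P+\epsilon\vec\mu)$-reduced divisor for small $\epsilon$. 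Once you pin down this definition of $\alpha$ and verify (as in \cite{amini}) that the resulting divisor is reduced, continuity on each outgoing segment follows exactly as before. Your proposal gestures at this (``does the $C_v$-part have non-negative rank after subtracting the boundary divisor'') but treats it as a locally constant switch rather than as the source of the slope, which is the step that actually needs to be written out.
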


\begin{proof}
This is based on an explicit description of the reduced divisor map 
$\Red_\mathcal D$ in a small neighborhood around any point of $\Gamma$, 
similar to the description provided in~\cite[Theorem 3]{amini} in the context of metric graphs.
We merely give the description by providing appropriate modifications to~\cite[Theorem 3]{amini}, 
referring to {\it loc.~cit.} for more details.

\medskip

Let $P$ be a point of $\Gamma$ and let $\vec \mu$ be a (unit)
tangent direction in $\Gamma$ emanating from $P$. For $\epsilon>0$ sufficiently small, 
we denote by $P+\epsilon \vec \mu$ 
the point of $\Gamma$ at distance $\epsilon$ from $P$ in the direction of $\vec \mu$.  We will describe the restriction of $\Red_\mathcal D$ to the segment 
$[P, P+\epsilon \vec \mu]$ for sufficiently small $\epsilon>0$. 
One of the two following cases can happen:
\begin{itemize}
\item[(1)] For all sufficiently small $\epsilon>0$, the $P$-reduced divisor $\mathcal D^P$ is also $(P+\epsilon\vec \mu)$-reduced.
\end{itemize}
\noindent In this case, the map $\Red_\mathcal D$ is constant (and so obviously continuous) 
on a small segment $[P, P+\epsilon_0 \vec \mu]$ with $\epsilon_0>0$.

\begin{itemize}
\item[(2)] There exists a cut $S$ in $\Gamma$ which is saturated with respect to $\mathcal D^P$ 
such that $P \in \partial S$ and $P+\epsilon \vec \mu \not\in S$ for all sufficiently small $\epsilon > 0$.
\end{itemize}

We note that there is a maximum saturated cut $S$ (i.e., containing any other saturated cut) 
with the property described in (2) (see the proof of~\cite[Theorem 3]{amini} for details). In the following $S$ denotes the maximum saturated cut with property (2).
In this case, there exists an $\epsilon_0>0$ such that for any $0<\epsilon<\epsilon_0$, the reduced divisor 
$\mathcal D^{P+\epsilon \vec \mu}$ has the following description (the proof mimics that of~\cite[Theorem 3]{amini} and is omitted).

\medskip

Let  $\mu,\vec \mu_1,\dots,\vec \mu_s$ be all the different 
tangent vectors in $\Gamma$ (based at the boundary points $P, x_1,\dots,x_s \in \partial S$, respectively)
which are outgoing from $S$.
(It might be the case that $x_i=x_j$ for two different indices $i$ and $j$). 
Let $\gamma_0>0$  be small enough so that for any point $x \in \partial S$ and any tangent vector $\vec \nu$ to $\Gamma$ at $x$ which is outgoing from $S$, the entire segment 
$(x,x+\gamma_0\vec \nu\,]$ 
lies outside $S$ and does not contain any point of the support of $D_\Gamma$.

\medskip

For any  $0<\gamma < \gamma_0$ and any positive integer $\alpha$, 
we will define below a rational function $f_{\Gamma}^{(\gamma,\alpha)}$ on $\Gamma$. 
Appropriate choices of $\gamma = \gamma(\epsilon)$ and $\alpha$ will 
then give the ($P+\epsilon \vec \mu$)-reduced divisor 
$\mathcal D^{P+\epsilon \vec \mu} = \mathcal D^P + \div(\f^{\gamma,\alpha})$, for any 
$\epsilon < \epsilon_0 := \frac{\gamma_0}\alpha$,
where $\f^{\gamma,\alpha}$ is the rational function on $\C$ given by $f_\Gamma^{\gamma,\alpha}$ on $\Gamma$ and $f_v =1$ on each $C_v$. 

\medskip

For $0<\gamma<\gamma_0$ and integer $\alpha\geq 1$, define $f_\Gamma^{(\gamma,\alpha)}$ as follows:

\begin{itemize}
\item $f_\Gamma^{(\gamma,\alpha)}$ takes value zero at any point of $S$;
\item On any outgoing  interval $[x_i,x_i+ \gamma\vec \mu_i]$ from $S$, $f_\Gamma^{(\gamma,\alpha)}$ is linear of slope $-1$;
\item The restriction of $f_\Gamma^{(\gamma,\alpha)}$ 
to the interval $[P,P+ (\frac \gamma \alpha) \vec \mu \,]$ is linear of slope $-\alpha$; 
\item $f_\Gamma^{(\gamma,\alpha)}$ takes value $-\gamma$ at any other point of $\Gamma.$
\end{itemize}

Note that the values of $f_\Gamma^{(\gamma,\alpha)}$ at the points $(x_i+ \gamma\vec \mu_i)$ and $P+ (\frac \gamma \alpha) \vec \mu$ are all equal to $-\gamma$,
so $f_\Gamma^{(\gamma,\alpha)}$ is well-defined.

\medskip 

It remains to determine the values of $\alpha$ and $\gamma$. 
Once the value of $\alpha$ is determined, $\gamma$ will be defined as $\alpha\epsilon$
so that the point $P+(\frac \gamma \alpha)\vec \mu$ coincides with the
point $P+\epsilon \vec \mu$.  We consider the following two cases, depending on whether or not $P$ is a vertex of $G$:

\begin{itemize}
\item If $P \in \Gamma \setminus V$, then $\alpha = D^P_\Gamma( P ) -  \mathrm{outdeg}_S ( P ) +1$. 
(Note that since $S$ is saturated with respect to $D_\Gamma^P$, we have $D^P_\Gamma( P ) \geq  \mathrm{outdeg}_S ( P )$ and thus 
$\alpha \geq 1$.)
\item If $P=v$ for a vertex $v\in V(G)$, let $e_0, e_1, \dots, e_l$ be the outgoing edges at $v$ with respect to $S$, and consider the points $x^{e_0}_v, x^{e_1}_v, \dots, x^{e_l}_v$ in $C_v(\k)$ indexed by these edges.  Suppose in addition that $e_0$ is the edge which corresponds to the 
tangent direction $\vec\mu$. 
Since $S$ is a saturated cut with respect to $D^v_\Gamma$, 
the divisor $D_v - \div_v(\partial S) = D_v - \sum_{i=0}^l (x^{e_i}_v)$ has non-negative rank in $C_v$. 
Define $\alpha$ to be the largest integer $n\geq 1$ such that 
$D_v - n (x_v^{e_0}) - \sum_{i=1}^l (x^{e_i}_v)$ has non-negative rank. 
\end{itemize}

Now for any $0\leq \epsilon<\epsilon_0= \frac {\gamma_0}\alpha$, the divisor 
$\mathcal D^{P+\epsilon \vec \mu}$ is $(P+\epsilon \vec \mu)$-reduced.
(The argument is similar to \cite[Proof of Theorem 3]{amini}.) 
It follows immediately that the reduced divisor map is continuous on the interval $[P, P+\epsilon_0 \vec \mu)$,
and the result follows.
\end{proof}

We are now ready to give the proof of Theorem~\ref{thm:f-width}.
\begin{proof}[Proof of Theorem~\ref{thm:f-width}] By Lemma~\ref{lem:rank-determ}, it is enough to check the equivalence of the following two properties for any divisor $\mathcal D$ on $\C$:
\begin{itemize}
\item[(i)] $r_\C(\mathcal D) \geq 1$.
\item[(ii)] For any $u  \in V$ and any point $z \in \R_u = \R\, \cap\, C_u(\k)$, the divisor $D_u^u -(z)$ has non-negative rank on $C_u$.
\end{itemize}

It is clear that (i) implies (ii). So we only need to prove that (ii) implies (i).
In addition, by Lemma~\ref{lem:rankone}, Property (i) is equivalent to:
\begin{itemize}
\item[(1)] for any point $P$ of $\Gamma$, $D^P_\Gamma( P )\geq 1$; and 
\item[(2)] for any vertex $v\in V(G)$, the divisor $D^v_v$ has rank at least one on $C_v$. 
\end{itemize}

\medskip

So it suffices to prove that ${\rm (ii)}\Rightarrow (1)$ and $(2)$. Since cardinality of $\R_v$ is $g_v+1$, $\R_v$ is rank-determining in $C_v$. Therefore, (ii) implies $(2)$. We now show that $(2)$ implies $(1)$. 
Let $\Gamma_0$ be the set of all $P\in\Gamma$ such that $D^P_\Gamma ( P )\geq 1$. By the continuity of the map $\Red_\mathcal D$, $\Gamma_0$ is a closed subset of $\Gamma$. In addition, since $D^v_v$ has rank at least 
one on $C_v$ and $D_\Gamma^v( v ) = \deg(D_v^v)$ for every vertex $v\in C$, we have $V \subset \Gamma_0$. This shows that $\Gamma \setminus \Gamma_0$ is a disjoint union of open segments contained in edges of $G$. 
Suppose for the sake of contradiction that $\Gamma_0 \subsetneq \Gamma$, and let $I = (P,Q) $ be a non-empty segment contained in the edge $\{u,v\}$ of $G$ such that 
$I \cap \Gamma_0 = \emptyset$. 

\medskip

{\bf Claim:} $\Red_\mathcal D$ is constant on the closed interval $[P,Q]$.  

\medskip

To see this, note that for any point $Z \in [P,Q]$ and any tangent direction $\vec \mu$ for which $Z+\epsilon \vec \nu \in [P,Q]$ for all sufficiently small $\epsilon>0$, 
we are always in case (1) in the description of $\Red_\mathcal D$.  Otherwise, there would be an integer $\alpha>0$ such that $\mathcal D^{Z+\epsilon \vec \mu} = \mathcal D^{Z} +  \div(\f^{(\epsilon \alpha, \alpha)})$ 
for all sufficiently small $\epsilon > 0$. In particular, this would imply (by the definition of $f^{(\eta,a)}$) that 
$D_\Gamma^{Z+\epsilon\vec \mu}( Z+\epsilon \vec \mu ) = \alpha \geq 1$, which implies 
that $Z+\epsilon \vec \nu \in \Gamma_0$, a contradiction.  This proves the claim.

\medskip

A case analysis (depending on whether $P$ and $Q$ are vertices or not) shows that for a point $Z \in (P,Q)$, the cut $S = \Gamma \setminus (P,Q)$ is saturated for $\mathcal D^P =\mathcal D^Q$. 
Since $\mathcal D^Z = \mathcal D^P=\mathcal D^Q$, and $S$ does not contain $Z$, this contradicts the assumption that $\mathcal D^Z$ is $Z$-reduced.
 \end{proof}

\medskip

Theorem~\ref{thm:f-width} has the following direct corollaries. 

\begin{cor} 
\label{cor:f-widthG}
Let $\mathcal G$ be a subgroup of $\mathbb R$ which contains all the edge lengths in $G$. For any divisor $\mathcal D \in \Div(\C)_\mathcal G$, we have 
$$r_{\C, \mathcal G}(\mathcal D) = r_\C(\mathcal D).$$
\end{cor}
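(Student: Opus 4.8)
The plan is to derive the corollary as an essentially formal consequence of the rank-determining set theorem, Theorem~\ref{thm:f-width}. The key structural remark is that the three rank functions involved --- the ordinary rank $r_\C$, the $\mathcal G$-restricted rank $r_{\C,\mathcal G}$, and the rank $r_\C^\R$ computed using only effective divisors supported on a set $\R$ of geometric points --- are all defined by imposing the same condition (``$\mathcal D-\mathcal E$ is linearly equivalent to an effective divisor, for every effective $\mathcal E$ of degree $k$'') over progressively smaller families of effective test divisors $\mathcal E$, so that passing to a larger family of test divisors can only decrease the resulting rank.

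First I would reduce to the case in which the given model $G$ of $\Gamma$ is loopless; this is harmless since $r_\C$ does not depend on the chosen model, and --- because the curves introduced at the new vertices under subdivision have genus zero, so that every one of their points is linearly equivalent to each of their marked points --- neither does the restricted rank $r_{\C,\mathcal G}$ (the subdivision points can moreover be taken $\mathcal G$-rational whenever $\mathcal G$ is divisible, as is the case for the value group of an algebraically closed non-Archimedean field, and in the remaining applications of interest $G$ is already loopless). With $G$ loopless, I would fix for each $v\in V$ a subset $\R_v\subset C_v(\k)$ of cardinality $g_v+1$ and set $\R=\bigcup_{v\in V}\R_v$; by Theorem~\ref{thm:f-width} the set $\R$ is rank-determining, i.e. $r_\C(\mathcal D)=r_\C^\R(\mathcal D)$ for every divisor $\mathcal D$ on $\C$.

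The crucial point is then that every effective divisor supported on $\R$ belongs to $\Div(\C)_\mathcal G$: such a divisor is a non-negative combination of geometric points $x\in C_v(\k)$, and the $\Gamma$-part of the divisor $(x)$ is simply $(v)$, supported at a vertex of $G$ and hence at a $\mathcal G$-rational point of $\Gamma$ (a vertex lies at distance $0$ from itself, and $0\in\mathcal G$). Consequently the effective test divisors appearing in the definition of $r_\C^\R(\mathcal D)$ form a subcollection of those appearing in the definition of $r_{\C,\mathcal G}(\mathcal D)$, which in turn form a subcollection of all effective divisors, so by the monotonicity noted above
\[
r_\C^\R(\mathcal D)\ \geq\ r_{\C,\mathcal G}(\mathcal D)\ \geq\ r_\C(\mathcal D).
\]
Since $r_\C^\R(\mathcal D)=r_\C(\mathcal D)$ by Theorem~\ref{thm:f-width}, all three quantities coincide, giving $r_{\C,\mathcal G}(\mathcal D)=r_\C(\mathcal D)$. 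The only step demanding any care --- and the one I expect to be the main, though minor, obstacle --- is the reduction to a loopless model together with the verification that subdivision of loop edges changes neither $\Div(\C)_\mathcal G$ (up to the identification of divisor classes) nor the restricted rank $r_{\C,\mathcal G}$; this is handled by the genus-zero argument indicated above.
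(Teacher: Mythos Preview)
Your sandwich $r_\C^\R(\mathcal D)\geq r_{\C,\mathcal G}(\mathcal D)\geq r_\C(\mathcal D)$ is valid only under a weak reading of $r_{\C,\mathcal G}$ in which one restricts the \emph{test} divisors $\mathcal E$ to $\Div(\C)_\mathcal G$ but still allows arbitrary linear equivalences. That is admittedly the literal wording in \S2.1, but it is not how the paper actually uses the notion: in the proof of Proposition~\ref{prop:combinrank} one explicitly requires ``a rational function $\f$ with $\div(\f)\in\Div(\C X_0)_\mathbb Z$'', and the paper's own proof of the present corollary devotes itself to showing that $r_\C(\mathcal D)\geq 0$ implies $r_{\C,\mathcal G}(\mathcal D)\geq 0$ --- a step that would be vacuous under your reading, since for $k=0$ there is no test divisor at all. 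Under the intended stronger definition (linear equivalence also taken within $\Div(\C)_\mathcal G$), your second inequality $r_{\C,\mathcal G}(\mathcal D)\geq r_\C(\mathcal D)$ is no longer formal monotonicity: shrinking the family of test divisors pushes the rank up, but shrinking the family of admissible rational functions pushes it down, and you have not argued why the first effect wins.

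The paper fills exactly this gap with a short reduced-divisor argument: for $\mathcal D\in\Div(\C)_\mathcal G$ with $r_\C(\mathcal D)\geq 0$, pass to the $v$-reduced representative $\mathcal D^v$ at a vertex $v$; because $v\in V$ and $\mathcal G$ contains all edge lengths, the rational function realizing $\mathcal D\sim\mathcal D^v$ already has $\div(\f)\in\Div(\C)_\mathcal G$, and then $\mathcal D^v$ is made effective by a function constant on $\Gamma$, so $r_{\C,\mathcal G}(\mathcal D)\geq 0$. Combined with the observation you do make --- that $\R$-supported effective divisors lie in $\Div(\C)_\mathcal G$ --- this base case bootstraps to all $k$. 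So your overall strategy (invoke Theorem~\ref{thm:f-width} and use that $\R$ sits inside the $\mathcal G$-rational locus) matches the paper's, but the one substantive step --- controlling the $\mathcal G$-rationality of the linear equivalence --- is missing from your argument.
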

\begin{proof}
Fix a rank-determining set $\R \subset \cup_{v\in V} C_v(\k)$ as in Theorem~\ref{thm:f-width}. Since $\R$ is rank-determining and any effective divisor $\mathcal E$ with support in $\R$ obviously belongs to $\Div(\C)_\mathcal G$, 
to prove the equality of $r_{\C, \mathcal G}(\mathcal D)$ and $r_\C(\mathcal D)$ it will be enough to show that the two statements $r_{\C, \mathcal G}(\mathcal D)\geq 0$  and  $r_\C(\mathcal D)\geq 0$ are equivalent. 
Obviously, the former implies the latter, so we only need to show that if $r_\C(\mathcal D)\geq 0$ then $r_{\C, \mathcal G}(\mathcal D)\geq 0$. Let $v$ be a vertex of $G$ and $\mathcal D^{v}$ the $v$-reduced divisor linearly equivalent to 
$\mathcal D$. By Lemma~\ref{lem:reduced}, $r_\C(\mathcal D)\geq 0$ is equivalent to $r_{C_v}(D^{v}_{v})\geq 0$. Now let $\mathcal D$ be an element of 
$\Div(\C)_\mathcal G$ with $r_{C_v}(D^v_v)\geq 0$. Since $v\in V$ and $\mathcal G$ contains all the edge-lengths in $G$, it is easy to see that $\mathcal D$ and $\mathcal D^v$ differ by the divisor of a rational function $\f$ with support in 
$\Div(\C)_{\mathcal G}$. In other words, $\mathcal D \sim \mathcal D^v$ in $\Div(\C)_\mathcal G$. 
Since $\mathcal D^v$ is linearly equivalent to an effective divisor in $\Div(\C)_\mathcal G$ (with constant rational function on $\Gamma$), we conclude that $r_{\C,\mathcal G}(D)\geq 0$. \end{proof}

\begin{cor} 
\label{cor:f-widthZ}
Let $\C X_0$ be the regularization of a strongly semistable curve $X_0$ over $\k$. Let $\L$ be a line bundle on 
$X_0$ corresponding to a divisor $\mathcal D \in \Div(\C)$.  Then
$r_{c} (\L) = r_{\C X_0}(\mathcal D)$.
\end{cor}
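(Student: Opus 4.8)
The plan is to deduce the equality from Corollary~\ref{cor:f-widthG} together with Proposition~\ref{prop:combinrank}. First I would record two preliminary reductions. Since $\mathcal L(\mathcal D)$ is, by definition, only specified when $\mathcal D\in\Div(\C X_0)_{\mathbb Z}$ (and then only up to combinatorial equivalence), I may and do assume throughout that $\mathcal D$ lies in $\Div(\C X_0)_{\mathbb Z}$; and since the combinatorial rank $r_c(\mathcal L(\mathcal D))$ depends only on the combinatorial equivalence class of $\mathcal L(\mathcal D)$, it equals $r_c(\L)$. Thus it suffices to prove $r_{\C X_0}(\mathcal D)=r_c(\mathcal L(\mathcal D))$.

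Next I would apply Corollary~\ref{cor:f-widthG} with $\mathcal G=\mathbb Z$ — legitimate because all edge lengths of $G_0$ are equal to $1$, hence lie in $\mathbb Z$ — to the divisor $\mathcal D\in\Div(\C X_0)_{\mathbb Z}$, obtaining $r_{\C X_0,\mathbb Z}(\mathcal D)=r_{\C X_0}(\mathcal D)$. This is precisely the point at which Theorem~\ref{thm:f-width} enters: it furnishes a rank-determining set $\R=\bigcup_v\R_v$ with $\R_v\subset C_v(\k)$ of size $g_v+1$, and every effective divisor supported on $\R$ automatically lies in $\Div(\C X_0)_{\mathbb Z}$, since it is supported on geometric points and therefore has $\Gamma$-part supported on the vertices of $\Gamma_0$. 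Then I would invoke Proposition~\ref{prop:combinrank}, part $(2)$ of which asserts exactly $r_{\C X_0,\mathbb Z}(\mathcal D)=r_c(\mathcal L(\mathcal D))$. Concatenating the two displayed equalities yields $r_c(\L)=r_{\C X_0}(\mathcal D)$, as desired.

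The one delicate point — and what I regard as the main, if minor, obstacle — is circularity: the proof of Proposition~\ref{prop:combinrank}$(2)$ given in Section~\ref{sec:mcnodal} itself cites the equality $r_{\C X_0}(\mathcal D)=r_{\C X_0,\mathbb Z}(\mathcal D)$. This is resolved by observing that the Zariski-closure and semicontinuity argument there in fact establishes $r_{\C X_0,\mathbb Z}(\mathcal D)=r_c(\mathcal L(\mathcal D))$ on its own: it manipulates only effective divisors in $\Div(\C X_0)_{\mathbb Z}$ and never appeals to the unrestricted rank. So the genuine order of logical dependence is Theorem~\ref{thm:f-width} $\Rightarrow$ Corollary~\ref{cor:f-widthG} (with $\mathcal G=\mathbb Z$) $\Rightarrow$ the present Corollary, with the combinatorial half of Proposition~\ref{prop:combinrank}$(2)$ supplying the middle equality. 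Once this bookkeeping is made explicit, no further computation is required.
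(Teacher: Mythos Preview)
Your proposal is correct and follows essentially the same approach as the paper, which simply says ``This follows from the previous corollary with $\mathcal G = \mathbb Z$.'' You are in fact more explicit than the paper: you spell out that one must also invoke the independently-established equality $r_{\C X_0,\mathbb Z}(\mathcal D)=r_c(\mathcal L(\mathcal D))$ from the proof of Proposition~\ref{prop:combinrank}(2), and you correctly diagnose and resolve the apparent circularity between that proposition and the present corollary.
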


\begin{proof} 
This follows from the previous corollary with $\mathcal G = \mathbb Z$. 
\end{proof}

\end{document}